\newcommand{\sharpind}{\mathbbm{1}}
\DeclareFontFamily{U}{mathx}{\hyphenchar\font45}
\DeclareFontShape{U}{mathx}{m}{n}{
      <5> <6> <7> <8> <9> <10>
      <10.95> <12> <14.4> <17.28> <20.74> <24.88>
      mathx10
      }{}
\DeclareSymbolFont{mathx}{U}{mathx}{m}{n}
\DeclareMathAccent{\widecheck}{0}{mathx}{"71}
\DeclareMathAccent{\wideparen}{0}{mathx}{"75}
\newtheorem{theorem}{Theorem} [section]
\newtheorem{lemma}[theorem]{Lemma}
\newtheorem{proposition}[theorem]{Proposition}
\theoremstyle{definition}
\newtheorem{remark}[theorem]{Remark}
\DeclareMathOperator*{\supp}{supp}
\newcommand{\noi}{\noindent}
\newcommand{\Z}{\mathbb{Z}}
\newcommand{\R}{\mathbb{R}}
\newcommand{\C}{\mathbb{C}}
\newcommand{\T}{\mathbb{T}}
\newcommand{\Tl}{\T_{\lambda}}
\newcommand{\Zl}{\Z_{\lambda}}
\let\Re=\undefined\DeclareMathOperator*{\Re}{Re}
\let\Im=\undefined\DeclareMathOperator*{\Im}{Im}
\newcommand{\ind}{\mathbf 1}
\numberwithin{equation}{section}
\numberwithin{theorem}{section}
\begin{document}
\selectlanguage{english}

\title[GWP of DNLS on the torus]{
Global well-posedness of the derivative nonlinear Schr\"odinger equation 
with periodic boundary condition in $H^{\frac12}$
}
\author{Razvan Mosincat}
\date{\today}

\address{
School of Mathematics\\
The University of Edinburgh 
and The Maxwell Institute for the Mathematical Sciences\\
James Clerk Maxwell Building\\
The King's Buildings\\
Peter Guthrie Tait Road, 
EH9 3FD, Edinburgh, United Kingdom}
\email{r.o.mosincat@sms.ed.ac.uk}

\subjclass[2010]{35Q55}

\keywords{derivative nonlinear Schr\"odinger equation; global well-posedness}

%\vspace{-10mm}

\begin{abstract}
We establish the global well-posedness of the derivative nonlinear Schr\"{o}dinger equation with periodic boundary condition in the Sobolev space $H^{1/2}$, provided that the mass of  initial data is less than $4\pi$. 
This result matches the one by Miao, Wu, and Xu and its recent mass threshold improvement by Guo and Wu 
in the non-periodic setting.  
Below $H^{1/2}$, 
we show that the uniform continuity of the solution map on bounded subsets 
of $H^s$ does not hold, for any gauge equivalent equation.
\end{abstract}

\maketitle
\tableofcontents

\newpage
\section{Introduction}

We consider the initial value problem for the derivative nonlinear Schr\"{o}dinger equation (DNLS) 
\footnote{A parameter in front of the nonlinearity is irrelevant in this study since on the Fourier side the nonlinearity is unsigned; the equation does not have a definite focusing or defocusing character.} 
with periodic boundary condition 
\begin{equation}
\label{DNLS}
%\tag{DNLS}
\begin{cases}
 i\partial_t u +\partial_x^2 u = i\partial_x (|u|^2u) \ ,\ (t,x)\in\R\times\T \\
 u(0,x)=u_0(x)\ ,\ u_0\in H^s(\T)
\end{cases} ,
\end{equation}
where $\T:=\R/2\pi\Z\simeq [0,2\pi)$. 
This equation was derived in the plasma physics literature in the 1970s \cite{Rogister71,Mio76, Mjolhus76}   
and it is a particular case of a perturbed complex Ginzburg-Landau equation \cite{vanSaarloosHohenberg}. 
Kaup and Newell \cite{KaupNewell} showed that \eqref{DNLS}  
is completely integrable, 
in the sense that it is the compatibility condition for a certain pair of linear differential equations.  
%(similar to the Zakharov-Shabat/Ablowitz-Kaup-Newell-Segur system for the cubic nonlinear Schr\"{o}dinger equation). 
In particular, it possesses an infinite family of conservation laws, as well as a two-parameter family of solitons. 
In this work, we only employ the conservation of the following integrals of motion,  referred to as the 
\emph{mass}, \emph{momentum}, and \emph{energy} (of a solution $u$), respectively:
\begin{align}
\label{eqnintro:M}
M[u] &:= \int |u|^2dx ,\\
\label{eqnintro:P}
P[u] &:= \int\Im{(u\partial_x \overline{u})} + \frac12 |u|^4dx ,\\
\label{eqnintro:E}
E[u] &:=\int |\partial_xu|^2 +\frac32 |u|^2 \Im(u\partial_x\overline{u}) + \frac12 |u|^6\, dx.
\end{align} 
We note that these quantities are at the levels $L^2$, $H^{\frac12}$, and $H^1$, respectively; 
the Hamiltonian for \eqref{DNLS} is $P[u]$. 
Given a sufficiently regular solution $u$, 
one can check by direct computation the conservation of the above functionals %in the periodic case 
(see for example \cite[Appendix~B]{HerrIMRN06}).  
In addition, one can similarly verify that the \emph{mean} $\int u(t,x)\,dx$ is also conserved. 

The scaling symmetry of this equation in the Euclidean setting is given by the invariance of solutions under the following transformation:
\begin{equation}
\label{naturalscaling}
u(t,x)\mapsto \frac{1}{\lambda^{\frac12}}u\left(\frac{t}{\lambda^2},\frac{x}{\lambda}\right)=:u^{\lambda}(t,x).
\end{equation}
In particular, $\|u^{\lambda}(t)\|_{L_x^2}= \|u(t/\lambda^{2})\|_{L^2_x}$,  
and thus $s_c=0$ is the scaling critical Sobolev index 
(reasonable well-posedness theory is to be expected for $s>s_c$, and possibly for $s=s_c$). 
In the periodic setting, the above transformation changes the underlying domain $\T$ 
to $\Tl:=\R/2\pi\lambda\Z$  (hence, the scaling transformation above is no longer a symmetry of the equation on $\T$). 
Nevertheless, we employ \eqref{naturalscaling} in this article  
and for the most part, we work on the dilated torus $\Tl$, with $\lambda\geq1$.

The aim of this paper is to study the long-time dynamics of low-regularity solutions of  \eqref{DNLS}. 
%The aim of this paper is to establish a global well-posedness result in the Sobolev space $H^{\frac12}(\T)$. 
The main result of this work reads as follows:

\begin{theorem}
\label{thm1}
The derivative nonlinear Schr\"{o}dinger equation with periodic boundary condition \eqref{DNLS} 
is globally well-posed in $H^{\frac12}$ 
for initial data $u_0$ with $M[u_0]<4\pi$. %$\|u_0\|^2_{L^2(\T)}< 2\pi$. 
\end{theorem}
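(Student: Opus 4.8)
The plan is to follow the now-standard gauge-transformation strategy for DNLS, adapted to the periodic setting. First I would introduce the periodic gauge transformation $u\mapsto v$, where $v(t,x)=\mathcal{G}(u)(t,x):=e^{-iJ(t,x)}u(t,x)$ with $J(t,x)=\tfrac{1}{2\pi}\int_0^{2\pi}\!\!\int_\theta^x |u(t,y)|^2\,dy\,d\theta$ (or a comparable mean-zero primitive of $|u|^2-\tfrac{1}{2\pi}M[u]$), which conjugates \eqref{DNLS} into an equation whose nonlinearity has no $|u|^2\partial_x u$ term and is of the schematic form $i\partial_t v+\partial_x^2 v = -v^2\partial_x\bar v + (\text{cubic and quintic terms with no derivatives}) + (\text{mass-dependent linear and nonlinear corrections})$. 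The key point is that the gauge map is a bi-Lipschitz homeomorphism on bounded subsets of $H^{1/2}(\T)$ and on $C([-T,T];H^{1/2})$, so global well-posedness for \eqref{DNLS} in $H^{1/2}$ is equivalent to the same statement for the gauged equation; moreover $M[v]=M[u]$ is preserved.

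Next I would establish local well-posedness for the gauged equation in $H^{1/2}(\T)$ with a time of existence depending only on $\|v_0\|_{H^{1/2}}$. This is the analytic heart of the local theory: one sets up a Fourier-restriction-norm ($X^{s,b}$ or its short-time / $U^p$–$V^p$ variants) estimate on the dilated torus $\T_\lambda$, proving the crucial trilinear estimate for $v^2\partial_x\bar v$ and the (easier, since derivative-free) estimates for the remaining multilinear terms; the resonant contributions — in particular the fully resonant frequency interactions that force $s\ge \tfrac12$ — must be handled either by exploiting the structure after gauging or by the extra smoothing/normal-form reductions. I would then iterate this local result. To reach \emph{global} times with $M[u_0]<4\pi$, the mechanism is: the conserved mass controls the $L^2$ norm, and one combines the conservation of the energy $E[u]$ with the sharp Gagliardo–Nirenberg/$L^6$ inequality — whose best constant is exactly what produces the $4\pi$ threshold — to bound $\|\partial_x v\|_{L^2}$, hence $\|v(t)\|_{H^1}$, a fortiori $\|v(t)\|_{H^{1/2}}$, uniformly in $t$ for $H^1$ data; an approximation/persistence-of-regularity argument (or the $I$-method if one wants to avoid assuming $H^1$ data) then propagates this a priori bound down to $H^{1/2}$ data, and feeding the uniform-in-time bound back into the local theory yields solutions on $[0,\infty)$.

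In more detail on the globalization at the endpoint regularity $s=\tfrac12$: since the coercive control from $E$ naturally lives at the $H^1$ level, I would run a high-low frequency decomposition and an $I$-method argument, introducing the smoothing operator $I_N$, controlling the modified energy $E[Iv]$, and showing its increment over a local time interval is small enough — using multilinear estimates on $\T_\lambda$ together with the $4\pi$ mass bound — to iterate up to arbitrarily large times after rescaling. The role of the parameter $\lambda\ge 1$ introduced in \eqref{naturalscaling} is precisely to create the small factor needed to absorb the supercritical scaling of the $H^{1/2}$ norm under \eqref{naturalscaling}, at the cost of working on $\T_\lambda$ with $\lambda\to\infty$; all the local estimates must therefore be tracked with explicit $\lambda$-dependence.

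The main obstacle I anticipate is twofold and concentrated in the local theory: first, obtaining the trilinear estimate for the gauged nonlinearity in $X^{1/2,b}(\T_\lambda)$ (or the relevant function space) with constants that behave well as $\lambda\to\infty$, since on the torus one loses the full strength of the dispersive/Strichartz estimates available on $\R$ and must instead rely on divisor-type counting bounds for the resonant surfaces; and second, making the $I$-method increment bound quantitative enough at exactly $s=\tfrac12$ — the endpoint — rather than $s>\tfrac12$, which typically requires exploiting an additional cancellation (a further normal-form/correction to the modified energy) in the worst frequency interactions. Getting the $4\pi$ constant sharp, as opposed to some smaller explicit threshold, rests on using the optimal constant in the $L^6$ Gagliardo–Nirenberg inequality on $\T_\lambda$ (equivalently on $\R$ in the limit), so I would isolate that as a separate lemma.
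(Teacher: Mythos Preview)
Your overall architecture is right---gauge transform, local theory in $H^{1/2}$ via $X^{s,b}$ estimates, then $I$-method with scaling to $\T_\lambda$---and you correctly anticipate that reaching the endpoint $s=\tfrac12$ requires extra correction terms in the modified energy (the paper in fact uses a \emph{third} generation modified energy $\mathcal{E}^3$ with a resonant decomposition of the correction multiplier, following Miao--Wu--Xu). But there is a genuine gap in how you obtain the $4\pi$ threshold.

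You write that the energy $E[u]$ together with the sharp $L^6$ Gagliardo--Nirenberg inequality produces the $4\pi$ threshold. This is incorrect: the classical sharp inequality $\|f\|_{L^6}^6 \le \tfrac{4}{\pi^2}\|\partial_x f\|_{L^2}^2\|f\|_{L^2}^4$ (Weinstein) combined with the energy gives only $M[u_0]<2\pi$, and this is sharp for the energy functional alone (there exist $u_0$ with $M[u_0]$ just above $2\pi$ and $E[u_0]<0$). The improvement to $4\pi$ is due to Wu and Guo--Wu and requires bringing in the \emph{momentum} $P[u]$ as well: one modulates $v\mapsto e^{i\alpha x}v$, observes that the resulting change in kinetic energy matches the principal part of $P$, and then controls the remaining $\|v\|_{L^4}^4$ term via a \emph{different} sharp Gagliardo--Nirenberg inequality due to Agueh, namely $\|f\|_{L^6}\le C_{\mathrm{GN}}\|\partial_x f\|_{L^2}^{1/9}\|f\|_{L^4}^{8/9}$, whose extremizer has mass exactly $4\pi$. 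The coercivity statement one actually proves is $\|\partial_x f\|_{L^2}^2 \lesssim |\mathcal{E}[f]| + \mathcal{P}[f]^2 + 1$ under $M[f]<4\pi$.

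This has a concrete consequence for the $I$-method iteration: in addition to the almost-conservation of the (third-generation) modified energy, you must also establish that $\mathcal{P}[Iv]$ stays close to the conserved $\mathcal{P}[v]$, i.e.\ prove a separate estimate of the form $|\mathcal{P}[If]-\mathcal{P}[f]|\lesssim N^{-1}\|If\|_{H^1}^4$, and feed both almost-conserved quantities into the coercivity lemma at each step. Your proposal tracks only $E[Iv]$, which would cap the argument at $2\pi$.
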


Before delving into the prerequisites and proof of this theorem, 
let us promptly review the well-posedness results in the Euclidean setting. % as it helps in setting up the .  
For a certain class of  Schwartz initial data, 
by using the inverse scattering method, 
Lee \cite{LeeThesis,Lee89} obtained the local and global\footnote{We refer to the recent article of Pelinovsky and Shimabukuro \cite[p.~4]{PelinovskyShimabukuro16} for a possible issue regarding the result of \cite{Lee89}.} 
solvability, respectively. 
Tsutsumi and Fukuda \cite{TsutsumiFukuda1} established the local existence and uniqueness 
of $H^s$-solutions, with $s>\frac32$, by the method of parabolic regularization. 
Furthermore, in \cite{TsutsumiFukuda2}, 
they obtained the global existence of solutions for $u_0\in H^2$ with sufficiently small $H^1$-norm. 

In the energy space, Hayashi \cite{Hayashi93} %, and Hayashi and Ozawa \cite{HayashiOzawa,HayashiOzawaSIAM}  
proved the global existence for sufficiently small initial data. %under the assumption $M[u_0]<2\pi$. 
%\footnote{The uniqueness in \cite{Hayashi93} is conditional to the class $L^{12}(\R;H^{1,3}(\R))$.} 
In his work, of particular importance is the gauge transformation defined by
\begin{equation}
\label{gaugetransform}
\mathcal{G}_{\beta}:L^2\to L^2\,,\ \mathcal{G}_{\beta}f (x):= e^{-i\beta \mathcal{J}(f)(x)}f(x) ,
\end{equation}
with $\mathcal{J}(f)(x) := \int_{-\infty}^x |f(y)|^2\,dy$, $x\in\R$. 
Through the transformation $v=\mathcal{G}_{\frac12}(u)$, % $w=\mathcal{G}_1(\partial_x\mathcal{G}_1(u))$, 
DNLS reduces to
\begin{equation}
i\partial_t v +\partial_x^2 v =i |v|^2\partial_xv
\end{equation}
and the theory of \cite{TsutsumiFukuda2} for smooth initial data with sufficiently small $H^1$-norm can be applied. 
This result was improved in the papers by Hayashi and Ozawa \cite{HayashiOzawa,HayashiOzawaSIAM} by reducing 
DNLS to a system of two semi-linear Schr\"{o}dinger equations (with no derivatives in the nonlinearities), 
where it was obtained\footnote{
Hayashi and Ozawa \cite{HayashiOzawa,HayashiOzawaSIAM} also showed that 
the solution map $u_0\mapsto u(t)$ preserves Sobolev regularity and spatial decay, 
for any $t\in\R$.} the global existence of $H^1$-solutions under the assumption 
\begin{equation}
\label{eqnintro:2pithreshold}
M[u_0]<2\pi .
\end{equation}
This mass threshold follows from noticing that under the transformation 
$v=\mathcal{G}_{\frac34}(u)$, the energy functional becomes
\footnote{If $u$ solves DNLS, then $v=\mathcal{G}_{\frac34}(u)$ solves 
$i\partial_t v + \partial_x^2v = \frac{i}{2}|v|^2\partial_x v -\frac{i}{2} v^2\partial_x\overline{v} -\frac{3}{16}|v|^4v$.}
\begin{equation}
\label{eqnintro:E34}
 E[u]= E[\mathcal{G}_{-\frac34}(v)]=\|\partial_x v\|_{L^2(\R)}^2 - \frac{1}{16} \|v\|_{L^6(\R)}^6,
\end{equation}
while the norms of interest remain essentially unchanged, i.e. 
\begin{align}
\|\partial_x v\|_{L^2(\R)} &\sim_{M[u]} \|\partial_x u\|_{L^2(\R)} , \\
\|v\|_{L^p(\R)} &= \|u\|_{L^p(\R)} .
\end{align}
Via the sharp Gagliardo-Nirenberg inequality due to Weinstein \cite{Weinstein83}, i.e. 
\begin{equation}
\label{eqnintro:GN1}
\|v\|_{L^6(\R)}^6 \leq \frac{4}{\pi^2} \|\partial_x v\|_{L^2(\R)}^{2} \|v\|_{L^2(\R)}^{4} ,
\end{equation}
one obtains that the energy $E[u]$ controls the $\dot{H}^1$-norm of a solution $u$ of DNLS, 
provided that   \eqref{eqnintro:2pithreshold} holds. 
Finally, we mention that in \cite{Hayashi93,HayashiOzawa}, uniqueness of $H^1(\R)$-solutions was also obtained, but conditional to the auxiliary spaces $L_t^{12}(\R;H_x^{1,3}(\R))$, $L^4_t(\R; W_x^{1,\infty}(\R))$, respectively. 
The unconditional well-posedness in $H^1(\R)$ was settled by Win \cite{WinKyoto08}. 

In low-regularity spaces, Takaoka  \cite{TakaokaADE} 
used the Fourier restriction norm spaces introduced by Bourgain \cite{BourgainGAFA93} and proved 
\begin{equation}
\label{eqnintro:Takaokatrilinest}
\|v^2\partial_x\overline{v}\|_{X^{s,b-1}(\R\times\R)} \lesssim  \|v\|^3_{X^{s,b}(\R\times\R)}, 
\end{equation}
for $\frac12\leq s<1$ and $\frac12<b\leq\frac58$, 
in a similar fashion to the estimate for the KdV equation \cite{KPVjams96}. 
We remind the reader that the Fourier restriction norms $\|\cdot\|_{X^{s,b}(\R\times\R)}$ 
are adapted to the linear part of the equation at hand 
(defined by \eqref{defn:Xsbnorms} on $\T$, their definition on $\R$ being analogous). 
On the other hand, 
he noted that for estimates of the form 
\begin{equation}
\||v|^2\partial_x v\|_{X^{s,b-1}(\R\times\R)} \lesssim  \|v\|^3_{X^{s,b}(\R\times\R)}, 
\end{equation}
``the Fourier restriction norm method seems inapplicable.''  
However,  the transformation\footnote{This reduction of DNLS 
to \eqref{eqnintro:gaugedDNLSonR} was also employed by Lee \cite{Lee89},  
to which he attached a certain spectral problem.} 
$v=\mathcal{G}_1(u)$ removes the nonlinearity $|u|^2\partial_x u$ from \eqref{DNLS},  
as $v$ solves 
\begin{equation}
\label{eqnintro:gaugedDNLSonR}
i\partial_t v + \partial_x^2 v = -i v^2\partial_x \overline{v} -\frac12|v|^4v. 
\end{equation}
Therefore,\footnote{The estimate for the quintic term of \eqref{eqnintro:gaugedDNLSonR} 
is easier to prove than for the cubic-derivative term.} Takaoka established 
the local well-posedness of \eqref{DNLS} down to $H^{\frac12}(\R)$. 
Since the mappings $u_0 \mapsto \mathcal{G}_1(u_0)=v_0$ 
and 
$v(t)\mapsto\mathcal{G}_{1}^{-1}(v(t))=u(t)$,  
viewed as applications from $H^{\frac12}(\R)$ to itself, 
are locally Lipschitz continuous (uniformly in $t$),  
so is the dependence of $u$ on the initial data $u_0$.   
Takaoka also showed that the above estimate \eqref{eqnintro:Takaokatrilinest} 
does not hold if $s<\frac12$, for any $b\in\R$. 
Moreover, for $0\leq s<\frac12$, 
the solution map $u_0\in H^s(\R) \mapsto u(t)\in H^s(\R)$ 
fails to be $C^3$, for any $t\neq0$. 
Another mild ill-posedness result for DNLS in $H^s(\R)$  ($0\leq s<\frac12$) was given by 
Biagioni and Linares in \cite{BiagioniLinares}. 
They used the solitary waves of DNLS \cite{KaupNewell,vanSaarloosHohenberg} 
and showed that 
the local uniform continuity of the same solution map does not hold. 
Hence, the fixed point argument for the gauge equivalent equation \eqref{eqnintro:gaugedDNLSonR} 
is no longer the tool to construct $H^s(\R)$-solutions for DNLS in the range 
$0\leq s<\frac12$. 

Let us now elaborate on the assumption \eqref{eqnintro:2pithreshold}. 
The energy functional in \eqref{eqnintro:E34}
is also shared by the focusing quintic NLS 
(with some appropriate constant in front of the nonlinearity),  
for which the condition \eqref{eqnintro:2pithreshold} is sharp, 
in the sense that 
finite-time blowup solutions with $M[u]\geq 2\pi$ exist 
(see \cite{Weinstein83} and references therein). 
Thus, the question of whether the same is true for DNLS appears naturally (see also \cite{HayashiOzawa}). 
However, in a recent series of articles, Wu \cite{Wu2013,WuAPDE2}, and Guo and Wu \cite{GuoWu} 
obtained global existence for DNLS above the mass threshold $2\pi$. 
They showed how to 
incorporate the momentum $P[u]$ in controlling the $\dot{H}^1$-norm of $u$. 
The key observation is the following:  
the change in energy incurred by modulating $u$ 
resembles the first term of the momentum (see \eqref{eqnintro:P}), 
and then  the second term of $P[u]$ is handled 
by another sharp Gagliardo-Nirenberg inequality  
due to  Agueh \cite{Agueh06},  
that interpolates $L^6(\R)$ between $\dot{H}^1(\R)$ and $L^4(\R)$ (rather than $L^2(\R)$), i.e. 
\begin{equation}
\label{eqnintro:GN2}
\|u\|_{L^6(\R)} \leq C_{\textup{GN}} \|\partial_x u\|_{L^2(\R)}^{\frac19} \|u\|_{L^4(\R)}^{\frac89} , 
\end{equation}
where $C_{\textup{GN}}=3^{\frac16} (2\pi)^{-\frac19}$. 
The upshot is the control of the $\dot{H}^1$-norm of a solution $u$, under 
\begin{equation}
\label{eqnintro:4pithreshold}
M[u_0]<4\pi.
\end{equation}
It is known that the mass thresholds $2\pi$ and $4\pi$ correspond to the masses 
of ground state solutions %$Q$ and $W$ 
to some elliptic equations 
and extremising functions in the  Gagliardo-Nirenberg inequalities 
\eqref{eqnintro:GN1} and \eqref{eqnintro:GN2}, respectively. 
In a recent article studying the orbital stability of solitary waves of DNLS with mass $4\pi$, 
Kwon and Wu \cite{KwonWu} proposed a criterion for blowup solutions with this critical mass. 
%We speculate in Remark~\ref{rmk:KwonWu}. 

We now turn to the periodic setting.  
The adaptation of the gauge transformation is due to Herr 
\cite{HerrIMRN06} where he proved 
the local well-posedness of \eqref{DNLS} in $H^s(\T)$ for $s\geq \frac{1}{2}$, 
by using the same transformation \eqref{gaugetransform}, but with 
$\mathcal{J}(f)$ defined as the mean-zero antiderivative of $|f|^2-\frac{1}{2\pi}M[f]$. 
Compared to the real-line case, 
the gauge transformation is no longer enough to satisfactorily reduce DNLS to a manageable equation,  
and thus it needs to be augmented with a translation operator (see \eqref{defn:transloperator}).  
After these transformations, 
one works instead with a 
\emph{periodic gauge equivalent derivative nonlinear Schr\"{o}dinger equation}, that still 
resembles \eqref{eqnintro:gaugedDNLSonR}, of the form
\begin{equation}
\label{gDNLS}
%\tag{GDNLS}
i\partial_t v +\partial^2_x v = %\mathcal{T}(v) +\mathcal{Q}(v) , 
-iv^2\partial_x \overline{v} +\mathcal{Q}(v) ,
\end{equation}
%where $\mathcal{T}(v)$ is a cubic-derivative nonlinearity (involving only $\partial_x \overline{u}$), 
%and $\mathcal{Q}(v)$ gathers (pure-power) quintic  and lower order terms 
where  $\mathcal{Q}(v)$ gathers (pure-power) quintic  and lower order terms 
(a precise formulation of the above equation is given by \eqref{g1DNLS}).  
For the key estimate in \cite{HerrIMRN06}, i.e. 
\begin{equation}
\label{eqnintro:Herrcubicderivest}
\|v^2\partial_x\overline{v}\|_{X^{s,-\frac12}(\R\times\T)} \lesssim \|v\|^3_{X^{s,\frac12}(\R\times\T)}
\end{equation}
for $s\geq\frac12$, 
the local smoothing and maximal function estimates for the linear Schr\"{o}dinger propagator 
are no longer available -- one has to rely merely on the $L^4$-Strichartz estimate of Bourgain \cite{BourgainGAFA93} 
and on Sobolev inequalities. 
Since the embedding of $X^{s,\frac12}(\R\times\T)$ into $C(\R;H^s(\T))$ fails, 
one  works instead with a slightly stronger norm (see \eqref{defn:Ysbnorm}-\eqref{defn:Zsnorm} below). 

Regarding possible improvements to the estimate \eqref{eqnintro:Herrcubicderivest}, 
a remarkable property of \eqref{gDNLS} %on $\T$ 
was uncovered 
by Gr\"{u}nrock and Herr \cite{GrunrockHerr}
while working in the scale of Fourier-Lebesgue spaces 
$\mathcal{F}L^{s,r}(\T)$ (defined by \eqref{defn:FLnorm} below). 
By appropriately modifying the classical $X^{s,b}$ norms, 
they established a local well-posedness result in 
$\mathcal{F}L^{\frac12,r}(\T)$, in the range $2\leq r <4$. 
(we note that for $s=\frac12$, 
these spaces scale like $H^{\frac{1}{r}}(\T)$). 
It turns out that the nonlinearity of \eqref{gDNLS} can be rearranged into  
$\widetilde{\mathcal{T}}(v)+\widetilde{\mathcal{Q}}(v)$, with 
\begin{equation}
\label{defnofmathcalTprime}
\widetilde{\mathcal{T}}(v):= -i \left(v\partial_x \overline{v} -2i \Im\fint_{\T} v\partial_x \overline{v}\,dx\right)v ,\\
\end{equation}
owing to the periodic gauge transformation. 
This writing reveals frequency cancelations (see also \eqref{freqrestr})    
that are essential for dealing with the cubic-derivative nonlinearity, 
and hence for the local well-posedness in $\mathcal{F}L^{s,r}(\T)$. 
However, \eqref{defnofmathcalTprime} does not help 
when trying to prove \eqref{eqnintro:Herrcubicderivest} for some $s<\frac12$,  
as the failure of this estimate is of another nature than the lack of such frequency cancelations. 

We also point out that, specific to the periodic case,  
the Lipschitz continuity of the solution map of DNLS on bounded subsets of $H^s(\T)$ 
is further restricted to subsets with prescribed $L^2$-norm 
due to the use of a translation operator 
when reversing the transformations leading to \eqref{gDNLS} back to \eqref{DNLS}  
(see Lemma~\ref{lemma:gaugecontinuity}). 
In fact, the local uniform continuity of the solution map of the periodic DNLS fails 
without fixing the mass on bounded subsets of $H^s(\T)$, at any regularity level
(see \cite[Theorem~3.1.1.(ii)]{HerrThesis}). 
Although counterexamples to 
the trilinear estimate \eqref{eqnintro:Herrcubicderivest} for $s<\frac12$ 
were given in \cite{GrunrockHerr}, 
to the best of the author's knowledge, 
a direct argument towards the (mild) ill-posedness of \eqref{DNLS} in $H^s(\T)$ 
can not be found in the literature. 
%probably because not much is known about solitary wave solutions of \eqref{DNLS}. 
Note that for the gauge equivalent equation \eqref{gDNLS}, 
one does not face the local uniform continuity bottleneck due to the translation operator 
and it was for this equation that the contraction mapping argument was applied in \cite{HerrIMRN06}. 
Hence, we provide here the following mild ill-posedness result. 
The mild sense refers to the fact that the result shows  
that the contraction mapping argument cannot be applied 
for the gauge equivalent equation \eqref{gDNLS}. 
In fact, the same is true for any equation obtained from \eqref{DNLS} 
through a gauge transformation of the form \eqref{gaugetransform} 
(see Appendix~\ref{appdx:illposedness}). 
%which we believe serves the same purpose as the result 
%of Biagioni and Linares  \cite{BiagioniLinares} on the real-line.

\begin{proposition}
\label{prop1p2}
Suppose $v_0\in H^s(\T)\mapsto v=:S_T(v_0)\in C([-T,T];H^s(\T))$ 
is the solution map for \eqref{gDNLS}, for some $0\leq s<\frac12$, $T>0$. 
Then, $S_T$ is not uniformly continuous on bounded subsets of $H^s(\T)$. 
\end{proposition}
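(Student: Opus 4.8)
The plan is to produce explicit single‑Fourier‑mode solutions of \eqref{gDNLS} and to exploit the nonlinear phase they accumulate, as in the classical amplitude‑modulation mechanism for dispersive equations. First, one checks by direct substitution into the precise formulation \eqref{g1DNLS} that for every integer $N\geq1$ and every $a\in\C$ the plane wave
\[
v_{N,a}(t,x):=a\,e^{i(Nx-\o(N,a)t)}
\]
is a smooth solution of \eqref{gDNLS}, with a nonlinear dispersion relation of the form $\o(N,a)=N^2+c_1\,N|a|^2+r(N,a)$, where $c_1$ is a \emph{nonzero} real constant and $r(N,a)$ gathers lower‑order terms, each a constant multiple of $N^{j}|a|^{2k}$ with $j\in\{0,1\}$, $k\geq1$ and $(j,k)\neq(1,1)$. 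Indeed, on such a $v$ one has $v\,\partial_x\overline v\equiv-iN|a|^2$ (a constant in $(t,x)$), so the trilinear term $\widetilde{\mathcal{T}}(v)$ of \eqref{defnofmathcalTprime} equals $N|a|^2\,v$, while every pure‑power monomial in $\widetilde{\mathcal{Q}}(v)$ reduces to a constant multiple of $|a|^{2k}v$; the translation operator built into \eqref{g1DNLS} only shifts $\o(N,a)$ by a further real multiple of $N|a|^2$. Being smooth, $v_{N,a}$ is, by uniqueness, the solution supplied by the $H^{1/2}$‑well‑posedness theory for \eqref{gDNLS}, so $S_T(v_{N,a}(0,\cdot))=v_{N,a}$.

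Next, fix $t_0\in(0,T]$, a small $\al>0$, and a real $\b\neq0$, to be chosen below, and for $N\geq1$ set
\[
a_{1,N}:=\al\,N^{-s},\qquad a_{2,N}:=\al\,N^{-s}+\b\,N^{s-1},\qquad v_0^{(j,N)}:=a_{j,N}\,e^{iNx}\quad(j=1,2).
\]
Since $\|e^{iNx}\|_{H^s(\T)}=c_0\jb N^{s}$ for a fixed constant $c_0>0$, both sequences lie in a single bounded subset of $H^s(\T)$, while
\[
\bigl\|v_0^{(1,N)}-v_0^{(2,N)}\bigr\|_{H^s(\T)}=c_0\,|\b|\,N^{s-1}\jb N^{s}\sim|\b|\,N^{2s-1}\longrightarrow 0\qquad(N\to\infty),
\]
which is exactly where the restriction $s<1/2$ enters.

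Now write $v^{(j,N)}:=v_{N,a_{j,N}}$ and $\o_{j,N}:=\o(N,a_{j,N})$, so that $S_T(v_0^{(j,N)})=v^{(j,N)}$ and $v^{(j,N)}(t,x)=a_{j,N}e^{-i\o_{j,N}t}e^{iNx}$. By the triangle inequality,
\begin{align*}
\bigl\|v^{(1,N)}(t_0)-v^{(2,N)}(t_0)\bigr\|_{H^s(\T)}
&=c_0\jb N^{s}\,\bigl|a_{1,N}e^{-i\o_{1,N}t_0}-a_{2,N}e^{-i\o_{2,N}t_0}\bigr|\\
&\geq c_0\jb N^{s}\Bigl(|a_{1,N}|\,\bigl|1-e^{-i(\o_{1,N}-\o_{2,N})t_0}\bigr|-|a_{1,N}-a_{2,N}|\Bigr).
\end{align*}
A short expansion, using $a_{1,N}-a_{2,N}=-\b N^{s-1}$ and the structure of $\o$ above, shows that $\o_{1,N}-\o_{2,N}$ converges, as $N\to\infty$, to a limit $L(\al,\b)$ which is linear in $\b$ with a coefficient that is nonzero for $\al>0$ small (its leading term as $\al\to0$ being $-2c_1\al$, and $c_1\neq0$). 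Choosing $\al$ small and then $\b$ so that $L(\al,\b)\,t_0=\pi$, we get $\bigl|1-e^{-i(\o_{1,N}-\o_{2,N})t_0}\bigr|\to2$, $\jb N^{s}|a_{1,N}|\to\al$, and $\jb N^{s}|a_{1,N}-a_{2,N}|\to0$, so the right‑hand side above tends to $2c_0\al>0$; in particular, since $S_T(v_0^{(j,N)})=v^{(j,N)}$, the quantity $\bigl\|S_T(v_0^{(1,N)})-S_T(v_0^{(2,N)})\bigr\|_{C([-T,T];H^s(\T))}$ is bounded below by a positive constant for all large $N$. Together with the previous step, this exhibits two sequences in a fixed bounded subset of $H^s(\T)$ whose data converge to one another while their images under $S_T$ remain a fixed positive distance apart, so $S_T$ is not uniformly continuous on bounded subsets.

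The step I expect to be the main obstacle is the first one: verifying that a single Fourier mode solves the precise gauge‑equivalent equation \eqref{g1DNLS} exactly --- accounting for every term produced by the periodic gauge transformation and by the translation operator --- and, above all, confirming by explicit computation that the coefficient $c_1$ of $N|a|^2$ in the dispersion relation does not vanish (the translation operator contributes a term of precisely that form, so one must make sure it does not cancel the remainder). Granting this, the rest is the routine amplitude‑modulation argument, the scalings $a_{1,N}\sim N^{-s}$ and $a_{1,N}-a_{2,N}\sim N^{s-1}$ being dictated by balancing the $N^{2s-1}$ decay of the data distance against an $O(1)$ phase separation at time $t_0$.
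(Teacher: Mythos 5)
Your proposal is correct and uses essentially the same mechanism as the paper's Appendix~A: single Fourier mode (monochromatic) exact solutions of \eqref{g1DNLS} whose nonlinear frequency shift is quadratic in the amplitude, combined with the scaling $|a|\sim N^{-s}$ so that the relevant balance fails precisely when $s<\tfrac12$. Indeed, the paper finds that $v_{N,a}(t,x)=ae^{i(Nx-N^2t-|a|^2Nt)}$ solves \eqref{g1DNLS} exactly (so in your notation $c_1=1$ and $r\equiv0$; all quartic contributions from $\mu[v]$, $\psi[v]$, and $-\tfrac12|v|^4v$ cancel on a plane wave, and $\widetilde{\mathcal{Q}}(v_{N,a})\equiv0$), which confirms the step you flagged as the main obstacle.

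The only genuine difference is the choice of amplitude perturbation. The paper takes both amplitudes of size $bN^{-s}$ with $|b-\tilde b|\lesssim\delta$, so the data distance is $\sim\delta$ (fixed) and the phase‐frequency gap $N^{1-2s}(|b|^2-|\tilde b|^2)$ diverges; the decohering time $t_N=\pi/(\varphi(N,b)-\varphi(N,\tilde b))$ must then be pushed below $T/2$ by taking $N$ large. You instead perturb by $\beta N^{s-1}$, so the data distance $\sim|\beta|N^{2s-1}\to0$ while the frequency gap $\omega_{1,N}-\omega_{2,N}\to-2c_1\alpha\beta$ converges to a nonzero constant, letting you decoher at a fixed $t_0\in(0,T]$ by tuning $\beta$. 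The two scalings are dual and both hinge on $s<\tfrac12$; your version has the cosmetic advantage that $t_0$ is fixed and does not need to be rescaled as the parameters shrink.

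One small remark on the exposition: the sentence ``the translation operator built into \eqref{g1DNLS} only shifts $\omega(N,a)$ by a further real multiple of $N|a|^2$'' is a bit misleading. The translation was already used to pass from \eqref{gDNLSw} to \eqref{g1DNLS}, so once you plug a plane wave into \eqref{g1DNLS} there is no separate translation contribution to track; the whole dispersion relation $\omega(N,a)=N^2+N|a|^2$ comes directly out of the equation as written. This does not affect the correctness of your argument.
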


In view of all of the existing results and the two main results established in this work 
(Theorem~\ref{thm1} and Proposition~\ref{prop1p2}), 
at the moment 
the analytical existence theory in Sobolev spaces for DNLS on the torus %$\T$ 
and on the real-line %$\R$ 
match exactly. 
As noted by Herr \cite{HerrIMRN06}, 
``this is different 
for numerous other nonlinear Schr\"{o}dinger or KdV equations.'' 
We refer to the remarks at the end of this section for some further speculative comments. 

Turning our attention to global-in-time solutions of DNLS, 
we recall the reader that the global existence  theory below $H^1(\R)$, 
under the smallness of mass condition \eqref{eqnintro:2pithreshold}, 
was developed  by Colliander, Keel, Staffilani, Takaoka, and Tao \cite{CKSTT,CKSTTrefined}.  
%first for $s>\frac23$ in \cite{CKSTT}, and then for $s>\frac12$ in \cite{CKSTTrefined}. 
First, in \cite{CKSTT}, they introduced the so called ``$I$-method''  
%or \emph{the almost conserved energy method}, 
which aims to control the growth of $\mathcal{E}^1[u]:=\mathcal{E}[Iu]$, 
where $I$ (defined in \eqref{defnofIoperator}) 
is a smoothing operator on large frequencies and it is the identity operator 
for functions supported on small frequencies  
(here, $\mathcal{E}$ stands for the energy of \eqref{eqnintro:gaugedDNLSonR} rather than of DNLS). 
%Thus, 
Relying on the $L^6$-Strichartz estimate and on a bilinear $L^4$-Strichartz inequality 
(in addition to the usual Sobolev embeddings), 
they obtain the global existence in $H^s(\R)$, for $s>2/3$. 
We also mention here that the precursor to this method, the high-low method of Bourgain,   
was also implemented to DNLS on the real-line by Takaoka in \cite{TakaokaEJDE},  
where he constructed global solutions for $s>{32}/{33}$. 
A stronger result was established in \cite{CKSTTrefined} 
for the second generation modified energy $\mathcal{E}^2[u]$ 
which is obtained from $\mathcal{E}^1[u]$ via a small correction term, 
thus lowering the regularity to $s>1/2$. 
What is  critical to the later result 
is an improved trilinear estimate \eqref{eqnintro:Takaokatrilinest}
in the presence of the $I$-operator 
that can reach $b=3/4-\varepsilon$, 
as well as delicate cancelations 
owing to the nonlinear structure $-i v^2\partial_x \overline{v} -\frac12|v|^4v$ of 
\eqref{eqnintro:gaugedDNLSonR}. 
Miao, Wu, and Xu \cite{MiaoWuXu} 
closed the gap between the local well-posedness range 
and the output of the $I$-method by 
iterating the scheme further via a third generation modified energy 
$\mathcal{E}^3[u]$  by adding another correction term to $\mathcal{E}^2[u]$. 
This time, the correction term has a singular set more complicated than 
that of the correction term in \cite{CKSTTrefined}, 
and thus it requires an intricate resonant decomposition. 
Finally, we mention here that recently, 
Guo and Wu \cite{GuoWu} improved the mass threshold 
to \eqref{eqnintro:4pithreshold} 
for global $H^{1/2}(\R)$-solutions. 

Regarding the global existence question for the periodic problem \eqref{DNLS}, 
Herr  \cite{HerrIMRN06} answered in affirmative  for $H^1(\T)$-solutions,  
under the assumption that the mass is smaller than $2/3$.  
This followed routinely from iterating the local well-posedness result 
together with a coercivity property of $E[\,\cdot\,]$ 
(obtained 
by using a non-optimal Gagliardo-Nirenberg-type inequality 
and without appealing to another gauge choice). 
In the present work, 
we also show that the mass threshold under which the energy functional 
associated to \eqref{DNLS} has the coercivity property is the same as in the real-line case 
(see Lemma~\ref{EnergyscriptEcontrolshomogH1} below).  
However, the smallness of mass condition for  global $H^1(\T)$-solutions 
was already improved to \eqref{eqnintro:4pithreshold} 
by the author and Oh in \cite{MosincatOh2015}. 
Below the energy space, in $H^s(\T)$, for $s>1/2$, and unquantified small mass initial data,  
the global existence of solutions to \eqref{DNLS} 
was  studied by Win in \cite{WinFE2010} 
by using a second generation modified energy for the $I$-method.

The main tools we use in obtaining Theorem~\ref{thm1} are the following: 
(i) the gauge transformation (and its Lipschitz continuity property) and the multi-linear estimates 
 due to Herr \cite{HerrIMRN06}, 
(ii) a third generation modified energy in the $I$-method scheme of 
Colliander, Keel, Staffilani, Takaoka, and Tao  \cite{CKSTT,CKSTTrefined},  
(iii)  a resonant decomposition of one of the correction terms 
as implemented in the real-line case by Miao, Wu, and Xu \cite{MiaoWuXu}, 
(iv) a revised bilinear $L^4$-Strichartz estimate of De Silva, Pavlovi\'{c}, Staffilani, and Tzirakis \cite{deSilva2007}, 
and 
(v) the sharp Gagliardo-Nirenberg inequalities 
\eqref{eqnintro:GN1}, \eqref{eqnintro:GN2} 
adapted to the periodic setting. 
%At the technical level [...] keeping track of $\lambda$. 
%Also, although the cancelation of CKSTT can still be exploited, 
%the need to use it is precluded by the 
 %non-resonant correction term.  
%the control of the third generation modified energy 
%(see subsection~\ref{Sect:GNinequalities} below). 

Before outlining the contents of this article, we end the introduction with the following remarks. 

\begin{remark}
Our method is an analytical argument, 
in particular we do not employ the integrable structure of \eqref{DNLS} in an explicit manner. 
Therefore, the technique can be applied 
(cf. \cite{CKSTTrefined,HerrIMRN06, TakaokaADE, Takaoka2015}) 
to equations of the form 
\begin{equation}
i\partial_t u+\partial_x^2 u = i\alpha|u|^2\partial_x u + i\beta u^2\partial_x\overline{u} +  \gamma |u|^4u,
\end{equation}
where $\alpha,\beta,\gamma\in\R$. %and $f(u)$ is a power nonlinearity. 
The first term is eliminated by the gauge transformation $v=\mathcal{G}_{\alpha/2}(u)$. 
\end{remark}

\begin{remark} 
\label{rmk:KwonWu} 
Whether DNLS exhibits finite-time blowup solutions  for initial data with high mass 
 remains an important open question. 
A numerical study by Liu, Simpson and Sulem \cite{LiuSimpsonSulem} indicates that there is finite time singularity 
for the $L^2$-supercritical nonlinearity $i|u|^{2\sigma}\partial_x u$,  
$\sigma>1$. 
We also refer to \cite{CherSimpsonSulem} for a further numerical investigation of the structure of the singular profile 
near blowup times. 
Here, we also point out that 
negative energy solutions 
to DNLS on the half-line with Dirichlet boundary condition 
blow up in finite time (see \cite{Wu2013}). 
%The mass thresholds of $2\pi$ and $4\pi$ 
 %are masses of extremising functions $Q$ and $W$ of the Gagliardo-Nirenberg inequalities 
 %\eqref{eqnintro:GN1} and \eqref{eqnintro:GN2}, respectively.  
%The functions $Q$ and $W$ are ground state solutions of some elliptic equations and 
\end{remark}

\begin{remark}
The equation \eqref{DNLS} 
has a rich structure being completely integrable, 
and the inverse scattering transform (IST) on the torus 
might reveal some fundamental differences compared 
to the real-line case. 
In recent articles, Liu, Perry, and Sulem \cite{LiuPerrySulem15}, 
analyze more closely the IST method on $\R$ initiated by Lee \cite{LeeThesis} 
and prove global existence 
in a ``spectrally determined'' open subset of $H^{2,2}(\R)$ neighboring $0$.  
Also via IST, Pelinovsky and Shimabukuro \cite{PelinovskyShimabukuro16} construct unique global solutions in 
$H^2(\R)\cap H^{1,1}(\R)$ with Lipschitz continuity on the initial data. 
%study further the properties of the inverse scattering transform. 
\end{remark}

\begin{remark}
In view of the local well-posedness result in the scale of Fourier-Lebesgue spaces 
by Gr\"{u}nrock and Herr \cite{GrunrockHerr}, 
it would be interesting to investigate via the $I$-method the global dynamics of the DNLS flow 
 in $\mathcal{F}L^{\frac12,r}(\T)$, for the appropriate range in $r$, 
 to complement the almost sure global well-posedness result of Nahmod, Oh,  Rey-Bellet, and Staffilani \cite{NORS}. 
This is also to be studied in the Euclidean case, 
 where the local well-posedness was established by Gr\"{u}nrock in \cite{GrunrockIMRN}. 
In the same direction of thought, we mention that recently, 
Takaoka \cite{Takaoka2015} 
proved the existence of local weak $H^s(\T)$-solutions with small (unquantified) mass 
in the range $\frac{12}{25}<s<\frac12$ 
by establishing a priori estimates 
 for the same gauge equivalent equation \eqref{gDNLS}; 
also, his work \cite{Takaoka2016} on the energy exchange behavior for a variant\footnote{
The equation \eqref{gDNLS}  falls under what he terms the ``focusing'' case, 
while the article \cite{Takaoka2016} is concerned with the ``defocusing'' situation; 
also \eqref{gDNLS} has some additional lower order terms.  
%-- it is unclear whether the same type of behavior would still be present.  
} of  \eqref{gDNLS}, might provide further insight on the DNLS dynamics above the mass threshold $4\pi$. 
\end{remark}

As a summary, we outline the content of the present article. 
In Section~\ref{Section2}, we introduce function spaces and review linear estimates  
(including a revised bilinear $L^4$-Strichartz estimate) 
that are used throughout the paper.   
After recalling the gauge transformation augmented with a translation operator, 
Theorem~\ref{thm1} is reduced to Proposition~\ref{prop:GWPofg1DNLS} 
concerning the global solutions of the periodic gauge equivalent equation \eqref{gDNLS}. 
Also in Section~\ref{Section2}, 
we provide the adaptation to the periodic setting of the Gagliardo-Nirenberg inequalities, 
and we introduce some further notation and constitutive elements 
for the $I$-method as close to previous 
implementations \cite{CKSTT,CKSTTrefined,deSilva2007,MiaoWuXu} as possible.  
Then, we  build up the $I$-method apparatus, beginning in Section~\ref{Sect:coercivity}. 
We first show that the coercivity property of the energy functional 
carries over to the periodic setting under the same mass condition \eqref{eqnintro:2pithreshold} 
as in the Euclidean setting,  
and then we incorporate the momentum functional 
to obtain $\dot{H}^1$-norm control for solutions corresponding to the improved mass threshold \eqref{eqnintro:4pithreshold}. 
In Section~\ref{sect:proofofLWPofIsystem},  
we provide a modified local well-posedness result 
based on  existing local multi-linear estimates and an interpolation lemma for the $I$-operator.   
In this instantiation of the $I$-method scheme, 
we construct a third generation modified energy functional in Section~\ref{sect:modifiedenergy} 
after revisiting the first and second generation energies, 
as well as discussing the frequency regions that previously 
did not  allow reaching the regularity $s=1/2$. 
In the same section, we also revisit the crafting of the resonant set  from the real-line setting 
and we provide pointwise bounds on multipliers which are used in the following two sections. 
Hence, in Section~\ref{Section:AlmostConservationEstimates} 
we analyze the growth of the third generation modified energy 
and conclude with its almost conservation property, % in Proposition~\ref{}. 
whereas in Section~\ref{Sect:AlmostconservedEandP} we show 
that it stays close to the first generation modified energy. 
The almost conservation of the modified momentum follows similarly to the real-line case 
and is also established in Section~\ref{Sect:AlmostconservedEandP}. 
In Section~\ref{sect:proofofGWPg1DNLS}, we modify the usual $I$-method argument 
to include the almost conserved momentum and we finish the proof of Proposition~\ref{prop:GWPofg1DNLS}. 
Finally, the Appendix provides the proof of a technical lemma 
that immediately implies Proposition~\ref{prop1p2}. 

\vspace{3mm}
%\subsection*{Acknowledgements}
\noindent
\textbf{Acknowledgements.} 
The author would like to thank his advisor, Tadahiro Oh, 
for suggesting this problem 
and for providing several fruitful ideas. 
He is also thankful to Yuzhao Wang, Soonsik Kwon, and Professor Yoshio Tsutsumi 
for their availability in discussing various aspects of this problem.

\section{Notations and basic estimates}
\label{Section2}

A quantity of the form $\alpha\pm$ is a shorthand for $\alpha\pm\varepsilon$ with $\alpha\in\R$ and 
$\varepsilon>0$ arbitrarily small 
(if two or more such quantities appear in the same relation, 
the dependence between $\varepsilon$'s is straightforward and can be ignored). 
%and independent of any other constants present in the relation in which it appears.
By $A\lesssim B$ 
($A\ll B$) we mean 
the existence of a large positive constant $C$ such that 
$A\leq CB$ ($CA\leq B$);  
we write $A\sim B$ if and only if $A\lesssim B$ and $B\lesssim A$. 
Also, we write $A=B+O(C)$ if and only if $|A-B|\lesssim C$. 
Throughout this work, 
$\eta$ denotes a smooth time cut-off function 
with $\eta\equiv 1$ on $[-1,1]$ and $\eta\equiv0$ outside $(-2,2)$. 

For reasons that are made clear in Section~\ref{sect:proofofGWPg1DNLS}, 
we need to use the  scaling transformation \eqref{naturalscaling}.   
%in order to 
Thus, we work on the parametrized torus $\Tl:=\R/2\pi\lambda\Z\simeq [0,2\pi\lambda)$, 
and Fourier modes in $\Zl:=\frac{1}{\lambda}\Z$. 
The convention 
we use  for the  (spatial) Fourier transform of a $2\pi \lambda$-periodic function  is 
$$\widehat{f}(k) = \int_0^{2\pi\lambda} e^{-ikx}f(x)dx\quad,\quad k\in\Zl$$
which is inverted by 
$$\widecheck{g}(x) =\frac{1}{2\pi\lambda} \sum_{k\in \Zl} e^{ikx}g(k)\quad,\quad x\in [0,2\pi\lambda] .$$
The convolution products on $\Tl$ and $\Zl$ are given by 
\begin{align*}
f*g(x)&=\int_0^{2\pi\lambda}f(x-y)g(y)\,dy,\\
a\star b(k) &= \frac{1}{2\pi\lambda}\sum_{h\in\Zl} a(k-h)b(h),
\end{align*}
respectively. 
We have 
$\widehat{fg}(k) = \widehat{f}\star\widehat{g}(k)$, 
and 
by endowing $\Zl$ with the scaled counting measure $(dk)_{\lambda}:=\frac{1}{2\pi\lambda}d\#$, 
the inner products on $L^2(\Tl)$ and $L^2(\Zl)$ are 
\begin{align*}
\langle f,g\rangle_{L^2(\Tl)} &= \int_0^{2\pi\lambda}f(x)\overline{g(x)}\,dx,\\
\langle a,b\rangle_{L^2(\Zl)} &= \frac{1}{2\pi\lambda} \sum_{k\in\Zl}a(k)\overline{b(k)} = 
\int_{\Zl}a(k)\overline{b(k)}(dk)_{\lambda} ,
\end{align*}
respectively. 
Then, the Parseval and Plancherel  identities are written as
\begin{align*}\langle f,\widecheck{a}\rangle_{L^2(\Tl)}  &= \langle \widehat{f} , {a}\rangle_{L^2(\Zl)} ,\\
\|f\|_{L^2(\Tl)} &= \|\widehat{f}\|_{L^2(\Zl)} .
\end{align*}
The Sobolev space $H^s(\T_{\lambda})$, respectively the Fourier Lebesgue space 
$\mathcal{F}L^{s,r}(\Tl)$ 
are the completion of the $2\pi\lambda$-periodic $C^{\infty}$ functions with respect to the norms 
\begin{align}
\label{defn:Sobnorm}
\|f\|_{H^s(\T_{\lambda})} &:=  \|\langle k\rangle^s \widehat{f}(k)\|_{L^2(\Zl)},\\
\label{defn:FLnorm}
 \|f\|_{\mathcal{F}L^{s,r}(\T_{\lambda})} &:= \|\langle k\rangle^s \widehat{f}(k)\|_{L^r(\Zl)} ,
\end{align}
where $\langle k\rangle:= (1+|k|^2)^{\frac12}$, $k\in\Zl$. 

\begin{remark}
\label{rmk:japanesebracketwithparam}
Notice that for any $k\neq0$, uniformly in $\lambda\geq 1$, we have
\begin{equation}
\label{uniformboundsofjapanesebracket}
|k|\leq \langle k\rangle \lesssim \lambda |k| , 
\end{equation}
and thus, in the periodic setting, 
$\langle k\rangle \sim 1$ for all $|k|\lesssim \lambda$. 
%with implicit constants independent of $\lambda$. 
\end{remark}

By $\mathcal{S}_{\lambda}$ we denote the class of  
functions $u^{\lambda}:\R\times\Tl\to\C$ which are Schwartz in $t$, 
$2\pi\lambda$-periodic and $C^{\infty}$ in $x$. %; set $\mathcal{S}:=\mathcal{S}_1$. 
With a slight abuse of notation, the time-space Fourier transform and its inverse are
\begin{align*}
\widehat{u}(\tau,k) &=  \int_{\R} \int_{\Tl} e^{-i(\tau t +kx)}u(t,x)\,dx\,dt \quad,\quad \tau\in\R, k\in\Zl , \\
\widecheck{v}(t,x) &= \int_{\Zl} \int_{\R} e^{i(\tau t+ kx)}v(\tau,k)\, d\tau\, (dk)_{\lambda}  \quad,\quad t\in\R, x\in\Tl .
\end{align*}
Nonlinear interactions take on the Fourier side the form 
\begin{align*}
\widehat{uv}(\tau,k) = \widehat{u}\star\widehat{v}(\tau,k) &= 
 \frac{1}{2\pi\lambda}\sum_{k_1\in\Zl} \int_{\R} \widehat{u}(\tau_1,k_1)\widehat{v}(\tau-\tau_1,k-k_1)\, d\tau_1\\
&= \int_{k_1+k_2=k} \int_{\tau_1+\tau_2=\tau} \widehat{u}(\tau_1,k_1)\widehat{v}(\tau_2,k_2)\, d\tau_1\,(dk_1)_{\lambda} .
\end{align*}

The unitary group on $L^2(\Tl)$ determined by the linear Schr\"{o}dinger equation on $\Tl$ is given by
\begin{equation}
(U_{\lambda}(t)f)(x) = \frac{1}{2\pi\lambda} \sum_{k\in\Zl} e^{ikx+itk^2}\widehat{f}(k) .
\end{equation}
For $s,b\in\R$ (spatial and temporal regularity indices), we define the 
$X^{s,b}(\R\times\Tl)$ space as the completion of $\mathcal{S}_{\lambda}$ under
the norm
\begin{equation}
\label{defn:Xsbnorms}
\|u\|_{X^{s,b}(\R\times\Tl)} := 
 \|\langle k\rangle^s \langle \tau+k^2\rangle^b \widehat{u}(t,k)\|_{L^2_{\tau}L^2_k(\R\times\Zl)} .
\end{equation}
It is well known that the (continuous) embedding $X^{s,b}(\R\times\Tl)\subset C_t^0H_x^s(\R\times\Tl)$ holds if $b>\frac12$ and fails for $b=\frac12$. Since the trilinear estimate needed for the local well-posedness theory 
(see Lemma~\ref{HerrsLWPestimates} below) holds only at $b=\frac12$, we 
introduce the spaces $Y^{s,b}$ and %a slightly stronger norm space than $X^{s,\frac12}$ via the norms
$Z^s$ (with a slightly stronger norm than the $X^{s,\frac12}$-norm) via the norms
\begin{align}
\label{defn:Ysbnorm}
\|u\|_{Y^{s,b}(\R\times\Tl)} &:= 
 \|\langle k\rangle^s \langle \tau+k^2\rangle^b \widehat{u}(t,k)\|_{L^2_kL^1_{\tau}(\Zl\times\R)} ,\\
\label{defn:Zsnorm}
\|u\|_{Z^s(\R\times\Tl)} &:= 
  \|u\|_{X^{s,\frac12}(\R\times\Tl)} + \|u\|_{Y^{s,0}(\R\times\Tl)} ,
\end{align}
and the companion space $\widetilde{Z}^s$ by using
\begin{align}
\|u\|_{\widetilde{Z}^s(\R\times\Tl)} &:= 
  \|u\|_{X^{s,-\frac12}(\R\times\Tl)} + \|u\|_{Y^{s,-1}(\R\times\Tl)} .
\end{align}
We have $Y^{s,0}(\R\times\Tl)\subset C_t^0H_x^s(\R\times\Tl)$ and therefore 
$Z^s=X^{s,\frac12}\cap Y^{s,0}\subset C_t^0H_x^s$. 

For a given time interval $J$, the Fourier restriction norms 
are defined via 
\begin{align}
\label{restrictionXsb}
\|u\|_{X^{s,b}(J\times\Tl)} := \inf\{\|v\|_{X^{s,b}(\R\times\Tl)} : v_{|J}=u\} ,
\end{align}
and similarly for $Y^{s,b}(J\times\Tl)$, $Z^s(J\times\Tl)$, and $\widetilde{Z}^s(J\times\Tl)$. 

By the Riemann-Lebesgue lemma and H\"{o}lder inequality, we have 
\begin{align*}
\|u\|_{L^{\infty}_{t,x}(\R\times\Tl)} &\lesssim 
%\|\widehat{u}(\tau,k)\|_{L^1_{\tau}\ell^1_k} = 
\left\| \, \|\widehat{u}\|_{L^1_{\tau}(\R)} \right\|_{L^1_k(\Zl)} \\
&\leq \left( \frac{1}{2\pi\lambda} \sum_{k\in\Zl} \langle k\rangle ^{-1-} \right)^{\frac12} 
\left(\frac{1}{2\pi\lambda} \sum_{k\in\Zl} \langle k\rangle^{1+} \|\widehat{u}(\tau,k)\|^2_{L^1_{\tau}(\R)} \right)^{\frac12}\\
&\lesssim \left\| \langle k\rangle^{\frac12+} 
  \|\widehat{u}\|_{L^1_{\tau}(\R)}\right\|_{L_k^2(\Zl)} 
%= \lambda^{-\frac12} \|u\|_{Y^{\frac12+,0}}
\end{align*}
and thus
\begin{equation}
\label{embeddinginLinfty}
\|u\|_{L^{\infty}_{t,x}(\R\times\Tl)} 
\lesssim \|u\|_{Y^{\frac12+,0}(\R\times\Tl)} .
\end{equation}
Similarly, by Minkowski's integral inequality, Riemann-Lebesgue lemma and Plancherel's identity, one obtains
\begin{equation}
\label{embeddinginLtinftyHxs}
\|u\|_{L_t^{\infty}H_x^s(\R\times\Tl)} 
 \lesssim \|u\|_{Y^{s,0}(\R\times\Tl)} , 
\end{equation}
for any $s\in\R$. 

Additionally, we have the following linear estimates. 
%(for the proof of which we refer to \cite[Lemma~3.6]{HerrIMRN06} and  references therein). 

\begin{lemma}{\cite[Lemma~3.6]{HerrIMRN06}}
Let $s\in\R$. There exists $c>0$ such that 
\begin{align}
\label{linearhomogest}
&\|\eta(t)U_{\lambda}(t)f\|_{Z^s(\R\times\Tl)} \leq c \|f\|_{H^s(\Tl)}\\
\label{linearinhomogest}
&\left\|\eta(t)\int_0^t U_{\lambda}(t-\tau)F(\tau,\cdot)d\tau\right\|_{Z^s(\R\times\Tl)} \leq 
 c \|F\|_{\widetilde{Z}^s(\R\times\Tl)}%\|F\|_{X^{s,-\frac12}\cap Y^{s,-1}} 
\end{align}
for all $f\in H^s$ and all $F\in \mathcal{S}_{\lambda}$.
\end{lemma}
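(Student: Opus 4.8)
I would establish the two bounds in turn. The homogeneous estimate \eqref{linearhomogest} is immediate from the Schwartz decay of $\eta$: the space--time Fourier transform of $\eta(t)U_\lambda(t)f$ equals $\widehat{\eta}(\tau+k^2)\widehat{f}(k)$, so a shift in $\tau$ gives the exact identities $\|\eta(t)U_\lambda(t)f\|_{X^{s,\frac12}(\R\times\Tl)}=\|\langle\tau\rangle^{\frac12}\widehat{\eta}\|_{L^2_\tau(\R)}\,\|f\|_{H^s(\Tl)}$ and $\|\eta(t)U_\lambda(t)f\|_{Y^{s,0}(\R\times\Tl)}=\|\widehat{\eta}\|_{L^1_\tau(\R)}\,\|f\|_{H^s(\Tl)}$, both temporal constants being finite since $\eta\in\mathcal{S}(\R)$; adding them yields \eqref{linearhomogest}, with $c$ independent of $\lambda$ because only the time variable enters.

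For the inhomogeneous estimate \eqref{linearinhomogest} I would use the standard resolvent representation of the Duhamel term. Writing $G(t):=\int_0^t U_\lambda(t-t')F(t')\,dt'$, expanding $F$ on the temporal Fourier side, and using $\int_0^t e^{it'\mu}\,dt'=(e^{it\mu}-1)/(i\mu)$, one obtains, for each frequency $k$,
\begin{equation*}
\widehat{\eta\,G}(\tau,k)=\frac{1}{2\pi i}\int_\R \frac{\widehat{\eta}(\tau-\sigma)-\widehat{\eta}(\tau+k^2)}{\sigma+k^2}\,\widehat{F}(\sigma,k)\,d\sigma ,
\end{equation*}
where $\langle\sigma+k^2\rangle$ and $\langle\tau+k^2\rangle$ are the input and output modulation weights of \eqref{defn:Xsbnorms}. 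I would then estimate $\|\eta G\|_{X^{s,\frac12}}$ and $\|\eta G\|_{Y^{s,0}}$ separately, splitting the $\sigma$-integral at $|\sigma+k^2|\sim1$. In the low-modulation region $|\sigma+k^2|\le1$, writing the numerator as $\widehat{\eta}\big((\tau+k^2)-(\sigma+k^2)\big)-\widehat{\eta}(\tau+k^2)$ and applying the mean value theorem bounds the kernel by a rapidly decaying function of $\tau+k^2$; the remaining $\sigma$-integral over a unit interval is then controlled, by Cauchy--Schwarz and trivially respectively, by $\|\langle\sigma+k^2\rangle^{-\frac12}\widehat{F}(\sigma,k)\|_{L^2_\sigma}$ and $\|\langle\sigma+k^2\rangle^{-1}\widehat{F}(\sigma,k)\|_{L^1_\sigma}$, and after multiplying by $\langle k\rangle^s$ and taking $L^2_k$ one obtains control by $\|F\|_{X^{s,-\frac12}}$ and $\|F\|_{Y^{s,-1}}$.

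In the high-modulation region $|\sigma+k^2|>1$ I would split the kernel as $\widehat{\eta}(\tau-\sigma)/(\sigma+k^2)-\widehat{\eta}(\tau+k^2)/(\sigma+k^2)$. The first term is a $\tau$-convolution of $\widehat{\eta}$ against $\chi_{\{|\sigma+k^2|>1\}}\widehat{F}(\sigma,k)/(\sigma+k^2)$: distributing the output weight via $\langle\tau+k^2\rangle^{\frac12}\lesssim\langle\tau-\sigma\rangle^{\frac12}\langle\sigma+k^2\rangle^{\frac12}$, using $\langle r\rangle^{\frac12}|\widehat{\eta}(r)|\in L^1$, and noting $\langle\sigma+k^2\rangle^{\frac12}/|\sigma+k^2|\lesssim\langle\sigma+k^2\rangle^{-\frac12}$ on this region, Young's inequality gives the $X^{s,\frac12}$-bound by $\|F\|_{X^{s,-\frac12}}$, while Young's inequality $L^1*L^1\to L^1$ gives the $Y^{s,0}$-bound by $\|F\|_{Y^{s,-1}}$. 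The second term factors as $\widehat{\eta}(\tau+k^2)$ times the scalar $c_k:=\int_{|\sigma+k^2|>1}\widehat{F}(\sigma,k)/(\sigma+k^2)\,d\sigma$, with $|c_k|\lesssim\|\langle\sigma+k^2\rangle^{-1}\widehat{F}(\sigma,k)\|_{L^1_\sigma}$; since $\langle r\rangle^{\frac12}\widehat{\eta}(r)\in L^2$ and $\widehat{\eta}\in L^1$, both its $X^{s,\frac12}$- and $Y^{s,0}$-norms are $\lesssim\|F\|_{Y^{s,-1}}$. Summing the contributions and recalling $\widetilde{Z}^s=X^{s,-\frac12}\cap Y^{s,-1}$ finishes \eqref{linearinhomogest}, again with $\lambda$-independent constant.

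The bulk of this is bookkeeping, the only structural input beyond $\eta\in\mathcal{S}(\R)$ being the mean value bound on the Duhamel kernel in the low-modulation regime. I expect the main obstacle to be the inhomogeneous estimate, and within it the careful tracking of the $Y$-type components of the norms: the ``rank-one'' piece $\widehat{\eta}(\tau+k^2)\,c_k$ is \emph{not} controlled by $\|F\|_{X^{s,-\frac12}}$, because passing from $\|\langle\sigma+k^2\rangle^{-1}\widehat{F}\|_{L^1_\sigma}$ to an $L^2_\sigma$ quantity by Cauchy--Schwarz would require $\langle\cdot\rangle^{-\frac12}\in L^2(\R)$, which fails; it is, however, exactly what $\|F\|_{Y^{s,-1}}$ controls. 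This is precisely why the auxiliary spaces $Z^s$ and $\widetilde{Z}^s$ of \cite{HerrIMRN06}, rather than $X^{s,\pm\frac12}$ alone, are the right framework for the linear flow, and why the $L^1_\tau$-in-modulation norm must enter their definition.
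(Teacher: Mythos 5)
Your proof is correct, and since the paper offers no proof of its own here (it simply cites \cite[Lemma~3.6]{HerrIMRN06}), the comparison is really against Herr's argument, which is precisely the Ginibre--Tsutsumi--Velo-style kernel analysis you carried out: exact Fourier identities for the homogeneous flow, the Duhamel kernel $\bigl(\widehat{\eta}(\tau-\sigma)-\widehat{\eta}(\tau+k^2)\bigr)/(\sigma+k^2)$, and the low/high modulation split with the mean-value bound and the rank-one piece $\widehat{\eta}(\tau+k^2)\,c_k$. Your closing observation — that the rank-one term is the one piece genuinely requiring $\|F\|_{Y^{s,-1}}$ rather than $\|F\|_{X^{s,-\frac12}}$, which is exactly why the $L^1_\tau$-in-modulation components of $Z^s$ and $\widetilde{Z}^s$ are built in — is the key structural point, and you state it accurately.
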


\begin{lemma}
\label{Lem:SobolevStrichartz}
Let $2\leq p,q< \infty$, $b\geq \frac12-\frac1p$, $s\geq \frac12-\frac1q$, $\lambda\geq1$. 
For $u\in\mathcal{S}_{\lambda}$, we have 
\begin{enumerate}
\item Sobolev estimates:
\begin{align}
& \|u\|_{L_t^pH_x^s(\R\times\Tl)}\lesssim \|u\|_{X^{s,b}(\R\times\Tl)} \label{Sobolev1} ,\\
& \|u\|_{L_t^{\infty}H_x^s(\R\times\Tl)}\lesssim \|u\|_{X^{s,\frac12+}(\R\times\Tl)} \label{Sobolev1infty},\\
& \|u\|_{L_t^pL_x^q(\R\times\Tl)}\lesssim \|u\|_{X^{s,b}(\R\times\Tl)} \label{Sobolev2},\\
& \|u\|_{L_t^{\infty}L_x^{\infty}(\R\times\Tl)}\lesssim \|u\|_{X^{\frac12+,\frac12+}(\R\times\Tl)}; \label{Sobolev2infty}
\end{align} 

\item Strichartz estimates:
\label{Lemma:PeriodicStrichartzEstimates}
\begin{align}
% &\|u\|_{L_t^{\infty}L_x^2(\R\times\Tl)}\lesssim \|u\|_{X^{0,\frac12+}(\R\times\Tl)}\\
\label{L4Strichartz} & \|u\|_{L^4_{t,x}(\R\times\Tl)}\lesssim \|u\|_{X^{0,\frac38}(\R\times\Tl)} , \\
 \label{L6Strichartz}  
 & \|u\|_{L^6_{t,x}(\R\times\Tl)}\lesssim 
  \lambda^{0+} \|u\|_{X^{0+,\frac12+}(\R\times\Tl)}
\end{align}
\end{enumerate}
with implicit constants independent of $\lambda\geq 1$. 
\end{lemma}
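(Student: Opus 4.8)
The plan is to split the six estimates into two families. The Sobolev estimates \eqref{Sobolev1}--\eqref{Sobolev2infty} I would get from one-dimensional Sobolev embeddings in $t$ (a transfer principle) together with the Sobolev embedding on $\Tl$ in $x$; the Strichartz estimates \eqref{L4Strichartz}--\eqref{L6Strichartz} are Bourgain's periodic $L^4$ and $L^6$ bounds \cite{BourgainGAFA93}, and there the only real content is that the implicit constants are uniform in $\lambda\ge1$, which works out because on $\Zl$ one counts against the scaled measure $(dk)_\lambda=\frac1{2\pi\lambda}d\#$.

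For \eqref{Sobolev1}, since $p\ge2$ I would first apply Minkowski's integral inequality to move $L^2(\Zl,(dk)_\lambda)$ outside $L^p_t$, reducing to the one-mode bound $\|\,t\mapsto\widehat u(t,k)\|_{L^p_t(\R)}\lesssim\|\langle\tau+k^2\rangle^b\widehat u(\tau,k)\|_{L^2_\tau(\R)}$ for each fixed $k\in\Zl$. Setting $g_k(t):=e^{itk^2}\widehat u(t,k)$, the left side is $\|g_k\|_{L^p_t}$ (a unimodular factor is invisible to $L^p_t$), and after the substitution $\tau\mapsto\tau-k^2$ the right side is $\|g_k\|_{H^b_t(\R)}$, so the claim is exactly the $1$-D Sobolev embedding $H^b(\R)\hookrightarrow L^p(\R)$, valid for $b\ge\frac12-\frac1p$, $p<\infty$; squaring in $k$ against $\langle k\rangle^{2s}(dk)_\lambda$ gives \eqref{Sobolev1} with a $\lambda$-independent constant. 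For \eqref{Sobolev1infty} I would replace the $L^p_t$ step by Cauchy--Schwarz in $\tau$, as in the computation preceding \eqref{embeddinginLinfty}: $\sup_t|\widehat u(t,k)|\le\|\langle\tau+k^2\rangle^{-\frac12-}\|_{L^2_\tau}\|\langle\tau+k^2\rangle^{\frac12+}\widehat u(\tau,k)\|_{L^2_\tau}$, the first factor being $O(1)$ uniformly in $k$ and $\lambda$. Then \eqref{Sobolev2} follows from \eqref{Sobolev1} (with spatial index $s$) and the $\lambda$-uniform Sobolev embedding $H^s(\Tl)\hookrightarrow L^q(\Tl)$ for $2\le q<\infty$, $s\ge\frac12-\frac1q$, which I would prove by a Littlewood--Paley decomposition using that $q\ge2$ allows Minkowski on the square function and the $\lambda$-uniform Bernstein inequality $\|P_{\le N}f\|_{L^q(\Tl)}\lesssim N^{\frac12-\frac1q}\|f\|_{L^2(\Tl)}$ --- the latter from $\|P_{\le N}f\|_{L^\infty}\le(\int_{|k|\le N}(dk)_\lambda)^{1/2}\|f\|_{L^2}$ together with $\int_{|k|\le N}(dk)_\lambda\sim N$ uniformly for $\lambda\ge1$ (Remark~\ref{rmk:japanesebracketwithparam}), interpolated with $L^2$. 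Finally \eqref{Sobolev2infty} is \eqref{Sobolev1infty} at regularity $\frac12+$ followed by $H^{\frac12+}(\Tl)\hookrightarrow L^\infty(\Tl)$, whose constant $\|\langle k\rangle^{-\frac12-}\|_{L^2(\Zl,(dk)_\lambda)}=\big(\frac1{2\pi\lambda}\sum_{m\in\Z}\langle m/\lambda\rangle^{-1-}\big)^{1/2}$ is bounded uniformly in $\lambda\ge1$ by splitting the sum at $|m|\sim\lambda$ and comparing with an integral (Remark~\ref{rmk:japanesebracketwithparam}).

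For the $L^4$ estimate \eqref{L4Strichartz} I would reproduce Bourgain's argument: write $\|u\|_{L^4_{t,x}}^2=\|u^2\|_{L^2_{t,x}}$, expand $\widehat{u^2}=\widehat u\star\widehat u$, and after dyadic decompositions in frequency and in the modulation $\langle\tau+k^2\rangle$ use that on the support of $\widehat{u_1}\star\widehat{u_2}$ one has $(\tau+k^2)=(\tau_1+k_1^2)+(\tau_2+k_2^2)-2k_1k_2$ and that on $\{k_1+k_2=k\}$ the quantity $k_1^2+k_2^2=2(k_1-\tfrac k2)^2+\tfrac{k^2}2$ varies quadratically in $k_1$; combining $L^2$-orthogonality with these counting facts produces the $\tfrac38$ modulation gain, and for $\lambda$-uniformity the point is that the relevant sublevel sets for $k_1$ have $(dk)_\lambda$-measure $O(1)$ independently of $\lambda\ge1$. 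For \eqref{L6Strichartz} I would instead transfer, via the usual $X^{s,b}$ transfer principle at $b>\frac12$, Bourgain's linear bound $\|U_\lambda(t)f\|_{L^6_{t,x}(\R\times\Tl)}\lesssim\lambda^{0+}\|f\|_{H^{0+}(\Tl)}$, which follows from $\|U_\lambda(t)f\|_{L^6}^3=\|(U_\lambda(t)f)^3\|_{L^2}$, Cauchy--Schwarz, and the divisor-type bound $\#\{(k_1,k_2,k_3)\in\Zl^3:\ \textstyle\sum k_i=k,\ \sum k_i^2=\ell\}\lesssim_\varepsilon(\lambda\langle k\rangle)^\varepsilon$ (obtained from the integer case after rescaling $k_i\mapsto\lambda k_i$); this is precisely the source of the $\lambda^{0+}$, the $\langle k\rangle^{0+}$, and --- through the transfer --- the $b>\frac12$ in \eqref{L6Strichartz}.

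The routine parts are the $1$-D Sobolev embeddings and the bookkeeping of dyadic sums. I expect the main obstacle to be tracking $\lambda$ through Bourgain's counting arguments for \eqref{L4Strichartz}--\eqref{L6Strichartz}: one must check that every lattice count entering the estimates, measured against $(dk)_\lambda$ rather than the bare counting measure, is $O(1)$ uniformly in $\lambda\ge1$ up to the unavoidable $\lambda^{0+}$ from the divisor bound, and that passing between the $2\pi\lambda$-periodic and the integer pictures introduces no hidden power of $\lambda$. This is the scale-(quasi-)invariance that makes it legitimate to run the $I$-method on $\Tl$ in the first place, so it does go through, but it is the step deserving the most attention.
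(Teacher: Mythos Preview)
Your proposal is correct and follows essentially the same approach as the paper: the paper sketches exactly this strategy, invoking the interaction representation \eqref{interactreprofXsbnorm} together with Minkowski's inequality and one-dimensional Sobolev embeddings for part (1), and revisiting Bourgain's counting arguments with attention to the $\lambda$-dependence (via the normalized measure $(dk)_\lambda$ on $\Zl$) for part (2). You have simply supplied more detail than the paper's terse outline --- in particular, your explicit identification of the divisor bound as the source of the $\lambda^{0+}$ loss in \eqref{L6Strichartz} and your verification that the $L^4$ counting is $\lambda$-uniform are precisely the checks the paper alludes to but does not spell out.
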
 
\noindent
One can prove the first part by using the interaction representation 
\begin{equation}
\label{interactreprofXsbnorm}
\|u\|_{X^{s,b}(\R\times\Tl)}= 
 \|U_{\lambda}(-t)u(t,x)\|_{H_x^s H_t^b(\Tl\times\R)}, 
\end{equation}
the classical Sobolev inequalities, 
Minkowski's integral inequality and the fact that the operators 
$U_{\lambda}(t)$ are unitary on $H_x^s(\Tl)$.
The second part can be justified by going over the 
Stichartz estimates due to Bourgain \cite{BourgainGAFA93} 
and revisiting the counting arguments, but
now accounting for Fourier modes in $\Zl$ rather than $\Z$ 
(e.g. there are $O(\lambda M)$ elements $k$ in $\Zl$ satisfying 
$|k|\lesssim M$, 
there is a normalizing factor in the measure placed on $\Zl$, etc.). 
%This was also done in \cite[Sect.~2]{deSilva2007}. 
It turns out that the $L^4$-Strichartz estimate has 
an implicit constant independent of $\lambda$, 
while the $L^6$-Strichartz estimate has a logarithmic loss in $\lambda$ 
(in addition to the loss in derivative). 

By interpolating the Strichartz estimate \eqref{L6Strichartz} 
with the Sobolev inequality \eqref{Sobolev2} (for $p=q=6$), 
we also have
\begin{equation}
\label{interpL6Strichartz}
\|u\|_{L^6_{t,x}(\R\times\Tl)}\lesssim 
  \lambda^{0+} \|u\|_{X^{0+,\frac12-}(\R\times\Tl)} .
\end{equation}
We note that the estimates \eqref{Sobolev1}-\eqref{interpL6Strichartz} also hold for Fourier restriction norms on a time interval $J$ rather than on the entire real line. 

We record the following scaling properties 
of the space-time norms introduced above  
when using \eqref{naturalscaling} and a parameter $\lambda\geq1$:
\begin{align*}
\|u^{\lambda}\|_{L_t^pL_x^q(\R\times\Tl)} &= 
 \lambda^{\frac2p +\frac1q- \frac12} \|u\|_{L_t^pL_x^q(\R\times\T)} , \\
\|u^{\lambda}\|_{L_t^p\dot{H}_x^s(\R\times\Tl)} &= 
  \lambda^{-s+\frac2p} \|u\|_{L_t^p\dot{H}_x^s(\R\times\T)} ,
 \end{align*}
 and
 \begin{align*}
\lambda^{-s+\frac2p}\|u\|_{L_t^pH_x^s(\R\times\T)}& \lesssim   
\|u^{\lambda}\|_{L_t^pH_x^s(\R\times\Tl)} 
\lesssim \lambda^{\frac2p} \|u\|_{L_t^pH_x^s(\R\times\T)}. 
\end{align*}
For $s,b\geq 0$, we have
\begin{align*}
&
\lambda^{-1}\|u^{\lambda}\|_{X^{s,b}(\R\times\Tl)} \lesssim 
\|u\|_{X^{s,b}(\R\times\T)} \lesssim 
\lambda^{-1+s+2b} \|u^{\lambda}\|_{X^{s,b}(\R\times\Tl)} ,
\end{align*}
while for $s\geq 0$, $b<0$, we record 
\begin{align*}
&
\lambda^{-1+2b}\|u^{\lambda}\|_{X^{s,b}(\R\times\Tl)} \lesssim 
\|u\|_{X^{s,b}(\R\times\T)} \lesssim 
\lambda^{-1+s} \|u^{\lambda}\|_{X^{s,b}(\R\times\Tl)}. 
\end{align*}

We also use the following lemma when dealing with sharp time-cutoff functions:
\begin{lemma}
\label{sharpcutoffX112minus}
Let $s\in\R$ and suppose $\phi\in H_t^{\frac12-}(\R)$. Then:
$$ \|\phi u\|_{X^{s,\frac12-}(\R\times\Tl)} \lesssim \|\phi\|_{H_t^{\frac12-}(\R)} \|u\|_{X^{s,\frac12}(\R\times\Tl)} .$$
\end{lemma}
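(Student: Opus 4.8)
The plan is to peel off the spatial variable and reduce the estimate to a one-dimensional product estimate in time. Using the interaction representation $\|w\|_{X^{s,b}(\R\times\Tl)}=\|U_\lambda(-t)w\|_{H_x^sH_t^b}$ (as in the proof of Lemma~\ref{Lem:SobolevStrichartz}) together with the fact that $\phi=\phi(t)$ commutes with $U_\lambda(-t)$, which acts only in $x$, one sets $v:=U_\lambda(-t)u$ and is reduced to proving
\[
\|\phi v\|_{H_x^sH_t^{\frac12-}}\lesssim\|\phi\|_{H_t^{\frac12-}}\,\|v\|_{H_x^sH_t^{\frac12}}.
\]
Taking the Fourier transform in $x$ and noting that $\widehat{(\phi v)}(t,k)=\phi(t)\,\widehat v(t,k)$, this reduces to the scalar estimate $\|\phi g\|_{H^{\frac12-}(\R)}\lesssim\|\phi\|_{H^{\frac12-}(\R)}\|g\|_{H^{\frac12}(\R)}$ applied to each section $g=\widehat v(\cdot,k)$, followed by squaring, multiplying by $\langle k\rangle^{2s}$ and summing over $k\in\Zl$.

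For the one-dimensional estimate I would write $\widehat{\phi g}=\widehat\phi\ast\widehat g$, decompose both factors into Littlewood--Paley pieces, and split into the three paraproduct regions according to which of the three frequencies (of $\phi$, of $g$, of the output) dominates. When $g$ carries the top frequency, or when $\phi$ and $g$ have comparable frequencies so the output frequency is the smallest, there is a spare power of frequency to distribute; these regions close by Young's convolution inequality combined with Bernstein's inequality and the one-dimensional Sobolev embedding $H^{\frac12}(\R)\hookrightarrow L^q(\R)$, valid for every $q<\infty$ (equivalently $\|S_jg\|_{L^q}\lesssim_q\|g\|_{H^{1/2}}$ uniformly in $j$), and the gaps between the relevant dyadic scales are summed by a geometric series. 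In the remaining region, where $\phi$ is the high-frequency factor and the output frequency is comparable to it, the entire output weight $\langle\xi\rangle^{\frac12-}$ must be supplied by $\phi$; one then estimates the low-frequency factor $g$ in $L^q$ with $q$ large, paying a small power $2^{j/q}$ of the top frequency $2^j$, which is absorbed into the (arbitrarily small) gap between the regularity index of $\phi$ and the slightly smaller target index of $\phi g$.

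The main obstacle is precisely this last region. Since $\tfrac12$ is the critical Sobolev exponent on $\R$, we have $H^{\frac12}(\R)\not\hookrightarrow L^\infty(\R)$, so the low-frequency factor cannot simply be bounded in $L^\infty$, and a naive argument keeping the same regularity index on both sides yields only a logarithmically divergent bound; one genuinely needs a sliver of extra regularity on $\phi$ relative to the target index, which is consistent with the convention on the $\eps$'s fixed at the start of Section~\ref{Section2} and is harmless in the intended application, where $\phi$ is a sharp time cutoff and hence lies in $H_t^{\frac12-\eps}$ for every $\eps>0$. In that application one may also proceed more directly: writing a characteristic function $\mathbf{1}_I$ as a difference of two half-line indicators (up to time translations, which act isometrically on $H_t^b$), multiplication by $\mathbf{1}_{(-\infty,0)}(t)$ becomes on the Fourier side $\tfrac12(\mathrm{Id}+cH)$ with $H$ the Hilbert transform, and since $\langle\xi\rangle^{2b}$ is an $A_2$ weight for $|b|<\tfrac12$ the operator $H$ is bounded on $L^2(\langle\xi\rangle^{2b}\,d\xi)$; this gives $\|\mathbf{1}_Ig\|_{H_t^b}\lesssim\|g\|_{H_t^b}\le\|g\|_{H_t^{1/2}}$ uniformly in $I$, with no loss at all.
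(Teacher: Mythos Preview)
Your reduction via the interaction representation to the scalar estimate $\|\phi g\|_{H_t^{\frac12-}}\lesssim\|\phi\|_{H_t^{\frac12-}}\|g\|_{H_t^{\frac12}}$ is exactly what the paper does. For the scalar estimate, the paper simply invokes the fractional Leibniz rule with exponents $\frac1p+\frac1q=\frac12$, chosen so that $H_t^{b-b'}\hookrightarrow L_t^p$ and $H_t^b\hookrightarrow L_t^q$ for some $b'<b<\frac12$; your paraproduct argument is an unpacking of precisely that Leibniz/Kato--Ponce inequality, and your discussion of the ``main obstacle'' (the failure of $H^{1/2}\hookrightarrow L^\infty$ forcing a small loss between the input and output indices) matches the paper's choice $b'<b$.

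Your closing remark is a genuinely different and useful observation: for the intended application $\phi=\ind_{[0,\delta]}$, multiplication by a half-line indicator is (up to a constant) the Hilbert transform on the Fourier side, and boundedness on $L^2(\langle\xi\rangle^{2b}\,d\xi)$ for $|b|<\frac12$ via the $A_2$ condition gives $\|\ind_I g\|_{H_t^b}\lesssim\|g\|_{H_t^b}$ with no loss. The paper does not exploit this; it proves the general product estimate and then separately observes $\|\ind_{[0,\delta]}\|_{H_t^{\frac12-}}\lesssim\delta^{0+}$ when applying it in Section~\ref{Section:AlmostConservationEstimates}. Your alternative is cleaner for that specific purpose, though the paper's route has the advantage of yielding the small positive power $\delta^{0+}$ directly.
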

\begin{proof}
By \eqref{interactreprofXsbnorm},  
$$\|\phi u\|_{X^{s,b'}(\R\times\Tl)}  = \|\phi(t)U_{\lambda}(-t)u(t,x)\|_{H_x^sH_t^{b'}(\Tl\times\R)} $$
and let $J_t:=\langle \partial_t\rangle $.  
Then, via the fractional Leibniz rule, we have 
\begin{equation}
\label{LeibnizforLem2p4}
 \|\phi(t)U_{\lambda}(-t)u(t)\|_{H_t^{b'}} \lesssim 
 \|J_t^{b'}\phi\|_{L_t^{p}} \left\|U_{\lambda}(-t)u(t)\right\|_{L_t^{q}} + 
 \|\phi\|_{L_t^{q}} \left\|J_t^{b'} \big(U_{\lambda}(-t)u(t)\big)\right\|_{L_t^p} ,
\end{equation}
where $\frac1p + \frac1q=\frac12$. We take $b':=\frac12-<b<\frac12$ and $p>2$ so that we have
the continuous Sobolev embedding $H_t^{b-b'}(\R) \subset L_t^p(\R)$. 
Consequently, we also have the Sobolev embedding 
$H_t^b(\R)\subset L_t^q(\R)$. 
Then, the conclusion follows from \eqref{LeibnizforLem2p4} and triangle inequality for the $H_x^s(\R)$-norm. 
\end{proof}

%In the sequel, if it cannot be a source of confusion, 
%we choose to omit specifying the time-space domain  
%$\R\times\Tl$ of these norms whenever possible. 

\subsection{A bilinear $L^4$-Strichartz estimate} 
The following result is a key ingredient in the analysis 
of the almost conservation estimates  
as it is a refinement of the 
$L^4$-Strichartz estimate 
that provides a decaying factor in $\lambda$.  
%under additional assumptions (...). 
Such an estimate is similar to 
the bilinear $L^4$-estimate 
in the non-periodic setting \cite[Lemma~7.1]{CKSTT}, 
and we point out that for $\lambda\to\infty$, we recover the same decay rate. 
For Schr\"{o}dinger evolutions on the one-dimensional torus, 
this estimate (but without pointing out the alternative (ii)) 
was first proved in \cite{deSilva2007}.  

\begin{lemma}%[bilinear $L^4$-Strichartz estimate]
\label{bilinearL2Strichartz}
Let $\lambda\geq 1$, $N_1, N_2\in 2^{\Z}$ and suppose 
$\phi_1,\phi_2$ are smooth functions on $\Tl$ with 
$\supp(\widehat{\phi_j})\subset \{k\in\Zl : |k|\sim N_j\}$, $j=1,2$. 
Assume that either 
\begin{enumerate}
\item[\textup{(i)}] $N_1\gg N_2$, or 
\item[\textup{(ii)}] $N_1\sim N_2$ and $k_1k_2<0$ for all 
$k_1\in \supp(\widehat{\phi_1})$, $k_2\in \supp(\widehat{\phi_2})$.  
\end{enumerate}
Then
\begin{equation}
\label{bilinestoflinearsols}
\left\|\big(\eta(t)U_{\lambda}(t)\phi_1\big) \big(\eta(t)U_{\lambda}(t)\phi_2\big)\right\|_{L^2_{t,x}(\R\times\Tl)} 
\lesssim C(\lambda,N_1) 
\|\phi_1\|_{L^2_x(\Tl)} \|\phi_2\|_{L^2_x(\Tl)}
\end{equation}
where 
\begin{equation}
C(\lambda,N_1)= \begin{cases} 
1 &,\text{ if } N_1\lesssim 1\\
(\frac{1}{\lambda} +\frac{1}{N_1})^{\frac12} &,\text{ if } N_1\gg 1 
\end{cases} .
\end{equation}
Moreover, suppose $u_1,u_2\in \mathcal{S}_{\text{per}}$ are Fourier supported in 
$\{|k_1|\sim N_1\}$ and $\{|k_2|\sim N_2\}$, respectively, for all times $t$. Then,  
under the same assumption on the two frequency supports, 
we have
\begin{equation}
\label{interpbilinearest}
\|u_1u_2\|_{L^2_{t,x}(\R\times\Tl)} 
\lesssim_{\varepsilon} C(\lambda,N_1)^{1-2\varepsilon} 
\|u_1\|_{X^{0,\frac12-\varepsilon}(\R\times\Tl)}\|u_2\|_{X^{0,\frac12-\varepsilon}(\R\times\Tl)} ,
\end{equation}
for any $\varepsilon>0$ sufficiently small.
\end{lemma}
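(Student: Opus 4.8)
The plan is to prove \eqref{bilinestoflinearsols} first by a standard orthogonality-plus-counting argument on the Fourier side, and then obtain \eqref{interpbilinearest} from it by writing a general $X^{0,b}$ function as a superposition of modulated linear solutions. For the first part, I would expand the product of the two free evolutions on the Fourier side: writing $\eta(t)U_\lambda(t)\phi_j = \sum_{k_j} \widehat{\phi_j}(k_j) e^{ik_jx} \big(\eta(t)e^{itk_j^2}\big)$, the space-time Fourier transform of the product is, up to constants, $\sum_{k_1+k_2=k}\widehat{\phi_1}(k_1)\widehat{\phi_2}(k_2)\,\widehat\eta\big(\tau-k_1^2-k_2^2\big)$ evaluated on $\Zl\times\R$. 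By Plancherel in $(\tau,k)$ and the rapid decay of $\widehat\eta$, $\|\cdot\|_{L^2_{t,x}}^2$ is controlled by $\sum_k \int_\tau \big|\sum_{k_1+k_2=k}\widehat{\phi_1}(k_1)\widehat{\phi_2}(k_2)\langle\tau-k_1^2-k_2^2\rangle^{-10}\big|^2$. Fixing $k$, the key point is that the map $k_1\mapsto(k_1+k_2, k_1^2+k_2^2) = (k, 2k_1^2-2kk_1+k^2)$ is, for fixed total frequency $k$, injective in $k_1$ up to the reflection $k_1\leftrightarrow k-k_1$; under hypothesis (i) $N_1\gg N_2$ (so $|k_1|\sim N_1\gg N_2\sim|k_2|$) or hypothesis (ii) ($k_1k_2<0$), the two preimages cannot both lie in $\supp(\widehat\phi_1)\times\supp(\widehat\phi_2)$, so the map is genuinely injective on the relevant set. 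Hence for each $k$ and each value of the "modulation parameter" $\mu$ there is at most one pair $(k_1,k_2)$; applying Cauchy–Schwarz in $k_1$ together with the fact that the number of lattice points $k_1\in\Zl$ with $|2k_1^2-2kk_1+k^2 - \mu|\lesssim 1$ is $O\big(\lambda(1+N_1^{-1}\cdot\lambda^{-1})^{?}\big)$ — more precisely $O(1+\frac{\lambda}{N_1})$ if $N_1\gg1$ since the quadratic has derivative $\sim N_1$ there, spacing $\frac1\lambda$ between consecutive modes — yields the gain $C(\lambda,N_1)^2 = \frac1\lambda+\frac1{N_1}$ (and trivially $1$ when $N_1\lesssim1$, where the sum over $k_1$ has $O(\lambda)$ terms but $\langle k\rangle\sim1$ absorbs it). Summing back over $k$ and $\mu$ and using Plancherel for $\phi_1,\phi_2$ finishes \eqref{bilinestoflinearsols}.

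For \eqref{interpbilinearest}, the standard device is to use the transfer principle: any $u\in X^{0,b}(\R\times\Tl)$ can be written $u(t)=\int_\R e^{it\sigma} U_\lambda(t) f_\sigma\, d\sigma$ with $\|\,|\sigma|^? \, \|f_\sigma\|_{L^2}\,\|$ controlled — concretely, setting $g(\sigma,k)=\widehat u(\sigma+|k|^2\!\text{-shift},k)$ appropriately, one has $\|u\|_{X^{0,b}}\sim \|\langle\sigma\rangle^b\|\widehat{f_\sigma}\|_{L^2_k}\|_{L^2_\sigma}$. Plugging $u_1,u_2$ in this form and using that the modulation factors $e^{it\sigma_j}$ and the time cutoff only perturb $\eta$ to a bounded family (or absorbing them into $\widehat\eta$'s decay), \eqref{bilinestoflinearsols} applied to the frequency-localized pieces gives $\|u_1u_2\|_{L^2_{t,x}}\lesssim C(\lambda,N_1)\iint \|f_{1,\sigma_1}\|_{L^2}\|f_{2,\sigma_2}\|_{L^2}\,d\sigma_1 d\sigma_2$, and then Cauchy–Schwarz in $\sigma_1,\sigma_2$ against the weights $\langle\sigma_j\rangle^{-(1/2-\varepsilon)}$ (which are square-integrable for $\varepsilon>0$) produces $\|u_1\|_{X^{0,1/2-\varepsilon}}\|u_2\|_{X^{0,1/2-\varepsilon}}$. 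The slightly weaker constant $C(\lambda,N_1)^{1-2\varepsilon}$ rather than $C(\lambda,N_1)$ comes from interpolating the sharp bilinear bound with the trivial bound obtained from two copies of the $L^4$-Strichartz estimate \eqref{L4Strichartz} via Hölder (which costs no power of $C(\lambda,N_1)$ but only needs $b=3/8<1/2-\varepsilon$); alternatively one tracks the loss directly through the $\varepsilon$-room in the modulation weights.

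The main obstacle is the counting estimate at the heart of the first part: one must carefully verify that under each of the hypotheses (i) and (ii) the quadratic resonance function $k_1\mapsto k_1^2+k_2^2$ (with $k_1+k_2=k$ fixed) is injective on the product of the two Fourier supports — the reflection symmetry $k_1\leftrightarrow k-k_1$ is exactly what hypotheses (i) and (ii) are designed to break — and then to get the right $\lambda$-dependence $O(1+\lambda/N_1)$ for the number of $\Zl$-lattice points in a unit-length preimage, which requires noting that the relevant derivative of the quadratic is $\sim N_1$ and the lattice spacing is $1/\lambda$. This is where the case $N_1\lesssim1$ must be treated separately (there the derivative can be $o(1)$, so one instead simply counts $O(\lambda)$ total points but exploits $\langle k\rangle\sim1$, or equivalently notes the right-hand side is unweighted in $L^2$). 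Everything else — Plancherel, decay of $\widehat\eta$, the transfer principle, and Cauchy–Schwarz in the modulation variables — is routine. It is also worth double-checking that the passage from \eqref{bilinestoflinearsols} to \eqref{interpbilinearest} genuinely needs the two-sided frequency localization to persist "for all times," which is used to apply the frequency-support hypotheses to each Littlewood–Paley piece of the transferred linear solutions.
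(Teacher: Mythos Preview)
Your approach is essentially the same as the paper's: Plancherel plus a counting argument based on the derivative of $k_1\mapsto k_1^2+(k-k_1)^2$ being $\sim N_1$ under either hypothesis, then transference and interpolation with the $L^4$-Strichartz estimate. Two small slips are worth fixing, though neither breaks the overall strategy.

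First, in the case $N_1\lesssim 1$ there is no weight $\langle k\rangle$ to ``absorb'' the $O(\lambda)$ points; what saves you is the normalization of the counting measure $(dk_1)_\lambda=\frac{1}{2\pi\lambda}\,d\#$ on $\Zl$, so that $\frac{1}{\lambda}\cdot O(\lambda)=O(1)$ directly gives $M\lesssim 1$.

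Second, and more importantly, the claim that $\langle\sigma\rangle^{-(1/2-\varepsilon)}$ is square-integrable is false: $\int\langle\sigma\rangle^{-(1-2\varepsilon)}\,d\sigma$ diverges for every $\varepsilon\geq 0$. The direct Cauchy--Schwarz in the modulation variables therefore only yields the bilinear estimate with $b=\tfrac12+$, not $b=\tfrac12-\varepsilon$. This is exactly why the paper (and you, in your next sentence) interpolate that $b=\tfrac12+$ bound against the $L^4$-Strichartz bound \eqref{L4Strichartz}, which holds at $b=\tfrac38$ with no $C(\lambda,N_1)$ factor; the interpolation is what produces both the endpoint $b=\tfrac12-\varepsilon$ and the weakened constant $C(\lambda,N_1)^{1-2\varepsilon}$. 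So keep the interpolation argument and drop the incorrect direct Cauchy--Schwarz claim.
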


\begin{remark}
\label{rmk:bilinearmistakeinWinspaper}
In \cite[Proposition~2.1]{WinFE2010}, 
there seems to be a mistake in the case $N_1\sim N_2$: 
the two Fourier supports should be localized on opposite sides of the origin on the real line in order for \eqref{bilinestoflinearsols} to be true. 
The estimate with this additional assumption was used 
in proving Cases (2) and (3) of \cite[Lemma~7.5]{WinFE2010}. 
Although with simlar ideas as in 
the proof of \cite[Proposition~3.7]{deSilva2007}, 
we decided to present the proof 
so that this observation becomes clear.  
\end{remark}

\begin{proof}
By Plancherel's identity, the left hand side of \eqref{bilinestoflinearsols} becomes
\begin{gather*}
\begin{split}
%\textup{LHS}{\eqref{bilinestoflinearsols}} &=
\left\|\int_{\tau_1+\tau_2=\tau}\int_{k_1+k_2=k} 
 \widehat{\eta}(\tau_1+k_1^2)\widehat{\eta}(\tau_2+k_2^2)\widehat{\phi_1}(k_1)\widehat{\phi_2}(k_2) (dk_1)_{\lambda}d\tau_1
 \right\|_{L^2_{\tau}L^2_k} .
\end{split}
\end{gather*}
We denote $\psi:=\widehat{\eta}*\widehat{\eta}$, 
and without loss of generality, we can assume that $\psi$ is $\R$-valued and non-negative.  
\footnote{In general, we can write 
$\psi= \psi_+ - \psi_- +i \psi^+ -i \psi^-$ 
with the four components satisfying the non-negativity assumption, 
from where we can carry on analogous arguments for each of these terms.}
%In general, we can write $\widehat{\eta}= \widehat{\eta}_+ - \widehat{\eta}_- +i \widehat{\eta}^+ -i \widehat{\eta}^-$ with the four components satisfying the assumption, from where we can carry on analogous arguments for each of the terms in the sum that dominates LHS{\eqref{bilinestoflinearsols}}.
Then 
$$\int_{\R}  \widehat{\eta}(\tau_1+k_1^2)\widehat{\eta}(\tau- \tau_1+k_2^2)d\tau_1 
 = \psi(\tau +k_1^2+k_2^2)\geq0$$
and by H\"{o}lder's inequality, we have 
\begin{gather*}
\begin{split} 
&\left\| \int_{k_1+k_2=k} \psi(\tau+k_1^2+k_2^2)
  \widehat{\phi_1}(k_1)\widehat{\phi_2}(k_2) (dk_1)_{\lambda} \right\|_{L^2_{\tau}L^2_k} \\
&\hphantom{xxxxxxx}  
\leq 
\left\|\left(\int_{k_1+k_2=k} \psi(\tau+k_1^2+k_2^2(dk_1)_{\lambda}
   \right)^{\frac12} \right. \\
&\hphantom{xxxxxxxxxxxx}\times \left.
 \left( \int_{k_1+k_2=k} \psi(\tau + k_1^2+k_2^2)
   |\widehat{\phi_1}(k_1)|^2 
   |\widehat{\phi_2}(k_2)^2(dk_1)_{\lambda}
 \right)^{\frac12} 
   \right\|_{L^2_{\tau}L^2_k}\\
&\hphantom{xxxxxxx} \leq M \left(\int_{\Zl}\int_{\Zl}\int_{\R}\psi(\tau + k_1^2+k_2^2)
   |\widehat{\phi_1}(k_1)|^2 |\widehat{\phi_2}(k_2)|^2 \,d\tau\,(dk_1)_{\lambda}\,(dk)_{\lambda}\right)^{\frac12}\\
   &\hphantom{xxxxxxx}  \leq M \|\psi\|^{\frac12}_{L^1(\R)}\|\phi_1\|_{L^2(\Tl)} \|\phi_2\|_{L^2(\Tl)} ,
\end{split}
\end{gather*}
where we applied Fubini's theorem and we denoted
$$M:=\left(\sup_{k,\tau} \int_{k_1+k_2=k} \psi(\tau + k_1^2+k_2^2)(dk_1)_{\lambda}\right)^{\frac12} .$$
%= 
%\left(\frac{1}{2\pi\lambda} \sup_{k,\tau} \sum_{k_1\in\Zl} %\psi(\tau+k_1^2+(k-k_1)^2) \right)^{\frac12}.$$
Thus, in order to obtain \eqref{bilinestoflinearsols}, it remains to show that $M\lesssim C(\lambda,N_1)$. 

Since $\psi$ is a Schwartz function, it is rapidly decaying, 
and so we can split $\R$ into disjoint intervals $I_j$ ($j\in\Z$) 
\footnote{If $\psi$ were compactly supported, 
it is enough to consider only one such interval, namely a finite-length interval which includes the support of $\psi$.} 
%$\bigcup_{j\in \Z} I_j$ 
such that for all $j$ we have 
$|I_j|\sim 1$ and $\|\psi_{| I_j}\|_{L^{\infty}}\lesssim 2^{-|j|}$.   
Given $k\in\Zl$, $\tau\in\R$, and $j\in\Z$, 
we consider the set
%\footnote{The set $S_{k,\tau,j}$ (which also depends on the parameters $\lambda$, $N_1$ and $N_2$, but we choose to not write them) is larger when the parabola of $x^2+(k-x)^2$ is more spread out.} 
$$S_{k,\tau,j} = \{k_1\in \Zl : k_1\in \supp(\widehat{\phi_1}) 
\,,\ k-k_1\in \supp(\widehat{\phi_2})\, ,\  
\tau + k_1^2 + (k-k_1)^2 \in I_j\} $$
and we estimate
$$M\lesssim 
%\left(\frac{1}{\lambda}\sup_{k,\tau} \sum_{j\in\Z} \|\psi_{|I_j}\|%_{L^{\infty}} \#S_{k,\tau,j} \right)^{\frac12} \lesssim 
\left(\sup_{k,\tau}\sum_{j\in\Z} \left(\frac{1}{\lambda}\#S_{k,\tau,j}\right) 2^{-|j|} \right)^{\frac12} ,$$
where $\#S_{k,\tau,j}$ denotes the cardinality of $S_{k,\tau,j}$. 
%If $N_1\leq1$, the number of $k_1\in\frac{1}{2\pi\lambda}\Z$ satisfying $\tau+ k_1^2 + (k-k_1)^2=O(1)$ is $O(1)$, hence 
%$\#S_{k,\tau,j}\lesssim \lambda$. 

If $N_1\lesssim 1$, then clearly 
$$ \#S_{k,\tau,j} \leq \#\left\{k_1\in \frac{1}{\lambda}\Z : |k_1|\lesssim 1\right\} \lesssim \lambda$$ 
and thus $M\lesssim 1$. 

Now let us assume $N_1\gg 1$. 
To estimate the cardinality of a nonempty set $S_{k,\tau,j}$, 
we denote   
$$f_{k,\tau}(k_1):= \tau + k_1^2+(k-k_1)^2 .$$ 
Notice that 
\begin{equation} 
\label{counting:derivestimate}
|f_{k,\tau}'(k_1)| = 2|k_1 - (k-k_1)| \sim N_1,
\end{equation}
and  that this property holds not only when $N_1\gg N_2$ but also when 
$k_1$ and $k-k_1$ have opposite signs,  
and this is ensured by assumption (ii). 
From \eqref{counting:derivestimate} and the mean value theorem, 
we get that 
$$\# S_{k,\tau,j}\lesssim 1+\frac{\lambda}{N_1} ,$$ 
uniformly in $j$ 
(if $\lambda\lesssim N_1$ there might be inly one element in $S_{k,\tau,j}$). 
%Hence, in case $N_1$ is large, we do have 
%$\# S_{k,\tau,j}\lesssim 1+\frac{\lambda}{N_1}$, uniformly in $j$.  

For the last part, by the transference principle for $X^{s,b}$ spaces 
(see for example \cite[Lemma~2.9]{TaoCBMS07}),  
the estimate \eqref{bilinestoflinearsols} implies 
\begin{equation}
\label{noninterpbilinearest}
\|u_1u_2\|_{L^2_{t,x}(\R\times\Tl)} \lesssim 
C(\lambda,N_1)\|u_1\|_{X^{0,\frac12+}(\R\times\Tl)} 
 \|u_2\|_{X^{0,\frac12+}(\R\times\Tl)} .
\end{equation}
On the other hand, 
by H\"{o}lder inequality and the $L^4$-Strichartz estimate, 
we have 
\begin{equation}
\label{L2Bourgain}
\|u_1u_2\|_{L^2_{t,x}(\R\times\Tl)} 
 \lesssim \|u_1\|_{X^{0,\frac38}(\R\times\Tl)}
  \|u_2\|_{X^{0,\frac38}(\R\times\Tl)}.
\end{equation}
By interpolating \eqref{noninterpbilinearest} and \eqref{L2Bourgain}, we 
obtain \eqref{interpbilinearest} 
for $\varepsilon>0$ sufficiently small.  
\end{proof}

\begin{remark}
We point out that the implicit constant in \eqref{interpbilinearest} depends on $\varepsilon$. 
Hence, we cannot 
disregard the logarithmic loss in the constant $C(\lambda, N_1)$. 
This loss is essentially the reason for which 
we need to introduce the second correction term in \eqref{defnofE3}
in the third iteration of the $I$-method (see also Remark~\ref{rmk:necessityofcorrectingforK41}).  
\end{remark}

\begin{remark}
Notice that, under assumption (i) of the above lemma, 
the estimate \eqref{interpbilinearest} holds 
if we replace one of the functions on the left hand side with its conjugate 
(or equivalently, one of the $X^{0,\frac12}$-norms in the right hand side 
with the $\overline{X}^{0,\frac12}$-norm as defined in \cite{HerrIMRN06}). 
This is no longer true under assumption (ii). 
\end{remark}

We use the above bilinear estimate essentially in the regime 
$1\leq \lambda\lesssim N_1$, 
and thus, in our estimates, $C(\lambda,N_1)\sim \lambda^{-\frac12}$. 

\subsection{Gagliardo-Nirenberg inequalities in the periodic setting}
\label{Sect:GNinequalities}
We recall that on the real-line, we have the sharp Gagliardo-Nirenberg inequalities
\begin{eqnarray}
\label{GNinequalityR}
\|f\|_{L^6(\R)} &\leq \left(\frac{2}{\pi}\right)^{\frac13} \|\partial_x f\|_{L^2(\R)}^{\frac13} \|f\|_{L^2(\R)}^{\frac23} ,\\
\label{GNinequalityRCGN}
\|f\|_{L^6(\R)} &\leq C_{\textup{GN}} \|\partial_x f\|_{L^2(\R)}^{\frac19} \|f\|_{L^4(\R)}^{\frac89} ,
\end{eqnarray} 
where $C_{\textup{GN}}:= 3^{\frac16} (2\pi)^{-\frac19}$. 
For \eqref{GNinequalityR} we refer to \cite{Weinstein83}, 
whereas for \eqref{GNinequalityRCGN}, see \cite{Agueh06}. 

\begin{remark}
\label{rmk:rigidityofQandW}
Extremising functions in  \eqref{GNinequalityR} are given by ground state solutions of 
$$-Q_{xx}+Q-\frac{3}{16}Q^5 =0\ , \ x\in\R$$ 
for which we have the Pohozaev identities 
$M[Q]=2\pi$ and $E[Q]=0$. 
We also know that  for any $u_0\in H^1(\R)$ with $M[u_0]<M[Q]$, we have $E[u_0]>0$. 
Such a $Q$ is also a stationary solution 
 of \eqref{gDNLS} on $\R$ with $\beta=\frac34$. 
%(this parameter plays an important role in Section~\ref{Sect:coercivity}). 
Moreover, there exists $u_0\in H^1(\R)$ with $2\pi<M[u_0]<2\pi+\varepsilon$ such that 
$E[u_0]<0$. 

On the other hand, ground state solutions of 
$$-W_{xx}+W^3 -\frac{3}{16}W^5 =0\ , \ x\in\R$$ 
achieve the optimal constant $C_{\textup{GN}}$ in \eqref{GNinequalityRCGN} and have $M[W]=4\pi$. 
\end{remark}

On $\T$, 
inequalities of the above form cannot hold, 
simply for the fact that  constant functions provide counterexamples. 
However, the situation is similar to the Poincar\'{e} inequality, and in fact, 
using elementary arguments,
\footnote{Strictly speaking, \eqref{GNinequalityHerr} was proved on $\T$, but it is also true on $\Tl$ as the inequality is scale invariant. 
The same result can be obtained by using the pointwise Poincar\'{e} inequality followed by 
an application of the H\"{o}lder inequality.} 
it was shown in \cite[Appendix~C]{HerrIMRN06} the following inequality: 
\begin{equation}
\label{GNinequalityHerr}
\|(|f|^2-\mu[f])f\|_{L^2(\Tl)}\leq \|\partial_x f\|_{L^2(\Tl)} \|f\|_{L^2(\Tl)}^2.
\end{equation}
for any $2\pi\lambda$-periodic function $f$.
%This gives the following periodic version of the  Gagliardo-Nirenberg inequality \eqref{GNinequalityR}: 
%$$\|f\|_{L^6(\Tl)}^3\leq \|\partial_x f\|_{L^2(\Tl)} \|f\|_{L^2(\Tl)}^2 + \frac{1}{|\Tl|}\|f\|_{L^2(\Tl)}^3.$$
Although \eqref{GNinequalityHerr} can be used to study the coercivity of $E$ 
(see Lemma~\ref{EnergyscriptEcontrolshomogH1} below), 
we use here the following result (see e.g. Lebowitz, Rose and Speer \cite[Lemma~4.1]{LRS88}) 
since it yields the same mass threshold $M[u_0]<2\pi$ as in the Euclidean setting.
%We mention that this plays the same role as \eqref{GNinequalityR} in \cite[Lemma~3.6]{CKSTT} 
%(see the proof of Lemma~\ref{EnergyscriptEcontrolshomogH1} below). 

\begin{lemma}
\label{lem:LRSgagliardonirenberg}
For any $\varepsilon>0$, there exists a constant $K_{\varepsilon}>0$ (independent of $\lambda$) such that 
\begin{equation}
\label{GNinequalityT}
\|f\|_{L^6(\Tl)}^6 \leq \left(\frac{4}{\pi^2} +\varepsilon\right) \|\partial_x f\|^2_{L^2(\Tl)} \|f\|_{L^2(\Tl)}^4 + 
K_{\varepsilon}\|f\|_{L^2(\Tl)}^6,
\end{equation}
for all $f\in H^1(\Tl)$. 
\end{lemma}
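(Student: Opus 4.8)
The plan is to deduce the periodic inequality \eqref{GNinequalityT} from the sharp Gagliardo--Nirenberg inequality \eqref{GNinequalityR} on the line by a standard periodic-to-Euclidean comparison argument, making sure every constant is tracked so that the optimal factor $\frac{4}{\pi^2}$ (note $\left(\frac{2}{\pi}\right)^2 = \frac{4}{\pi^2}$, which is exactly the sixth power of the constant in \eqref{GNinequalityR}) survives and the $\lambda$-independence is manifest. First I would reduce to the case $\lambda = 1$: if the inequality holds on $\T$ with some $K_\varepsilon$, then for a $2\pi\lambda$-periodic $f$ one sets $g(x) := f(\lambda x)$ on $\T$ and checks that the three quantities $\|f\|_{L^6(\Tl)}^6$, $\|\partial_x f\|_{L^2(\Tl)}^2\|f\|_{L^2(\Tl)}^4$, and $\|f\|_{L^2(\Tl)}^6$ transform under the same power of $\lambda$ (this is precisely the scale invariance that makes the claim plausible; one has to verify the homogeneity degrees match, which they do because the left side and the first right-hand term are both scaling-invariant and the last term picks up the same $\lambda$-power as $\|f\|_{L^6}^6$ only after using $\|f\|_{L^2}\lesssim$ bound — actually here one should be careful and it is cleaner to prove it directly on $\Tl$, see below).

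Given the subtlety with the last term, I would instead work directly on $\Tl$ as follows. Write $f = c + h$ where $c := \frac{1}{2\pi\lambda}\int_{\Tl} f\,dx$ is the mean and $h$ has mean zero, so $\|\partial_x f\|_{L^2} = \|\partial_x h\|_{L^2}$. The mean-zero part $h$ can be viewed, after extending periodically, as a function on a large interval; the cleanest route is to localize: cover $\Tl$ by $O(\lambda)$ unit intervals, on each of which one uses a local version of \eqref{GNinequalityR}. Concretely, for a mean-zero $h$ on $\Tl$, by the one-dimensional Gagliardo--Nirenberg/Sobolev inequality applied after a suitable cutoff and periodic extension, one gets $\|h\|_{L^6(\Tl)}^6 \le \left(\frac{2}{\pi}\right)^2\|\partial_x h\|_{L^2(\Tl)}^2\|h\|_{L^2(\Tl)}^4 + (\text{lower-order in }\|h\|_{L^2})$, where the lower-order terms come from the commutator with the cutoff and are controlled by $\|h\|_{L^2}^6$ with a constant depending only on $\varepsilon$ (used to absorb a small multiple of the main term). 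Then I would expand $\|f\|_{L^6}^6 = \|c+h\|_{L^6}^6$ using the binomial/triangle inequality in $L^6$: $\|c+h\|_{L^6} \le \|h\|_{L^6} + \|c\|_{L^6} = \|h\|_{L^6} + |c|(2\pi\lambda)^{1/6}$, and note $|c|(2\pi\lambda)^{1/2} \le \|f\|_{L^2(\Tl)}$ by Cauchy--Schwarz, so $|c|(2\pi\lambda)^{1/6} \le (2\pi\lambda)^{1/6-1/2}\|f\|_{L^2} = (2\pi\lambda)^{-1/3}\|f\|_{L^2} \le (2\pi)^{-1/3}\|f\|_{L^2}$ uniformly in $\lambda \ge 1$. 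Raising to the sixth power and using Young's inequality $(a+b)^6 \le (1+\varepsilon)a^6 + C_\varepsilon b^6$ turns the mixed terms into a $(1+\varepsilon)$ multiple of $\|h\|_{L^6}^6$ plus a constant times $\|f\|_{L^2}^6$; combining with the bound on $\|h\|_{L^6}^6$, with $\|h\|_{L^2} \le \|f\|_{L^2}$, yields \eqref{GNinequalityT} with constant $(1+\varepsilon)\left(\frac{2}{\pi}\right)^2$, which after renaming $\varepsilon$ is the claim.

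The main obstacle I anticipate is the local-to-global step: producing the inequality $\|h\|_{L^6(\Tl)}^6 \le \left(\frac{4}{\pi^2}+\varepsilon\right)\|\partial_x h\|_{L^2}^2\|h\|_{L^2}^4 + K_\varepsilon\|h\|_{L^2}^6$ for mean-zero $h$ on $\Tl$ \emph{with the sharp constant and no $\lambda$-dependence} is the crux, because the naive partition-of-unity argument leaks a constant that depends on the number of pieces (hence on $\lambda$) unless one is careful to let the cutoffs overlap only boundedly and to use the mean-zero/Poincaré structure to kill the boundary contributions. The cleanest fix, which I would adopt, is the one already flagged in the paper's footnote to \eqref{GNinequalityHerr}: use the pointwise Poincaré inequality $|h(x)|^2 \le \left|\int_{\Tl} |h(y)|^2 dy\right| \cdot (\text{something}) + 2\int_{\Tl}|h||\partial_x h|\,dy$-type bound together with Hölder, or simply cite \cite[Lemma~4.1]{LRS88} directly for the mean-zero piece and then run the above splitting to handle the mean. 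Indeed, since the statement explicitly permits invoking the result of Lebowitz--Rose--Speer, the honest plan is: quote \eqref{GNinequalityR}-type sharp behavior from \cite{LRS88} for the torus, reduce to $\lambda=1$ by the scaling remark (Remark~\ref{rmk:japanesebracketwithparam} and the scaling discussion guarantee $\lambda$-uniformity), and present only the short decomposition argument above to confirm the constant $\frac{4}{\pi^2}+\varepsilon$ and independence of $\lambda$.
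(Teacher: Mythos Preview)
The paper itself does not supply a proof of this lemma; it simply cites \cite[Lemma~4.1]{LRS88}. Your ``honest plan'' at the end---quote the result from \cite{LRS88} on $\T$ and then pass to $\Tl$---is therefore exactly what the paper does (implicitly).

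That said, two comments are in order. First, the scaling reduction you began with and then abandoned actually works, and is the cleanest way to get $\lambda$-independence. If $g(x):=f(\lambda x)$ on $\T$, then
\[
\|f\|_{L^6(\Tl)}^6=\lambda\,\|g\|_{L^6(\T)}^6,\qquad
\|\partial_x f\|_{L^2(\Tl)}^2\|f\|_{L^2(\Tl)}^4=\lambda\,\|\partial_x g\|_{L^2(\T)}^2\|g\|_{L^2(\T)}^4,\qquad
\|f\|_{L^2(\Tl)}^6=\lambda^3\,\|g\|_{L^2(\T)}^6,
\]
so applying \eqref{GNinequalityT} on $\T$ and multiplying by $\lambda$ gives the inequality on $\Tl$ with error term $K_\varepsilon\lambda^{-2}\|f\|_{L^2(\Tl)}^6\le K_\varepsilon\|f\|_{L^2(\Tl)}^6$ for $\lambda\ge 1$. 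There was no subtlety to be careful about.

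Second, the middle part of your proposal---splitting $f=c+h$ and attempting a local-to-global argument for the mean-zero piece---is genuinely incomplete: you correctly identify the crux (getting the sharp constant $\frac{4}{\pi^2}$ for $h$ on $\Tl$ without $\lambda$-dependence) but never actually carry it out, and the partition-of-unity sketch you describe would not yield the sharp constant without substantial further work. Since the scaling reduction above dispatches the $\lambda$-dependence for free, this entire detour is unnecessary. The right proof here is: cite \cite{LRS88} for $\lambda=1$, then scale.
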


With similar arguments, one can adapt \eqref{GNinequalityRCGN} to $\Tl$ as well.

\begin{lemma}Let $\delta>0$. We have 
\begin{equation}
\label{GNinequalityTCGN}
\|f\|_{L^6(\Tl)} \leq C_{GN}\left(1+\frac{\delta}{5\pi\lambda}\right)^{\frac29} 
 \left(\|\partial_x f\|_{L^2(\Tl)}^2+  \frac{1}{\pi\lambda\delta}\|f\|_{L^2(\Tl)}^2\right)^{\frac{1}{18}} \|f\|_{L^4(\Tl)}^{\frac89} 
\end{equation}
for all $f\in H^1(\Tl)$. 
\end{lemma}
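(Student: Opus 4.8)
The plan is to deduce \eqref{GNinequalityTCGN} from the Euclidean inequality \eqref{GNinequalityRCGN} by a cutoff-and-averaging argument. Let $f\in H^1(\Tl)$, which we regard as a $2\pi\lambda$-periodic function on $\R$, and fix a cutoff $\psi\colon\R\to[0,1]$ with $\supp\psi=[0,2\pi\lambda]$, equal to $1$ on $[\rho,2\pi\lambda-\rho]$ and with a prescribed (say, piecewise-linear) profile on the two end intervals of length $\rho$, where $\rho$ is a small scale, comparable to $\delta$, to be fixed at the very end. For $t\in[0,2\pi\lambda)$ set $\psi_t:=\psi(\cdot-t)$ and $g_t:=\psi_t f\in H^1(\R)$; note that $g_t$ is supported in an interval of length $2\pi\lambda$.

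First I would record the elementary periodization identity: for nonnegative $\Phi\in L^1(\R)$ and $p\ge1$, splitting $\R$ into period intervals and applying Fubini gives
\[
\fint_0^{2\pi\lambda}\Big(\int_\R\Phi(x-t)\,|f(x)|^p\,dx\Big)\,dt=\frac{\|\Phi\|_{L^1(\R)}}{2\pi\lambda}\,\|f\|_{L^p(\Tl)}^p .
\]
Applying it with $(\Phi,p)=(\psi^6,6)$, $(\psi^2,2)$, $((\psi')^2,2)$ — and using that the cross term in $\|\partial_xg_t\|_{L^2(\R)}^2$ averages to zero (as $\int_\R\psi'\psi=0$) — yields
\[
\fint_0^{2\pi\lambda}\!\|g_t\|_{L^6(\R)}^6\,dt=\frac{\|\psi\|_{L^6(\R)}^6}{2\pi\lambda}\,\|f\|_{L^6(\Tl)}^6,\qquad
\fint_0^{2\pi\lambda}\!\|\partial_xg_t\|_{L^2(\R)}^2\,dt=\frac{\|\psi\|_{L^2}^2\,\|\partial_xf\|_{L^2(\Tl)}^2+\|\psi'\|_{L^2}^2\,\|f\|_{L^2(\Tl)}^2}{2\pi\lambda}.
\]
Moreover, since $0\le\psi\le1$ and $\supp\psi_t$ has length $2\pi\lambda$, one has the pointwise bound $\|g_t\|_{L^4(\R)}\le\|f\|_{L^4(\Tl)}$ for every $t$.

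Next I would raise \eqref{GNinequalityRCGN} applied to $g_t$ to the sixth power, $\|g_t\|_{L^6(\R)}^6\le C_{GN}^6\,\|\partial_xg_t\|_{L^2(\R)}^{2/3}\|g_t\|_{L^4(\R)}^{16/3}$, bound $\|g_t\|_{L^4(\R)}^{16/3}\le\|f\|_{L^4(\Tl)}^{16/3}$, average in $t$, and use Jensen's inequality (concavity of $s\mapsto s^{1/3}$) to move the average past the $\tfrac23$-power. Together with the two identities above this gives
\[
\|f\|_{L^6(\Tl)}^6\le\frac{(2\pi\lambda)^{2/3}\,\|\psi\|_{L^2}^{2/3}}{\|\psi\|_{L^6(\R)}^6}\,C_{GN}^6\,\|f\|_{L^4(\Tl)}^{16/3}\Big(\|\partial_xf\|_{L^2(\Tl)}^2+\frac{\|\psi'\|_{L^2}^2}{\|\psi\|_{L^2}^2}\,\|f\|_{L^2(\Tl)}^2\Big)^{1/3},
\]
and taking sixth roots produces an inequality of exactly the shape of \eqref{GNinequalityTCGN}.

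What remains is the purely one-dimensional calibration: choosing $\rho$ (in terms of $\delta$) and the profile of $\psi$ so that $\|\psi'\|_{L^2}^2/\|\psi\|_{L^2}^2\le(\pi\lambda\delta)^{-1}$ and $\big((2\pi\lambda)^{2/3}\|\psi\|_{L^2}^{2/3}/\|\psi\|_{L^6(\R)}^6\big)^{1/6}\le\big(1+\tfrac{\delta}{5\pi\lambda}\big)^{2/9}$, by computing $\|\psi\|_{L^2}^2$, $\|\psi'\|_{L^2}^2$, $\|\psi\|_{L^6(\R)}^6$ for the concrete profile. This is the only place the explicit constants $1/5$ and $1/(\pi\lambda\delta)$ enter (the computation being carried out for $\delta\le\pi\lambda$, so that $\rho$ fits inside a period; the range $\delta>\pi\lambda$ requires only cosmetic changes and is irrelevant for the intended application). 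I expect this last fitting of the cutoff — not any conceptual point — to be the main, if minor, obstacle.
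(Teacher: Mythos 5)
Your cutoff-and-averaging scheme is sound and does produce a Gagliardo--Nirenberg inequality of the right \emph{form}: the periodization identities are correct, the cross term in $\|\partial_x g_t\|_{L^2(\R)}^2$ does average to zero, the pointwise bound $\|g_t\|_{L^4(\R)}\le\|f\|_{L^4(\Tl)}$ is valid, Jensen goes in the right direction, and you correctly arrive at
\[
\|f\|_{L^6(\Tl)} \le C_{GN}\Big(\tfrac{(2\pi\lambda)^{2/3}\|\psi\|_{L^2}^{2/3}}{\|\psi\|_{L^6(\R)}^6}\Big)^{1/6}\Big(\|\partial_x f\|_{L^2}^2+\tfrac{\|\psi'\|_{L^2}^2}{\|\psi\|_{L^2}^2}\|f\|_{L^2}^2\Big)^{1/18}\|f\|_{L^4}^{8/9}.
\]
The gap is in the step you postponed: the calibration of the cutoff cannot produce the stated constants, and this is not a minor bookkeeping issue but a provable obstruction. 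Write $L=2\pi\lambda$, $V=L-\|\psi\|_{L^6}^6$, $a=L-\|\psi\|_{L^2}^2$, $K=\|\psi'\|_{L^2}^2$. Raising the two conditions you need to the $18$th power and expanding for $\delta/L$ small, they become, to leading order, $3V-a\le\tfrac{8\delta}{5}$ and $K\le\tfrac{2}{\delta}$. Writing $\psi$ on the rise interval $[0,\rho]$ as a profile $\phi$ with $\phi(0)=0$, $\phi(\rho)=1$, $0\le\phi\le1$, one has $3V-a=2\int_0^\rho W(\phi)\,dx$ with $W(\phi)=2+\phi^2-3\phi^6=(1-\phi^2)(2+3\phi^2+3\phi^4)\ge0$, and $K=2\int_0^\rho|\phi'|^2dx$. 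By Cauchy--Schwarz,
\[
(3V-a)\,K \;\ge\; 4\Big(\int_0^\rho\sqrt{W(\phi)}\,|\phi'|\,dx\Big)^2 \;\ge\; 4\Big(\int_0^1\sqrt{2+\phi^2-3\phi^6}\,d\phi\Big)^2 \;\approx\; 7.4,
\]
with equality in the first step iff $\phi'\propto\sqrt{W(\phi)}$; and this lower bound is \emph{independent of $\rho$ and of the profile}. But the two conditions above force $(3V-a)K\le\tfrac{8\delta}{5}\cdot\tfrac{2}{\delta}=\tfrac{16}{5}=3.2$. Since $7.4>3.2$, no cutoff $\psi$ supported on a single period and bounded by $1$ can satisfy both; in particular your piecewise-linear choice gives $(3V-a)K=\tfrac{80}{7}\approx11.4$. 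In plain terms: if you calibrate $\rho$ so that $\|\psi'\|_{L^2}^2/\|\psi\|_{L^2}^2=1/(\pi\lambda\delta)$, the prefactor you obtain is $(1+c\,\delta/(\pi\lambda))^{2/9}$ with $c\gtrsim 0.43$ for the optimal profile (and $c=10/21\approx0.48$ for the linear one), not $c=1/5$. So the ``fitting of the cutoff'' you set aside is in fact impossible within this method, and the lemma as stated cannot be reached this way.

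Two further remarks. First, your method \emph{does} prove a correct statement of the same shape with a larger numerical constant in the $(1+\cdot)^{2/9}$ factor, and that weaker statement suffices for the role this lemma plays in Section~3 (where only the $\lambda\to\infty$ limiting behaviour is used); but that is a different lemma from the one you were asked to prove. Second, the paper does not prove this lemma either: it cites \cite{MosincatOh2015} and says the argument is a ``slight modification,'' so the reference's method---not reproduced here---is presumably not cutoff-and-averaging. Before claiming the stated constants, you would need to consult that source or find an argument that avoids paying the $\|\psi'\|_{L^2}^2$ cost.
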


The proof is a slight modification of \cite[Lemma~2.2]{MosincatOh2015} 
which used the $L^4$-norm rather than the $L^2$-norm of $f$ in the first factor on the right hand side above. 

%\section{The gauge transform and the $I$-operator}

\subsection{The gauge transformation on $\Tl$} 
Following Herr  \cite{HerrIMRN06, HerrThesis}, we consider
\begin{equation}
\label{defnofG}
\mathcal{G}_{\beta}: L_x^2(\Tl)\to L_x^2(\Tl)\quad,\quad 
\mathcal{G}_{\beta}(f)(x):= e^{-i\beta \mathcal{J}(f)(x)} f(x)\quad,
\end{equation}
where $\mathcal{J}(f)$ is the mean-zero antiderivative of $|f|^2-\mu(f)$, i.e. 
\begin{equation}
\label{defnofJ}
\mathcal{J}(f)(x):= \frac{1}{2\pi\lambda} \int_0^{2\pi\lambda} \int_{\theta}^x 
   				|f(y)|^2  - \mu[f] \,dy\,d\theta
\end{equation}
and 
\begin{equation}
\label{defnofmuf}
\mu[f]:=\frac{1}{2\pi\lambda} \|f\|_{L^2(\T_{\lambda})}^2 .
\end{equation}
Note that $|\mathcal{G}_{\beta}f| =|f|$ and therefore $\mu[\mathcal{G}_{\beta}f]=\mu[f]$; moreover 
$\mathcal{G}_{\beta}$  is inverted by $\mathcal{G}_{-\beta}$. 

Setting $w(t,x)=\mathcal{G}_{\beta}(u(t))(x)$,  
the derivative nonlinear Schr\"{o}dinger equation \eqref{DNLS} becomes 
\begin{gather}
\label{gDNLSw}
\begin{split}
i\partial_t w +\partial_x^2 w -2i\beta\mu[w] \,\partial_x w =& \,  
2i(1-\beta) |w|^2\partial_x w +i(1-2\beta)w^2\partial_x \overline{w} + \beta\mu[w] |w|^2w \\
& \ +(\frac{\beta}{2}-\beta^2)|w|^4w-\psi[w]w ,
\end{split}
\end{gather}
where 
\begin{equation}
\label{defnofpsi}
\psi[w]:= \frac{\beta}{2\pi\lambda} \int_{\Tl} \left(2\Im(w\overline{w}_x) + (\frac32-2\beta)|w|^4\right)dx + \beta^2\mu[w]^2 .
\end{equation}
Correspondingly, the momentum and energy functionals are
\begin{align}
\label{defn:Psubnu}
P[\mathcal{G}_{-\beta}(w)]&= \int_{\Tl}\left(\Im(w\overline{w}_x) +(\frac12-\beta)|w|^4 \right)dx + \beta \mu[w]M[w] 
 =:P_{\beta}[w] ,\\
\notag
E[\mathcal{G}_{-\beta}(w)]&= 
\int_{\Tl}\left(|w_x|^2 +(\frac32-2\beta)|w|^2\Im(w\overline{w}_x) +(\beta^2-\frac32\beta+\frac12)|w|^6 \right)dx\\
&\qquad +\frac{\beta}{2}\mu[w]\|w\|_{L_x^4}^4 
  +2\beta\mu[w]P_{\beta}[w] - \beta^2\mu[w]^2M[w]=:E_{\beta}[w].
\label{defn:Esubnu}
\end{align}
We point out that in the periodic setting, 
the terms coupled with $\mu[w]$ and $\psi[w]$ 
are new terms when comparing \eqref{gDNLSw} to the Euclidean setting.  

We can eliminate the auxiliary linear term on the left hand side of \eqref{gDNLSw} 
by  the translation transformation
\begin{equation}
\label{defn:transloperator}
w(t,x)\mapsto v(t,x+2\beta\mu[w]t).
\end{equation}
%The functionals $\mu[\cdot], M[\cdot], P_{\beta}[\cdot], E_{\beta}[\cdot]$ are invariant under this transformation; however 
%since $M[w]$ and $P_{\beta}[w]$ are constant in time for sufficiently smooth solutions $u$ to \eqref{DNLS}, 
%it suffices to consider 
%\begin{align}
%\label{defn:Psupnu}
%P^{\beta}[w]&:= \int_{\Tl}\left(\Im(w\overline{w}_x) +(\frac12-\beta)|w|^4 \right)dx ,\\
%\label{defn:Esupnu}
%E^{\beta}[w]&:= 
%\int_{\Tl}\left(|w_x|^2 +(\frac32-2\beta)|w|^2\Im(w\overline{w}_x) +(\beta^2-\frac32\beta+\frac12)|w|^6 \right)dx 
% +\frac{\beta}{2}\mu[w]\|w\|_{L_x^4}^4 .
%\end{align}
Correspondingly, we introduce the gauge transformation of spacetime functions  
\begin{equation}
\label{gaugetransfwithtranslation}
\mathcal{G}^{\beta}: C_t^0L_x^2(J\times\Tl)\to C_t^0L_x^2(J\times\Tl)\ ,\ 
\mathcal{G}^{\beta}(u)(t,x) := \mathcal{G}_{\beta}(u(t))(x-2\beta\mu[u(t)] t) .
\end{equation}

For the local well-posedness theory, 
it is necessary to use the gauge parameter $\beta=1$ 
so that the ``bad'' nonlinear term %for the local well-posedness theory 
$|w|^2\partial_x w$ in \eqref{gDNLSw} is elliminated. 
Hence, in the sequel, we consider the equation on $\Tl$ corresponding to this gauge choice, namely
\begin{equation}
\label{g1DNLS}
%\tag{$\mathcal{G}^1$DNLS}
i\partial_t v + \partial_x^2 v = -i v^2\partial_x \overline{v} -\frac12 |v|^4v +\mu[v]|v|^2v -\psi[v]v ,
\end{equation}
where we recall that $\mu[v]=\frac{1}{2\pi\lambda}\|v\|_{L^2(\Tl)}^2$ and 
\begin{equation}
\label{defnofpsinu1}
\psi[v]:= \frac{1}{2\pi\lambda} \int_{\Tl} \left(2\Im(v\overline{v}_x) -\frac12|v|^4\right)dx + \mu[v]^2 .
\end{equation}
%with the corresponding energy functional 
%%(i.e. for $\beta=1$, as defined in \eqref{trimmedHandE}) 
%\begin{equation}
%\label{gaugedEnergy}
%E[v] = \int_{\T_{\lambda}}\left( |v_x^2| -\frac{1}{2} |v|^2\Im(v\bar{v}_x) +\frac{1}{2}\mu(v) |v|^4 \right)dx .
%\end{equation}
\begin{remark} 
The nonlinearity of \eqref{g1DNLS} can be written in the form 
$\mathcal{N}=\widetilde{\mathcal{T}}+\widetilde{\mathcal{Q}}$ by grouping terms as follows:
%of the nonlinearity into \emph{cubic} and \emph{quintic parts}: 
\begin{align}
\label{defnofmathcalT}
\widetilde{\mathcal{T}}(v):=& -i \left(v\overline{v}_x -2i \fint_{\Tl} \Im(v\overline{v}_x)\,dx\right)v ,\\
\label{defnofmathcalQ}
\widetilde{\mathcal{Q}}(v):=& 
 -\frac12 \left(|v|^4 -\fint_{\Tl} |v|^4dx \right) + \fint_{\Tl}|v|^2dx \left(|v|^2 -  \fint_{\Tl}|v|^2dx\right)v.
\end{align}
%However, we do not use this formulation of \eqref{g1DNLS}.
An important observation here is that, on the Fourier side, 
by using the inclusion-exclusion principle we can write
\begin{equation}
\label{freqrestr}
\widehat{\widetilde{\mathcal{T}}(v)}(k) = \frac{1}{(2\pi\lambda)^2} 
\sum_{\substack{k=k_{123}\\ k\neq k_1, k_3}} k_2 \widehat{v}(k_1)\widehat{\overline{v}}(k_2)\widehat{v}(k_3) 
- \frac{1}{(2\pi\lambda)^2} k \widehat{v}(k)\widehat{\overline{v}}(-k)\widehat{v}(k) .
\end{equation}
It was made clear in \cite{GrunrockHerr} that 
the above frequency cancelations are essential 
in establishing the estimate that deals with the derivative-cubic nonlinearity 
in the scale  of Fourier-Lebesgue spaces $\mathcal{F}L^{s,r}(\T)$. 
However, in  Sobolev spaces $H^s(\T)$, for $s\geq\frac12$, 
one can handle the cubic-derivative term 
$u^2\partial_x{\overline{u}}$ without the frequency cancelations 
(see Lemma~\ref{HerrsLWPestimates} below due to Herr \cite{HerrIMRN06}). 
In \eqref{defnofmathcalQ}, 
the coefficients of the subtracted terms do not allow symmetrization,   
hence we do not have useful frequency cancelations. 
\end{remark}

The following lemma provides the continuity properties 
of the gauge transformation that  
allow to satisfactorily transfer the well-posedness results 
between various versions of \eqref{gDNLS}, including \eqref{DNLS} itself. 
We note that in order to have the Lipschitz continuity of $\mathcal{G}^{\beta}$ 
(rather than of $\mathcal{G}_{\beta}$) one needs to fix the $L^2$-norm of the functions at all times $t$.%, i.e. $\mu[u(0)]=\mu_0$. 

\begin{lemma}{\cite[Lemma~2.3]{HerrIMRN06}}$ $
\label{lemma:gaugecontinuity}
\noi
Let $s,r,\mu_0\geq 0$, $T>0$. There exists $c=c(r,s,\lambda)>0$ such that: 
\begin{enumerate}
\item If $f,g\in B_r:=\{f\in H^s(\Tl) : \|f\|_{H^s(\Tl)} \leq r \}$, then 
\begin{equation}
\|\mathcal{G}_{\beta}(f) - \mathcal{G}_{\beta}(g) \|_{H^s(\Tl)} \leq c \|f-g\|_{H^s(\Tl)}.
\end{equation}
\item If $u,v\in B^{r,\mu_0}$, where 
$$B^{r,\mu_0}:=\{u\in C([-T,T]; H^s(\Tl)) : \|u\|_{L_t^{\infty}H_x^s} \leq r , \mu[u(t)]=\mu_0 \text{ for all }t\in[-T,T] \} ,$$ 
then
\begin{equation}
\label{continuityofGsuperscriptnu}
\|\mathcal{G}^{\beta}(u)(t) - \mathcal{G}^{\beta}(v)(t) \|_{H^s(\Tl)} \leq c \|u(t)-v(t)\|_{H^s(\Tl)} 
\end{equation}
for all $t\in [-T,T]$.
\end{enumerate}
\end{lemma}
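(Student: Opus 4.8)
The plan is to prove the two continuity estimates directly from the definitions \eqref{defnofG}--\eqref{defnofmuf} and \eqref{gaugetransfwithtranslation}, exploiting that $|\mathcal{G}_\beta f| = |f|$ and that $\mathcal{J}(f)$ depends quadratically on $f$. For part (1), I would write
\begin{equation*}
\mathcal{G}_{\beta}(f) - \mathcal{G}_{\beta}(g) = \bigl(e^{-i\beta\mathcal{J}(f)} - e^{-i\beta\mathcal{J}(g)}\bigr)f + e^{-i\beta\mathcal{J}(g)}(f-g),
\end{equation*}
estimate each term in $H^s(\Tl)$, and reduce matters to controlling $\|e^{-i\beta\mathcal{J}(f)} - e^{-i\beta\mathcal{J}(g)}\|_{C^1}$ type quantities together with algebra-property bounds for $H^s$. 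First I would record that $\mathcal{J}$ maps $B_r$ into a bounded set of $C^2(\Tl)$ (indeed, by \eqref{defnofJ}, $\partial_x\mathcal{J}(f) = |f|^2 - \mu[f]$ up to the mean-zero normalization, and $|f|^2$ is controlled in $H^s$ by the algebra property since $s\ge 0$ — though here one also needs an $L^\infty$ bound, so the argument really runs through $H^1$ or uses Sobolev embedding; for $s<\tfrac12$ this is where one must be slightly careful and instead bound $\|\mathcal{J}(f)\|_{W^{1,\infty}}$ via $\||f|^2\|_{L^\infty}$, which is \emph{not} bounded by $\|f\|_{H^s}$ when $s\le\tfrac12$). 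The cleaner route, and I suspect the one intended, is to prove the estimate first for $s$ large (say $s\ge 1$) where everything is an algebra and $\mathcal{J}(f)\in C^\infty$, and then observe the estimate is needed in the paper only for the regularities at hand; alternatively one interpolates. In any case the core inequality is the pointwise bound $|e^{ia}-e^{ib}|\le |a-b|$ and the Lipschitz-in-$f$ bound $\|\mathcal{J}(f)-\mathcal{J}(g)\|_{L^\infty}\lesssim (\|f\|+\|g\|)\|f-g\|$, which follows from $|f|^2 - |g|^2 = \Re\bigl((f-g)\overline{(f+g)}\bigr)$ and Hölder.

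For part (2), the new ingredient beyond part (1) is the spatial translation by $2\beta\mu[u(t)]t$ built into $\mathcal{G}^{\beta}$. Here I would use crucially the hypothesis $\mu[u(t)] = \mu[v(t)] = \mu_0$ for all $t$, so that \emph{the two functions are translated by exactly the same amount}; since translation is an isometry on $H^s(\Tl)$, it commutes with the estimate and contributes nothing. Concretely, writing $\tau_a$ for translation by $a$,
\begin{equation*}
\mathcal{G}^{\beta}(u)(t) - \mathcal{G}^{\beta}(v)(t) = \tau_{-2\beta\mu_0 t}\bigl(\mathcal{G}_{\beta}(u(t)) - \mathcal{G}_{\beta}(v(t))\bigr),
\end{equation*}
and applying part (1) pointwise in $t$ with $r = \|u\|_{L^\infty_tH^s_x}\vee\|v\|_{L^\infty_tH^s_x}$ gives \eqref{continuityofGsuperscriptnu} immediately. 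If the masses were allowed to differ, one would have to estimate $\|\tau_{a}h - \tau_{b}h\|_{H^s}$ in terms of $|a-b|$, which costs a derivative and would force a loss of regularity — this is exactly the obstruction the remark preceding the lemma alludes to, and why the mass must be frozen.

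The main obstacle is the low-regularity bookkeeping in part (1): controlling $\mathcal{J}(f)-\mathcal{J}(g)$ and the phase difference in $H^s$ when $s$ is small requires either (i) passing through an $L^\infty$ (or $C^1$) bound on $\mathcal{J}$, which is legitimate since $\mathcal{J}(f)$ involves only $|f|^2$ integrated twice and one can invoke $\||f|^2\|_{L^1}=\|f\|_{L^2}^2$ to get $\mathcal{J}(f)\in C(\Tl)$ with $\|\mathcal{J}(f)\|_{L^\infty}\lesssim \|f\|_{L^2}^2$, hence $\|\mathcal{J}(f)\|_{W^{1,\infty}}$ needs $\||f|^2\|_{L^\infty}$ which we do \emph{not} have below $H^{1/2}$ — so one instead keeps $\partial_x\mathcal{J}(f) = |f|^2-\mu[f]$ only in $L^1$ and argues the $H^s$ bound for the phase factor by a paraproduct/commutator estimate, or (ii) simply cites that this is \cite[Lemma~2.3]{HerrIMRN06} and reproduces Herr's argument. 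Given that the statement is quoted verbatim from \cite{HerrIMRN06}, I expect the ``proof'' to be a short paragraph reducing (2) to (1) via the mass-freezing observation and pointing to \cite{HerrIMRN06, HerrThesis} for the estimate (1) itself.
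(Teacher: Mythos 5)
The paper gives no proof of this lemma at all: it is stated as a citation of \cite[Lemma~2.3]{HerrIMRN06} and used as a black box, so there is nothing in the paper's ``proof'' to compare against. You correctly anticipate this at the end of your write-up.

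That said, the substantive content of your proposal is sound where it matters. Your reduction of part~(2) to part~(1) is exactly the right observation: since $\mu[u(t)]=\mu[v(t)]=\mu_0$ is imposed, the two gauge-transformed functions are shifted by the identical amount $2\beta\mu_0 t$, translation is an isometry on $H^s(\Tl)$, and \eqref{continuityofGsuperscriptnu} follows from part~(1) applied at each fixed $t$. This is the whole reason the class $B^{r,\mu_0}$ fixes the mass, and it is the point the surrounding discussion in the paper (and \cite[Theorem~3.1.1.(ii)]{HerrThesis}) is making. Your decomposition for part~(1) is the standard one, and your caution is well placed: for $0\le s<\tfrac12$ one cannot run the argument through a $W^{1,\infty}$ bound on $\mathcal{J}(f)$, since $\partial_x\mathcal{J}(f)=|f|^2-\mu[f]$ is only controlled in $L^1$ by $\|f\|_{L^2}^2$. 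Herr's actual proof handles this carefully; the key inputs are that $\mathcal{J}(f)$ is bounded (indeed $\|\mathcal{J}(f)\|_{L^\infty}\lesssim_\lambda\|f\|_{L^2}^2$ from the $W^{1,1}$-bound) and that multiplication by $e^{-i\beta\mathcal{J}(f)}$ can be shown bounded and Lipschitz on $H^s$ for all $s\ge0$ by a more delicate argument than the fractional Leibniz rule with an $L^\infty$ endpoint. Since the present paper cites rather than reproves this, you need not reconstruct that argument; your note that one should ``cite \cite{HerrIMRN06,HerrThesis} for the estimate~(1)'' is in fact what the paper does for the entire lemma.
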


We recall that the local well-posedness theory for \eqref{g1DNLS} 
via a fixed point argument in the space $Z^1$
was developed in \cite{HerrIMRN06, HerrThesis}
(see the estimates in Lemma~\ref{HerrsLWPestimates} below). 
Therefore, in order to get Theorem~\ref{thm1}, 
we aim to prove that the $H^s$-solutions of \eqref{g1DNLS} exist globally in time in the following sense:  
\begin{proposition}
\label{prop:GWPofg1DNLS}
Let $\frac12\leq s<1$ and $v_0\in H^s(\T)$ with $M[v_0]<4\pi$. Then 
for any $\varepsilon>0$, there exists $c=c(\|v_0\|_{H^s(\T)}, M[v_0],\varepsilon)<\infty$ 
such that for all $T>0$, the solution $v$ of \eqref{g1DNLS} with $v(0)=v_0$ satisfies
$$\sup_{0\leq t\leq T} \|v(t)\|_{H_x^s(\T)} \leq c (1+ T)^{2-2s+\varepsilon}.$$
\end{proposition}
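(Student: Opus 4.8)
The plan is to run the $I$-method for the periodic gauge equivalent equation \eqref{g1DNLS} in the quantitative form that yields polynomial-in-time growth of $\|v(t)\|_{H^s_x(\T)}$. Since the conserved quantities of \eqref{g1DNLS} live at regularities $L^2$, $H^{1/2}$, and $H^1$, and we are below $H^1$, we smooth the solution with the Fourier multiplier $I=I_N$ so that $Iv$ has $H^1$-like behaviour; the modified energy is then $\mathcal{E}^1[v]:=E_1[Iv]$ (with $E_1$ the energy functional of \eqref{g1DNLS} as in \eqref{defn:Esubnu} with $\beta=1$), and we also carry along the modified momentum $P_1[Iv]$. Because at $s=1/2$ the naive modified energy does not close, we need to add correction terms: following the third-generation energy of Miao--Wu--Xu as implemented here (the functional $\mathcal{E}^3[v]$ from \eqref{defnofE3} built by adding two multiplier corrections to $\mathcal{E}^1$, with the resonant decomposition from Section~\ref{sect:modifiedenergy}), we will have at our disposal the almost-conservation statements of Section~\ref{Section:AlmostConservationEstimates} (increment of $\mathcal{E}^3$ over a unit time interval is $\lesssim N^{-\alpha}$ for some $\alpha>1$, uniformly in $\lambda$ in the relevant regime $1\le\lambda\lesssim N$) and the comparison statement of Section~\ref{Sect:AlmostconservedEandP} ($\mathcal{E}^3[v]=\mathcal{E}^1[v]+O(\text{small})$), together with the almost conservation of the modified momentum.

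Concretely, first I would rescale via \eqref{naturalscaling}: replace $v$ by $v^\lambda$ on $\Tl$, choosing $\lambda=\lambda(N)$ (a power of $N$) so that the initial modified energy is normalized, say $\mathcal{E}^1[Iv^\lambda(0)]\lesssim 1$; this uses the coercivity/control-of-$\dot H^1$ results of Section~\ref{Sect:coercivity}, i.e. Lemma~\ref{EnergyscriptEcontrolshomogH1} and its momentum-augmented refinement under $M[v_0]<4\pi$ (this is exactly where the $4\pi$ threshold and the periodic Gagliardo--Nirenberg inequalities \eqref{GNinequalityT}, \eqref{GNinequalityTCGN} enter), to turn a bound on $\mathcal{E}^1$ plus $P_1$ plus the conserved mass into a bound on $\|Iv^\lambda\|_{H^1}$ and hence on $\|v^\lambda\|_{H^s}$. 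Then I would iterate the modified local well-posedness result of Section~\ref{sect:proofofLWPofIsystem} on time steps of length $\sim 1$: on each step the solution of the $I$-system exists and satisfies the $Z^1$-bounds needed to feed the almost-conservation estimate, so after $\sim T\lambda^2$ such steps the modified energy has grown by at most $(T\lambda^2)\cdot N^{-\alpha}$, which stays $O(1)$ as long as $T\lambda^2 N^{-\alpha}\lesssim 1$; together with the almost-conserved momentum (whose increment is similarly summable) this keeps $\|Iv^\lambda(t)\|_{H^1}\lesssim 1$ throughout $[0,T\lambda^2]$. Undoing the scaling converts this into $\sup_{0\le t\le T}\|v(t)\|_{H^s_x(\T)}\lesssim \lambda^{1-s}$, and optimizing the constraint $T\lambda^2 N^{-\alpha}\lesssim 1$ in $N$ (with $\lambda$ a fixed power of $N$) produces exactly the exponent $2-2s+\varepsilon$; the $\varepsilon$ absorbs the logarithmic losses from the bilinear Strichartz estimate \eqref{interpbilinearest} and the $L^6$-Strichartz estimate.

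The bookkeeping I expect to be routine once the ingredients are in place: the scaling identities for $X^{s,b}$ and $L^p_tL^q_x$ norms recorded in Section~\ref{Section2}, the linear estimates \eqref{linearhomogest}--\eqref{linearinhomogest}, Lemma~\ref{HerrsLWPestimates} for the multilinear bounds, and the interpolation lemma for $I$ are all cited or proved earlier; combining them to propagate the $Z^1$-norm on each unit step and to sum the energy increments is standard $I$-method arithmetic. The main obstacle is making the almost-conservation estimate for $\mathcal{E}^3$ genuinely close at the endpoint $s=1/2$ with a power $N^{-\alpha}$, $\alpha>1$, uniformly in $\lambda$: this forces the two-step correction and the delicate resonant decomposition, and it is where one must exploit the precise nonlinear structure $-iv^2\partial_x\overline v$ of \eqref{g1DNLS} (its frequency cancelations, cf. \eqref{freqrestr}) together with the $\lambda^{-1/2}$ gain from the revised bilinear Strichartz estimate of Lemma~\ref{bilinearL2Strichartz} to beat the high-frequency resonant interactions; controlling the multiplier bounds on the singular resonant set and handling the extra $\mu[v]$- and $\psi[v]$-dependent lower-order terms peculiar to the periodic equation is the technical heart of the argument. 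A secondary, milder obstacle is the consistency required by the translation operator in the gauge transformation (Lemma~\ref{lemma:gaugecontinuity}): the global bound must be stated on bounded subsets of $H^s(\T)$ with fixed mass, which is automatic here since $M[v_0]$ is conserved and appears in the constant $c$.
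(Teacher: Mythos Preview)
Your outline follows the paper's own proof in Section~\ref{sect:proofofGWPg1DNLS} essentially step for step: rescale by $\lambda=N^{(1-s)/s}$, iterate the modified local theory (Proposition~\ref{prop:LWPIsyst}), use Proposition~\ref{prop:slowlyvaryingincremsofE3} for the increment of $\mathcal{E}^3$, Proposition~\ref{prop:mathcalE3isclosetomathcalEI} to compare $\mathcal{E}^3$ with $\mathcal{E}[Iv]$, Proposition~\ref{prop:mathcalPIstaysclosetoP} for the momentum, and Lemma~\ref{lem:controlofdotH1wEandP} for coercivity under $M<4\pi$.

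Two points of bookkeeping deserve correction. First, the increment bound is not merely ``$\lesssim N^{-\alpha}$ for some $\alpha>1$, uniformly in $\lambda$'': the paper proves $|\mathcal{E}^3[v(\delta)]-\mathcal{E}^3[v(0)]|\lesssim N^{-3/2+}\lambda^{-1+}$, and the $\lambda^{-1+}$ factor is essential. At $s=1/2$ one has $\lambda=N$, so after $J\sim\lambda^2T$ steps the accumulated error is $\sim T\lambda^2 N^{-3/2+}\lambda^{-1+}=TN^{-1/2+}$; a bound $N^{-\alpha}$ with only $\alpha>1$ and no $\lambda$-decay would give $TN^{2-\alpha}$, which does not close unless $\alpha>2$. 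The bilinear estimate of Lemma~\ref{bilinearL2Strichartz} supplies this $\lambda$-gain, as you note later, but your summary constraint ``$T\lambda^2 N^{-\alpha}\lesssim1$'' should read $T\lambda^2 N^{-\gamma}\lambda^{-\kappa}\lesssim1$ with $\gamma=\tfrac32-$, $\kappa=1-$. Second, undoing the scaling gives $\|v(t)\|_{H^s(\T)}\lesssim \lambda^{s}\|Iv^\lambda\|_{H^1(\Tl)}\lesssim N^{1-s}$, not $\lambda^{1-s}$; with $N\sim T^{2+}$ this yields the stated exponent. Finally, the momentum is not controlled by summing increments: Proposition~\ref{prop:mathcalPIstaysclosetoP} compares $\mathcal{P}[Iv(t)]$ directly to the conserved $\mathcal{P}[v(t)]=\mathcal{P}[v_0]$ at each time, so no accumulation occurs there. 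The frequency cancellations \eqref{freqrestr} are not actually used in the $H^s$ theory here (they matter for $\mathcal{F}L^{s,r}$); the relevant structural input is rather the refined multiplier bounds of Lemmas~\ref{pwestsM4}--\ref{pwestsM6}.
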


Since the equation \eqref{g1DNLS} has the time reversibility symmetry 
$v(t,x)\mapsto \overline{v(-t,-x)}$ 
and the $L_x^2$-norm is conserved along the evolution, 
the above result 
implies that the $H_x^s$-norm of any solution $v$ of \eqref{g1DNLS} does not blow up in finite time.

\subsection{The $I$-operator}
\label{subsect:theIoperator}
For $0\leq s<1$ and $N\gg1$ a fixed dyadic number, 
we define the Fourier multiplication operator\footnote{
The operator $I$ depends on the regularity index $s$ and 
the parameters $N$ and $\lambda$, 
but we choose to omit  writing them as indices of $I$ whenever possible. 
However, in Lemma~\ref{CKSTTlemmaforIoperator} it becomes necessary to point them out explicitly.}
\begin{equation}
\label{defnofIoperator}
I: H^s(\T_{\lambda})\to H^1(\T_{\lambda})\ ,\ 
\widehat{If}(k) = m(k) \widehat{f}(k)\ , \ 
k\in\Zl
\end{equation}
where $m:\R \to (0,1]$ is an even, smooth, non-increasing function on $[0,\infty)$, 
chosen such that 
%$$m(\xi)\sim \begin{cases} 
%1 &,\text{ if } |\xi| \lesssim 1\\
%\left(\frac{N}{|\xi|}\right)^{1-s} &,\text{ if } |\xi|\gg 1
%\end{cases}$$
\begin{align*}
m(\xi)=
\begin{cases} 1 &\text{, if }  |\xi| \leq N \\%\text{ and }\\
\left(\frac{N}{|\xi|}\right)^{1-s} &\text{, if } |\xi|\geq 2N
\end{cases}
\end{align*}
(and a smooth interpolant  for $|\xi|$ between $N$ and $2N$). 
Furthermore, for any $s\geq \frac12$, 
the Fourier multiplier $m(\,\cdot\,)$ can be chosen such that it satisfies the monotonicity property
\begin{equation}
\label{nondecrasingpropofmsgeq12}
\xi\mapsto m(\xi)\xi^{\frac12}  \text{ is non-decreasing on $[0,\infty)$.}
\end{equation}
%\begin{align}
%\label{nondecrasingpropofmsgeq12}
%&\text{(i) for } s\geq \frac12,\  \xi\mapsto m(\xi)^2\xi 
% \text{ is non-decreasing; and}\\
%\label{nondecrasingpropofmsgeq0}
%&\text{(ii) for } s\geq 0,\  \xi\mapsto m(\xi)\xi 
% \text{ is non-decreasing.}
%\end{align}
%the map $\xi\mapsto m(\xi)^2\xi^2$ is increasing on $(0,\infty)$ and 
%$\xi\mapsto m(\xi)^2\xi$ is increasing for $\xi\gg N$. Hence, for regularities $\frac12\leq s<1$, we have
%\begin{equation}
%m(\xi)^2\langle\xi\rangle \gtrsim 1.
%\end{equation}
%This indeed holds since for $|\xi|\lesssim N$ we have $m(\xi)\sim 1$ and $\langle\xi\rangle \geq 1$, while 
%for $|\xi|\gg N$ we have $\langle\xi\rangle \sim |\xi|$ and  $m(\xi)^2|\xi|= 
One easily checks that, for any $0\leq \theta <1$, we have 
\begin{equation}
\label{symbol:lowbound}
m(k)\langle k \rangle^{1-\theta} \gtrsim 
\begin{cases}
N^{1-\theta} &, \text{ if } |k|\gg N\\
1 &, \text{ if } |k|\lesssim N
\end{cases}
\end{equation}
in the regularity range $\theta\leq s< 1$, with implicit constants independent of $\lambda$. 
%\begin{enumerate}
%\item[(i)] $m(k)\langle k \rangle^{\frac12} \gtrsim 1$ 
% for all $k\in\Zl$;
%\item[(ii)] $m(k)\langle k \rangle^{\frac12} \gtrsim N^{\frac12}$ for $|k|\gtrsim N$.
%\end{enumerate}

We note that $I$ behaves like the identity operator on frequencies smaller than $N$ and integrates of order $1-s$ on frequencies much bigger than $N$. 
Indeed, 
$$\sum_{k\lesssim N} \langle k\rangle^{2s} |\widehat{u}(k)|^2 \lesssim 
   \sum_{k\lesssim N} \langle k\rangle^2 m(k)^2 |\widehat{u}(k)|^2 \lesssim 
   N^{2(1-s)} \sum_{k\lesssim N} \left(\frac{\langle k \rangle}{N}\right)^{2-2s} \langle k\rangle^{2s} m(k)^2 |\widehat{u}(k)|^2$$
and 
$$ 
\sum_{k\gg N} \langle k\rangle^{2}  \frac{1}{\langle k \rangle^{2-2s}}  |\widehat{u}(k)|^2
\lesssim 
   \sum_{k\gg N} \langle k\rangle^2 m(k)^2 |\widehat{u}(k)|^2 \lesssim  
   \sum_{k\gg N} \langle k\rangle^{2}  \left(\frac{N}{\langle k \rangle}\right)^{2-2s}  |\widehat{u}(k)|^2\ .$$
Therefore,  we have
\begin{equation}
\label{smoothingpropofI}
 \|u\|_{H^s(\Tl)} \lesssim \| I u \|_{H^1(\Tl)}\lesssim N^{1-s} \|u\|_{H^s(\Tl)} , 
\end{equation}
as well as
\begin{equation}
\label{smoothingpropofIhomog}
 \|u\|_{\dot{H}^s(\Tl)} \lesssim \| I u \|_{\dot{H}^1(\Tl)}\lesssim N^{1-s} \|u\|_{\dot{H}^s(\Tl)} .
\end{equation}
%Using the interaction representation of the $X^{s,b}$-norm, we can write \eqref{smoothingpropofI} in the form 
%\begin{equation}
%\label{smoothingpropofI2}
%%\tag{\ref{smoothingpropofI}'}
%\|u\|_{X^{s,b}} \lesssim \|Iu\|_{X^{1,b}} \lesssim N^{1-s} \|u\|_{X^{s,b}}
%\end{equation}

\subsection{Multilinear forms}
As in \cite{CKSTT, CKSTTrefined,deSilva2007,MiaoWuXu}, we use the shorthand notations 
$k_{12\ldots n}:=k_1+k_2+\ldots +k_n$, $k_{1-2}:=k_1-k_2$, etc., as well as  
$m_j:=m(k_j), m_{jh}:=m(k_{jh})$, etc. 
Also, we set
$$\Gamma_n(\Tl):=\{\mathbf{k}=(k_1,\ldots, k_n)\in (\Zl)^n : k_{12\ldots n}=0\}$$ 
and endow it with the Dirac measure $\delta_0(k_1+k_2+\ldots +k_n)$. 

For $n$ even integer, we define the $n$-linear form of $f_1,\ldots f_n:\Tl\to\C$ 
associated to the multiplier $M_n:\R^n\to\C$ by 
$$\Lambda_n(M_n;f_1,\ldots,f_n):=%\frac{1}{(2\pi\lambda)^{n-1}} \sum_{\substack{k_1,...,k_n\in\Zl\\k_{12...n}=0}} 
\int_{\Gamma_n(\Tl)} M_n(k_1,k_2,...,k_n)\prod_{j=1}^n \widehat{f_j}(k_j) $$
and the shorthand $\Lambda_n(M_n;f):=\Lambda_n(M_n; f,\overline{f},\ldots,f,\overline{f})$. 
For example, we can write 
\begin{align*}
\int_{\T_{\lambda}} |v_x^2|dx &= - \Lambda_2(k_1k_2;v) ,\\ 
\Im \int_{\Tl}|v|^2v\bar{v}_xdx &= -\frac{1}{4}\Lambda_4(k_{13-24};v) .
\end{align*}

\begin{remark}
\label{rmk:SymmPropofLambdas}
We note that 
$$ \overline{\Lambda(M_n;f)} =\int_{\Gamma_n(\Tl)} \overline{M_n}(k_1,k_2,\ldots,k_n)
 \widehat{\overline{f}}(-k_1)\widehat{f}(-k_2) \cdots 
  \widehat{\overline{f}}(-k_{n-1})\widehat{f}(-k_n) $$
and thus, if the multiplier $M_n$ is such that   
$$\overline{M_n}(-k_2, -k_1,\ldots,-k_n, -k_{n-1})= \sigma M_n(k_1,k_2,\ldots,k_{n-1},k_n), $$
then we have that 
$\Lambda_n(M_n;f)$ is $\R$-valued ($i\R$-valued), 
provided that $\sigma=+1$ ($\sigma=-1$). 
\end{remark}

%\subsection*{Differentiation rule} 
On the Fourier side, the equation \eqref{g1DNLS} is written as
\begin{gather*}
\begin{split}
\partial_t \widehat{v}(k) = -i k^2 \widehat{v}(k) &- i\int_{k_{123}=k} k_2 \widehat{v}(k_1)\widehat{\overline{v}}(k_2) \widehat{v}(k_3) 
-i\mu[v] \int_{k_{123}=k}\widehat{v}(k_1)\widehat{\overline{v}}(k_2) \widehat{v}(k_3) \\
& + \frac{i}{2}\int_{k_{12345}=k} 
 \widehat{v}(k_1)\widehat{\overline{v}}(k_2) \widehat{v}(k_3)\widehat{\overline{v}}(k_4) \widehat{v}(k_5) 
 \,+\,i\psi[v]\widehat{v}(k) .
\end{split}
\end{gather*}
Also, 
\begin{gather*}
\begin{split}
\partial_t \widehat{\overline{v}}(k) = +i k^2 \widehat{\overline{v}}(k) &- 
i\int_{k_{123}=k} k_2 \widehat{\overline{v}}(k_1)\widehat{v}(k_2) \widehat{\overline{v}}(k_3) 
-i\mu[v] \int_{k_{123}=k}\widehat{\overline{v}}(k_1)\widehat{v}(k_2) \widehat{\overline{v}}(k_3) \\
& - \frac{i}{2}\int_{k_{12345}=k} 
 \widehat{\overline{v}}(k_1)\widehat{v}(k_2) \widehat{\overline{v}}(k_3)\widehat{v}(k_4) \widehat{\overline{v}}(k_5) 
 \,-\,i\psi[v]\widehat{\overline{v}}(k) .
\end{split}
\end{gather*}

If $n,\ell$ are even integers and $1\leq j\leq n$, 
{the elongation at index $j$ with $\ell$ positions of the multiplier $M_n$}  
is defined by
$$\mathbb{X}_j^{\ell}(M_n)(k_1,k_2,\ldots,k_{n+\ell}):=
M_n(k_1,\ldots,k_{j-1},k_j+k_{j+1}+\ldots+k_{j+\ell},k_{j+\ell+1}, \ldots, k_{n+\ell}) .$$ 
Then, for a solution $v$ of \eqref{g1DNLS}, we have the {differentiation rule} 
\begin{gather}
\label{diffrule}
\begin{split}
\partial_t \Lambda_n(M_n;v(t)) =& i\Lambda_n\left(M_n\sum_{j=1}^n (-1)^jk_j^2 ; v(t)\right) \\
&-i\Lambda_{n+2}\left(\sum_{j=1}^n \mathbb{X}_j^2(M_n)k_{j+1};v(t)\right) -
    i\mu[v] \Lambda_{n+2}\left(\sum_{j=1}^n \mathbb{X}_j^2(M_n) ;v(t)\right)\\
&+\frac{i}{2} \Lambda_{n+4}\left(\sum_{j=1}^n (-1)^{j-1}\mathbb{X}_j^4(M_n);v(t)\right) 
  .%+ i\psi(v(t)) \Lambda_n(M_n;v(t)) .
\end{split}
\end{gather}
In comparison with the similar rule in the Euclidean setting
(see \cite[Proposition~3.5]{CKSTTrefined}), 
we note that the additional term (i.e. the one coupled with $\mu[v]$) 
is due to the particularity of the gauge transformation \eqref{defnofG}-\eqref{defnofJ}. 
Since $\psi$ is $\R$-valued, 
the terms corresponding 
to the $\psi[v]v$ term in \eqref{g1DNLS} cancel each other. 

We introduce the following notation for the factor corresponding to 
the term $\partial_x^2v$ in the equation \eqref{g1DNLS}:
\begin{equation}
\alpha_n(\mathbf{k}):=-i(k_1^2-k_2^2+\ldots+k_{n-1}^2-k_n^2) .
\end{equation}
Note that $\alpha_2=0$ on $\Gamma_2(\Tl)$. 
A key property for the analysis of the second and third generation modified energies is the factorization of 
$\alpha_4$ on $\Gamma_4(\Tl)$: 
\begin{equation}
\alpha_4(\mathbf{k})=-i\left( (k_1-k_2)k_{12} +(k_3-k_4)k_{34}\right)=-2ik_{12}k_{14}.
\end{equation}
In this direction, 
we further introduce the {modulations}: 
\begin{align*}
&\omega_j := \tau_j + k_j^2\quad,\quad \text{for } j\text{ odd} ,\\
&\omega_j := \tau_j - k_j^2\quad,\quad \text{for } j\text{ even} ,
\end{align*}
for all $(\tau_1,\ldots,\tau_n)\in \Gamma_n(\R):=\{(\tau_1,\ldots,\tau_n)\in \R^n : \tau_1+\tau_2+\ldots+\tau_n=0 \}$.
Note that 
\begin{align*}
%\label{sigmaM4sum}
\omega_1+\omega_2+\omega_3+\omega_4 &= \tau_{1234} + k_1^2-k_2^2 +k_3^2-k_4^2\\ 
 	&= 2k_{12}k_{14}
\end{align*}
%Let $\{\sigma_j^*\}$ denote the reordering of $\{|\sigma_j|\}$ so that 
%$\sigma_1^* \geq \sigma_2^*\geq \sigma_3^*\geq \sigma_4^*$. 
which implies
\begin{equation}
\max_{1\leq j\leq 4} |\omega_j|   \gtrsim   |k_{12}k_{14}|  .
\end{equation}

\section{Coercivity properties}
\label{Sect:coercivity}

We begin this section by revisiting the energy functional corresponding 
to the gauge equivalent DNLS equation \eqref{g1DNLS} on $\Tl$, 
namely 
\begin{equation}
\label{gaugedEnergy}
E_1[v] := \int_{\T_{\lambda}}\left( |\partial_x v|^2 -\frac{1}{2} |v|^2\Im(v\partial_x\overline{v})  \right)dx  
  +\frac12\mu[v] \|v\|_{L^4(\Tl)}^4 + 2\mu[v]P_1[v] - \mu[v]^2M[v].
\end{equation}
Compared to the real-line case, 
due to the particularity of the gauge transformation in the periodic setting, 
the terms coupled with the coefficient $\mu[v]$ are new. 
The last two terms are integrals of motion, 
so we could discard them. 
However, the term $\frac12\mu[v]\|v\|_{L^4}^4$ is not conserved by the flow of 
\eqref{g1DNLS}.
%its omission in \cite{WinFE2010} was not justified.

\begin{remark}
\label{rmk:droppinglasttermofE} 
If $v$ is a smooth solution of \eqref{g1DNLS}, 
$\|v\|_{L^4}$ is not necessarily conserved. 
Indeed, we have (see \eqref{defnofmathcalN}-\eqref{defnofmathcalN123})
\begin{align*}
\partial_t \|v\|_{L^4}^4 &= %4\Re\int_{\Tl} |v|^2\overline{v}\partial_t v\,dx = 
4\Re\int_{\Tl} |v|^2\overline{v}(i\partial_x^2v -i \mathcal{N}(v))\,dx\\
&= - 4\Im  \int_{\Tl} |v|^2\overline{v} \partial_x^2 v 
- |v|^2\overline{v}\Big( \mathcal{N}_1(v)+ \mathcal{N}_2(v) + \mu[v]\mathcal{N}_3(v) -\psi[v]v\Big)\,dx \\
&= 4\Im \int_{\Tl} \partial_x(v\overline{v}^2)\partial_x v\, dx 
   -4\Im \psi[v]\|v\|_{L^4(\Tl)}^4 + \textup{h.o.t.}\\
 &= 4\Im \int_{\Tl} \overline{v}^2(\partial_x v)^2\,dx + \textup{h.o.t.} ,
\end{align*}
where we used the fact that $\psi[v]$ is $\R$-valued; see \eqref{defnofpsinu1}.  
In general, the higher order terms (\textup{h.o.t.})  
cannot cancel the fourth order term 
$4\Im \int_{\Tl} \overline{v}^2v_x^2\,dx $. 
\end{remark}

Nevertheless, by Sobolev embedding and interpolation of $H^s$-norms, we have
$$\|v\|_{L^4}\lesssim \|v\|_{H^{\frac14}}\leq \|v\|_{L^2}^{\frac34}\|v\|_{H^1}^{\frac14}$$
and therefore, for any $\varepsilon>0$, 
\begin{equation}
\label{estimateofmuL4v}
\frac12\mu(v)\|v\|^4_{L^4} \lesssim \|v\|_{L^2}^5\|v\|_{H^1}\lesssim 
\varepsilon \|\partial_x v\|^2_{L^2} +\varepsilon \|v\|_{L^2}^2+  \frac{1}{\varepsilon} \|v\|_{L^2}^{10} .
\end{equation}
%The last term of $E[v]$, although not conserved for solutions $v$ of \eqref{g1DNLS}, grows  slower than the first term. 
%Hence, when controlling the $\dot{H}^1$-norm of a solution $v$  by $E$ 
Therefore, we consider the essential part 
of the energy functional in \eqref{gaugedEnergy}, namely 
\begin{equation}
\label{defnofmathcalE}
\mathcal{E}[v]:= \int_{\Tl} \left( |\partial_x v|^2 -\frac12 |v|^2\Im(v\overline{v}_x)\right)dx .
\end{equation}
This is the same expression as the energy corresponding to 
\eqref{eqnintro:gaugedDNLSonR} on the real-line 
(see \cite{CKSTT}). 
In view of \eqref{estimateofmuL4v} and the conservation of mass, 
when controlling the $\dot{H}^1$-norm of a solution $v$ to \eqref{g1DNLS}, 
the above $\mathcal{E}[v]$ is just as good as the conservation law $E_1[v]$. 

Applying the same strategy to the mixed term  $|v|^2\Im(v\overline{v}_x)$ 
and by using the Gagliardo-Nirenberg inequality \eqref{GNinequalityT}, we get
$$\mathcal{E}[v] +1 \gtrsim_{\delta,M[v]} \left(4\pi^2(1-\varepsilon-\delta)\delta -M[v]^2\right) \|\partial_x v\|_{L^2}^2$$
for any $\varepsilon, \delta>0$, where the constant $1$ in the left hand side above 
hides a polynomial in $M[v]$. 
Since $\sup_{\varepsilon,\delta>0}(1-\varepsilon-\delta)\delta = \frac14$, 
this would yield the mass threshold condition $M[v]<\pi$. 

However, as was noticed by Hayashi and Ozawa \cite{Hayashi93,HayashiOzawa} in the Euclidean case, 
the choice  $\beta=\frac34$ for the gauge transformation 
yields a neat expression for the corresponding energy functional 
and a better mass threshold condition, namely $M[v]<2\pi$. 
In view of Remark~\ref{rmk:rigidityofQandW}, this mass threshold is sharp; 
it cannot be improved by any other gauge choice.  
Using the adaptation of a Gagliardo-Nirenberg inequality 
(Lemma~\ref{lem:LRSgagliardonirenberg} above), 
we show that this threshold also carries over to the periodic setting. 

\begin{lemma}
\label{EnergyscriptEcontrolshomogH1}
Let $\lambda\geq 1$. For any $f\in H^1(\Tl)$ with $M[f]=\|f\|_{L^2(\Tl)}^2 <2\pi$, we have:
\begin{equation}
\label{2ndIneqLem3p1}
\|\partial_x f\|_{L^2(\Tl)}^2 \lesssim \mathcal{E}[f] + 1.
\end{equation}
The implicit constant depends only on $M[f]$and blows up as $M[f]\nearrow 2\pi$. 
\end{lemma}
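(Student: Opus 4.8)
The plan is to mimic the classical argument of Hayashi--Ozawa that produces the sharp threshold $M[f]<2\pi$ on the real line, but using the periodic Gagliardo--Nirenberg inequality \eqref{GNinequalityT} in place of \eqref{eqnintro:GN1}. The point is that the functional $\mathcal{E}$ in \eqref{defnofmathcalE} is the gauge-$1$ energy (up to conserved/controlled lower-order terms), and it becomes manifestly coercive after passing to the gauge parameter $\beta=\frac34$. Concretely, I would start from the identity $E[\mathcal{G}_{-\frac34}(w)]=E_{\frac34}[w]$ in \eqref{defn:Esubnu}, or equivalently from the real-line computation \eqref{eqnintro:E34} adapted to $\Tl$, to rewrite $\mathcal{E}[f]$ in terms of $g:=\mathcal{G}_{\frac14}(\mathcal{G}_1^{-1}(f))$ --- i.e. the function whose gauge-$\frac34$ normalization corresponds to $f$ --- so that the mixed cubic term $-\frac12|v|^2\Im(v\overline{v}_x)$ gets absorbed and one is left with something of the shape $\|\partial_x g\|_{L^2}^2 - \frac{1}{16}\|g\|_{L^6}^6$ plus terms coupled to $\mu$ and to conserved quantities. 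Since $|g|=|f|$ pointwise, $M[g]=M[f]<2\pi$ and $\|g\|_{L^6}=\|f\|_{L^6}$, while $\|\partial_x g\|_{L^2}$ and $\|\partial_x f\|_{L^2}$ are comparable up to constants depending only on $M[f]$ (exactly as recorded after \eqref{eqnintro:E34}, and valid on $\Tl$ uniformly in $\lambda\ge1$ by Remark~\ref{rmk:japanesebracketwithparam} and the boundedness of $\mathcal{G}_\beta$).

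With that reduction in hand, the second step is to apply \eqref{GNinequalityT}: for any $\varepsilon>0$,
\begin{equation*}
\|g\|_{L^6(\Tl)}^6 \leq \left(\tfrac{4}{\pi^2}+\varepsilon\right)\|\partial_x g\|_{L^2(\Tl)}^2 M[g]^2 + K_\varepsilon M[g]^3,
\end{equation*}
so that
\begin{equation*}
\|\partial_x g\|_{L^2}^2 - \tfrac{1}{16}\|g\|_{L^6}^6 \geq \left(1 - \tfrac{1}{16}\left(\tfrac{4}{\pi^2}+\varepsilon\right)M[f]^2\right)\|\partial_x g\|_{L^2}^2 - \tfrac{K_\varepsilon}{16} M[f]^3.
\end{equation*}
The coefficient $1-\frac{1}{4\pi^2}M[f]^2$ is strictly positive precisely when $M[f]<2\pi$, and choosing $\varepsilon$ small enough (depending on $M[f]$) keeps it positive; it degenerates to $0$ as $M[f]\nearrow 2\pi$, which is the claimed blow-up of the implicit constant. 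The remaining $\mu$-coupled terms are lower order: they are bounded by $C(M[f])\|f\|_{L^4}^4$ (as $\mu[f]=\frac{1}{2\pi\lambda}M[f]\le \frac{M[f]}{2\pi}$) and then absorbed via \eqref{estimateofmuL4v}, paying an arbitrarily small multiple of $\|\partial_x f\|_{L^2}^2$ plus a constant; likewise $\mu[f]^2 M[f]$ and $\mu[f]P_1[f]$ are either constants or controlled by the same interpolation device. Collecting everything and translating back through the comparability $\|\partial_x g\|_{L^2}\sim_{M[f]}\|\partial_x f\|_{L^2}$ yields \eqref{2ndIneqLem3p1}, with the $+1$ on the right absorbing all the $M[f]$-dependent additive constants.

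The main obstacle I anticipate is purely bookkeeping rather than conceptual: carefully tracking how the change of gauge $\mathcal{G}_1\mapsto\mathcal{G}_{3/4}$ transforms $\mathcal{E}[v]$ into the Hayashi--Ozawa form on the torus, including the extra $\mu$-dependent terms that are genuinely new in the periodic setting (as emphasized after \eqref{gaugedEnergy}), and making sure the constants in the comparisons $\|\partial_x g\|_{L^2}\sim\|\partial_x f\|_{L^2}$ and $\|g\|_{L^p}=\|f\|_{L^p}$ are uniform in $\lambda\ge1$. An alternative, slightly cleaner route that avoids an explicit second gauge change is to differentiate $\mathcal{E}[e^{i\theta\mathcal{J}(f)/2}f]$ in $\theta$ and optimize --- i.e. exploit that modulating $f$ changes $\mathcal{E}$ by an amount resembling the momentum, exactly the mechanism described in the introduction --- but this amounts to the same computation. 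Either way, once the algebra is arranged so that only $\|\partial_x\cdot\|_{L^2}^2$, $\|\cdot\|_{L^6}^6$, conserved integrals, and $L^4$-terms appear, the inequality \eqref{GNinequalityT} together with \eqref{estimateofmuL4v} closes the estimate immediately.
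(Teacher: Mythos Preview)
Your approach is essentially the same as the paper's: pass to the Hayashi--Ozawa gauge to rewrite $\mathcal{E}[f]$ in the form $\|\partial_x g\|_{L^2}^2 - \tfrac{1}{16}\|g\|_{L^6}^6 + \text{(lower order)}$, then apply the periodic Gagliardo--Nirenberg inequality \eqref{GNinequalityT}. Two small remarks.

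First, there is an index slip: to reach the gauge-$\tfrac34$ function from $f$ (which sits at gauge $1$) you need $g=\mathcal{G}_{3/4}(\mathcal{G}_1^{-1}(f))=\mathcal{G}_{-1/4}(f)$, not $\mathcal{G}_{1/4}(\mathcal{G}_1^{-1}(f))=\mathcal{G}_{-3/4}(f)$. With $\beta=-\tfrac34$ the $\|g\|_{L^6}^6$ coefficient comes out $+\tfrac{3}{16}$ and the cubic cross term does not cancel, so the computation would not close; with $\beta=-\tfrac14$ it does.

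Second, the paper avoids the $E_\beta$ formalism entirely and the attendant bookkeeping you anticipate: since $\mathcal{E}[f]$ is just the two-term functional \eqref{defnofmathcalE} (no $\mu$-coupling), one simply substitutes $f=e^{i\beta\mathcal{J}(g)}g$ with $\beta=-\tfrac14$ and expands directly. The only surviving lower-order terms are $-\tfrac12\mu[g]\int\Im(g\overline{g}_x)$ and $L^4$/$L^2$ pieces, all absorbed by Young's inequality; there is no need to track $P_1$, $E_1$, $E_{3/4}$ or their $\mu$-couplings. The comparability $\|\partial_x f\|_{L^2}\sim_{M[f]}\|\partial_x g\|_{L^2}$ is obtained in one line from \eqref{GNinequalityHerr}. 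This is strictly less work than the route you sketch, though the content is the same.
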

\begin{proof}
Consider $g=\mathcal{G}_{\beta}(f)$. 
Then $|g|=|f|$ and 
$$\partial_x f = e^{i\beta\mathcal{J}(g)}\Big(\partial_x g + i\beta(|g|^2-\mu[g])g\Big).$$
Using \eqref{GNinequalityHerr}, it follows that
\begin{gather}
\label{coercivityeqn1}
\begin{split}
\|\partial_x f\|_{L^2}^2 &= 
\|\partial_x g\|_{L^2}^2 + \beta^2 \|(|g|^2-\mu[g])g\|_{L^2}^2
-2\beta \int (|g|^2-\mu[g])\Im(g\partial_x\overline{g})\,dx\\
&\leq  \Big(1+\beta^2\|g\|_{L^2}^4 +2|\beta|\|g\|_{L^2}^2\Big) \|\partial_x g\|_{L^2}^2.
\end{split}
\end{gather}
Straightforward computations give us 
\begin{gather*}
\begin{split}
\mathcal{E}[f] &= 
 \|\partial_x g\|_{L^2}^2 +\beta^2\int \left|(|g|^2-\mu[g])g\right|^2 dx -2\beta\int(|g|^2-\mu[g])\Im(g\partial_x\overline{g})dx\\ 
  &\qquad - \frac{1}{2} \int |g|^2\Im(g\partial_x\overline{g})dx +\frac{\beta}{2} \int (|g|^2-\mu[g])|g|^4dx
\end{split}
\end{gather*}
By taking $\beta=-\frac{1}{4}$, we obtain 
\begin{gather*}
\begin{split}
\mathcal{E}[f] &=  \|\partial_x g\|_{L^2}^2 - \frac{1}{16}\|g\|_{L^6}^6 
 +\frac{1}{16} \left(\mu[g]^2-2\mu[g]\right)\|g\|_{L^2}^2 -\frac{1}{2}\mu[g] \int\Im(g\partial_x\overline{g})dx + 
 \frac{1}{8}\mu[g]\|g\|_{L^4}^4\\
 &\geq \|\partial_x g\|_{L^2}^2 - \frac{1}{16}\|g\|_{L^6}^6 -\frac18\mu[g] \|g\|_{L^2}^2 -\frac{1}{2}\mu[g] \int\Im(g\partial_x\overline{g})dx
\end{split}
\end{gather*}
and note that for any $\varepsilon>0$ and any $\lambda\geq 1$, 
$$
\left| \frac{1}{2}\mu[g] \int\Im(g\partial_x\overline{g})dx\right| 
\leq \|g\|^3_{L^2}\|\partial_x g\|_{L^2} \leq 
 \varepsilon \|\partial_x g\|_{L^2}^2 + C_{\varepsilon} \|g\|_{L^2}^6 ,$$
%and by Sobolev inequality and the interpolating properties of the $H^s$ spaces, 
%$$\frac{\alpha^2}{8}$$
for some $C_{\varepsilon}\sim \varepsilon^{-1}$. 
We choose $\varepsilon>0$ such that 
$\|f\|_{L^2}^4(\frac{1}{4\pi^2}+\frac{\varepsilon}{16})<1-\varepsilon$, 
and by \eqref{GNinequalityT} 
\footnote{By using \eqref{GNinequalityHerr} at this point, the coercivity of $\mathcal{E}[\,\cdot\,]+1$  
would be obtained under $M[u_0]<2\sqrt{2}$.}
we then get 
\begin{gather*}
\begin{split}
\mathcal{E}[f] &\geq (1-\varepsilon)\|\partial_xg\|_{L^2}^2 
 - \frac{1}{16}(\frac{4}{\pi^2}+\varepsilon)\|\partial_x g\|_{L^2}^2\|g\|_{L^2}^4 -\frac{1}{16}K_{\varepsilon}\|g\|_{L^2}^6\\
 &\qquad -\frac{1}{8} \mu[g]\|g\|_{L^2}^2
   -C_{\varepsilon}\|g\|_{L^2}^6 
\end{split}
\end{gather*}
and thus 
\begin{gather}
\label{coercivityeqn2}
\begin{split}
\mathcal{E}[f] +M[f]^3 \gtrsim \left((1-\varepsilon)-(\frac{1}{4\pi^2}+\frac{\varepsilon}{16})\|f\|_{L^2}^4\right) \|\partial_xg\|_{L^2}^2 .
\end{split}
\end{gather}
By combining \eqref{coercivityeqn1} and \eqref{coercivityeqn2}, we deduce \eqref{2ndIneqLem3p1}. 
\end{proof}

Inspired by a recent paper of Guo and Wu \cite{GuoWu}, 
we can improve the mass threshold below which we can control the $\dot{H}^1$-norm of $f$ 
by using both the energy $\mathcal{E}[f]$ and the momentum 
\begin{equation}
\label{defnofmathcalP}
\mathcal{P}[f] :=  \int_{\Tl} \Im(f\partial_x{\overline{f}})dx - \frac12 \|f\|_{L^4(\Tl)}^4 
\end{equation} 
associated to \eqref{g1DNLS}, where we dropped the conserved term from \eqref{defn:Psubnu}. 
The key observation is to notice that by modulating $f$, 
the change in kinetic energy incurred resembles the main part of the momentum $\mathcal{P}[f]$ 
(see \eqref{eqn:changeinkineticenergy} below). 

\begin{lemma}
\label{lem:controlofdotH1wEandP}
Let $\lambda\geq 1$. 
For any $f\in H^1(\Tl)$ with $M[f]=\|f\|_{L^2(\Tl)}^2 <4\pi$, we have: 
\begin{equation}
\label{controlwithEandPsquared}
\|\partial_x f\|_{L^2(\Tl)}^2 \lesssim \left|\mathcal{E}[f]\right|+ \mathcal{P}[f]^2 +1. 
\end{equation}
The implicit constant  depends only on $M[f]$ and blows up as $M[f]\nearrow 4\pi$. 
\end{lemma}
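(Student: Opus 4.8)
The plan is to transpose the real-line argument of Guo and Wu \cite{GuoWu} to $\Tl$: pass to the gauge with parameter $\tfrac34$ (in which the energy takes the clean shape $\|\partial_x\cdot\|_{L^2}^2-\tfrac1{16}\|\cdot\|_{L^6}^6$), modulate $f$ so that its mean momentum essentially vanishes, and then pit the positive momentum contribution against the \emph{sharp} Agueh--Gagliardo--Nirenberg inequality \eqref{GNinequalityTCGN}. The threshold $4\pi$ will come out of the sharp constant $C_{GN}$.

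First I would set $g:=\mathcal{G}_{-1/4}(f)$. Since $|g|=|f|$, all $L^p(\Tl)$-norms, $M$, and $\mu:=\mu[f]$ are unchanged, and the gauge computation in the proof of Lemma~\ref{EnergyscriptEcontrolshomogH1} (and its analogue for the momentum) gives
\begin{equation*}
\mathcal{E}[f]=\|\partial_x g\|_{L^2(\Tl)}^2-\tfrac1{16}\|g\|_{L^6(\Tl)}^6+R_1,\qquad
\int_{\Tl}\Im(g\partial_x\overline g)\,dx-\tfrac14\|g\|_{L^4(\Tl)}^4=\mathcal{P}[f]+\tfrac14\mu M[f],
\end{equation*}
where $R_1$ is a sum of $\mu$-coupled terms with $|R_1|\le\varepsilon\|\partial_x g\|_{L^2}^2+C_\varepsilon(M[f])$ uniformly in $\lambda\ge1$ (by Cauchy--Schwarz and \eqref{estimateofmuL4v}). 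Next, for $\omega\in\Zl$ put $g_\omega:=e^{i\omega x}g\in H^1(\Tl)$; then $\|g_\omega\|_{L^p}=\|g\|_{L^p}$, $\int\Im(g_\omega\partial_x\overline{g_\omega})=\int\Im(g\partial_x\overline g)-\omega M$, and $\|\partial_x g_\omega\|_{L^2}^2=\|\partial_x g\|_{L^2}^2+\omega^2M-2\omega\int\Im(g\partial_x\overline g)$. Writing $\widetilde\omega:=\tfrac1M\int\Im(g\partial_x\overline g)$, I would take $\omega_*\in\Zl$ nearest to $\widetilde\omega$, so $|\omega_*-\widetilde\omega|\le\tfrac1{2\lambda}$, and set $\theta:=M\widetilde\omega-\mathcal{P}[f]$; the momentum identity forces $\|g\|_{L^4}^4=4\theta-\mu M\ge0$, hence $\theta\ge0$, and $M\widetilde\omega^2=(\theta+\mathcal{P}[f])^2/M$. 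Completing the square in $\omega$, rewriting $\|\partial_x g\|_{L^2}^2$ through $g_{\omega_*}$, and absorbing $R_1$ (together with the harmless error $M(\widetilde\omega-\omega_*)^2\le M/4$) produces
\begin{equation*}
(1-\varepsilon)\Big(\|\partial_x g_{\omega_*}\|_{L^2(\Tl)}^2+\tfrac{(\theta+\mathcal{P}[f])^2}{M}\Big)\le\mathcal{E}[f]+\tfrac1{16}\|g_{\omega_*}\|_{L^6(\Tl)}^6+C_\varepsilon(M[f]).
\end{equation*}

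Now I would bound $\|g_{\omega_*}\|_{L^6}^6$ with \eqref{GNinequalityTCGN} applied to $g_{\omega_*}$, choosing $\delta$ a small absolute constant so that the prefactor $(1+\tfrac{\delta}{5\pi\lambda})^{2/9}$ and the term $\tfrac1{\pi\lambda\delta}\|g_{\omega_*}\|_{L^2}^2$ contribute only $\lambda$-uniform quantities; using $t\mapsto t^{1/3}$ subadditivity and $\|g_{\omega_*}\|_{L^4}^4\le4\theta$ this leaves a main term $\sim C_{GN}^6\|\partial_x g_{\omega_*}\|_{L^2}^{2/3}\theta^{4/3}$ plus a constant multiple of $\theta^{4/3}$. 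Two Young's inequalities — one on $\|\partial_x g_{\omega_*}\|_{L^2}^{2/3}\theta^{4/3}$ with exponents $(3,\tfrac32)$, tuned so that all but a tiny fraction $\nu$ of $\|\partial_x g_{\omega_*}\|_{L^2}^2$ is absorbed into the left side, and one on the residual $\theta^{4/3}$ — then give
\begin{equation*}
\nu\,\|\partial_x g_{\omega_*}\|_{L^2(\Tl)}^2+\Big(\tfrac{1-\varepsilon}{M}-\kappa_*\Big)\theta^2+(1-\varepsilon)\tfrac{2\theta\,\mathcal{P}[f]}{M}+(1-\varepsilon)\tfrac{\mathcal{P}[f]^2}{M}\le\mathcal{E}[f]+C(M,\varepsilon,\nu,\delta).
\end{equation*}
The crucial arithmetic is that, because $C_{GN}$ is the sharp constant (so $C_{GN}^9=\tfrac{3\sqrt3}{2\pi}$), the coefficient $\kappa_*$ equals $\tfrac1{4\pi}(1+o(1))$ as $\varepsilon,\nu,\delta\to0$; since $M<4\pi$ one has $\tfrac1M>\tfrac1{4\pi}$, so for the parameters small enough $c_1:=\tfrac{1-\varepsilon}{M}-\kappa_*>0$. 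Absorbing the cross term $\tfrac{2\theta\mathcal{P}[f]}{M}$ by Young against $\tfrac{c_1}{2}\theta^2$ and $\mathcal{P}[f]^2$ yields $\nu\|\partial_x g_{\omega_*}\|_{L^2}^2+\tfrac{c_1}{2}\theta^2\lesssim|\mathcal{E}[f]|+\mathcal{P}[f]^2+1$, hence (using $M\widetilde\omega^2\lesssim\theta^2+\mathcal{P}[f]^2$ and $\|\partial_x g\|_{L^2}^2\le\|\partial_x g_{\omega_*}\|_{L^2}^2+M\widetilde\omega^2$) $\|\partial_x g\|_{L^2(\Tl)}^2\lesssim|\mathcal{E}[f]|+\mathcal{P}[f]^2+1$. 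Finally $\|\partial_x f\|_{L^2(\Tl)}^2\lesssim_{M[f]}\|\partial_x g\|_{L^2(\Tl)}^2$ exactly as in \eqref{coercivityeqn1} (via \eqref{GNinequalityHerr}), and all implicit constants depend only on $M[f]$ and blow up as $M[f]\nearrow4\pi$, since $\nu$ and $c_1$ are then forced to $0$.

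The hard part is this last, quantitative step: one must keep enough control of the constants to see that the coefficient of $\theta^2$ generated by Agueh's inequality followed by the Young splittings is precisely $\tfrac1{4\pi}$ in the limit — it is the interplay of the exponents $(\tfrac19,\tfrac89)$ in \eqref{GNinequalityTCGN} (becoming $(\tfrac13,\tfrac{16}{3})$ after sixth powers) with the Young exponents $(3,\tfrac32)$ that converts the mass of the Agueh extremiser into the threshold $4\pi$. A subordinate, purely periodic difficulty is that $e^{i\omega x}g\in H^1(\Tl)$ only when $\omega\in\Zl$, so the mean momentum of the modulated function cannot be set to zero exactly; the $O(1/\lambda)$ defect $\omega_*-\widetilde\omega$ and the mass-terms $\mu M$ must be carried through and shown to produce only constants uniform in $\lambda\ge1$, which together with the $\lambda$-dependent factors in \eqref{GNinequalityTCGN} is what dictates the choice of $\delta$ as a fixed small constant.
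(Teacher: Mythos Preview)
Your argument is correct and follows the same high-level Guo--Wu strategy as the paper (gauge to $\beta=\tfrac34$, modulate, invoke the Agueh inequality \eqref{GNinequalityTCGN}), but the execution is organized differently. The paper writes the identity
\[
\tfrac{1}{2\alpha}E_{\frac34}[g_\alpha]-\tfrac{1}{2\alpha}E_{\frac34}[g]-\tfrac{\alpha}{2}M[g]+\tfrac14\|g\|_{L^4}^4=-P_{\frac34}[g],
\]
bounds $E_{\frac34}[g_\alpha]$ from below via \eqref{GNinequalityTCGN}, and then \emph{balances} the two competing terms $\tfrac{\alpha}{2}M$ and $\tfrac{1}{128\pi\alpha}\|g\|_{L^4}^8$ by the choice $\alpha_*=\|g\|_{L^4}^4/(8\sqrt{\pi}\,\|g\|_{L^2})$; this produces a quadratic inequality $c\ge a\|g\|_{L^4}^4-b/\|g\|_{L^4}^4$ which is solved for $\|g\|_{L^4}^4$, and a second application of \eqref{GNinequalityTCGN} closes the argument. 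You instead pick the modulation $\omega_*$ to (nearly) minimize $\|\partial_x g_\omega\|_{L^2}^2$, complete the square, and then recover the threshold $4\pi$ from a single Young-inequality optimization on $A^{1/3}\theta^{4/3}$, where the limiting coefficient $\kappa_*=1/(4\pi)$ drops out of the exact value $C_{GN}^9=3\sqrt3/(2\pi)$. Your route makes the appearance of the sharp constant more transparent and avoids the auxiliary quadratic, at the cost of slightly more delicate bookkeeping in the Young splittings; the paper's route stays closer to the original real-line computation in \cite{GuoWu}. Both handle the $\Zl$-quantization of the modulation in the same way, by absorbing the $O(1/\lambda)$ defect into the $\lambda$-uniform constants.
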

\begin{proof}
As in the proof of Lemma~\ref{EnergyscriptEcontrolshomogH1} above, 
let us consider $g=\mathcal{G}_{-\frac14}(f)$ for which, according to \eqref{coercivityeqn1},  we have 
$$\|\partial_x f\|_{L^2(\Tl)}\sim \|\partial_x g\|_{L^2(\Tl)}.$$
The main part is showing that 
\begin{equation}
\label{lem4p3:mainestimate}
\|\partial_x g\|^2_{L^2(\Tl)} \lesssim |E_{\frac34}[g]| + P_{\frac34}[g]^2 +1 .
\end{equation}
Indeed, this suffices to get \eqref{controlwithEandPsquared} as we have
\begin{align*}
| P_{\frac34}[g]| =& |P_{1}[f] |\leq |\mathcal{P}[f]| + \mu[f]M[f] , \\
| E_{\frac34}[g]| =& |E_{1}[f] | \lesssim  |\mathcal{E}[f]| +\frac12 \mu[f]\|f\|_{L^4}^4 + 
 \mu[f]  |\mathcal{P}[f]| + \mu[f]^2 M[f] ,
\end{align*}
and we can use \eqref{estimateofmuL4v}. 

From \eqref{defn:Psubnu}-\eqref{defn:Esubnu}, we recall that
\begin{align*}
P_{\frac34}[g] &=\int_{\Tl}\Im(g\partial_x\overline{g})dx -\frac14\|g\|_{L^4}^4 + \frac34\mu[g]M[g],\\
E_{\frac34}[g] &=\|\partial_x g\|_{L^2}^2 - \frac{1}{16}\|g\|_{L^6}^6 +\frac38\mu[g]\|g\|_{L^4}^4 
    +\frac32\mu[g]P_{\frac34}[g] - \frac{9}{16}\mu[g]^2M[g] .
\end{align*}
In order to get \eqref{lem4p3:mainestimate}, 
we consider the modulated function 
$g_{\alpha}(x):=e^{i\alpha x}g(x)$ 
with $\alpha\in \Zl$ and $\alpha>0$ to be chosen later. 
We have 
\begin{equation}
\label{eqn:changeinkineticenergy}
%\Im(g_{\alpha}\partial_x\overline{g_{\alpha}}) = \Im(g\partial_x\overline{g}) - \alpha |g|^2 \quad,\quad 
\|\partial_x g_{\alpha}\|_{L^2}^2 = \|\partial_x g\|_{L^2}^2 
  + \alpha^2\|g\|_{L^2}^2 - 2\alpha\int_{\Tl}\Im(g\partial_x\overline{g})dx 
\end{equation}
and therefore
\begin{equation}
\label{eqn:diffofE34}
 E_{\frac34}[g_{\alpha}]- E_{\frac34}[g] =\alpha^2M[g] -2\alpha\int_{\Tl}\Im(g\partial_x\overline{g})dx +
 \frac38\mu[g] \Big( P_{\frac34}[g_{\alpha}]- P_{\frac34}[g]\Big). 
\end{equation}
Since
$$\Im(g_{\alpha}\partial_x\overline{g_{\alpha}}) = \Im(g\partial_x\overline{g}) - \alpha |g|^2 ,$$
we also have
\begin{equation}
\label{eqn:diffP34}
P_{\frac34}[g_{\alpha}]- P_{\frac34}[g] = -\alpha M[g]. 
\end{equation}
Therefore
$$%P_{\frac34}[g_{\alpha}]- P_{\frac34}[g] = -\alpha M[g]\quad,\quad 
\frac{1}{2\alpha}E_{\frac34}[g_{\alpha}]- 
\frac{1}{2\alpha}E_{\frac34}[g] =\frac{\alpha}{2}M[g] -\int_{\Tl}\Im(g\partial_x\overline{g})dx 
 -\frac34\mu[g]M[g]$$
and thus we find that 
\begin{equation}
\label{eqn:P34g}
\frac{1}{2\alpha}E_{\frac34}[g_{\alpha}] - \frac{1}{2\alpha}E_{\frac34}[g] -\frac{\alpha}{2}M[g] + \frac14\|g\|_{L^4}^4 
 =-P_{\frac34}[g] . 
\end{equation}

We now  use the Gagliardo-Nirenberg inequality \eqref{GNinequalityTCGN} to give a lower bound to the first term in \eqref{eqn:P34g}; we drop the positive term $\frac38\mu[g_{\alpha}]\|g_{\alpha}\|_{L^4}^4$. 
Also, we use \eqref{eqn:diffP34}, 
and taking into account that the Lebesgue norms of $g_{\alpha}$ and $g$ coincide, 
we have
\begin{align*}
E_{\frac34}[g_{\alpha}] \geq& \|g\|_{L^6}^6\left(
C_{GN}^{-18}\left(1+\frac{\delta}{5\pi\lambda}\right)^{-4}\frac{\|g\|_{L^6}^{12}}{\|g\|_{L^4}^{16}} -\frac{1}{16}\right) 
  - \frac{1}{\pi\lambda\delta}M[g]\\
&\quad +\frac32\mu[g]\left(P_{\frac34}[g] - \alpha M[g]\right) -\frac{9}{16}\mu[g]^2M[g]
\end{align*}
%Also, 
%$$\mu[g]P_{\frac34}[g]  $$
By \eqref{eqn:P34g}, we then get
\begin{align*}
|P_{\frac34}[g] | +\frac34\mu[g]M[g] 
\geq& \frac14\|g\|_{L^4}^4 -\frac{\alpha}{2}M[g] 
 -\frac{1}{2\alpha}\varphi(\frac{\|g\|_{L^6}^6}{\|g\|_{L^4}^{8}}) \|g\|_{L^4}^8 \\
 &-\frac{1}{2\alpha} \left( |E_{\frac34}[g]| +\frac32\mu[g]|P_{\frac34}[g]| + \frac{9}{16}\mu[g]^2M[g]
 +\frac{1}{\pi\lambda\delta} M[g]  \right) ,
\end{align*}
where 
$$\varphi(x):= \left( \frac{1}{16}- C_{\textup{GN}}^{-18}\left(1+\frac{\delta}{5\pi\lambda}\right)^{-4} x^2\right)x $$
and for which we have
$$\max_{x>0} \varphi(x) \geq \max_{x>0} \left(\frac{1}{16}-C_{\textup{GN}}^{-18} x^2\right)x=\frac{1}{64\pi}.$$
We now balance the terms 
$\frac{\alpha}{2} M[g]$ and $\frac{1}{128\pi\alpha}\|g\|_{L^4}^8$ by choosing 
$$ \alpha_*:= \frac{\|g\|_{L^4}^4}{8\sqrt{\pi}\|g\|_{L^2}}.$$
However, in order to correctly define $g_{\alpha}$ as a periodic function on $\Tl$, we take 
\begin{equation}
\alpha:=  \frac{1}{\lambda} \Big( \left[\lambda\alpha_*\right] +1\Big) 
\end{equation}
(here, by $[x]$ we denote the integer part of $x$). 
Then 
\begin{align*}
-\frac{\alpha}{2} M[g] -\frac{1}{128\pi\alpha}\|g\|_{L^4}^8 &\geq 
-\frac{\alpha_*}{2} M[g] -\frac{1}{128\pi\alpha_*}\|g\|_{L^4}^8 -\frac{1}{2\lambda} M[g]\\
 &= -\alpha_* M[g]-\frac{1}{2\lambda} M[g]
\end{align*}
and taking into account that $\lambda\geq 1$, we  deduce
\begin{gather}
\label{lem4p3:ineqforP34}
\begin{split}
|P_{\frac34}[g] | +M[g]^2 +M[g]  
\geq& \frac14\|g\|_{L^4}^4 -\alpha_* M[g]\\
  % \frac{1}{8\sqrt{\pi}}\|g\|_{L^2} \|g\|_{L^4}^4\\ %-\frac{4}{\delta}\frac{\|g\|_{L^2}^3}{\|g\|_{L^4}^4}
  &-\frac{1}{2\alpha_*}\left(  |E_{\frac34}[g]| + M[g]|P_{\frac34}[g]| +M[g]^3 + \delta^{-1} M[g]\right) .
\end{split}
\end{gather}
We consider the following positive reals
\begin{align*}
a &:= \frac14\left( 1- \frac{1}{2\sqrt{\pi}} \|g\|_{L^2} \right) ,\\ %=  \frac14\left( 1- \frac{1}{2\sqrt{\pi}} \|f\|_{L^2} \right)\\
b &:= 4\sqrt{\pi}\|g\|_{L^2}\left(  |E_{\frac34}[g]| + M[g]|P_{\frac34}[g]| +M[g]^3 + \delta^{-1} M[g]\right) ,\\
c &:= |P_{\frac34}[g] | +M[g]^2 +M[g] .
\end{align*}
Thus, the inequality \eqref{lem4p3:ineqforP34} provides the following
$$ c\geq a \|g\|_{L^4}^4 - \frac{b}{\|g\|_{L^4}^4} .$$
It follows that 
$$\|g\|_{L^4}^4\leq \frac{c+\sqrt{c^2+4ab}}{2a} \lesssim c+ b^{\frac12} $$
and so we obtain 
\begin{equation}
\label{lem4p3:boundonL4g}
\|g\|_{L^4}^8\lesssim c^2+b\lesssim   |E_{\frac34}[g]| + P_{\frac34}[g]^2 +1.
\end{equation}
Therefore, by using again  \eqref{GNinequalityTCGN}, 
\begin{align*}
\|\partial_x g\|_{L^2}^2 +\frac{2}{\delta}\mu[g] &= E_{\frac34}[g] +\frac{1}{16}\|g\|_{L^6}^6 -\frac38\mu[g]\|g\|_{L^4}^4 
 -\frac32\mu[g] P_{\frac34}[g] +\frac{9}{16} \mu[g]^2M[g] +\frac{2}{\delta}\mu[g] \\
 &\lesssim |E_{\frac34}[g] | + |P_{\frac34}[g] | +1 + 
   \left( \|\partial_x g\|_{L^2}^2 + \frac{2}{\delta} \mu[g]\right)^{\frac13} \|g\|_{L^4}^{\frac{16}{3}}
\end{align*}
where the implicit constant can be taken to depend only on $M[g]$. %and $\delta$. 
Then either 
$$\|\partial_x g\|_{L^2}^2 +\frac{2}{\delta}\mu[g] \lesssim |E_{\frac34}[g] | + |P_{\frac34}[g] | +1$$
or 
$$\left( \|\partial_x g\|_{L^2}^2 +\frac{2}{\delta}\mu[g]\right)^{\frac23} \lesssim \|g\|_{L^4}^{\frac{16}{3}} $$
and we use \eqref{lem4p3:boundonL4g}. 
In both cases, 
\eqref{lem4p3:mainestimate} holds and the proof is completed.
\end{proof}

%\subsection{Proof of Proposition~\ref{prop:LWPIsyst}}
\section{Local well-posedness for the $I$-system}
\label{sect:proofofLWPofIsystem}

Since $Iv$ does not solve the gauge equivalent  equation \eqref{g1DNLS}, in general 
$P_1[Iv]$ and $E_1[Iv]$ are not conservation laws, 
even for a smooth (local) solutions $v$ of \eqref{g1DNLS}. 
Instead, we have
\begin{equation}
\label{Isystem}
i\partial_t (Iv) +\partial_x^2 (Iv) = 
%I\mathcal{T}(v) + I\mathcal{Q}(v)
-i I(v^2\partial_x\overline{v}) - \frac12 I(|v|^4v) 
+ \mu[v]I(|v|^2v)  -\psi[v](Iv) ,
\end{equation}
with $(Iv)_{|t=0}=Iv_0$ and $x\in\Tl$. %A priori $E[Iv(t)]$ might not be conserved.   
We modify the local well-posedness proof for  
\eqref{g1DNLS} to obtain the following %local well-posedness 
result for \eqref{Isystem}.

\begin{proposition}%[LWP for the $I$-system]
\label{prop:LWPIsyst}
Let $B>0$.  There exist $\delta \sim B^{-\theta}$ (for some $\theta>0$) and $D>0$ 
(both independent of $N$ and $\lambda$) such that 
if $v_0\in H^s(\Tl)$ is such that $\|Iv_0\|_{H^1(\Tl)}\leq B$, then 
\begin{equation}
\|Iv\|_{Z^{1}([0,\delta]\times \Tl)}\leq D.
\end{equation} 
\end{proposition}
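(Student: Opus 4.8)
The plan is to run a contraction-mapping argument for the Duhamel formulation of \eqref{Isystem}, following the local theory of \cite{HerrIMRN06} for \eqref{g1DNLS} but carrying the $I$-operator through every nonlinear estimate. Since $I$ is a Fourier multiplier it commutes with $U_\lambda(t)$, so on an interval $[0,\delta]$ with $0<\delta\le1$ the function $Iv$ solves \eqref{Isystem} with datum $Iv_0$ exactly when
\[
Iv(t)=\eta(t/\delta)\,U_\lambda(t)\,Iv_0-i\,\eta(t/\delta)\int_0^t U_\lambda(t-t')\Big(-iI(v^2\partial_x\overline v)-\tfrac12 I(|v|^4v)+\mu[v]I(|v|^2v)-\psi[v]Iv\Big)(t')\,dt'.
\]
Applying the linear bounds \eqref{linearhomogest}--\eqref{linearinhomogest} and then localizing in time on $[0,\delta]$ — which, as in \cite{HerrIMRN06} and using Lemma~\ref{sharpcutoffX112minus}, produces a gain $\delta^{\theta_0}$ for some $\theta_0>0$ — reduces the proposition to the nonlinear estimates, uniform in $N$ and in $\lambda\ge1$,
\[
\|I(v^2\partial_x\overline v)\|_{\widetilde{Z}^1}+\|I(|v|^4v)\|_{\widetilde{Z}^1}+\|\mu[v]I(|v|^2v)\|_{\widetilde{Z}^1}+\|\psi[v]Iv\|_{\widetilde{Z}^1}\lesssim \|Iv\|_{Z^1}^3+\|Iv\|_{Z^1}^5,
\]
with all norms taken on $[0,\delta]\times\Tl$, together with the analogous Lipschitz bound for the difference of the nonlinearities evaluated at two functions $v,\widetilde v$.

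For the genuinely multilinear terms I would begin from Herr's estimates (Lemma~\ref{HerrsLWPestimates}), which at regularity $s=1$ give $\|v^2\partial_x\overline v\|_{\widetilde{Z}^1}\lesssim\|v\|_{Z^1}^3$, $\|\,|v|^4v\,\|_{\widetilde{Z}^1}\lesssim\|v\|_{Z^1}^5$ and $\|\,|v|^2v\,\|_{\widetilde{Z}^1}\lesssim\|v\|_{Z^1}^3$ with constants independent of $\lambda$, and then move the $I$-operator inside the nonlinearity by the interpolation lemma for $I$ (Lemma~\ref{CKSTTlemmaforIoperator}), exactly as in \cite{CKSTT,CKSTTrefined,deSilva2007,MiaoWuXu}; this transfer introduces no dependence on $N$ or $\lambda$ because $m$ is even, smooth and non-increasing, $m(\xi)\langle\xi\rangle^{1-s}$ varies only by a bounded factor on each dyadic block, and the trilinear estimate is available throughout $\tfrac12\le s\le1$. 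The scalar prefactors are controlled directly: by the smoothing property \eqref{smoothingpropofI} we have $\|v(t)\|_{L^2_x}\lesssim\|Iv(t)\|_{H^1_x}$ and $\|v(t)\|_{H^{1/2}_x}\lesssim\|Iv(t)\|_{H^1_x}$, whence $\mu[v(t)]=\tfrac{1}{2\pi\lambda}\|v(t)\|_{L^2_x}^2\lesssim\|Iv\|_{Z^1}^2$ and, since $\int_{\Tl}\Im(v\overline v_x)\,dx$ and $\|v\|_{L^4_x}^4$ are both controlled at the $H^{1/2}$-level (the sole place the threshold $s\ge\tfrac12$ enters here), also $\|\psi[v]\|_{L^\infty_t([0,\delta])}\lesssim\|Iv\|_{Z^1}^2$; the time-dependent scalar factors are then absorbed with a further application of a Lemma~\ref{sharpcutoffX112minus}-type estimate, contributing $\|Iv\|_{Z^1}^5$ and $\|Iv\|_{Z^1}^3$ respectively. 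The same scheme applied term by term yields the Lipschitz bound.

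Collecting these estimates gives, on $[0,\delta]$ with $0<\delta\le1$,
\[
\|Iv\|_{Z^1([0,\delta]\times\Tl)}\le c\,\|Iv_0\|_{H^1(\Tl)}+c\,\delta^{\theta_0}\big(\|Iv\|_{Z^1([0,\delta]\times\Tl)}^3+\|Iv\|_{Z^1([0,\delta]\times\Tl)}^5\big)
\]
and a matching contraction estimate for the difference of two iterates. Setting $D:=2cB$ and choosing $\delta\sim B^{-\theta}$ for a suitable $\theta>0$ small enough that $c\,\delta^{\theta_0}\big((2cB)^3+(2cB)^5\big)\le cB$ when $B\ge1$ — and simply keeping $\delta\simeq1$ when $B\le1$ — a standard fixed-point argument in the ball $\{\,\|Iv\|_{Z^1([0,\delta]\times\Tl)}\le D\,\}$ produces the solution with $\|Iv\|_{Z^1([0,\delta]\times\Tl)}\le D$, and both $D$ and $\theta$ depend only on $B$, in particular not on $N$ or $\lambda$. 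The main obstacle I anticipate is the uniform-in-$(N,\lambda)$ cubic-derivative estimate $\|I(v^2\partial_x\overline v)\|_{\widetilde{Z}^1}\lesssim\|Iv\|_{Z^1}^3$: one must check that Herr's trilinear estimate \eqref{eqnintro:Herrcubicderivest} holds on $\Tl$ with $\lambda$-independent constant for every $s\in[\tfrac12,1]$, and that the interpolation lemma for $I$ applies to it with no loss in $N$ or $\lambda$ — the gauge structure of \eqref{g1DNLS} (the absence of a pure $|v|^2\partial_x v$ term) being precisely what makes this go through; the quintic and cubic pure-power terms, and the scalar factors, are strictly easier.
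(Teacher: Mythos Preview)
Your proposal is correct and follows essentially the same approach as the paper: Duhamel formulation for $Iv$, the linear estimates \eqref{linearhomogest}--\eqref{linearinhomogest}, Herr's multilinear bounds (Lemma~\ref{HerrsLWPestimates}) combined with the interpolation lemma for the $I$-operator (Lemma~\ref{CKSTTlemmaforIoperator}) to obtain \eqref{ITZ1}--\eqref{IQ3Z1} uniformly in $N,\lambda$, boundedness of the scalar coefficients $\mu[v]$ and $\psi[v]$ at the $H^{1/2}$-level, and a contraction in a ball of radius $D\sim B$ with $\delta^{\varepsilon}D^5\sim 1$. The only point the paper makes slightly more explicit is that the hypothesis of Lemma~\ref{CKSTTlemmaforIoperator} requires the multilinear estimate with $I_1^s$ for every $s\in[\tfrac12,1]$, and the case $s=1$ is obtained from the $s=\tfrac12$ case of Lemma~\ref{HerrsLWPestimates} via the Leibniz rule (since $I_N^1=\mathrm{Id}$); you allude to this but it is worth stating, as Lemma~\ref{HerrsLWPestimates} is recorded only at $s=\tfrac12$.
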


In order to prove this result, we use the estimates  of the local well-posedness theory for \eqref{g1DNLS} due to Herr {\cite{HerrIMRN06}} and an interpolation lemma 
of Colliander, Keel, Staffilani, Takaoka, and Tao {\cite[Lemma~12.1]{CKSTTjfa04}} for translation invariant multi-linear operators. 
%to guarantee the existence of $Iv$ in $X^{1,\frac12}$ on a prescribed time length.  

\begin{lemma} {\cite[Section~4]{HerrIMRN06}}
\label{HerrsLWPestimates}
Let $\delta\in (0,1)$ and $\lambda\geq 1$. 
%For $\mathcal{T}, \mathcal{Q}$ as defined by \eqref{defnofmathcalT}, \eqref{defnofmathcalQ}, 
There exist $c,\varepsilon>0$ such that 
\begin{align}
%\|u_1\partial_x\overline{u_2} u_3\|_{X^{\frac12,-\frac12}\cap Y^{\frac12,-1}} 
%  & \leq c T^{\varepsilon} \prod_{j=1}^3 \|u_j\|_{X^{\frac12,\frac12}}\\
%\|u_1\partial_x\overline{u_2} u_3\|_{Y^{\frac12,-1}} & \leq c T^{\varepsilon} \prod_{j=1}^3 \|u_j\|_{X^{\frac12,\frac12}}\\
&\|u_1(\partial_x\overline{u_2})u_3\|_{\widetilde{Z}^{\frac12}(\R\times\Tl)}
%_{X^{\frac12,-\frac12}\cap Y^{\frac12,-1}} 
  \leq c \delta^{\varepsilon} \prod_{j=1}^3 
  	\|u_j\|_{X^{\frac12,\frac12}(\R\times\Tl)} ,\\
&\|u_1\overline{u_2}u_3\overline{u_4}u_5)\|_{\widetilde{Z}^{\frac12}(\R\times\Tl)}
%_{X^{\frac12,-\frac12}\cap Y^{\frac12,-1}} 
   \leq c \delta^{\varepsilon} \prod_{j=1}^5 
    \|u_j\|_{X^{\frac12,\frac12}(\R\times\Tl)} ,\\
&\|u_1\overline{u_2}u_3\|_{\widetilde{Z}^{\frac12}(\R\times\Tl)}
   \leq c \delta^{\varepsilon} \prod_{j=1}^3 \|u_j\|_{X^{\frac12,\frac12}(\R\times\Tl)} ,
\end{align}
for all $u_j\in \mathcal{S}_{\lambda}$ with $\supp(u_j)\in \{(t,x)\in\R\times \Tl : |t|\leq \delta\}$, $1\leq j\leq 5$. 
\end{lemma}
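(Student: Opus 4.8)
This statement is recorded in \cite[Section~4]{HerrIMRN06}; the plan is to recall how its proof goes. Since $\widetilde{Z}^{\frac12}=X^{\frac12,-\frac12}\cap Y^{\frac12,-1}$, it suffices to control the two pieces separately. For the $Y^{\frac12,-1}$ piece one uses Cauchy--Schwarz in $\tau$ against $\langle\tau+k^2\rangle^{-\frac12-}\in L^2_\tau$ to get $\|F\|_{Y^{\frac12,-1}}\lesssim\|F\|_{X^{\frac12,-\frac12+}}$ for any product $F$; the latter norm also dominates $\|F\|_{X^{\frac12,-\frac12}}$, so it is enough to prove all three estimates with $X^{\frac12,-\frac12+}$ on the left. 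By Plancherel and duality, $\|F\|_{X^{\frac12,-\frac12+}}$ equals the supremum over $\|g\|_{L^2_{t,x}(\R\times\Tl)}=1$ of a convolution integral in the variables $(\tau_j,k_j)$ carrying the output weight $\langle k\rangle^{\frac12}\langle\tau+k^2\rangle^{-\frac12+}$ (and, in the first estimate, an extra factor $|k_2|$ coming from $\partial_x$), and one performs a dyadic decomposition $|k_j|\sim N_j$, $|k|\sim N$, using the frequency constraint to express each power of a large frequency through the largest one.

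The main term is the derivative--cubic estimate, and the only stringent regime is the one in which the differentiated factor carries (essentially) the top frequency: $N_2\gtrsim N_1,N_3$ and hence $N\sim N_2$. This is exactly where $s=\tfrac12$ is critical, because then the symbol demands $\langle k\rangle^{\frac12}|k_2|\sim N_2^{3/2}$, while the three inputs supply only $\langle k_1\rangle^{\frac12}\langle k_2\rangle^{\frac12}\langle k_3\rangle^{\frac12}$. The remedy is the resonance identity: on the frequency sheet $k=k_1-k_2+k_3$ one computes $k^2-k_1^2+k_2^2-k_3^2=2(k_1-k_2)(k_3-k_2)$, whose right-hand side is $\sim N_2^2$ in this regime, so one of the four dispersive modulations is $\gtrsim N_2^2$. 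Its half-power $\gtrsim N_2$ is precisely the loss created by $\partial_x$; after trading it off, one is left with an $L^2_{t,x}$ bilinear estimate with no loss, which is closed by the $L^4$-Strichartz inequality \eqref{L4Strichartz}, its bilinear refinement (Lemma~\ref{bilinearL2Strichartz}, estimate \eqref{interpbilinearest}), and the embeddings \eqref{Sobolev2} and \eqref{embeddinginLinfty} for the remaining low-frequency factors. All other frequency configurations (two comparable top frequencies with a cancellation, or the derivative sitting on a low factor) are less demanding and are dispatched by H\"older's inequality together with two applications of \eqref{L4Strichartz}, conceding only the admissible $\tfrac12-\tfrac38=\tfrac18$ of temporal regularity.

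For the quintic term $u_1\overline{u_2}u_3\overline{u_4}u_5$ there is no derivative and, at $s=\tfrac12$, ample room: after dualizing, split the six resulting factors into two groups of three and estimate each group in $L^2_{t,x}$ by the $L^6$-Strichartz estimate \eqref{L6Strichartz} (or \eqref{interpL6Strichartz}), conceding only the harmless $\lambda^{0+}$ loss and leaving slack in both $s$ and $b$; placing the negative-regularity (dual) factor at the top frequency only makes the estimate easier. The cubic term $u_1\overline{u_2}u_3$ is the same argument with four factors and two applications of \eqref{L4Strichartz}. Finally, the smallness factor $\delta^{\varepsilon}$ is produced in the standard way: since each $u_j$, and hence $F$, is supported in $\{|t|\le\delta\}$, every Strichartz/Sobolev step above is carried out at sub-endpoint temporal exponents, leaving a uniform gap $\theta>0$ in the temporal regularity of the inputs, and the time-localization bound $\|\eta(t/\delta)u\|_{X^{s,b'}}\lesssim\delta^{b-b'}\|u\|_{X^{s,b}}$ (for $-\tfrac12<b'\le b<\tfrac12$, cf.\ Lemma~\ref{sharpcutoffX112minus}) converts this gap into a factor $\delta^{\varepsilon}$ with $\varepsilon<\theta$.

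The crux is the endpoint nature of the estimate: the $X^{s,\pm\frac12}$ exponents leave no temporal slack, and $s=\tfrac12$ leaves no spatial slack in the cubic--derivative case, so the gain from the resonance relation must be matched \emph{exactly} against the derivative loss, and it is essential to use the periodic (and $\lambda$-decaying) bilinear $L^4$-Strichartz estimate rather than a crude product bound to close the worst high-low-low interaction. Once this borderline interaction is handled, the reduction to the $Y$-spaces and the tracking of the $\lambda$-dependence are routine.
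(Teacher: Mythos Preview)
The paper does not give its own proof of this lemma; it is quoted from \cite{HerrIMRN06}, and the only additional content here is the remark immediately following, which asserts that Herr's pointwise multiplier bounds (his Lemmas~4.1 and~4.3) hold uniformly in $\lambda\ge1$ and that the subsequent estimates use \emph{only} the $L^4$-Strichartz inequality \eqref{L4Strichartz} and Sobolev embeddings. So there is essentially no paper-proof to compare against beyond that remark, and your task is really to reproduce Herr's argument.

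Your sketch captures the right architecture---dualize, decompose dyadically, exploit the resonance identity $k^2-k_1^2+k_2^2-k_3^2=2(k_1-k_2)(k_3-k_2)$ to recover the derivative loss---but two points deserve correction. First, you invoke the bilinear refinement of Lemma~\ref{bilinearL2Strichartz} to close the high--low--low interaction, whereas Herr's proof (and this is precisely what the paper's remark emphasizes) does \emph{not} use it: he absorbs the modulation gain into sharp pointwise bounds on the dualized symbol and then closes every case with two applications of the plain $L^4$-Strichartz estimate and Sobolev. In this paper the bilinear estimate is reserved for the almost-conservation analysis of Sections~\ref{Section:AlmostConservationEstimates}--\ref{Sect:AlmostconservedEandP}, where the extra $\lambda^{-1/2}$ decay is genuinely needed; at the LWP level it is superfluous. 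Second, your blanket reduction ``it suffices to prove the $X^{\frac12,-\frac12+}$ estimate'' is too optimistic at the endpoint $s=\tfrac12$: in the subcase where the \emph{output} carries the large modulation, passing from $b=-\tfrac12$ to $b=-\tfrac12+\varepsilon$ costs exactly $N_2^{2\varepsilon}$, which obstructs the dyadic summation. Herr does not go through a single $X^{\frac12,-\frac12+}$ bound; he treats the $X^{\frac12,-\frac12}$ and $Y^{\frac12,-1}$ pieces with the same case analysis and the same pointwise symbol bounds, handling the $L^1_\tau$ structure of the $Y$-norm directly in the borderline region. Finally, your citation of Lemma~\ref{sharpcutoffX112minus} for the $\delta^{\varepsilon}$ gain is off: that lemma is a product estimate, not the time-localization bound $\|\eta(t/\delta)u\|_{X^{s,b'}}\lesssim\delta^{b-b'}\|u\|_{X^{s,b}}$ you actually need (which is standard but not restated in this paper).
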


\begin{remark}
One can check that the pointwise weights bounds provided by 
\cite[Lemma~4.1, Lemma~4.3]{HerrIMRN06} hold uniformly in $\lambda\geq1$ 
(although, in view of Remark~\ref{rmk:japanesebracketwithparam}, 
further sub-cases have to be addressed). 
Then, the multi-linear estimates above use 
only the $L^4$-Strichartz and Sobolev inequalities of Lemma~\ref{Lem:SobolevStrichartz} above, 
which are all scaling invariant.
\end{remark}

In order to state the interpolation lemma, 
let $I_N^s$ denote the $I$-operator introduced in \eqref{defnofIoperator}. 
Also, following \cite{CKSTTjfa04}, 
we let $S_x$ to denote the shift operator $S_x u(y,t) = u(y-x,t)$. 
A Banach space $X$ of time-space functions 
$u:J\times \Tl \to \C$ (where $J\subset \R$ is some time interval) 
is translation invariant if $\|S_x u\|_{X} = \|u\|_X$ for all $u\in X$ and all $x$.  
We  use the spaces 
$X=X^{1,\frac12}(J\times \Tl)$ and 
$Z=\widetilde{Z}^{1}(J\times \Tl)$ 
%X^{1,-\frac12}\cap Y^{1,-1}(J\times \Tl)$ 
which clearly %can be easily checked that 
satisfy this requirement. 
An $n$-linear operator 
$T:X\times\ldots\times X\to Z$ is translation invariant if 
$S_x T(u_1,\ldots, u_n) = T(S_x u_1, \ldots, S_x u_n)$ for all $u_j\in X$.

\begin{lemma}
\label{CKSTTlemmaforIoperator} 
Let $s_0>0$, $n\geq 1$ and let 
$T:X\times \ldots \times X\to Z$ be a translation invariant 
$n$-linear operator.  Suppose
\begin{equation}
\label{assumedoundonI1s}
\|I_1^{s} T(u_1,\ldots u_n)\|_Z \leq C \prod_{j=1}^n  \|I_1^s u_j\|_{X} 
\end{equation}
for all $s_0\leq s\leq 1$ and all $u_j\in X$, for some $C>0$. 
Then, we also have
\begin{equation}
\label{inherittedboundonIns}
\|I_N^{s} T(u_1,\ldots u_n)\|_Z \leq DC \prod_{j=1}^n  \|I_N^s u_j\|_{X} 
\end{equation}
for all $s_0\leq s\leq 1$ and all $u_j\in X$, for some $D>0$ independent of $N$ and $\lambda$. 
\end{lemma}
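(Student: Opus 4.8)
The statement is exactly the interpolation lemma of Colliander–Keel–Staffilani–Takaoka–Tao \cite[Lemma~12.1]{CKSTTjfa04}, adapted to the periodic torus $\Tl$; the plan is to reproduce their argument, checking that all constants are uniform in $N$ and $\lambda$. The core idea is that the operator $I_N^s$ factors through $I_1^s$ up to a multiplier that is a superposition of modulations (space translations $S_x$), against which every relevant function space is invariant. More precisely, writing $\sigma_N(k):=m_N(k)/m_N(Nk)$ for the ratio of the two multiplier symbols (so that $\widehat{I_N^s f}(k)=\sigma_N(k)\,\widehat{I_1^s(D_N f)}(k)$, where $D_N$ is the dilation $f\mapsto f(\,\cdot/N)$ suitably interpreted on $\Zl$, or — staying on a fixed torus — one realizes $I_N^s = m_N(D)$ directly), one exploits the identity $m_N(k)=m_1(k/N)$. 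The key analytic input is that the function $\xi\mapsto m_1(\xi/N)$, when compared to $m_1(\xi)$, differs by a Fourier multiplier whose symbol has an $L^1$ (in fact bounded-variation) inverse Fourier transform with norm bounded independently of $N$ and $\lambda$; this is the content of the ``$m(k)/m(Nk)$ has integrable Fourier transform'' estimate in \cite{CKSTTjfa04}.

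First I would set up the reduction: by the fundamental theorem of calculus / a partition-of-unity over dyadic frequency blocks $|k_j|\sim N_j$, it suffices to prove \eqref{inherittedboundonIns} for $u_j$ each frequency-localized to a single dyadic shell, with the constant $D$ independent of the shells (dyadic summability being handled by the usual orthogonality plus the room in the exponents). On a single product of shells, $T(u_1,\dots,u_n)$ is output-localized to $|k|\lesssim \max_j N_j$, and the ratio multiplier $m_N(k_{1\cdots n})\prod_j m_N(k_j)^{-1}$ (combined appropriately with the $I_1^s$ version) becomes, after rescaling frequencies by $N$, a smooth symbol on a fixed region; one then writes this symbol as a Fourier integral $\int \widehat{\rho}(y)\, e^{iy\cdot k}\,dy$ with $\|\widehat{\rho}\|_{L^1}$ bounded uniformly, so that the multiplier operator is an average of space-translations $S_y$. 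Second, I would invoke translation invariance of both $X=X^{1,1/2}$ and $Z=\widetilde{Z}^1$ together with translation invariance of the $n$-linear operator $T$: pushing the average of $S_y$'s through $T$ and applying the hypothesized bound \eqref{assumedoundonI1s} inside the integral, then using $\|S_y(\cdot)\|=\|\cdot\|$ and $\int|\widehat{\rho}(y)|\,dy\lesssim 1$, yields \eqref{inherittedboundonIns} with $D=\sup_N\|\widehat{\rho}\|_{L^1}$.

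The main obstacle — and the only place where the periodic, dilated-torus setting requires care beyond citing \cite{CKSTTjfa04} — is verifying that the $L^1$-norm of the kernel $\widehat{\rho}$ is bounded \emph{uniformly in $\lambda\geq 1$} as well as in $N$. On $\Tl$, frequencies live on $\Zl=\frac1\lambda\Z$ and the relevant symbol is a function on $\Zl$ rather than on $\R$; one must express the multiplier as a convolution kernel on $\Tl$ and control its total mass. The resolution is that the symbol in question is the restriction to $\Zl$ of a fixed smooth, compactly-supported-derivative function on $\R$ (after the frequency rescaling by $N$, using $|k|\lesssim N$, hence $\langle k\rangle\sim 1$ on the shells that matter by Remark~\ref{rmk:japanesebracketwithparam}), so Poisson summation / periodization only improves $L^1$ bounds, and the normalization $(dk)_\lambda=\frac{1}{2\pi\lambda}d\#$ in the measure on $\Zl$ is exactly matched by the $\frac{1}{2\pi\lambda}$ in the inverse Fourier transform on $\Tl$. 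Thus the uniform bound survives, and one takes $D:=\sup_{N,\lambda}\|\widehat{\rho}_{N,\lambda}\|_{L^1(\Tl)}<\infty$. I expect the rest (dyadic summation, reduction to single shells, commuting $T$ with the translation average) to be routine given the translation invariance already recorded for $X$ and $Z$.
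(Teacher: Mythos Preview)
Your proposal misidentifies the mechanism of the CKSTT argument. The proof in \cite[Lemma~12.1]{CKSTTjfa04} does \emph{not} proceed by expressing a ratio multiplier as an $L^1$-average of space translations, and there is no ``$m(k)/m(Nk)$ has integrable Fourier transform'' estimate in that paper. In fact the scalar ratio $m_N(k)/m_1(k)$ equals $N^{1-s}$ for $|k|\gg N$, so it is unbounded in $N$; the symbol you would need to realize as a uniformly $L^1$ kernel simply is not a bounded multiplier, and your plan stalls at exactly this point. The vague phrase ``combined appropriately with the $I_1^s$ version'' hides the difficulty rather than resolving it.

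The actual argument, which the paper's remark following the lemma sketches, is the binary low/high splitting $u_j = P_{\lesssim N}u_j + P_{\gg N}u_j$ (not a full dyadic decomposition), together with the two \emph{scalar} multiplier bounds
\[
\|I_1^s I_N^{2-s}\|_{X\to X}\lesssim 1,\qquad \|N^{s-1}I_N^s I_1^{2-s}\|_{X\to X}\lesssim 1,
\]
both uniform in $N$ and $\lambda$. On the all-low piece one uses the hypothesis at $s=1$ (where $I_1^1=\mathrm{Id}$); on any piece containing a high-frequency factor one trades $I_N^s$ for $I_1^s$ via the second bound, losing $N^{1-s}$, and recovers it from the first bound applied to the high-frequency input. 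Translation invariance of $T$ enters only to ensure the Littlewood--Paley projectors commute well enough with $T$, and the paper's sole new observation is that defining $P_{\lesssim N}$ by periodizing a fixed Schwartz function on $\R$ gives $\|P_{\lesssim N}\|$ uniform in $\lambda$ as well. Your periodization remark at the end is in the right spirit for the $\lambda$-uniformity, but it should be attached to the projectors $P_{\lesssim N}$, not to a nonexistent $L^1$ kernel for the ratio symbol.
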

To convince the reader that the proof of {\cite[Lemma~12.1]{CKSTTjfa04}} yields 
the constant $D$ independent of the parameter $\lambda$ (as well as $N$), 
we provide the following remark that uses the ``periodization'' procedure 
also encountered in the Poisson summation formula.

%One can prove the above periodic version 
%using the same proof as of {\cite[Lemma~12.1]{CKSTTjfa04}} by making the following slight adjustment to the periodic setting. 
\begin{remark}
We know that  
the Littlewood-Paley projection operators $P_{\lesssim N}f:=\phi_N*f$ are uniformly bounded in $N$, where 
$\phi_N:=N\phi(N\cdot)$ and $\widehat{\phi}$ 
is a symmetric function on $\Zl$ equal to one on $\{|k|\leq 1\}$ and 
vanishes outside $\{|k| < 2\}$. 
However, the bound $\|\phi\|_{L^1(\Tl)}$ depends on $\lambda$. 
We modify slightly this usual definition in order 
to ensure uniform boundedness in the scaling parameter $\lambda$ as well. 
Thus, let $\psi$ be a Schwarz function on $\R$ such that $\widehat{\psi}$ %(its Fourier transform on $\R$) 
is a symmetric bump function compactly supported in $\{\xi\in\R : |\xi|\leq 2\}$ 
and identically one for $|\xi|\leq 1$. 
Define $\psi_N:=N\psi(N\cdot)$ and for any $x\in\Tl$ we set%\footnote{This ``periodization'' procedure is also  encountered in the Poisson summation formula.} 
$$\varphi_N(x):= \sum_{k\in\Z} \psi_N(x+2\pi\lambda k) .$$
Note that 
$\widehat{\varphi_N}(k) = \widehat{\psi_N}(k)$
%=\widehat{\psi}(\frac{m}{N})$ 
for any $k\in\Zl$, 
and thus the operator $P_{\lesssim N}f=\varphi_N*f$ acts as the identity operator 
when $\supp(\widehat{f})\subset \{k \in\Zl : |k|\lesssim N\}$ 
(this is compatible with the region where the operators $I_N^s$ also behave like the identity operator).  
Also, 
$$\|\varphi_N\|_{L^1(\Tl)} = \|\psi_N\|_{L^1(\R)} = \|\psi\|_{L^1(\R)}$$ 
and therefore 
$$\|P_{\lesssim N}\|_{X\to X} , \|P_{\lesssim N}\|_{Z\to Z}  \lesssim 1 ,$$
uniformly in $N$ and $\lambda$. Finally, 
by arguing as in \cite{CKSTTjfa04} that 
$I_1^sI_N^{2-s}$ and $N^{s-1}I_N^s I_1^{2-s}$ are bounded 
(uniformly in $N$ and $\lambda$), splitting 
$u_j=P_{\lesssim N}u_j + P_{\gg N} u_j$ for each $j$, 
and estimating each contribution separately,  
we obtain \eqref{inherittedboundonIns}.
\end{remark}

We apply the above interpolation lemma to the trilinear and quintilinear terms corresponding 
to the right hand side of \eqref{g1DNLS}, namely
\begin{eqnarray}
\label{defnofmathcalN}
\mathcal{N}(v) &= -i v^2\partial_x \overline{v} -\frac12 |v|^4v +\mu[v]|v|^2v -\psi[v]v\\
\label{defnofmathcalN123}
&=: \mathcal{N}_1(v)+ \mathcal{N}_2(v) +\mu[v]\mathcal{N}_3(v) - \psi[v] v. 
\end{eqnarray}
%where $\mathcal{N}_1(v):= -i v^2\partial_x \overline{v}$, 
%$\mathcal{N}_2(v):=-\frac12|v|^4v$, and $\mathcal{N}_3(v):=|v|^2v$. 
Note that the estimates of Lemma~\ref{HerrsLWPestimates} 
give \eqref{assumedoundonI1s} for $s_0=\frac12$. 
Since 
$I_N^1=\mathrm{Id}$ for any $N$, 
we obtain the estimate \eqref{assumedoundonI1s} for $s=1$  via the Leibniz rule 
and Lemma~\ref{HerrsLWPestimates}. For example,  
\begin{align*} 
\|u_1(\partial_x\overline{u_2})u_3\|_{\widetilde{Z}^1} 
%&=\|\langle \partial_x\rangle^{\frac12}\left(u_1(\partial_x\overline{u_2})u_3\right)\|_{\widetilde{Z}^{\frac12}}\\
 %_{X^{\frac12,-\frac12}\cap Y^{\frac12,-1}}\\
%& \lesssim\|\mathcal{T}(\langle \partial_x\rangle^{\frac12}u_1,u_2,u_3)\|_{X^{\frac12,-\frac12}\cap Y^{\frac12,-1}}\\
% &\qquad +\|\mathcal{T}(u_1,\langle \partial_x\rangle^{\frac12}u_2,u_3)\|_{X^{\frac12,-\frac12}\cap Y^{\frac12,-1}}\\
% &\qquad\quad + \|\mathcal{T}(u_1,u_2,\langle \partial_x\rangle^{\frac12}u_3)\|_{X^{\frac12,-\frac12}\cap Y^{\frac12,-1}}\\ 
&\lesssim 
\|\langle \partial_x\rangle^{\frac12}u_1(\partial_x\overline{u_2})u_3\|_{\widetilde{Z}^{\frac12}}
 +\|u_1(\partial_x\langle \partial_x\rangle^{\frac12}\overline{u_2})u_3\|_{\widetilde{Z}^{\frac12}}
 + \|u_1(\partial_x\overline{u_2})(\langle \partial_x\rangle^{\frac12}u_3)\|_{\widetilde{Z}^{\frac12}}\\
& \lesssim \delta^{\varepsilon} \prod_{j=1}^{3} \|\langle \partial_x\rangle^{\frac12}u_j\|_{X^{\frac12,\frac12}} 
 \sim \delta^{\varepsilon} \prod_{j=1}^{3} \|u_j\|_{X^{1,\frac12}}.
\end{align*}
%$I_1^s=c\langle \partial_x\rangle^{1-s}$
%for the entire range $\frac12\leq s\leq 1$ 
%by using interpolation properties of 
%the $X^{s,b}$ and $Y^{s,b}$ spaces. 
One argues analogously for the other multi-linear estimates of Lemma~\ref{HerrsLWPestimates}. 
Hence, by applying Lemma~\ref{CKSTTlemmaforIoperator}, we obtain
\begin{eqnarray}
\label{ITZ1}
\|I\Big(u_1(\partial_x\overline{u_2})u_3\Big)\|_{\widetilde{Z}^1} 
  &\lesssim \delta^{\varepsilon} \prod_{j=1}^3 \|Iu_j\|_{X^{1,\frac12}} ,\\
\label{IQZ1}
\|I\Big(u_1\overline{u_2}u_3\overline{u_4}u_5\Big)\|_{\widetilde{Z}^1} 
  &\lesssim \delta^{\varepsilon} \prod_{j=1}^5 \|Iu_j\|_{X^{1,\frac12}} ,\\
\label{IQ3Z1}
\|I\Big(u_1\overline{u_2}u_3\Big)\|_{\widetilde{Z}^1} 
  &\lesssim \delta^{\varepsilon} \prod_{j=1}^3 \|Iu_j\|_{X^{1,\frac12}} .
\end{eqnarray}
We also need the following Lipschitz continuity properties for the coupling coefficients $\mu[v]$ and $\psi[v]$. 
We easily have  
\begin{equation}
\left| \mu[f]-\mu[g]\right|\leq \frac{1}{2\pi\lambda} \left(\|f\|_{L^2(\Tl)}+\|g\|_{L^2(\Tl)}\right)\|f-g\|_{L^2(\Tl)} ,
\end{equation}
while H\"{o}lder's inequality, Parseval's identity, and the $L^6$-Sobolev inequality give 
\begin{equation}
\label{boundedcouplingcoefficients}
\left|\psi[f]-\psi[g]\right|\lesssim \frac{1}{2\pi\lambda}
\left(\|f\|^3_{H^{\frac12}} + \|f\|_{L^2} + \|g\|^3_{H^{\frac12}} + \|g\|_{L^2}  \right)
 \|f-g\|_{H^{\frac12}(\Tl)}. 
\end{equation}
The reader can also consult \cite[Lemma~2.5]{HerrIMRN06}.

We can now proceed with the proof of Proposition~\ref{prop:LWPIsyst} 
by using the fixed point argument in a closed ball of the space 
$W=\{v : \eta_{\delta}(t)Iv(t,x)\in Z^1(\R\times\Tl)\}$ with norm 
$$\|v\|_{W}:= \|\eta_{\delta} Iv\|_{Z^1(\R\times\Tl)} ,$$
with  $\delta\in (0,1)$ and $D>0$ to be chosen later, and 
$\eta_{\delta}(t):=\eta(\frac{t}{\delta})$. 
Grouping terms as in \eqref{defnofmathcalT}-\eqref{defnofmathcalQ}, and using the Duhamel formulation, 
solutions of \eqref{Isystem} are those $v$ that satisfy
\begin{equation}
\label{DuhamelIsystem}
Iv(t) = U_{\lambda}(t)Iv_0 
  -i \int_0^t U_{\lambda}(t-t')I\mathcal{N}(v(t')) dt'  .
\end{equation}
%with $\mathcal{N}=\mathcal{T}+\mathcal{Q}$. 
Consider the mapping $v\mapsto \Gamma(v)$ given by
$$\Gamma(v) :=  
\eta(t) U_{\lambda}(t)v_0 -i \eta(t)\int_0^t U_{\lambda}(t-t')\mathcal{N}(\eta_{\delta}(t')v(t')) dt' . $$
By  \eqref{linearhomogest}-\eqref{linearinhomogest} %and \eqref{restrictionXsb}
and \eqref{ITZ1}-\eqref{IQ3Z1}, 
we have
\begin{gather}
\begin{split}
\label{boundonGamma}
\|\Gamma(v)\|_{W} &\leq \|\eta(t)U_{\lambda}(t)Iv_0\|_{Z^1} + 
	\left\|\eta(t)\int_0^t U_{\lambda}(t-\tau) I\mathcal{N}(\eta_{\delta}(t')v(t')) dt'\right\|_{Z^1}\\
	&\leq c_1 \left(\|Iv_0\|_{H^1(\Tl)} + \| I\mathcal{N}(\eta_{\delta} v)\|_{\widetilde{Z}^1} \right)\\
	&\leq c_1B + c_2\delta^{\varepsilon}\left(
	  \|\eta_{\delta}Iv\|_{X^{1,\frac12}}^3
	  +\|\eta_{\delta}Iv\|_{X^{1,\frac12}}^5
	   + \|\eta_{\delta}Iv\|_{\widetilde{Z}^1}\right) .
	 %&\leq c_1B +c_2\delta^{\varepsilon}(D^3+D^5)
\end{split}
\end{gather}
Also, 
\begin{align*}
\|\Gamma(v_1)- \Gamma(v_2)\|_{W} &= 
 \left\|\eta(t)\int_0^t U_{\lambda}(t-\tau) 
  \left(I\mathcal{N}(\eta_{\delta}(t')v_1(t')) - I\mathcal{N}(\eta_{\delta}(t')v_2(t'))\right) dt'\right\|_{Z^1}\\
  &\lesssim \|I\left(\mathcal{N}_1(\eta_{\delta}v_1)-\mathcal{N}_1(\eta_{\delta}v_2)\right)\|_{\widetilde{Z}^1} +  
   \|I\left(\mathcal{N}_2(\eta_{\delta}v_1)-\mathcal{N}_2(\eta_{\delta}v_2)\right)\|_{\widetilde{Z}^1}\\
   &\qquad + \|I\left(\mathcal{N}_3(\eta_{\delta}v_1)-\mathcal{N}_3(\eta_{\delta}v_2)\right)\|_{\widetilde{Z}^1} 
      + \|I\left(\eta_{\delta}v_1- \eta_{\delta}v_2 \right)\|_{\widetilde{Z}^1} .
\end{align*}
We write 
\begin{align*}
\mathcal{N}_1(u_1)-\mathcal{N}_1(u_2)=
%\mathcal{T}(u_1,u_1,u_1)-\mathcal{T}(u_2,u_2,u_2) = 
%\mathcal{T}(u_1,u_1,u_1-u_2) + \mathcal{T}(u_1,u_1-u_2,u_2) + \mathcal{T}(u_1-u_2,u_2,u_2) 
u_1(\partial_x\overline{u_1})(u_1-u_2) + u_1\partial_x(\overline{u_1-u_2})u_2 + (u_1-u_2)(\partial_x \overline{u_2})u_2 
\end{align*}
and by using \eqref{ITZ1}, 
we obtain
\begin{align*}
\|I\left(\mathcal{N}_1(\eta_{\delta}v_1)-\mathcal{N}_1(\eta_{\delta}v_2)\right)\|_{\widetilde{Z}^1} 
  \lesssim \delta^{\varepsilon}(\|\eta_{\delta}Iv_1\|_{Z^1}^2+ \|\eta_{\delta}Iv_2\|_{Z^1}^2)\|\eta_{\delta}I(v_1-v_2)\|_{Z^1}.
\end{align*}
Arguing similarly by using \eqref{IQZ1} and \eqref{IQ3Z1}, we also have 
\begin{align*}
\|I\left(\mathcal{N}_2(\eta_{\delta}v_1)-\mathcal{N}_2(\eta_{\delta}v_2)\right)\|_{\widetilde{Z}^1} 
  \lesssim\delta^{\varepsilon}(\|\eta_{\delta}Iv_1\|_{Z^1}^4+ \|\eta_{\delta}Iv_2\|_{Z^1}^4)\|\eta_{\delta}I(v_1-v_2)\|_{Z^1},\\
\|I\left(\mathcal{N}_3(\eta_{\delta}v_1)-\mathcal{N}_3(\eta_{\delta}v_2)\right)\|_{\widetilde{Z}^1} 
  \lesssim \delta^{\varepsilon}(\|\eta_{\delta}Iv_1\|_{Z^1}^2+ \|\eta_{\delta}Iv_2\|_{Z^1}^2)\|\eta_{\delta}I(v_1-v_2)\|_{Z^1}.
\end{align*}
It follows that 
\begin{align}
\label{differenceofGammas}
\|\Gamma(v_1)- \Gamma(v_2)\|_{W} &\lesssim \delta^{\varepsilon} 
\left(\|v_1\|^2_W + \|v_2\|^2_W + \|v_1\|^4_W +\|v_2\|_W^4\right)\|v_1-v_2\|_W .
\end{align}
By taking $D=2c_1B+1$ and $\delta$ such that 
$\delta^{\varepsilon} D^5 \sim 1$, 
from \eqref{boundonGamma} and \eqref{differenceofGammas}, 
we get
\begin{align*}
\|\Gamma(v)\|_{W}\leq D \quad\text{and}\quad 
\|\Gamma(v_1)-\Gamma(v_2)\|_{W}\leq \frac12 \|v_1-v_2\|_W ,
\end{align*}
for all $v,v_1,v_2\in\{w\in W :\|w\|_W\leq D\}$. 
Hence, by Banach's fixed point  theorem, there exists a unique 
$v$ with $\|v\|_{W}\leq D$ such that $v=\Gamma(v)$ in $W$. 
Thus, 
$$\|Iv\|_{Z^1([0,\delta]\times \Tl)} \leq \|\eta_{\delta}Iv\|_{Z^1(\R\times\Tl)}\leq D $$   
and it follows that 
\eqref{DuhamelIsystem} holds for all $t\in [0,\delta]$. 
The proof of Proposition~\ref{prop:LWPIsyst} is completed.

\section{Modified energy functionals via the $I$-operator and correction terms}
\label{sect:modifiedenergy}

In view of the discussion in Section~\ref{Sect:coercivity}, 
we consider the essential part 
of the energy functional associated to \eqref{g1DNLS}, namely 
\begin{equation}
\label{defnofmathcalE}
\mathcal{E}[v]:= \int_{\Tl} \left( |\partial_x v|^2 -\frac12 |v|^2\Im(v\overline{v}_x)\right)dx .
\end{equation}
%This is the same expression as the energy corresponding to \eqref{gDNLS}, 
%$\beta=1$ on the real line (see \cite{CKSTT}). 
The \emph{first modified energy} is defined to be the  $\R$-valued functional
\begin{equation}
\label{defnofE1}
\mathcal{E}^1[v]:= \mathcal{E}[Iv]= -\Lambda_2(k_1k_2m_1m_2;v) + \frac14\Lambda_4(k_{13}m_1m_2m_3m_4;v)
\end{equation}
and for $v$ sufficiently smooth solution of \eqref{g1DNLS}, 
one can compute its time increment from the fundamental theorem of calculus
\begin{equation}
\label{incrementeq}
\mathcal{E}^1[v(t_0+\delta)] -\mathcal{E}^1[v(t_0)] = \int_{t_0}^{t_0+\delta} \frac{d}{dt} \mathcal{E}^1[v(t)]\,dt .
\end{equation}
Using \eqref{diffrule}, we have 
\begin{gather}
\label{parttE1}
\begin{split}
\frac{d}{dt} \mathcal{E}^1[v(t)] =& \Lambda_4(M_4^1;v) + \Lambda_6(M_6^1;v) +
\Lambda_8(M_8^1;v)
 - i\mu[v]\Big( \Lambda_4(K_4^1;v) + \Lambda_6(K_6^1;v)\Big) , 
\end{split}
\end{gather}
with the multipliers $M_4^1, M_6^1, M_8^1$ given by \cite[Proposition~4.1]{CKSTT}, e.g. 
\begin{align}
M_4^1(\mathbf{k}):= -\frac{i}{2} m_1m_2m_3m_4k_{12}k_{13}k_{14} -\frac{i}{2}(m_1^2k_1^2k_3 +m_2^2k_2^2k_4+m_3^2k_3^2k_1+ m_4^2k_4^2k_2) .
\end{align}
Here, it is not particularly important to have the precise expression of the multipliers $M_6^1$, $M_8^1$. 
The multipliers $K_4^1$, $K_6^1$ are new to the periodic setting 
(due to a different expression of the gauge transformation) 
and are given by
\begin{align}
\label{defnofK41}
K_4^1(\mathbf{k}) &:=\frac12 \sum_{j=1}^4 (-1)^j m_j^2k_j^2  \ ,\\
\label{defnofK61}
K_6^1(\mathbf{k}) &:=\frac23 \sum_{\substack{\{a,c,e\}=\{1,3,5\}\\ \{b,d,f\}=\{2,4,6\}}} 
  m_am_b m_cm_{def} -m_dm_em_fm_{abc}\ .
\end{align}
Note that %$\psi(v(t))$ and $\mathcal{E}^1[v(t)]$ are real-valued and 
by Remark~\ref{rmk:SymmPropofLambdas}, 
$\Lambda_4(K_4^1;v)$ and $\Lambda_6(K^1_6,v)$ 
are purely imaginary, and that 
$\Lambda_4(M_4^1;v)$, $\Lambda_6(M_6^1;v)$ and 
$\Lambda_8(M_8^1;v)$
are $\R$-valued.

The rule of thumb when one tries to prove estimates on the various terms of \eqref{incrementeq} is that 
``different pieces of $\Lambda_n$ appearing in the right hand side of $\partial_t \mathcal{E}^1(v)$ are 
easier for $n$ larger'' \cite[p.~72]{CKSTTrefined}. This motivates the following procedure when one tries to refine the $I$-method.

A \emph{second instantiation of the $I$-method} modifies further the expression of the energy functional by taking 
\begin{equation}
\label{defnofE2}
\mathcal{E}^2[v]:= \mathcal{E}^1[v] +\Lambda_4(\sigma_4;v)
\end{equation}
where the ``correction'' multiplier $\sigma_4$ is chosen so that 
in the expression of $\frac{d}{dt} \mathcal{E}^2(v)$, no fourth order term 
$\Lambda_4(\,\cdot\,;v)$ appears. 
For the sake of keeping the equations compact, 
we choose to drop the reference to $v$ 
from $\Lambda_n(M_n;v)$, and the frequency arguments $\mathbf{k}=(k_1,\ldots,k_n)$ when the formulae get too long.

By the differentiation rule \eqref{diffrule}, we have
\begin{gather}
\label{parttsigma4}
\begin{split}
\frac{d}{dt} \Lambda_4(\sigma_4) =& \Lambda_4(\sigma_4 \alpha_4) - 
 i\Lambda_6\left(\sum_{j=1}^4 \mathbb{X}_j^2(\sigma_4)k_{j+1}\right) 
+\frac{i}{2}\Lambda_8\left(\sum_{j=1}^4 (-1)^{j-1} \mathbb{X}_j^4(\sigma_4)\right)\\
&\quad -i\mu[v] \Lambda_6\left(\sum_{j=1}^4 \mathbb{X}_j^2(\sigma_4) \right) .%+ i\psi(v)\Lambda_4(\sigma_4)
\end{split}
\end{gather}
Note that if $\alpha_4(\mathbf{k})=0$, then either $k_{12}=0$ or $k_{14}=0$, and both imply that 
$\widetilde{M_4}(\mathbf{k})=0$. 
We define the first correction $\sigma_4=\sigma_4(\mathbf{k})$ for $\mathbf{k}\in\Gamma_4(\Tl)$  
by setting 
\begin{equation}
\label{defnofsigma4}
\sigma_4 := - \frac{M_4^1}{\alpha_4}= -\frac14 \left(m_1m_2m_3m_4 k_{13} +   
\frac{m_1^2k_1^2k_3 +m_2^2k_2^2k_4 +m_3^2k_3^2k_1 + m_4^2k_4^2k_2}{k_{12}k_{14}}\right)
\end{equation}
when $\alpha_4\neq0$, and 
$\sigma_4=0$ when $\alpha_4=0$. 
Thus, 
in the second iteration of the $I$-method there are no resonances for the correction term 
as we have 
$|M^1_4(\mathbf{k})|\lesssim |\alpha_4(\mathbf{k})|$ 
for all $\mathbf{k}\in\Gamma_4(\Tl)$.

Therefore, by  \eqref{defnofE1}, \eqref{defnofE2} and \eqref{defnofsigma4}, the second generation modified energy is 
given by
\begin{equation}
\mathcal{E}^2[v]= -\Lambda_2(k_1k_2m_1m_2) + \frac12 \Lambda_4(M_4) ,
\end{equation}
where we set
\begin{equation}
\label{defnofM4}
M_4 :=-\frac{m_1^2k_1^2k_3 +m_2^2k_2^2k_4 +m_3^2k_3^2k_1 + m_4^2k_4^2k_2}{2k_{12}k_{14}} 
\end{equation}
when the denominator does not vanish.  
Note that since $k_{12}k_{14}=0$ in $\Gamma_4(\Tl)$ implies 
$m_1^2k_1^2k_3 +m_2^2k_2^2k_4 +m_3^2k_3^2k_1 + m_4^2k_4^2k_2=0$,  we can set in this cases $M_4:=0$. 

Hence from \eqref{parttE1} and \eqref{parttsigma4}, we get
\begin{gather}
\label{eqn:parttcalE2}
\begin{split}
\frac{d}{dt}\mathcal{E}^2[v(t)] =& \Lambda_6(M^2_6) + \Lambda_8(M^2_8)
 -i\mu[v]\Big(\Lambda_4(K_4^1) +\Lambda_6(K_6^1)+\Lambda_6(K_6^2)\Big)  ,
\end{split}
\end{gather}
where $M^2_6$ and $M^2_8$ are the multipliers given 
by
\begin{align}
\label{defnofM62}
M^2_6 &:=\frac{i}{6}\sum_{j=1}^6 (-1)^jm_j^2k_j^2\\
&\quad -\frac{i}{72}
\sum\limits_{{\{a,c,e\}=\{1,3,5\}}\atop{\{b,d,f\}=\{2,4,6\}}}\;
\Big(M_4(k_{abc},k_d,k_e,k_f)k_b+M_4(k_a,k_{bcd},k_e,k_f)k_c
\nonumber\\
 & \qquad\qquad\qquad\qquad\qquad\qquad +
M_4(k_a,k_b,k_{cde},k_f)k_d+M_4(k_a,k_b,k_c,k_{def})k_e\Big),\nonumber\\
\label{defnofM82}
M^2_8 &:= C_8\sum\limits_{{\{a,c,e,g\}=\{1,3,5,7\}}\atop{
\{b,d,f,h\}=\{2,4,6,8\}}}\!\!
\Big(M_4(k_{abcde},k_f,k_g,k_h)
-M_4(k_a,k_{bcdef},k_g,k_h) \\
 & \qquad\qquad\qquad\qquad\qquad
 + M_4(k_a,k_b,k_{cdefg},k_h) -M_4(k_a,k_b,k_c,k_{defgh})\Big)\nonumber
\end{align}
(as in \cite[Proposition~3.7]{CKSTTrefined} or \cite[p.~2173]{MiaoWuXu}). 
Also, 
\begin{align}
\label{defnofK62}
K_6^2 &:=\sum_{j=1}^4 \mathbb{X}_j^2(\sigma_4).
\end{align}
We note that when proving the estimates on $M_6$ (see Lemma~\ref{pwestsM6}), 
cancelations between the large terms 
coming from the first term in \eqref{defnofM62} and the large terms coming from the sum of $M_4$'s are exploited, 
and thus the coefficients of the two pieces of $M_6$ are critical, 
whereas the constant $C_8=-\frac{i}{2(5!)^2}$ is irrelevant in our analysis. 

\begin{remark}[\textbf{Small Frequencies Remark}]
\label{rmk:smallfrequencies}
Notice that if $|k_j|\ll N$ for all $j$, 
we have $m(k_j)=1$ and thus 
\begin{align}
M_4(\mathbf{k})&=-\frac{k_1k_3k_{13} +k_2k_4(-k_{13})}{2k_{12}k_{14}}
 =\frac{k_{13}}{2}, 
 \text{ for all } \mathbf{k}\in\Gamma_4(\Tl). 
\end{align}
One can similarly check that if $|k_j|\ll N$ for all $j$, 
all the multipliers 
$M_n^g$, $K_n^g$ ($n=4,6,8$, $g=1,2$) vanish.  
\end{remark}

On $\Gamma_n(\Tl)$, 
the largest two frequencies must have comparable sizes 
and thus, without loss of generality,   
we may assume that 
\begin{equation}
\label{N1simN2}
\mathbf{k}\in \Upsilon_n(\Tl):=\{(k_1,\ldots,k_n)\in \Zl^n : |k^*_1|\sim |k^*_2|\gtrsim N\} ,
\end{equation} 
where $N$ is the frequency size threshold of the $I$-operator 
as defined in  Subsection~\ref{subsect:theIoperator}, and 
$(k_1^*,\ldots,k_n^*)$ denotes a rearrangement of $(k_1,\ldots,k_n)$ such that 
$$|k_1^*|\geq |k_2^*|\geq\ldots\geq |k_n^*|.$$ 
We'll also adopt the notation $N_j=|k_j^*|$. 

Due to Remark~\ref{rmk:smallfrequencies}, 
when proving the necessary estimates, 
it is enough to  consider $\mathbf{k}\in\Upsilon_n(\Tl)$, 
i.e. only the region $N_1\sim N_2\gtrsim N$.

\begin{remark}[\textbf{Symmetry Remark}]
\label{rmk:evenoddinvariance}
We point out that the multipliers $M_n^g$'s
that appear throughout this article, 
and consequently the associated multilinear forms 
$\Lambda_n(M_n^g;v_1,v_2,\ldots,v_n)$ are invariant under permutations of the even or of the odd $k_j$ (or $v_j$) indices. 
Also, the same is true (up to sign) if one swaps the set of all odd $k_j$'s (or $v_j$'s) with the set of all even $k_j$'s 
(respectively $v_j$'s). 

Hence, in addition to \eqref{N1simN2}, 
without loss of generality we may assume that 
$$|k_1| \geq |k_3|\geq\ldots \geq |k_{n-1}| \quad,\quad |k_2|\geq |k_4|\geq\ldots\geq |k_n| $$
and 
$$|k_1|\geq |k_2|.$$
If all these are in place, we have $k^*_1=k_1$, 
but  
either $k^*_2=k_2$ or $k^*_2=k_3$. 
\end{remark}

\subsection{Pointwise bounds on the multipliers} 
We provide here the multiplier estimates that are relevant in our analysis, 
namely for the almost conservation estimates of the mo\-di\-fied energy functional 
in Section~\ref{Section:AlmostConservationEstimates} 
and in the estimates of the correction terms in Section~\ref{Sect:AlmostconservedEandP}.   
We recall that we work under the symmetry assumptions on the multipliers $M_n^g$, $K_n^g$ 
mentioned in Remark~\ref{rmk:evenoddinvariance}. 
Also, 
since we rely on \eqref{nondecrasingpropofmsgeq12}, 
the assumption $s\geq \frac12$ is needed for all of the results below. 

Although the multiplier $M_4$ is not involved directly in \eqref{eqn:parttcalE2}, 
the refined bounds (ii) and (iii) below are crucial for  $M_6^2$ and $M_8^2$. 

\begin{lemma}{\cite[Lemma~4.1, 4.2]{CKSTTrefined}}
\label{pwestsM4} 
For $M_4$ defined by \eqref{defnofM4} and $\mathbf{k}\in\Gamma_4(\Tl)$, we have:
\begin{enumerate}
\item[\textup{(i)}] $|M_4(\mathbf{k})| \lesssim m(N_1)^2N_1$;
\item[\textup{(ii)}]  if $|k_1|\sim |k_3|\gtrsim N\gg N_3$, then $|M_4(\mathbf{k})| \lesssim m(N_1)^2N_3$;
\item[\textup{(iii)}]  if  $|k_1|\sim |k_2|\gtrsim N\gg N_3$, then $M_4(\mathbf{k}) = \frac{m(k_1)^2k_2^2}{2k_1} + O(N_3)$.
\end{enumerate}
\end{lemma}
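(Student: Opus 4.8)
The plan is to follow closely the computations of Colliander, Keel, Staffilani, Takaoka, and Tao in \cite[Lemmas~4.1 and 4.2]{CKSTTrefined} and to check that they transfer without change from $\Z$ to the dilated lattice $\Zl$. By the Small Frequencies Remark (Remark~\ref{rmk:smallfrequencies}) and the fact that on $\Gamma_4(\Tl)$ the two largest frequencies are always comparable, it is enough to treat $\mathbf{k}\in\Upsilon_4(\Tl)$, and by Remark~\ref{rmk:evenoddinvariance} we may assume $|k_1|\geq|k_3|$, $|k_2|\geq|k_4|$, and $|k_1|\geq|k_2|$; in particular $N_1=|k_1|\gtrsim N\gg1$, so $\langle k_1\rangle\sim|k_1|$ and the distinction between $|k|$ and $\langle k\rangle$ from Remark~\ref{rmk:japanesebracketwithparam} is harmless. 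The only analytic inputs are the elementary bound $|m'(\xi)|\lesssim m(\xi)/\langle\xi\rangle$ and the monotonicity property \eqref{nondecrasingpropofmsgeq12}; from these, writing $h(\xi):=m(\xi)^2\xi^2$ (an even function, since $m$ is), one gets $|h'(\xi)|\lesssim m(\xi)^2|\xi|$ and $|h''(\xi)|\lesssim m(\xi)^2$, together with the comparisons $m(|k|)^2|k|^2\lesssim m(N_1)^2N_1^2$ and $m(|k|)^2|k|\lesssim m(N_1)^2N_1$ whenever $|k|\leq N_1$. Since $m_j^2k_j^2=h(k_j)$, the numerator in \eqref{defnofM4} equals $\mathcal{M}(\mathbf{k})=k_3h(k_1)+k_1h(k_3)+k_4h(k_2)+k_2h(k_4)$, and in the case $m\equiv1$ a direct computation on $\Gamma_4(\Tl)$ (using $k_3+k_4=-k_{12}$, $k_2+k_3=-k_{14}$, $k_1+k_3=k_{13}$) gives $\mathcal{M}=-k_{13}k_{12}k_{14}$; this recovers the Small Frequencies Remark and, in particular, shows that $\mathcal{M}$ vanishes on $\{k_{12}=0\}\cup\{k_{14}=0\}$ inside $\Gamma_4(\Tl)$.

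For part (i), I would parametrize $\Gamma_4(\Tl)$ by $\xi:=k_1$, $k_2=k_{12}-\xi$, $k_3=\xi-k_{12}-k_{14}$, $k_4=k_{14}-\xi$, and split into cases according to the sizes of $k_{12}$ and $k_{14}$ (both of modulus $\leq 2N_1$). If $|k_{12}k_{14}|\gtrsim N_1^2$, then the crude estimate $|\mathcal{M}|\lesssim m(N_1)^2N_1^3$ (bounding one term of $\mathcal{M}$ at a time with the comparisons above) already yields $|M_4|\lesssim m(N_1)^2N_1$. If exactly one of the two gaps, say $k_{14}$, has $|k_{14}|\ll N_1$, then $k_4\approx-k_1$ and $k_2\approx-k_3$, and pairing the four summands of $\mathcal{M}$ along these two near-cancellations and applying the mean value theorem to $h$ (noting that $|k_1|$ and $|k_4|$, and $|k_2|$ and $|k_3|$, differ by $|k_{14}|$) gives $|\mathcal{M}|\lesssim m(N_1)^2N_1^2|k_{14}|$, which suffices after dividing by $|k_{12}k_{14}|$ because $|k_{12}|\gtrsim N_1$ in this case. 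Finally, if both $|k_{12}|,|k_{14}|\ll N_1$, then all four frequencies are of size $\sim N_1$, and using the identity $\mathcal{M}=\xi\bigl(h(\xi)-h(\xi-k_{12})-h(\xi-k_{14})+h(\xi-k_{12}-k_{14})\bigr)-k_{12}\bigl(h(\xi)-h(\xi-k_{14})\bigr)-k_{14}\bigl(h(\xi)-h(\xi-k_{12})\bigr)$ together with the double mean value theorem $|h(\xi)-h(\xi-a)-h(\xi-b)+h(\xi-a-b)|\lesssim|a||b|\sup_{|\theta|\sim N_1}|h''(\theta)|\lesssim|a||b|\,m(N_1)^2$ (the relevant segment staying at scale $N_1$) yields $|\mathcal{M}|\lesssim m(N_1)^2N_1|k_{12}k_{14}|$, i.e. $|M_4|\lesssim m(N_1)^2N_1$.

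For parts (ii) and (iii), where in addition $N_3\ll N_1\sim N_2$, the denominator $k_{12}k_{14}$ is essentially pinned and the argument is cleaner. In case (ii) one has $|k_1|\sim|k_3|\gtrsim N$ and $|k_2|,|k_4|\leq N_3\ll N$, so $m_2=m_4=1$ and $k_{12},k_{14}\sim N_1$; then $m_2^2k_2^2k_4+m_4^2k_4^2k_2=k_2k_4(k_2+k_4)=O(N_3^3)$, which is $O(m(N_1)^2N_1^2N_3)$ by monotonicity, while for $m_1^2k_1^2k_3+m_3^2k_3^2k_1=k_1k_3(m_1^2k_1+m_3^2k_3)$ one uses $|k_1+k_3|\lesssim N_3$ and the mean value theorem for $m^2$ to get $m_1^2k_1+m_3^2k_3=O(m(N_1)^2N_3)$; hence $|\mathcal{M}|\lesssim m(N_1)^2N_1^2N_3$ and $|M_4|\lesssim m(N_1)^2N_3$. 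In case (iii) one has $|k_1|\sim|k_2|\gtrsim N$ and $|k_3|,|k_4|\leq N_3\ll N$, so $m_3=m_4=1$; now $k_1+k_2=-(k_3+k_4)$ is the small quantity, $|k_{12}|\lesssim N_3$, while $k_{14}=k_1+k_4\sim k_1$. Writing $m_1^2k_1^2k_3+m_2^2k_2^2k_4=-m_1^2k_1^2k_{12}+\bigl(h(k_2)-h(k_1)\bigr)k_4$ with $|h(k_2)-h(k_1)|\lesssim m(N_1)^2N_1|k_{12}|$, and $k_3^2k_1+k_4^2k_2=-k_1(k_3-k_4)k_{12}+k_4^2k_{12}=O(N_1N_3|k_{12}|)$ (using $k_3+k_4=-k_{12}$ and $k_2=k_{12}-k_1$), one finds $\mathcal{M}=-m_1^2k_1^2k_{12}+O(N_1N_3|k_{12}|)$, hence $M_4=\frac{m_1^2k_1^2}{2k_{14}}+O(N_3)$; finally $\frac{m_1^2k_1^2}{2k_{14}}-\frac{m_1^2k_2^2}{2k_1}=\frac{m_1^2}{2}\cdot\frac{k_1(k_1-k_2)k_{12}-k_2^2k_4}{k_1k_{14}}=O(N_3)$, which is the statement of (iii).

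I expect the main obstacle to be the case bookkeeping in part (i): one must enumerate the frequency configurations so that in each of them the (single or double) mean value estimate is applied on a segment that stays at scale $N_1$ — so that the crucial gain of $m(N_1)^2$ is available — and so that the accompanying powers of $|k_{12}|$ and $|k_{14}|$ exactly match the denominator $k_{12}k_{14}$. A minor additional point, occurring in part (iii), is that the genuinely small-frequency-cubic term $m_3^2k_3^2k_1$ is not $O(N_3)$ on its own after dividing by $2k_{12}k_{14}$ and must be combined with $m_4^2k_4^2k_2$ to exhibit the cancellation coming from $k_3+k_4=-k_{12}$.
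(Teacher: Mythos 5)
The paper does not prove this lemma; it cites \cite[Lemmas~4.1, 4.2]{CKSTTrefined}, leaving to the reader the check that the estimates carry over from $\Z$ to $\Zl$. Your proof is a correct and detailed verification along the only reasonable lines: the exact factorization $\mathcal{M}=-k_{13}k_{12}k_{14}$ when $m\equiv 1$, and otherwise single and double mean-value estimates for $h(\xi)=m(\xi)^2\xi^2$, using $|h'|\lesssim m^2|\xi|$ and $|h''|\lesssim m^2$ on segments that stay at scale $N_1$, with the case split governed by the sizes of $|k_{12}|$ and $|k_{14}|$. I checked each of your three regimes in part~(i) and the algebra in (ii)--(iii) --- in particular the identity $k_1^3-k_2^2k_{14}=k_1(k_1-k_2)k_{12}-k_2^2k_4$ and the comparison $\frac{m_1^2k_1^2}{2k_{14}}-\frac{m_1^2k_2^2}{2k_1}=O(N_3)$ --- and they all hold, as do your observations that $|k_{12}k_{14}|\gtrsim N_1^2$ is equivalent to both gaps being $\gtrsim N_1$ (since each gap is $\leq 2N_1$) and that $m(\xi)^2|\xi|$ is non-decreasing, which you need repeatedly.

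One point worth recording, which traces to the paper's definition of $M_4$ rather than to a defect in your argument: the paper declares $M_4:=0$ on $\{k_{12}k_{14}=0\}$, and your derivation of (iii) tacitly operates on $k_{12}\neq 0$ (you divide by $k_{12}$). At $k_{12}=0$ --- which is compatible with $|k_1|=|k_2|\gtrsim N\gg N_3$, taking $k_2=-k_1$ and $k_4=-k_3$ --- the convention forces $M_4=0$, whereas $\frac{m_1^2k_2^2}{2k_1}=\frac{m_1^2k_1}{2}\sim m(N_1)^2N_1\gtrsim N\gg N_3$, so (iii) as literally stated fails on that slice. This does not affect (i)--(ii) (both sides vanish there) nor any of the downstream estimates, but it is cleanest to phrase (iii) with the proviso $k_{12}\neq 0$, or to take the limiting value of the rational expression as the definition of $M_4$ on the zero set of the denominator.
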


By using the estimate (i) above, 
one can immediately obtain a crude bound for the symbol $M_6^2$ (see (i) below). 
We recall that in \cite{CKSTTrefined}, the refined estimate (ii) below,  
as well as using Bourgain's trick to provide additional denominators, 
make possible the global well-posedness result of \eqref{DNLS} on the real line for $s>\frac12$, 
but not at the end-point $s=\frac12$. 
It is worth mentioning that for (ii), 
in the case $N_3\ll N$ and the largest two frequencies have same parity, 
it was exploited the cancellation 
``between the large terms coming from $\beta_6$ and the large terms of the sum of the $M_4$.'' 
%However, this situation is excluded on $\Omega^c\subset \Omega_1^c$. 
Hence the almost conservation estimate of $\mathcal{E}^2$ owes to 
the specific nonlinear structure $-iv^2\partial_x{\overline{v}} -\frac12 |v|^4v$ 
of the gauged DNLS equation \eqref{g1DNLS} in the Euclidean case. %for $s>\frac12$. 

\begin{lemma}{\cite[Lemma~6.2]{CKSTTrefined}}
%{\cite[Lemma~6.4]{CKSTTrefined}  and  \cite[Proposition~4.1, Cor.~4.1]{MiaoWuXu}. 
%for (1)
%\footnote{Although (i) of Lemma~\ref{pwestsM6} is stated 
%in \cite{CKSTTrefined} 
%with the assumption 
%$N_3\gtrsim N$, one can check that the proof is valid without it.} }
\label{pwestsM6}
For $M_6^2$ defined by \eqref{defnofM62} and $\mathbf{k}\in\Gamma_6(\Tl)$, we have:
\begin{enumerate}
\item[\textup{(i)}] $|M^2_6(\mathbf{k})| \lesssim m(N_1)^2N_1^2$;
\item[\textup{(ii)}] if $N_3\ll N$,  
then $|M^2_6(\mathbf{k})| \lesssim N_1 N_3$.
%\item if $k_2^*=k_2$, then $|M_6(\mathbf{k})| \lesssim N_1N_3$;
%\item[\textup{(iii)}] if $k_2^*=k_2$ and $N_3\ll N$, then $M_6(\mathbf{k})=..+O(N_3^2)$;
\end{enumerate}
\end{lemma}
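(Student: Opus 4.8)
The plan is to follow the proof of \cite[Lemma~6.2]{CKSTTrefined} step by step, the point being that it consists solely of pointwise (algebraic) estimates for the multiplier and uses no lattice-point counting, so every line is insensitive to replacing $\Z$ by $\Zl$. Write $M_6^2 = \beta_6 + \gamma_6$, where $\beta_6(\mathbf{k}):=\frac{i}{6}\sum_{j=1}^6(-1)^jm_j^2k_j^2$ is the diagonal quadratic part and $\gamma_6$ is the symmetrized sum of the four elongations of $M_4$ in \eqref{defnofM62}; note that in each of those four summands the external linear factor ($k_b$, $k_c$, $k_d$ or $k_e$) is always one of the three frequencies sitting inside the contracted argument. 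By Remark~\ref{rmk:smallfrequencies} and the reduction around \eqref{N1simN2}, I may assume $N_1\sim N_2\gtrsim N$, and by Remark~\ref{rmk:evenoddinvariance} I may take $k_1^*=k_1$, $k_2^*\in\{k_2,k_3\}$.

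For (i): since $\xi\mapsto m(\xi)\xi^{1/2}$ is non-decreasing (see \eqref{nondecrasingpropofmsgeq12}), so are $\xi\mapsto m(\xi)^2\xi$ and $\xi\mapsto m(\xi)^2\xi^2$; hence $|m_j^2k_j^2|\le m(N_1)^2N_1^2$ for each $j$, so $|\beta_6|\lesssim m(N_1)^2N_1^2$. In $\gamma_6$, each argument of an elongated $M_4$ has magnitude comparable to $N_1$ (a contraction $k_{abc}$ satisfies $|k_{abc}|\le 3N_1$), so Lemma~\ref{pwestsM4}(i) gives $|M_4(\cdots)|\lesssim m(N_1)^2N_1$; multiplying by the external factor ($\lesssim N_1$) and summing the $O(1)$ terms yields (i).

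For (ii), assume $N_3\ll N$, so all frequencies except two, at indices $j_1,j_2$, are $\ll N$, and on $\Gamma_6$ one has $k_{j_1}+k_{j_2}=-\sum_{i\neq j_1,j_2}k_i=O(N_3)$; thus $k_{j_1},k_{j_2}$ have opposite signs, $|k_{j_1}|=|k_{j_2}|+O(N_3)$, and — using $|m'(\xi)|\lesssim m(\xi)/|\xi|$, which follows from the explicit choice of $m$ — $|m_{j_1}^2-m_{j_2}^2|\lesssim m(N_1)^2N_3/N_1$, whence $|m_{j_1}^2k_{j_1}^2-m_{j_2}^2k_{j_2}^2|\lesssim N_1N_3$. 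In $\beta_6$ the four small terms are each $\lesssim N_3^2\le N_1N_3$; if $j_1,j_2$ have opposite parity the two large terms enter with opposite signs and their sum is the quantity just bounded. For a summand $M_4(\cdots)k_\bullet$ of $\gamma_6$: if $|k_\bullet|\lesssim N_3$ then $|M_4(\cdots)|\lesssim m(N_1)^2N_1\lesssim N_1$ by Lemma~\ref{pwestsM4}(i) and the term is $O(N_1N_3)$; if $|k_\bullet|\gtrsim N$ (so $k_\bullet$ is one of the large frequencies, sitting inside the contracted argument), then either the other large frequency also lies in the contracted argument — so that argument is $O(N_3)$, all four arguments are $\ll N$, and $M_4=O(N_3)$ by the small-frequency identity of Remark~\ref{rmk:smallfrequencies}, giving $O(N_1N_3)$ — or it is a single argument of $M_4$, in which case a short combinatorial check (using the parity pattern in \eqref{defnofM62}) shows that when $j_1,j_2$ have opposite parity this single argument occupies a slot of the same parity as the contracted one, so Lemma~\ref{pwestsM4}(ii) gives $|M_4(\cdots)|\lesssim m(N_1)^2N_3$ and again the term is $O(N_1N_3)$.

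The only remaining — and the genuinely delicate — configuration is $j_1,j_2$ of equal parity: then the two large terms of $\beta_6$ \emph{add}, contributing $\sim m(N_1)^2N_1^2$, and simultaneously $\gamma_6$ contains terms in which $k_\bullet$ is one large frequency while the other large frequency is a single argument occupying the opposite-parity slot, so that only Lemma~\ref{pwestsM4}(iii) applies and $|M_4(\cdots)k_\bullet|\lesssim m(N_1)^2N_1^2$ individually. The resolution, exactly as in \cite[Lemma~6.2]{CKSTTrefined}, is to substitute the expansion $M_4(\cdots)=\frac{m(\cdot)^2(\cdot)^2}{2(\cdot)}+O(N_3)$ from Lemma~\ref{pwestsM4}(iii) into all such summands; since $k_{j_1}=-k_{j_2}+O(N_3)$ and $|m_{j_1}^2-m_{j_2}^2|\lesssim m(N_1)^2N_3/N_1$, every leading piece may be replaced by $(-1)^{j_1}m_{j_1}^2k_{j_1}^2$ up to an $O(N_1N_3)$ error, and then the leading contributions of $\beta_6$ and of the sum of the $M_4$'s cancel identically — a fixed algebraic identity, which is precisely why the coefficients $\frac16$ and $\frac1{72}$ in \eqref{defnofM62} are what they are. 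The leftover is $O(N_1N_3)$; verifying this cancellation is the main obstacle, but it is a finite computation identical to the Euclidean one, as it involves only the values of the multiplier and elementary arithmetic of the $\Zl$-frequencies. Summing the $O(1)$ many contributions completes the proof of (ii).
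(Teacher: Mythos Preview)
The paper does not supply its own proof of this lemma; it simply cites \cite[Lemma~6.2]{CKSTTrefined} and adds a sentence of context pointing out that the key cancellation (between the $\beta_6$-piece and the symmetrized $M_4$-sum) is exploited precisely in the case $N_3\ll N$ with the two largest frequencies of the same parity. Your sketch follows that cited proof faithfully: the crude bound (i) via monotonicity of $\xi\mapsto m(\xi)^2\xi$ and Lemma~\ref{pwestsM4}(i); the easy sub-cases of (ii) (small external factor, both large frequencies inside the contraction, or same-parity $M_4$-slots so that Lemma~\ref{pwestsM4}(ii) applies); and the delicate same-parity case handled by substituting the expansion of Lemma~\ref{pwestsM4}(iii) and invoking the algebraic cancellation fixed by the coefficients $\tfrac16$ and $\tfrac{1}{72}$. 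Your combinatorial check on which $M_4$-slot the second large frequency occupies is correct, and you are right that the argument is purely pointwise and hence insensitive to passing from $\Z$ to $\Zl$.
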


%By splitting the analysis into two cases 
%(i.e. whether or not the largest two frequency sizes are achieved 
%by opposite parity frequencies), 
%\cite{CKSTTrefined} 
%obtained good enough bounds on $M_6^2$,  
%which owe to the specific nonlinear structure 
%$-iv^2\partial_x{\overline{v}} -\frac12 |v|^4v$ 
%of the gauged DNLS equation \eqref{g1DNLS} and 
%yield the global existence result in the Euclidean case for $s>\frac12$. 

\begin{lemma}
\label{pwestssigma4}
For $\sigma_4$ defined by \eqref{defnofsigma4} and $\mathbf{k}\in\Gamma_4(\Tl)$, we have:
$$|\sigma_4(\mathbf{k})|\lesssim m(N_1)^2N_1.$$
\end{lemma}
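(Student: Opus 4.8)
The plan is to reduce the bound on $\sigma_4$ to the bound on $M_4$ already supplied by Lemma~\ref{pwestsM4}(i) plus an elementary estimate on the product $m_1m_2m_3m_4$. Comparing the definitions \eqref{defnofsigma4} and \eqref{defnofM4}, one has on $\Gamma_4(\Tl)$ the algebraic identity $\sigma_4 = -\tfrac14\, m_1m_2m_3m_4\, k_{13} + \tfrac12\, M_4$ (both sides being set to $0$ when $k_{12}k_{14}=0$, since then the numerator in \eqref{defnofM4} also vanishes). By the triangle inequality it therefore suffices to bound $m_1m_2m_3m_4\,|k_{13}|$, because $|M_4(\mathbf{k})|\les m(N_1)^2N_1$ is precisely Lemma~\ref{pwestsM4}(i).

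For the remaining term, I would first record that on $\Gamma_4(\Tl)$ the two largest frequencies are automatically comparable: writing $(k_1^*,\dots,k_4^*)$ for the decreasing rearrangement, the constraint $k_1^*+\dots+k_4^*=0$ gives $N_1=|k_1^*|\le |k_2^*|+|k_3^*|+|k_4^*|\le 3N_2$, hence $N_1\sim N_2$. Next I would use that $m$ is even and non-increasing, so $m(N_1)\le m(N_2)$; and, since $s\ge\tfrac12$, the monotonicity property \eqref{nondecrasingpropofmsgeq12} together with $N_2\le N_1\le 3N_2$ yields $m(N_2)N_2^{1/2}\le m(N_1)N_1^{1/2}$, i.e. $m(N_2)\le 3^{1/2}m(N_1)$. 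Thus $m(N_1)\sim m(N_2)$. Bounding the two smallest factors by $m\le 1$, we get $m_1m_2m_3m_4=\prod_j m(|k_j^*|)\le m(N_1)m(N_2)\les m(N_1)^2$, and combined with $|k_{13}|\le |k_1|+|k_3|\le 2N_1$ this gives $m_1m_2m_3m_4\,|k_{13}|\les m(N_1)^2N_1$.

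Putting the two contributions together yields $|\sigma_4(\mathbf{k})|\les m(N_1)^2N_1$, as claimed. I do not anticipate a serious obstacle: the one point to be careful about is that the gain of a second factor $m(N_1)$ (rather than the cheap bound $m_1m_2m_3m_4\le m(N_1)$, which would only give the weaker $|\sigma_4|\les m(N_1)N_1$) relies on $m(N_1)\sim m(N_2)$, and hence on the assumption $s\ge\tfrac12$ via \eqref{nondecrasingpropofmsgeq12} — exactly as flagged in the paragraph preceding these multiplier lemmas. One can also note, consistently with the above, that when all $|k_j|\ll N$ one has $m_j=1$ and $M_4=\tfrac12 k_{13}$ by Remark~\ref{rmk:smallfrequencies}, so $\sigma_4$ in fact vanishes identically in that region.
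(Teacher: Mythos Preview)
Your proof is correct and follows essentially the same approach as the paper: split $\sigma_4$ into the piece $-\tfrac14 m_1m_2m_3m_4 k_{13}$ and the piece $\tfrac12 M_4$, then bound each separately using Lemma~\ref{pwestsM4}(i) for the latter. The paper's proof simply asserts the bound on the first piece without elaboration, whereas you supply the justification via $N_1\sim N_2$ and \eqref{nondecrasingpropofmsgeq12}; this extra care is appropriate and matches the paper's standing hypothesis $s\ge\tfrac12$.
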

\begin{proof}
For $\sigma_4$, one easily notes that $\sigma_4^1:=-\frac14 m_1m_2m_3m_4k_{13}$ is bounded by $m(N_1)^2N_1$ and 
for $\sigma_4^2:=\sigma_4-\sigma_4^1$, we have
Lemma~\ref{pwestsM4} which gives $|\sigma_4^2|\sim |M_4|\lesssim m(N_1)^2N_1$. 
\end{proof}

Another immediate consequence of Lemma~\ref{pwestsM4} is the following:

\begin{lemma}
\label{pwestsM82}
For $M_8^2$ defined by \eqref{defnofM82} and  $\mathbf{k}\in\Gamma_8$, we have:
\begin{enumerate}
\item[\textup{(i)}]  $|M^2_8(\mathbf{k})| \lesssim m(N_1)^2N_1$;
\item[\textup{(ii)}]  if $N_3\ll N$, then $|M^2_8(\mathbf{k})|\lesssim N_3$. 
\end{enumerate}
\end{lemma}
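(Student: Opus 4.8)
For part (i), I would unwind the definition \eqref{defnofM82}: $M_8^2$ is a linear combination, with bounded coefficients, of the $O(1)$ many expressions $M_4(\ell_1,\ell_2,\ell_3,\ell_4)$ in which each $\ell_i$ is a sum of at most five of the $k_j$'s, so $(\ell_1,\dots,\ell_4)\in\Gamma_4(\Tl)$ and $\max_i|\ell_i|\lesssim N_1$. Lemma~\ref{pwestsM4}(i) bounds each such term by $m(\max_i|\ell_i|)^2\max_i|\ell_i|$, and since $m$ is non-increasing while $\xi\mapsto m(\xi)^2\xi$ is non-decreasing — the latter being the content of \eqref{nondecrasingpropofmsgeq12}, and the reason $s\ge\tfrac12$ is needed — one has $m(L)^2L\lesssim m(N_1)^2N_1$ for every $L\lesssim N_1$. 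Summing finitely many such contributions gives (i).

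For part (ii) I would first observe that on $\Gamma_8(\Tl)$ the two largest frequencies are always comparable, so either $\mathbf{k}\in\Upsilon_8(\Tl)$, or $N_1\ll N$, in which case all $m_j=1$ and $M_8^2=0$ by Remark~\ref{rmk:smallfrequencies}; thus assume $\mathbf{k}\in\Upsilon_8(\Tl)$ together with $N_3\ll N$. Then exactly two frequencies — call them $P$ and $Q$ — have modulus $\sim N_1\gtrsim N$, the other six being $O(N_3)$, and the zero-sum relation forces $P+Q=O(N_3)$. The core step is to classify, for each elongation $\mathbb{X}_j^4(M_4)$ occurring in \eqref{defnofM82}, where $P$ and $Q$ land among the four arguments. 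If they land in the same argument, all four arguments are $O(N_3)$ and Lemma~\ref{pwestsM4}(i) gives a contribution $\lesssim N_3$. If they land in two ``odd'' arguments (slots $1,3$) or two ``even'' arguments (slots $2,4$), then after using the symmetries of Remark~\ref{rmk:evenoddinvariance} to reduce to the form of Lemma~\ref{pwestsM4}(ii), that lemma gives $\lesssim m(N_1)^2N_3\le N_3$. The only problematic case is when one of $P,Q$ occupies an ``odd'' slot and the other an ``even'' slot; there Lemma~\ref{pwestsM4}(iii), combined with $P+Q=O(N_3)$ and the fact that $\xi\mapsto m(\xi)^2\xi$ is Lipschitz away from $\{|\xi|\lesssim N\}$, yields a contribution of the shape $(-1)^{j-1}\big(\pm\tfrac12 m(P)^2P\big)+O(N_3)$, where the inner sign records whether $P$ or $Q$ sits in the odd slot — here one uses that $m(Q)^2Q=-m(P)^2P+O(N_3)$, since $\xi\mapsto m(\xi)^2\xi$ is odd and $Q=-P+O(N_3)$.

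The hard part is then to show that all these leading terms $\pm\tfrac12 m(P)^2P$ cancel in the signed symmetrized sum \eqref{defnofM82}, leaving only the $O(N_3)$ remainders. This is pure combinatorial bookkeeping: for the two large frequencies carrying indices of opposite parity, and separately for indices of the same parity, one counts over the $4!\times4!$ permutations and the four indices $j$ how many ``bad'' configurations produce $+\tfrac12 m(P)^2P$ versus $-\tfrac12 m(P)^2P$, and checks that the alternating signs $(-1)^{j-1}$ make the counts agree. This is exactly the periodic transcription of the cancellation already performed in the Euclidean setting (cf.\ \cite[Lemma~6.2]{CKSTTrefined} for the analogous $M_6^2$ bound of Lemma~\ref{pwestsM6}, and \cite{MiaoWuXu}); passing from $\Z$ to $\Zl$ does not affect it. Once the cancellation is in place, summing the $O(1)$ remaining $O(N_3)$ contributions yields $|M_8^2|\lesssim N_3$, which is (ii). I expect this cancellation count to be the only genuine obstacle; the rest is a direct application of Lemma~\ref{pwestsM4} together with the monotonicity and parity properties of the symbol $m$.
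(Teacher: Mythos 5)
The paper offers no proof of this lemma---it simply states that it is ``[a]nother immediate consequence of Lemma~\ref{pwestsM4}''---so there is nothing of the paper's own to compare your proposal against. Your part (i) is exactly the intended one-liner: each of the finitely many elongated $M_4$'s is bounded via Lemma~\ref{pwestsM4}(i), and then $m$ non-increasing together with $\xi\mapsto m(\xi)^2\xi$ non-decreasing gives $m(L)^2L\lesssim m(N_1)^2N_1$ for every argument size $L\lesssim N_1$.

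For part (ii), your slot-parity classification and your observation that the opposite-parity-slot configurations carry a leading term $\sim m(N_1)^2N_1\gg N_3$ (via Lemma~\ref{pwestsM4}(iii)) that can only disappear through cancellation in the signed symmetrized sum \eqref{defnofM82} is correct, and that cancellation is the whole nontrivial content of the statement---there is no route around it, so the paper's ``immediate'' is just a pointer to the Euclidean precedent. Two points on your sketch. A minor imprecision: after normalizing $k_1^*=k_1$ by Remark~\ref{rmk:evenoddinvariance}, in the opposite-index-parity subcase $k_2^*=k_2$ the large frequency $P=k_1$ occupies the \emph{odd} slot of every elongated $M_4$ whose two large slots have opposite parity (it reaches an even slot only by absorption at an even $j$, but then $Q$'s singleton slot is also even, giving same parity). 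So your ``inner sign'' is constantly $+$ in that subcase, and the cancellation is carried entirely by the outer $(-1)^{j-1}$ averaged over the $4!\times 4!$ permutations; the inner sign genuinely flips only in the same-parity subcase $k_2^*=k_3$, which cancels pairwise under $(p,q)\leftrightarrow(q,p)$. The substantive point: the count is the one genuine step of the proof and you leave it as a claim. It does close---with $P$ at tuple position $p\in\{1,3,5,7\}$ and $Q$ at $q\in\{2,4,6,8\}$, the signed tally $\sum_{q}\sum_{j=1}^4(-1)^{j-1}\ind[\text{opposite large slots}]$ works out to $-4,0,4,0$ for $p=1,3,5,7$, so the grand total vanishes---so your plan is sound, but as written you have flagged rather than performed the one step that actually needs doing (the same gap the paper itself leaves).
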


\begin{lemma}
\label{pwestsK41}
For $K_4^1$ defined by \eqref{defnofK41} and $\mathbf{k}\in\Gamma_4(\Tl)$, we have 
\begin{enumerate}
\item[\textup{(i)}]  $|K_4^1(\mathbf{k})| \lesssim m(N_1)^2N_1^2$;
\item[\textup{(ii)}] if $|k_1|\sim|k_2|\gtrsim N\gg N_3$,  
 then $|K^1_4(\mathbf{k})| \lesssim m(N_1)^2 N_1 N_3$.
\end{enumerate}
\end{lemma}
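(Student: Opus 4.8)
The plan is to prove (i) by a direct term-by-term bound, and (ii) by extracting the cancellation between the two highest-frequency contributions $-m_1^2k_1^2$ and $m_2^2k_2^2$ of $K_4^1$. Throughout I will use the monotonicity property \eqref{nondecrasingpropofmsgeq12}: for $s\geq\tfrac12$ the map $\xi\mapsto m(\xi)\xi^{1/2}$ is non-decreasing, hence so are $\xi\mapsto m(\xi)\xi=\big(m(\xi)\xi^{1/2}\big)\xi^{1/2}$ and $\xi\mapsto m(\xi)^2\xi=\big(m(\xi)\xi^{1/2}\big)^2$; together with $m$ being even, non-increasing on $[0,\infty)$ and identically $1$ on $[0,N]$, this yields the slowly-varying facts $m(a)\sim m(b)$ whenever $a\sim b$ and $|m'(\xi)|\lesssim m(\xi)/\xi$ for $\xi\gtrsim N$ (from $m'\equiv 0$ on $[0,N)$, $m'(\xi)=-(1-s)m(\xi)/\xi$ on $(2N,\infty)$, and $|m'|\lesssim 1/N$ on $[N,2N]$). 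Since $K_4^1$ is, up to a sign, invariant under permutations of the odd indices, of the even indices, and under swapping these two sets, I may invoke the reductions of Remark~\ref{rmk:evenoddinvariance}, so that $|k_1|=N_1\geq|k_3|$, $|k_2|\geq|k_4|$ and $|k_1|\geq|k_2|$.

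For (i) I will simply observe that, since $(-1)^j=\pm1$ and $m_j^2k_j^2=\big(m(|k_j|)|k_j|\big)^2$, the triangle inequality gives $|K_4^1(\mathbf{k})|\leq\frac12\sum_{j=1}^4\big(m(|k_j|)|k_j|\big)^2$; as $|k_j|\leq N_1$ for every $j$ and $\xi\mapsto m(\xi)\xi$ is non-decreasing, each summand is $\leq\big(m(N_1)N_1\big)^2$, whence $|K_4^1(\mathbf{k})|\lesssim m(N_1)^2N_1^2$.

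For (ii), under the hypothesis $|k_1|\sim|k_2|\gtrsim N\gg N_3\geq|k_3|\geq|k_4|$ one has $m_3=m_4=1$, so that
\[
K_4^1(\mathbf{k})=\tfrac12\big(m_2^2k_2^2-m_1^2k_1^2\big)+\tfrac12\big(k_4^2-k_3^2\big).
\]
The low-frequency piece needs no cancellation: $|k_4^2-k_3^2|\leq k_3^2+k_4^2\lesssim N_3^2$, and since $\xi\mapsto m(\xi)^2\xi$ is non-decreasing and equals $\xi$ on $[0,N]$, we have $m(N_1)^2N_1\geq m(N_3)^2N_3=N_3$, hence $N_3^2\leq m(N_1)^2N_1N_3$. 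For the high-frequency piece I will use the constraint $k_{1234}=0$, which gives $|k_1+k_2|=|k_3+k_4|\lesssim N_3$, hence both $\big||k_1|-|k_2|\big|\leq|k_1+k_2|\lesssim N_3$ and $|k_1^2-k_2^2|=|k_1-k_2|\,|k_1+k_2|\lesssim N_1N_3$. Writing $m_2^2k_2^2-m_1^2k_1^2=(m_2^2-m_1^2)k_1^2+m_2^2(k_2^2-k_1^2)$, the second term is $\lesssim m(N_1)^2N_1N_3$ since $m_2=m(|k_2|)\sim m(N_1)$; for the first, the mean value theorem together with $|m'(\xi)|\lesssim m(\xi)/\xi$ on $\{\xi\sim N_1\}$ gives $|m_1-m_2|\lesssim\tfrac{m(N_1)}{N_1}\big||k_1|-|k_2|\big|\lesssim\tfrac{m(N_1)}{N_1}N_3$, so that $|m_2^2-m_1^2|=(m_1+m_2)|m_1-m_2|\lesssim\tfrac{m(N_1)^2}{N_1}N_3$ and $|(m_2^2-m_1^2)k_1^2|\lesssim m(N_1)^2N_1N_3$. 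Adding the two pieces gives (ii).

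The only steps requiring any care — and the main (mild) obstacle — are the slowly-varying statements $m(|k_2|)\sim m(N_1)$ and $|m'(\xi)|\lesssim m(\xi)/\xi$ near frequency $N_1$; these follow directly from the explicit form of $m$ and are exactly the properties already used for the analogous multiplier $M_4$ in Lemma~\ref{pwestsM4}. Note that nothing genuinely periodic enters the argument, which is identical to its Euclidean counterpart in \cite{CKSTTrefined}.
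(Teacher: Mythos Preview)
Your proof is correct and follows essentially the same approach as the paper: for (i) both use monotonicity of $\xi\mapsto m(\xi)^2\xi^2$, and for (ii) both extract the cancellation in $m_1^2k_1^2-m_2^2k_2^2$ via the mean value theorem together with $|k_{12}|\lesssim N_3$, then crudely bound the low-frequency terms by $N_3^2\leq m(N_1)^2N_1N_3$. The only cosmetic difference is that the paper applies the mean value theorem directly to $\xi\mapsto m(\xi)^2\xi^2$ (whose derivative is $\sim m(\xi)^2\xi$), whereas you first split $m_2^2k_2^2-m_1^2k_1^2=(m_2^2-m_1^2)k_1^2+m_2^2(k_2^2-k_1^2)$ and apply it to $m$ alone.
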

\begin{proof}
The first statement is immediate as $\xi\mapsto m(\xi)^2\xi^2$ is increasing. 
For the second statement, 
$|m'(\xi)|\sim m(\xi)|\xi|^{-1}$ when $|\xi|\gg N$, and 
by the mean value theorem 
$$|m(k_1)^2k_1^2-m(k_2)^2k_2^2| \sim m^2(\theta)|\theta||k_1-(-k_2)|$$
for some $\theta$ between $k_1$ and $-k_2$; hence $|\theta|\sim N_1$ and $m(\theta)^2\sim m(N_1)^2$. 
Since we also have $|k_{12}|=|k_{34}|\lesssim N_3$, we get 
$|m(k_1)^2k_1^2-m(k_2)^2k_2^2| \lesssim m(N_1)^2N_1N_3$. Then, the crude bound 
$$|m(k_3)^2k_3^2-m(k_4)^2k_4^2|\leq m(k_3)^2k_3^2 + m(k_4)^2k_4^2\lesssim m(N_3)^2N_3^2$$
together with $m(N_3)^2N_3\leq m(N_1)^2N_1$, concludes the proof. 
\end{proof}

For the last lemma in this section, 
the first statement is immediate from $0<m(\cdot)\leq 1$, 
while the second follows from Lemma~\ref{pwestssigma4}.

\begin{lemma}
\label{pwestsK6162}
For $K_6^1$, $K_6^2$ defined by \eqref{defnofK61}, \eqref{defnofK62} respectively, 
and $\mathbf{k}\in\Gamma_6(\Tl)$, we have 
\begin{enumerate}
\item[\textup{(i)}]  $|K_6^1(\mathbf{k})| \lesssim 1$;
\item[\textup{(ii)}]  $|K_6^2(\mathbf{k})| \lesssim m(N_1)^2N_1$.
\end{enumerate}
\end{lemma}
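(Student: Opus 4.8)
The plan is to read off each bound directly from the definitions of $K_6^1$ and $K_6^2$, using respectively only the elementary range $0<m(\cdot)\le 1$ and the already-established pointwise bound for $\sigma_4$.

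For part (i), I would observe that, up to the fixed constant $\tfrac23$, the multiplier $K_6^1$ is a finite sum---over the $(3!)^2=36$ choices of orderings with $\{a,c,e\}=\{1,3,5\}$ and $\{b,d,f\}=\{2,4,6\}$---of differences of two products of four Fourier-multiplier values, namely $m_am_bm_cm_{def}-m_dm_em_fm_{abc}$. Since $m$ takes values in $(0,1]$, every such product is at most $1$, so each summand is bounded by $2$ in modulus, and therefore $|K_6^1(\mathbf{k})|\lesssim 1$ uniformly on $\Gamma_6(\Tl)$. This uses nothing more than the range of $m$.

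For part (ii), I would unwind $K_6^2=\sum_{j=1}^4\mathbb{X}_j^2(\sigma_4)$, so that it suffices to bound each elongation on $\Gamma_6(\Tl)$ by $m(N_1)^2N_1$. By definition, $\mathbb{X}_j^2(\sigma_4)(k_1,\dots,k_6)=\sigma_4(\ell_1,\ell_2,\ell_3,\ell_4)$, where $(\ell_1,\dots,\ell_4)$ arises from $(k_1,\dots,k_6)$ by replacing the entry in position $j$ with $k_j+k_{j+1}+k_{j+2}$ and keeping the other three; since $\ell_1+\ell_2+\ell_3+\ell_4=k_1+\dots+k_6=0$, this vector lies in $\Gamma_4(\Tl)$, so Lemma~\ref{pwestssigma4} applies and yields $|\sigma_4(\ell_1,\dots,\ell_4)|\lesssim m(L_1)^2L_1$ with $L_1:=\max_{1\le i\le 4}|\ell_i|$ (and the bound is trivial when $\alpha_4$ vanishes on the elongated vector, as then $\sigma_4=0$). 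Because each $\ell_i$ is a sum of at most three of the $k$'s, $L_1\le 3N_1$; then the monotonicity \eqref{nondecrasingpropofmsgeq12} (equivalently, $\xi\mapsto m(\xi)^2\xi$ is non-decreasing) gives $m(L_1)^2L_1\le m(3N_1)^2(3N_1)$, and since $m$ is non-increasing, $m(3N_1)\le m(N_1)$, so $m(3N_1)^2(3N_1)\le 3\,m(N_1)^2N_1$. Summing the four contributions yields $|K_6^2(\mathbf{k})|\lesssim m(N_1)^2N_1$.

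I do not anticipate a genuine obstacle here; the statement is essentially a corollary of Lemma~\ref{pwestssigma4}. The only points needing a little care are verifying that each elongated frequency vector indeed lands in $\Gamma_4(\Tl)$ so that Lemma~\ref{pwestssigma4} is applicable, noting that the resonant case $\alpha_4=0$ (where $\sigma_4$ is set to $0$) is harmless, and passing from the elongated largest frequency $L_1\lesssim N_1$ back to $N_1$ at the cost of a harmless constant via the monotonicity of $m(\xi)^2\xi$ together with the monotonicity of $m$.
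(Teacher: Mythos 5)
Your proof is correct and follows the same approach the paper merely indicates in a single sentence (part (i) from $0<m(\cdot)\leq1$, part (ii) from Lemma~\ref{pwestssigma4} applied to the elongations of $\sigma_4$). You have simply filled in the details the paper leaves implicit, including the useful observations that each elongated vector lies in $\Gamma_4(\Tl)$ and that the transfer from the elongated top frequency $L_1\le 3N_1$ back to $N_1$ only costs a constant via the monotonicity of $m(\xi)^2\xi$ and of $m$.
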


\subsection{Necessity of the third iteration of the $I$-method}

To make the matters clear why we need to implement a third generation 
$I$-method, we prove here the decay estimate for $\int\Lambda_6(M^2_6)dt$. 
This part serves two purposes: first, to see how one applies the bilinear estimate 
in order to recover the result of 
\cite[Lemma~7.5]{WinFE2010}, 
and second to uncover the worst case scenarios 
and hence motivate the non-resonant subregions of Subsection~\ref{Sect:TheNonresonantSet}. 

\begin{proposition}
\label{prop:WinestsofLambda6}
For  $s>\frac12$ and $M_6^2$ defined by \eqref{defnofM62}, 
we have the estimate
%\begin{equation}
%\label{WinsestofLambda6}
%\int_{\R}\Lambda_6(M_6^2 ; v_1,\overline{v_2},\ldots,\overline{v_6})\,dt \lesssim 
%N^{-1+}\lambda^{-1+}
%\prod_{j=1}^6 \| I v_j\|_{Z^1(\R\times\Tl)} .
%\end{equation}
\begin{equation}
\label{WinsestofLambda6}
\left| \int_0^{\delta}\Lambda_6(M_6^2 ; v(t))\,dt \right| \lesssim 
N^{-1+}\lambda^{-1+} \| I v\|^6_{Z^1([0,\delta] \times\Tl)} .
\end{equation}
\end{proposition}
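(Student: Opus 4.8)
The plan is to run the usual dyadic/bilinear machinery of the $I$-method, in the style of the real-line treatments \cite{CKSTT,CKSTTrefined,deSilva2007,MiaoWuXu} and the periodic work of Win \cite{WinFE2010}. First I would dispose of the trivial configurations: by the Small Frequencies Remark the integrand vanishes unless $\mathbf{k}\in\Upsilon_6(\Tl)$, i.e.\ $N_1\sim N_2\gtrsim N$, and by the Symmetry Remark I may additionally assume the orderings $|k_1|\ge|k_3|\ge|k_5|$, $|k_2|\ge|k_4|\ge|k_6|$, $|k_1|\ge|k_2|$. Decomposing each factor $v=\sum_N P_N v$ dyadically and applying Parseval in $(t,x)$, I would take absolute values on the Fourier side and replace each localized copy of $v$ or $\overline v$ by a companion function $g_j$ with nonnegative space--time Fourier transform supported in $\{|k|\sim N_j\}$, reducing \eqref{WinsestofLambda6} to a bound of the form
\begin{equation*}
\sum_{\vec N} B(\vec N)\,\Big\|\prod_{j=1}^{6}g_j\Big\|_{L^1_{t,x}(\R\times\Tl)}\lesssim N^{-1+}\lambda^{-1+}\|Iv\|_{Z^1}^6,
\end{equation*}
where $B(\vec N)$ is the pointwise bound on $|M_6^2|$ over the block, supplied by Lemma~\ref{pwestsM6}: $B(\vec N)\lesssim m(N_1)^2N_1^2$ in general, and $B(\vec N)\lesssim N_1N_3$ when $N_3\ll N$.

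The second ingredient is the geometric fact that makes the bilinear estimate applicable, and which is exactly the correction recorded in Remark~\ref{rmk:bilinearmistakeinWinspaper}: on $\Gamma_6$ one has $k_1^*+k_2^*=-(k_3^*+\cdots+k_6^*)$, so whenever $N_3\ll N_1$ the two top frequencies are comparable with opposite signs, i.e.\ hypothesis (ii) of Lemma~\ref{bilinearL2Strichartz} holds; and more generally the largest frequencies cannot all carry the same sign without forcing a further frequency to be $\gtrsim N_1$. Using this I would, in every configuration, select two disjoint pairs among the six indices each satisfying hypothesis (i) ($N\gg N'$) or (ii) of Lemma~\ref{bilinearL2Strichartz}, apply H\"older in the form
\begin{equation*}
\Big\|\prod_{j=1}^{6}g_j\Big\|_{L^1_{t,x}}\le\|g_a g_b\|_{L^2_{t,x}}\,\|g_c g_d\|_{L^2_{t,x}}\,\|g_e g_f\|_{L^\infty_{t,x}},
\end{equation*}
estimate the two $L^2$ factors by the bilinear estimate \eqref{interpbilinearest} (each contributing $C(\lambda,N_1)^{1-2\varepsilon}\lesssim\lambda^{-\frac12+}$ in the operative regime $1\le\lambda\lesssim N_1$, hence $\lambda^{-1+}$ together), and the $L^\infty$ factor by the embedding \eqref{embeddinginLinfty} (or \eqref{Sobolev2infty}). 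Finally I would convert the resulting $X^{0,\frac12-}$- and $Y^{0,0}$-norms of the dyadic pieces of $v$ into $\|P_{N_j}Iv\|_{Z^1}$ by means of the symbol lower bound \eqref{symbol:lowbound}, which costs $m(N_j)^{-1}\langle N_j\rangle^{-1}\sim m(N_j)^{-1}N_j^{-1}$ for each high-frequency factor and only an $\langle N_j\rangle^{-1}$- or $\langle N_j\rangle^{-\frac12+}$-type gain for the low-frequency ones.

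It then remains to sum the dyadic series. The point is that the numerator $B(\vec N)$ is balanced by the $m(N_j)^{-1}N_j^{-1}$ factors so that, on the block $N_1\sim N_2\gtrsim N$, the net weight in the top frequency is (at worst) $N_1^{1-2s}N^{2s-2}$ when $N_3\ll N$ and $N_1^{-s}N^{s-1}$ in the configurations with three comparable large frequencies, while the small frequencies yield square-summable tails. Carrying out the sums by Cauchy--Schwarz, the large-frequency contribution is $\lesssim N^{2s-2}\cdot N^{1-2s}=N^{-1}$ (resp.\ $\lesssim N^{s-1}\cdot N^{-s}=N^{-1}$) and the small-frequency one is $\lesssim 1$ up to harmless logarithms in $N$ and $\lambda$ that are absorbed into $N^{0+}\lambda^{0+}$, which gives \eqref{WinsestofLambda6} and reproves \cite[Lemma~7.5]{WinFE2010}. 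The hypothesis $s>\tfrac12$ enters precisely here, making these exponents strictly negative so that the sums close with room to spare.

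The step I expect to be the main obstacle is the bookkeeping of the resonant region $N_1\sim N_2\sim N_3\ (\gtrsim N)$: there the two largest frequencies need not have opposite signs, so one has to re-pair — an opposite-sign pair \emph{does} exist among the three largest when exactly three frequencies are large, and the third of them can then be paired with a smaller one in regime (i) — and, more seriously, the bilinear estimate \eqref{interpbilinearest} carries an $\varepsilon$-dependent (logarithmic) loss in $C(\lambda,N_1)$ that one cannot entirely discard. That loss is immaterial here because $s>\tfrac12$ supplies a genuine power gain, but it is exactly what obstructs the endpoint $s=\tfrac12$; together with the identification of these tight sub-configurations it motivates the resonant decomposition of Subsection~\ref{Sect:TheNonresonantSet} and the second correction term of the third $I$-method iteration. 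One must also track complex conjugates with some care, since — unlike (i) — hypothesis (ii) of Lemma~\ref{bilinearL2Strichartz} is not preserved under conjugating one factor, so conjugated factors should be placed either in the conjugation-insensitive ($L^\infty$, $L^4$) slots or in regime-(i) bilinear pairs.
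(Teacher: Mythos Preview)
Your proposal is correct and follows essentially the same approach as the paper: dyadic decomposition, the pointwise bounds of Lemma~\ref{pwestsM6}, a H\"older split into two bilinear $L^2$ pairs plus $L^\infty$ on the remaining factors, and Cauchy--Schwarz summation over $N_1\sim N_2$. The paper organizes the casework explicitly as $N_3\ll N$, $N_3\gtrsim N\gg N_4$, and $N_4\gtrsim N$, and in the latter two sometimes substitutes a single-factor $L^4$-Strichartz estimate for one of the bilinear pairings (which avoids the re-pairing bookkeeping you anticipate in the region $N_1\sim N_2\sim N_3$ at the cost of only one $\lambda^{-\frac12+}$ gain, compensated by the extra $N^{-1+}$ from the additional large frequency); your conjugation concern is resolved in the paper by the positivity reduction on the Fourier side, after which the sign localization of the supports directly activates hypothesis~(ii) of Lemma~\ref{bilinearL2Strichartz}.
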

\begin{proof}
We write $v=\sum_{k\in\Zl} v_j$, with $\supp(\widehat{v_j}) \subset\{ (\tau,k)\in\R\times\Zl : |k|\sim N_j\}$ 
for each $N_j$ dyadic number. Thus, it is enough to estimate 
\begin{equation}
\label{WinsestofLambda6v2}
\int_{\R}\Lambda_6(M_6^2 ; v_1,v_2,\ldots,v_6)\,dt 
\end{equation}
where without loss of generality we can assume, in addition to the frequency localization, that each $\widehat{v_j}$ is real-valued and non-negative. % , and due to the symmetries of the multiplier $M_6^2$ we can assume . 
This step, as well as why it is enough to consider the time integral on $\R$ rather than on $[0,\delta]$  
can be justified by standard arguments as in Section~\ref{Section:AlmostConservationEstimates}. 

\textbf{Case 1.} $N_1\sim N_2 \gtrsim N\gg N_3$. 
By Lemma~\ref{pwestsM6} (ii) we have $|M_6^2|\lesssim N_1N_3$. 
Notice that for $s>\frac12$ and $\varepsilon>0$ small enough, one obtains 
\begin{equation}
\label{mN12N1gtrsimN1eps}
m(N_1)^2N_1 =N^{2-2s} N_1^{2s-1}= 
N^{1-\varepsilon}\left(\frac{N_1}{N}\right)^{2s-1-\varepsilon}
N_1^{\varepsilon} 
\gtrsim N^{1-\varepsilon} N_1^{\varepsilon} .
\end{equation}
In this case, $m(N_j)=1$ for $j\geq 3$ and therefore 
by \eqref{interpbilinearest} and \eqref{embeddinginLinfty}, 
we get 
\begin{align}
\nonumber
{\eqref{WinsestofLambda6v2}} &\lesssim  
  \int_*\int_{**} \frac{1}{m(N_1)^2N_1 \prod_{j=4}^6\langle k_j\rangle} \prod_{j=1}^6 \widehat{J_xIv_j}\\
 \nonumber
  &\lesssim \frac{N^{-1+}}{N_1^{0+}} \int_{\R}\int_{\Tl} 
   (J_xIv_1) (J_xIv_3)(J_xIv_2) (J_xIv_4) (Iv_5)(Iv_6) dxdt\\
 \nonumber
 &\lesssim \frac{N^{-1+}}{N_1^{0+}} \| (J_xIv_1) (J_xIv_3)\|_{L^2_{t,x}} \| (J_xIv_2) (J_xIv_4)\|_{L^2_{t,x}} 
   \|Iv_5\|_{L^{\infty}_{t,x}} \|Iv_6\|_{L^{\infty}_{t,x}}\\
  \nonumber
   &\lesssim \frac{N^{-1+}\lambda^{-1+}}{N_1^{0+}} \prod_{j=1}^4 \|Iv_j\|_{X^{1,\frac12}}  
       \prod_{j=5,6}  \|Iv_j\|_{Y^{\frac12+,0}}\\
       \label{eqn:prop5p9}
&\lesssim \frac{N^{-1+}\lambda^{-1+}}{N_1^{0+}} \prod_{j=1}^6 \|Iv_j\|_{Z^1}
\end{align}
where $\int_*$ and $\int_{**}$ stand for integration with respect to the measures 
$\delta_0(\tau_1+\ldots +\tau_6)$ and $\delta_0(k_1+\ldots +k_6)$ on $\Gamma_6(\R)$ 
and on $\Gamma_6(\Tl)$, respectively. 
The operator $J_x$ denotes the Bessel potential operator, i.e. $\widehat{J_xf}(k)=\langle k\rangle \widehat{f}(k)$. 

\begin{remark}
For $s=\frac12$, we only have $m(N_1)^2N_1\gtrsim N$ as we cannot afford to borrow an $N_1^{\varepsilon}$ factor as in \eqref{mN12N1gtrsimN1eps} above. 
Notice that 
since there are no other tools to obtain additional decaying factors, 
to make up for the logarithmic loss in $\lambda$, 
as well as to ensure summability, one would need to obtain a better estimate, for example 
\begin{equation}
\label{desiredboundonM62}
|M_6^2|\lesssim N_1^{1-\theta} N_3^{1+\theta}, 
\end{equation}
which gives the following factor in \eqref{eqn:prop5p9}: 
$$N^{-1}\lambda^{-1} \frac{N_3^{\theta}\lambda^{0+}}{N_1^{\theta}} 
\lesssim \frac{N^{-1}\lambda^{-1-} }{N_3^{0+}}$$
(recall that since $s\geq\frac12$, we have $1\leq \lambda\leq N$). 
We note that 
%the factor $1/N_3^{0+}$ ensures summability over the dyadics $N_1\geq \ldots \geq N_6$ 
%(see Section~\ref{Section:AlmostConservationEstimates}) 
%and that 
the decaying factor 
$N^{-1}\lambda^{-1-}$ 
would allow us to obtain the global well-posedness result at $s=\frac12$ 
(see Section~\ref{sect:proofofGWPg1DNLS}). 
Although  
the bound \eqref{desiredboundonM62} is not conceivable on the entire $\Gamma_6(\Tl)$, 
such an estimate can be established on a carefully chosen subset 
(see Section~\ref{Sect:TheNonresonantSet}). 
\end{remark}

\textbf{Case 2.} $N_3\gtrsim N\gg N_4$. By Lemma~\ref{pwestsM6} (i) we have $|M_6^2|\lesssim m(N_1)^2N_1^2$, 
and for $s\geq 0$, $m(N_3)N_3\gtrsim N^{-1+}N_3^{0+}$. We then have 
\begin{align*}
{\eqref{WinsestofLambda6v2}} &\lesssim  
  \int_*\int_{**} \frac{1}{m(N_3)N_3 \prod_{j=4}^6\langle k_j\rangle} \prod_{j=1}^6 \widehat{J_xIv_j}\\
  &\lesssim \frac{N^{-1+}}{N_3^{0+}} \int_{*}\int_{**} 
   \widehat{J_xIv_1}\widehat{J_xIv_2}\widehat{J_xIv_3}  \widehat{Iv_4}  \widehat{Iv_5}\widehat{Iv_6} .
\end{align*}
At this point we have to discuss the frequency separation of the first three factors. 

\textbf{Subcase 2.1} If $N_3\sim N_1$, then since $N_3\gg N_4$, two out of the three frequencies $k_1,k_2,k_3$ must have opposite signs, say $k_1$ and $k_2$. Thus $J_xIv_1$ and $J_xIv2$ are separated in frequency, and so are 
$J_xIv_3$ and $J_xI v_4$. We have 
\begin{align*}
{\eqref{WinsestofLambda6v2}} &\lesssim  
\frac{N^{-1+}}{N_1^{0+}} \int_{\R}\int_{\Tl} 
   (J_xIv_1) (J_xIv_2)(J_xIv_3) (J_xIv_4) (Iv_5)(Iv_6) dxdt\\ 
 &\lesssim  \frac{N^{-1+}}{N_1^{0+}} \| (J_xIv_1) (J_xIv_2)\|_{L^2_{t,x}} \| (J_xIv_3) (J_xIv_4)\|_{L^2_{t,x}} 
   \|Iv_5\|_{L^{\infty}_{t,x}} \|Iv_6\|_{L^{\infty}_{t,x}}\\
 &\lesssim \frac{N^{-1+}\lambda^{-1+}}{N_1^{0+}} \prod_{j=1}^6 \|Iv_j\|_{Z^1} .
\end{align*}

\textbf{Subcase 2.2}  If $N_3\ll N_1$, then as in Case 1, we can clearly apply the bilinear estimate \eqref{interpbilinearest} 
to the $L^2_{t,x}$-norms of both $(J_xIv_1)(J_xIv_3)$ and $(J_xIv_2)(J_xIv_4)$ and obtain 
\begin{align*}
{\eqref{WinsestofLambda6v2}} &\lesssim  \frac{N^{-1+}\lambda^{-1+}}{N_3^{0+}} 
\|Iv_1\|_{X^{1,\frac12}} \|Iv_2\|_{X^{1,\frac12}}   \prod_{j=3}^6 \|Iv_j\|_{Z^1}.
\end{align*}
Notice that in this sub case the factor $1/N_3^{0+}$ does not allow direct summation over the dyadic numbers 
$N_1\sim N_2$. However, exploiting the $L^2$-based norm of the space $X^{1,\frac12}$ of the first two factors, 
one can recover the claim 
(see Section~\ref{Section:AlmostConservationEstimates}) %in this case (i.e. $N_3\ll N_1$). 
without any setback. 

\begin{remark}
Notice that although in Case 2 we have three large frequencies ($N_3\gtrsim N\gg N_4$), 
the bound on the weight $M_6^2$ is worse than in Case 1, 
and overall we obtain the same (insufficient) decaying factor of $N^{-1+}\lambda^{-1+}$. 
Therefore we also need to correct for this case. 
\end{remark}

\textbf{Case 3.} $N_4\gtrsim N$. 
By Lemma~\ref{pwestsM6} (i) we have $|M_6^2|\lesssim m(N_1)^2N_1^2$, 
and for $s\geq 0$, $m(N_j)N_j\gtrsim N^{-1+}N_j^{0+}$, $j=3,4$. It follows that 
\begin{align*}
{\eqref{WinsestofLambda6v2}} &\lesssim  
 \int_*\int_{**} \frac{1}{m(N_3)N_3 m(N_4)N_4 \prod_{j=5,6}\langle k_j\rangle} \prod_{j=1}^6 \widehat{J_xIv_j}\\
 &\lesssim \frac{N^{-2+}}{N_3^{0+}} \int_{*} \int_{**}
   \widehat{J_xIv_1}\widehat{J_xIv_2}\widehat{J_xIv_3}  \widehat{J_x Iv_4}  \widehat{Iv_5}\widehat{Iv_6} .
\end{align*}
Although when $\lambda\sim N$, the decaying factor obtained above is just as good as that in the previous cases, 
we can gain here another decaying factor $\lambda^{-\frac12+}$ 
by separating the analysis into subcases $N_3\sim N_1$ and $N_3\ll N_1$, 
as we did in Case~2. 
We obtain
\begin{align*}
{\eqref{WinsestofLambda6v2}} &\lesssim  
\frac{N^{-2+}\lambda^{-\frac12+}}{N_3^{0+}} 
\|Iv_1\|_{X^{1,\frac12}} \|Iv_2\|_{X^{1,\frac12}}   \prod_{j=3}^6 \|Iv_j\|_{Z^1} 
\end{align*}
and since we choose the parameters so that $1\leq \lambda\leq N$, 
we have in this case a better decaying factor. 
\end{proof}

The other sixth order term in \eqref{eqn:parttcalE2} is $\mu[v]\Lambda_4(K_4^1;v)$. 
The  coefficient $\mu[v] =\frac{1}{2\pi\lambda}\|v\|_{L^2(\Tl)}^2$ 
already provides a decaying factor of $\lambda^{-1}$.  In the remark below, 
we investigate  the worst case scenario corresponding to this term. 

\begin{remark}
\label{rmk:necessityofcorrectingforK41}
The pointwise bound $|K_4^1(\mathbf{k})|\lesssim m(N_1)^2N_1^2$ is optimal  
in the case $N_3\ll N$ and the largest two frequencies have the same parity 
(as we have, for example,  $|K_4^1(N_1,0,-N_1,0)|=m(N_1)^2N_1^2$). 
In this case, 
the best estimate that can be obtained is 
\begin{align}
%\textup{LHS}\eqref{MultilinearEst:K41} 
\nonumber
\int_{\R} \Lambda_4(K_4^1 ; v_1,\ldots,v_4)\,dt &\lesssim 
\int_*\int_{**} \frac{1}{\langle k_3\rangle \langle k_4\rangle} \prod_{j=1}^4 \widehat{J_xIv_j}\\
\nonumber
&\lesssim \frac{1}{N_3^{0+}} \int_{\R}\int_{\Tl} 
(J_xIv_1)(J_x^{0+}Iv_3)(J_xIv_2)(Iv_4)dxdt\\
\nonumber
 &\lesssim \frac{\lambda^{-1+}}{N_3^{0+}} \| (J_xIv_1) (J_x^{0+}Iv_3)\|_{L^2_{t,x}} 
  \| (J_xIv_2) (Iv_4)\|_{L^2_{t,x}}\\
 \label{estK41}
&\lesssim \frac{\lambda^{-1+}}{N_3^{0+}} \prod_{j=1}^4\|Iv_j\|_{X^{1,\frac12-}}.
\end{align}
Hence, we have the estimate\footnote{In the region where we have the refined estimate $|K_4^1(\mathbf{k})|\lesssim m(N_1)^2N_1N_3$, one obtains the pre-factor $N^{-1+}\lambda^{-2+}$ in \eqref{estK41}.}  
$$\mu[v]\left| \int_0^{\delta}\Lambda_4(K_4^1;v(t))dt\right| \lesssim \lambda^{-2+} \|Iv\|^6_{Z^1([0,\delta]\times\Tl)}.$$ 
This decay rate is insufficient to reach the regularity index $s=\frac12$. 
Since the bound of $K_4^1$ is optimal and the available tools cannot yield a better estimate, 
we have to provide a second correction term that removes (at least) this case. 
\end{remark}

\subsection{The third generation modified energy} 
\label{sect:thirdmodifiedenergy}
We refine further the choice of modified energy for the $I$-method 
as a refinement of $\mathcal{E}^2$ of the form 
\begin{equation}
\label{defnofE3}
\mathcal{E}^3[v]:= \mathcal{E}^2[v]+ \Lambda_6(\sigma_6;v) + i\mu[v]\Lambda_4(\widetilde{\sigma_4};v) .
\end{equation}
In the same manner as above, we are lead to define the ``correction'' term $\sigma_6$ by imposing 
$M_6+\sigma_6\alpha_6=0$. 
In contrast to the situation of $\alpha_4$ discussed above,  
the set on which $\alpha_6$ vanishes is not small, 
in particular $\alpha_6=0$ does not imply $M_6=0$. 
The idea around this is to define a region $\Omega$   
in the hyperplane $\Gamma_6(\Tl)$ referred to as \emph{the non-resonant set of $\sigma_6$} 
where  $\alpha_6$ clearly does not vanish, 
but also with the property that on $\Omega^c:=\Gamma_6(\Tl)\setminus\Omega$ 
we have satisfactory pointwise estimates on $M^2_6$.  
%(and the additional higher order multipliers). 
We can then take 
\begin{align}
\label{defnofsigma6}
\sigma_6 &:=-\frac{M^2_6}{\alpha_6}\cdot \sharpind_{\Omega} \,,
\end{align}
where $\sharpind_{\Omega}$ 
denotes the characteristic function of the set $\Omega$ 
which is defined in Subsection~\ref{Sect:TheNonresonantSet}. 

For the second correction term in \eqref{defnofE3}, 
the situation is simpler 
(since $\alpha_4=0$ implies $K_4^1=0$) 
and we can define  
\begin{align}
\label{defnoftildesigma4}
\widetilde{\sigma_4} &:= \frac{K^1_4}{\alpha_4}
\end{align}
when $\alpha_4\neq0$, and $\widetilde{\sigma_4}:=0$ when $\alpha_4=0$. 

Using \eqref{diffrule}, we find that 
\begin{gather}
\label{parttE3}
\begin{split}
\frac{d}{dt} \mathcal{E}^3[v(t)] =& \Lambda_6(M^2_6\cdot\sharpind_{\Omega^c}) + 
 \Lambda_8(M^2_8)+ \Lambda_8(M^3_8) 
+\Lambda_{10}(M^3_{10})\\
& -i\mu[v]\Big(
\Lambda_4(K_4^1)+\Lambda_6(K_6^1)+ \Lambda_6(K_6^2) +\Lambda_6(\widetilde{K_6^3}) 
 +\Lambda_8(K^3_8) +\Lambda_8(\widetilde{K^3_8})\Big)\\
 & + \mu[v]^2 \Lambda_6(\widetilde{K^4_6})
\end{split}
\end{gather}
where the additional terms 
(i.e. the ones corresponding to the two correction terms 
$\sigma_6$ and $\widetilde{\sigma_4}$)
are given by 
\begin{align}
\label{defnofM83}
M^3_8 &:= -i\sum_{j=1}^6 \mathbb{X}_j^2(\sigma_6)k_{j+1} ,\\
\label{defnofK83}
K^3_8 &:= \sum_{j=1}^6 \mathbb{X}_j^2(\sigma_6) ,\\ 
\label{defnofM103}
M^3_{10} &:= \frac{i}{2} \sum_{j=1}^6 (-1)^{j+1} \mathbb{X}_j^4(\sigma_6) ,\\
\label{defnofK63tilde}
\widetilde{K_6^3} &:= i \sum_{j=1}^4 \mathbb{X}_j^2(\widetilde{\sigma_4})k_{j+1} ,\\
\label{defnofK64tilde}
\widetilde{K_6^4} &:= \sum_{j=1}^4 \mathbb{X}_j^2(\widetilde{\sigma_4}) ,\\
\label{defnofK83tilde}
\widetilde{K_8^3} &:= \frac{i}{2} \sum_{j=1}^4 (-1)^j \mathbb{X}_j^4(\widetilde{\sigma_4}) .
\end{align}

\subsection{A non-resonant set for $\alpha_6$} 
\label{Sect:TheNonresonantSet}
We now turn to describing the set $\Omega$, 
as it was introduced in \cite{MiaoWuXu}. 
With the simplifying assumptions of Remark~\ref{rmk:evenoddinvariance}
in place, 
let us analyze the expression 
$$i\alpha_6= k_1^2-k_2^2+k_3^2-k_4^2+k_5^2-k_6^2.$$

If precisely two frequencies have sizes above the threshold $N$, we distinguish the following two cases. 

\noindent
\textbf{Case 1.} If the largest two frequencies have the same parity, 
then clearly $|\alpha_6|\gtrsim N_1^2$. 
The corresponding non-resonant region is defined to be 
\begin{equation}
\label{defnofOmega1}
\Omega_1 := \{\mathbf{k}\in \Upsilon_6(\Tl) : |k_1|\sim |k_3| \gtrsim N\gg N_3 \}. 
\end{equation}
This definition is just slightly different from the analogous one in \cite[Section~3]{MiaoWuXu} 
and it does not affect the estimates. 

\noindent
\textbf{Case 2.} If the largest two frequencies have opposite parity, 
say $k_1$ and $k_2$, then on $\Gamma_6(\Tl)$ it must be that 
$k_1=-k_2+O(N_3)$ and 
$$i\alpha_6 =k_{12}(k_1-k_2)+O(N_3^2). $$
While $k_1-k_2=O(N_1)$, it is possible to have $k_{12}=0$ and 
$\alpha_6=0$. Even if the latter does not happen,  
a too weak lower bound on $\alpha_6$ renders an insufficiently good upper bound on $M_8^3$ 
(one of the multipliers that involve $\sigma_6=-\frac{M_6^2}{\alpha_6}$, 
see \eqref{defnofM83}).  
As in \cite{MiaoWuXu}, we consider the following subregion
\begin{equation}
\label{defnofOmega2}
\Omega_2 := \left\{\mathbf{k}\in \Upsilon_6(\Tl) : |k_1|\sim |k_2| \gtrsim N\gg N_3 
   \text{ and } |k_{12}| \gtrsim \left( \frac{N_3}{N_1}\right) ^{\frac12}N_3\right\}. 
\end{equation}

\begin{remark}
Notice that in the case $|k_1|\sim |k_2|\gtrsim N\gg N_3$,  
we have $|k_{12}|=|k_{3456}| \lesssim N_3$. 
On the other hand, by looking to ensure $\alpha_6\neq 0$, the natural bound to impose is 
$|k_{12}|\gtrsim \frac{N_3^2}{N_1}$. However,  while the latter gives a better bound on 
the remaining part $M_6^2\ind_{\Omega^c}$ of $\frac{d}{dt}\mathcal{E}^3$, it does not allow for a satisfactory bound on the correction multiplier $\sigma_6$ (which appears, for example, in $M_8^3$). 
At the other extreme, correcting only in the region 
$|k_{12}|\sim N_3$ does not produce a small enough bound on $M_6^2\ind_{\Omega^c}$.   
We would like to point out that 
(here, as well as in the Euclidean setting \cite{MiaoWuXu}), 
the choice of $\frac12$ in the exponent is not essential, as any lower bound of the form 
$$|k_{12}|\gtrsim  \left( \frac{N_3}{N_1}\right) ^{\theta}N_3$$
(for some $0< \theta <1$) produces the extra $N^{-\theta}$ decay factor needed 
to reach $s=\frac12$.
\end{remark}

\textbf{Case 3.} 
Finally, since the decay factors in the estimate of $\Lambda_6(M_6^2)$-term were also critical in the case 
$N_3\gtrsim N\gg N_4$ (see Case 2 in the proof of Proposition~\ref{prop:WinestsofLambda6}), 
we need to correct for it in this region as well. 
When three frequency sizes are much larger than the remaining frequency sizes,  
$\alpha_6$ does not vanish as we have $|\alpha_6|\gtrsim N_3^2$. 
Therefore, we define 
\begin{equation}
\label{defnofOmega3}
\Omega_3 := \{\mathbf{k}\in \Upsilon_6(\Tl) : N_3\gg N_4\}
\end{equation}
We point out that the correction is deliberately intended for the larger region $N_3\gg N_4$ (i.e. $\Omega_3$) 
rather than $N_3\gtrsim N\gg N_4$, 
since on $\Omega_3$ we have 
\begin{equation}
|k_1^* + k_2^*| = |k_3^*+k_4^*+k_5^*+k_6^*|\sim N_3 \gtrsim \left( \frac{N_3}{N_1}\right) ^{\frac12}N_3. 
\end{equation}

Correcting for $M_6^2$ in these three subregions of $\Upsilon_6(\Tl)$ is enough for our goal, 
hence we consider 
$\Omega: =\Omega_1\cup \Omega_2\cup \Omega_3$ 
to be the non-resonant set of $\alpha_6$, 
and in what follows we denote $\Omega^c:= \Upsilon_6(\Tl)\setminus\Omega$.

\subsection{Pointwise bounds on the multipliers (continued)} 
In this section we first recall the pointwise estimates obtained by Miao, Wu, and Xu \cite{MiaoWuXu}, 
and then we establish the bounds needed to handle the second correction term in \eqref{defnofE3}. 

\begin{lemma}{\cite[Corollary~4.1]{MiaoWuXu}}
\label{pwestsM63}
For $M_6^2$ defined by \eqref{defnofM62} and $\mathbf{k}\in\Gamma_6$, we have:
\begin{enumerate}
\item[\textup{(i)}] if $N_3\ll N$, then $|M^2_6(\mathbf{k})|\lesssim N_1|k_1^*+k_2^*| + N_3^2$; 
\item[\textup{(ii)}] if $N_3\ll N$ and $\mathbf{k}\in\Omega^c$, 
then $|M^2_6(\mathbf{k})|\lesssim  N_1^{\frac12} N_3^{\frac32}$. 
\end{enumerate}
\end{lemma}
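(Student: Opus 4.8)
The plan is to deduce (ii) from (i) and then explain how (i) is established. For (ii): by the Small Frequencies Remark~\ref{rmk:smallfrequencies} we may restrict to $\mathbf{k}\in\Upsilon_6(\Tl)$, i.e.\ $N_1\sim N_2\gtrsim N$, and by Remark~\ref{rmk:evenoddinvariance} we have $k_1^*=k_1$ and $k_2^*\in\{k_2,k_3\}$. If $k_2^*=k_3$, the two largest frequencies are $k_1$ and $k_3$ (both carrying an odd index, hence the same parity) with $|k_1|\sim|k_3|\gtrsim N\gg N_3$, so $\mathbf{k}\in\Omega_1\subset\Omega$, contradicting $\mathbf{k}\in\Omega^c$. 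Thus $k_2^*=k_2$, the two largest frequencies have opposite parity, $|k_1|\sim|k_2|\gtrsim N\gg N_3$, and since $\mathbf{k}\notin\Omega_2$ we read off from \eqref{defnofOmega2} that $|k_1^*+k_2^*|=|k_{12}|\lesssim(N_3/N_1)^{1/2}N_3$. Plugging this into (i) and using $N_3\le N_1$,
\[
|M_6^2(\mathbf{k})|\lesssim N_1\,|k_{12}|+N_3^2\lesssim N_1^{1/2}N_3^{3/2}+N_3^2\lesssim N_1^{1/2}N_3^{3/2}.
\]

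For (i): again reduce to $\mathbf{k}\in\Upsilon_6(\Tl)$. Since $N_3\ll N$, only the two largest frequencies can exceed $N$, so $m_j=1$ for the four smallest, which moreover have size $\lesssim N_3$ and, being on $\Gamma_6(\Tl)$, sum to $-(k_1^*+k_2^*)$; in particular $|k_1^*+k_2^*|\lesssim N_3$. Split $M_6^2=\tfrac{i}{6}\beta_6-\tfrac{i}{72}\Sigma$ as in \eqref{defnofM62}, with $\beta_6:=\sum_{j=1}^6(-1)^jm_j^2k_j^2$ and $\Sigma$ the sum of the four elongated copies of $M_4$. The four small terms of $\beta_6$ are $O(N_3^2)$, and since $\xi\mapsto m(\xi)^2\xi^2$ is smooth with $|(m(\xi)^2\xi^2)'|\lesssim m(\xi)^2|\xi|$ for $|\xi|\gtrsim N$, the mean value theorem (using $|k_1^*+k_2^*|\lesssim N_3\ll N_1$) gives $|m(k_1^*)^2(k_1^*)^2-m(k_2^*)^2(k_2^*)^2|\lesssim m(N_1)^2N_1|k_1^*+k_2^*|\lesssim N_1|k_1^*+k_2^*|$. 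Hence if the two large frequencies have opposite parity then $\beta_6=O(N_1|k_1^*+k_2^*|+N_3^2)$ already; if they have the same parity, $\beta_6$ additionally contains $m(k_1^*)^2(k_1^*)^2+m(k_2^*)^2(k_2^*)^2\sim m(N_1)^2N_1^2$, a leading piece that must be matched against a corresponding piece of $\Sigma$.

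The main obstacle is the bookkeeping for $\Sigma$. Each of its summands is an elongation of $M_4$ times one of the middle frequencies, and the refined bounds of Lemma~\ref{pwestsM4} apply slotwise: (ii) gives $|M_4|\lesssim m(N_1)^2N_3$ when the two large arguments of that $M_4$ occupy the two un-conjugated slots or the two conjugated slots, while (iii) gives $M_4=\tfrac{m(\cdot)^2(\cdot)^2}{2(\cdot)}+O(N_3)$ when they occupy a mixed pair. In the opposite-parity case every relevant elongation is of the latter type and, times its accompanying frequency, contributes $O(N_1|k_1^*+k_2^*|+N_3^2)$. In the same-parity case one collects the leading $\tfrac{m^2(\cdot)^2}{2(\cdot)}$ contributions of the relevant elongations and checks that, with the exact constants $\tfrac16$ and $\tfrac1{72}$ of \eqref{defnofM62}, they cancel the $m(N_1)^2N_1^2$ part of $\beta_6$, leaving $O(N_1|k_1^*+k_2^*|+N_3^2)$. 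This slot-by-slot accounting over the $24$ elongated copies of $M_4$, and the attendant coefficient-level cancellation, is precisely the computation of \cite[Corollary~4.1]{MiaoWuXu}; being a purely pointwise statement about the symbols it transfers verbatim with $\Z$ replaced by $\Zl$, the only change being that the implicit constants are uniform in $\lambda\ge1$. Everything outside this cancellation is routine, so that is where the work concentrates.
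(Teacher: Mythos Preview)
The paper itself gives no proof of this lemma; it is simply quoted with a citation to \cite[Corollary~4.1]{MiaoWuXu}. So there is nothing in the paper to compare your argument against beyond that citation.

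Your deduction of (ii) from (i) is clean and correct: once you are on $\Upsilon_6(\Tl)$ with $N_3\ll N$, membership in $\Omega^c$ forces the two largest frequencies to have opposite parity (otherwise $\mathbf{k}\in\Omega_1$), and then $\mathbf{k}\notin\Omega_2$ gives $|k_{12}|\lesssim (N_3/N_1)^{1/2}N_3$, from which (ii) follows from (i) immediately.

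Your sketch of (i) has the right architecture --- split $M_6^2$ into $\beta_6$ and the $M_4$-sum, handle $\beta_6$ by the mean value theorem, and invoke Lemma~\ref{pwestsM4} slotwise on the elongated copies --- but one sentence is inaccurate. In the opposite-parity case you assert that ``every relevant elongation is of the latter type and, times its accompanying frequency, contributes $O(N_1|k_1^*+k_2^*|+N_3^2)$.'' That is not true term-by-term. For instance, take $a=1$, $b=2$ in $M_4(k_{abc},k_d,k_e,k_f)k_b$: both large frequencies $k_1,k_2$ sit in the elongated slot $k_{abc}$ (which is then small), all four arguments of $M_4$ are $\ll N$, so $M_4=O(N_3)$, but the multiplying factor is $k_b=k_2\sim N_1$, giving an individual contribution of size $N_1N_3$. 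When $|k_{12}|\ll N_3$ this exceeds $N_1|k_{12}|+N_3^2$. Similarly, the ``mixed pair'' terms you do mention contribute leading pieces of size $N_1$ times a small frequency, again $O(N_1N_3)$ individually. The refined bound (i) therefore requires cancellation among these contributions in the opposite-parity case as well, not only in the same-parity case. Since you ultimately (and correctly) defer the full coefficient-level accounting to \cite{MiaoWuXu}, this does not invalidate your conclusion, but the narrative should acknowledge that cancellation is essential in both parity configurations.
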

%\begin{proof}
%See  \cite[Lemma~6.4]{CKSTTrefined}  and  \cite[Proposition~4.1, Cor.~4.1]{MiaoWuXu}. 
%%for (1)
%\footnote{Although (1) of Lemma~\ref{pwestsM6} is stated 
%in \cite{CKSTTrefined} 
%with the assumption 
%$N_3\gtrsim N$, one can check that the proof is valid without it.} 
%%and (2); \cite[Proposition~4.1]{MiaoWuXu} for (3) and (4).
%\end{proof}

\begin{lemma}{\cite[Lemma~4.9]{MiaoWuXu}}
\label{pwestssigma6}
For $\sigma_6$ defined by \eqref{defnofsigma6} and $\mathbf{k}\in\Gamma_6(\Tl)$, we have:  
\begin{enumerate}
\item[\textup{(i)}] $|\sigma_6(\mathbf{k})|\lesssim 1$;
\item[\textup{(ii)}] if $\mathbf{k}\in \Omega_1\cap \{N_3\ll N\}$, 
then $|\sigma_6(\mathbf{k})|\lesssim \frac{N_3}{N_1}$. 
\end{enumerate}
\end{lemma}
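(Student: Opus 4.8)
The plan is to exploit the construction of the non-resonant set $\Omega=\Omega_1\cup\Omega_2\cup\Omega_3$ from Subsection~\ref{Sect:TheNonresonantSet}: on each of its three pieces the modulation $\alpha_6$ obeys a lower bound that is tuned to an available upper bound on $M_6^2$. Since by \eqref{defnofsigma6} we have $\sigma_6=-M_6^2/\alpha_6$ on $\Omega$ and $\sigma_6\equiv0$ on $\Omega^c$, it is enough, on each $\Omega_i$, to bound $|\alpha_6(\mathbf{k})|=|k_1^2-k_2^2+k_3^2-k_4^2+k_5^2-k_6^2|$ from below and divide the corresponding pointwise bound on $|M_6^2|$ from Lemma~\ref{pwestsM6} or Lemma~\ref{pwestsM63} by it. Throughout I use the normalisations of Remark~\ref{rmk:evenoddinvariance}, so that $k_1^*=k_1$, $k_2^*\in\{k_2,k_3\}$, and on $\Omega$ the two largest frequencies both have size $\sim N_1\gtrsim N$; I also record that $\Omega_1$ and $\Omega_2$ lie in the regime $N_3\ll N$, while on $\Omega_3$ one has $N_3\gg N_4$ and hence $|k_1^*+k_2^*|=|k_3^*+k_4^*+k_5^*+k_6^*|\sim N_3$.

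Statement (ii) comes out first and quickly. On $\Omega_1\cap\{N_3\ll N\}$ the two largest frequencies sit at indices of equal parity, so they enter $i\alpha_6$ with the same sign; since all remaining frequencies have size $\ll N_1$, this yields $|\alpha_6|\gtrsim N_1^2$. Because $N_3\ll N$, Lemma~\ref{pwestsM6}(ii) gives $|M_6^2|\lesssim N_1N_3$, and dividing produces $|\sigma_6(\mathbf{k})|\lesssim N_3/N_1$.

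For (i) I would go region by region. On $\Omega_1$ the same lower bound $|\alpha_6|\gtrsim N_1^2$, together with the crude estimate $|M_6^2|\lesssim m(N_1)^2N_1^2\le N_1^2$ from Lemma~\ref{pwestsM6}(i), gives $|\sigma_6|\lesssim1$. On $\Omega_2$ the two largest frequencies have opposite parity; the membership $\mathbf{k}\in\Gamma_6(\Tl)$ and $N_3\ll N$ force $k_1^*=-k_2^*+O(N_3)$, so $i\alpha_6=k_{1-2}\,k_{12}+O(N_3^2)$ with $|k_{1-2}|\sim N_1$ and $|k_{12}|=|k_1^*+k_2^*|$. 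Since $\Omega_2$ is defined by $|k_{12}|\gtrsim(N_3/N_1)^{1/2}N_3\gg N_3^2/N_1$ (the last step using $N_3\ll N\lesssim N_1$), the $O(N_3^2)$ term is negligible and $|\alpha_6|\gtrsim N_1|k_{12}|$; pairing this with $|M_6^2|\lesssim N_1|k_1^*+k_2^*|+N_3^2\lesssim N_1|k_{12}|$ from Lemma~\ref{pwestsM63}(i) gives $|\sigma_6|\lesssim1$, and one sees that the exponent $\tfrac12$ in the definition of $\Omega_2$ is exactly what balances the two sides. On $\Omega_3$, a short case analysis in the signs of the three leading terms of $i\alpha_6$ (using $k_1^*+k_2^*+k_3^*=O(N_4)$, $N_4\ll N_3$, $|k_1^*|\sim|k_2^*|\sim N_1$) gives $|\alpha_6|\gtrsim N_1N_3$ in every case. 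When $N_3\ll N$ one then divides by $|M_6^2|\lesssim N_1|k_1^*+k_2^*|+N_3^2\lesssim N_1N_3$ (Lemma~\ref{pwestsM63}(i)); when $N_3\gtrsim N$, the crude bound $|M_6^2|\lesssim m(N_1)^2N_1^2$ is too lossy for $s>\tfrac12$, and one uses instead the refinement $|M_6^2|\lesssim m(N_1)^2N_1\,|k_1^*+k_2^*|\lesssim m(N_1)^2N_1N_3$, obtained by applying the mean value theorem to $\xi\mapsto m(\xi)^2\xi^2$ on the two leading terms of $M_6^2$ (as in the proof of Lemma~\ref{pwestsK41}) together with the cancellations against the $M_4$-terms noted after Lemma~\ref{pwestsM6}. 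In all cases $|\sigma_6|\lesssim1$.

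The main obstacle is the opposite-parity regime, i.e. $\Omega_2$ and the corresponding opposite-sign configurations on $\Omega_3$: there $\alpha_6$ is a difference of two nearly equal large squares and can be arbitrarily small — indeed $\alpha_6$ vanishes on part of $\Gamma_6(\Tl)$, which is precisely why $\sigma_6$ is cut off to $\Omega$ — so the crude bound $|\alpha_6|\gtrsim N_1^2$ is not available and one must exploit exactly the quantitative lower bound on $|k_{12}|$ engineered into $\Omega_2$ (and the identity $|k_1^*+k_2^*|\sim N_3$ on $\Omega_3$) paired with the refined multiplier estimate of Lemma~\ref{pwestsM63}(i). The one place where a computation beyond the already stated lemmas is needed is the corner $\Omega_3\cap\{N_3\gtrsim N\}$ with $s>\tfrac12$, resolved by the mean-value refinement of the $M_6^2$ bound described above — the same mechanism as in the proof of Lemma~\ref{pwestsK41}.
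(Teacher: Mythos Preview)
The paper does not supply a proof of this lemma; it is quoted from \cite{MiaoWuXu}. Your argument is the natural one and is essentially correct: bound $|\alpha_6|$ from below on each $\Omega_i$ and divide into it the available upper bound on $|M_6^2|$. Your treatment of $\Omega_1$, $\Omega_2$, and $\Omega_3\cap\{N_3\ll N\}$ is clean and matches what one finds in \cite{MiaoWuXu}; note also that $\Omega_3\cap\{N_3\ll N\}\subset\Omega_1\cup\Omega_2$ (as the paper itself remarks after \eqref{defnofOmega3}), so that sub-case is already covered. Your lower bound $|\alpha_6|\gtrsim N_1N_3$ on $\Omega_3$ is correct and sharper than the $N_3^2$ the paper quotes when defining $\Omega_3$.

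The one place that deserves a little more care is $\Omega_3\cap\{N_3\gtrsim N\}$ with opposite-parity top two frequencies and $s>\tfrac12$. You correctly identify that the crude bound $|M_6^2|\lesssim m(N_1)^2N_1^2$ falls short here (for $s=\tfrac12$ it does suffice, since then $m(N_1)^2N_1=N\lesssim N_3$). However, your pointer to ``the cancellations against the $M_4$-terms noted after Lemma~\ref{pwestsM6}'' is not quite the right mechanism: that remark concerns the \emph{same}-parity case with $N_3\ll N$, where large contributions from $\beta_6$ and from the $M_4$-sum must cancel. In the opposite-parity regime, the $\beta_6$ piece is indeed controlled by the mean value theorem as you say, giving $\lesssim m(N_1)^2N_1|k_1^*+k_2^*|$; but the $M_4$-sum is bounded not by cancellation against $\beta_6$ but by a term-by-term analysis using the refined $M_4$ estimates of Lemma~\ref{pwestsM4}(ii),(iii), exploiting that in most elongations either the collapsed argument or the external derivative factor is $O(N_3)$. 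That analysis is carried out in \cite{MiaoWuXu} and is a bit longer than your sketch suggests, though the outcome $|M_6^2|\lesssim m(N_1)^2N_1N_3$ (hence $|\sigma_6|\lesssim1$) is exactly as you claim.
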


\begin{lemma}{\cite[Proposition~4.3]{MiaoWuXu}}
\label{pwestsM83}
For $M_8^3$ defined by \eqref{defnofM83} and $\mathbf{k}\in\Gamma_8$, we have:
\begin{enumerate}
\item[\textup{(i)}]  $|{M^3_8}(\mathbf{k})|\lesssim N_1$;
\item[\textup{(ii)}]  if $N_3\ll N$, then 
 $|{M^3_8}(\mathbf{k})|\lesssim N_1^{\frac12}N_3^{\frac12}$. 
\end{enumerate}
\end{lemma}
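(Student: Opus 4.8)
The plan is to read the estimates off the definition \eqref{defnofM83} of $M_8^3$ as a sum of six elongations of $\sigma_6$, with part (i) coming at once from the crude bound $|\sigma_6|\lesssim 1$ and part (ii) requiring a case analysis of how the elongation $\mathbb{X}_j^2$ sits relative to the two dominant frequencies, in the spirit of \cite[Proposition~4.3]{MiaoWuXu}. The one point beyond \cite{MiaoWuXu} is uniformity in $\lambda$, which is automatic here since the multiplier $m$ and the implicit constants in Lemmas~\ref{pwestssigma6} and~\ref{pwestsM63} are all independent of $\lambda$. For (i): by Lemma~\ref{pwestssigma6}(i) we have $|\mathbb{X}_j^2(\sigma_6)(\mathbf{k})|\lesssim 1$ for each $j$, while $|k_{j+1}|\leq |k_1^*|=N_1$, so summing the six terms gives $|M_8^3(\mathbf{k})|\lesssim N_1$.

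For (ii) I would first use Remarks~\ref{rmk:smallfrequencies} and~\ref{rmk:evenoddinvariance} to restrict to $\mathbf{k}\in\Upsilon_8(\Tl)$ with the standard ordering conventions in force; since $N_3\ll N$, exactly two of the eight frequencies, namely $k_1^*$ and $k_2^*$, have modulus $\gtrsim N$, and $\mathbf{k}\in\Gamma_8(\Tl)$ forces $k_1^*=-k_2^*+O(N_3)$. Fixing $j$, I would then split according to how the merged triple $\{k_j,k_{j+1},k_{j+2}\}$ meets $\{k_1^*,k_2^*\}$. If it contains neither or both of them, then either $k_{j+1}$ has modulus $\lesssim N_3$ (so that term is $\lesssim N_3\leq N_1^{\frac12}N_3^{\frac12}$) or the elongated $6$-tuple has all frequencies $\lesssim N_3\ll N$, in which case it lies in the region where $M_6^2$, and hence $\sigma_6$ by \eqref{defnofsigma6}, vanishes (cf.\ Remark~\ref{rmk:smallfrequencies}).

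The substantial case is when the merged triple contains exactly one of $k_1^*,k_2^*$: then the elongated $6$-tuple $\mathbf{k}^{(j)}$ has precisely two comparable frequencies of size $\sim N_1$ with sum $O(N_3)$ and third-largest frequency $\lesssim N_3\ll N$, so for $\mathbb{X}_j^2(\sigma_6)(\mathbf{k})$ to be nonzero $\mathbf{k}^{(j)}$ must lie in $\Omega_1\cup\Omega_2\cup\Omega_3$ (see \eqref{defnofOmega1}--\eqref{defnofOmega3}). When $k_{j+1}$ is one of the two small members of the triple, the term is again $\lesssim N_3$. When $k_{j+1}$ is the large member, on $\Omega_1$ Lemma~\ref{pwestssigma6}(ii) gives $|\mathbb{X}_j^2(\sigma_6)(\mathbf{k})|\lesssim N_3/N_1$, so the term is $\lesssim N_3$; on $\Omega_3$ one uses that $|k_1^{*}+k_2^{*}|\sim N_3$ forces $|\alpha_6(\mathbf{k}^{(j)})|\sim N_1 N_3$ and pairs this with the bound $|M_6^2|\lesssim N_1|k_1^{*}+k_2^{*}|+N_3^2$ of Lemma~\ref{pwestsM63}(i); and on $\Omega_2$ one uses the defining lower bound $|k_{12}|\gtrsim (N_3/N_1)^{\frac12}N_3$ together with the factorization $i\alpha_6=(k_1^{*}-k_2^{*})(k_1^{*}+k_2^{*})+O(N_3^2)$ (so that the leading product dominates the $O(N_3^2)$ correction and $|\alpha_6(\mathbf{k}^{(j)})|\gtrsim N_1|k_{12}|$) and the refined structure of $M_6^2$ from Lemma~\ref{pwestsM63}, arranged as in \cite{MiaoWuXu} so that the gain from the $\Omega_2$ threshold exactly compensates the size of $M_6^2$; summing the six contributions then yields $|M_8^3(\mathbf{k})|\lesssim N_1^{\frac12}N_3^{\frac12}$.

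The hard part is precisely the last configuration — a large frequency appearing as the derivative factor $k_{j+1}$ while the elongated $6$-tuple lies in $\Omega_2$. There the crude estimate $|\sigma_6|\lesssim 1$ only reproduces the $N_1$ of part (i), and the improvement down to $N_1^{\frac12}N_3^{\frac12}$ hinges on the delicate interplay between the exponent $\frac12$ chosen in the definition of $\Omega_2$ in Subsection~\ref{Sect:TheNonresonantSet}, the lower bound for $\alpha_6$ it guarantees, and the fine cancellation in $M_6^2$ encoded in Lemma~\ref{pwestsM63}; this is exactly why $\Omega_2$ was carved out with that threshold and why those pointwise bounds were recorded beforehand. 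All remaining cases are routine bookkeeping.
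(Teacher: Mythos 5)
The paper defers the proof of this lemma entirely to \cite[Proposition~4.3]{MiaoWuXu}; it gives no argument of its own, so what needs assessing is whether your reconstruction is sound. Part (i) is fine. Part (ii) has a genuine gap at exactly the step you flag as ``the hard part'': the configuration where $k_{j+1}$ is one of the two large frequencies and the elongated $6$-tuple $\mathbf{k}^{(j)}$ lands in $\Omega_2$.

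Your claimed mechanism there --- that ``the gain from the $\Omega_2$ threshold exactly compensates the size of $M_6^2$'' --- does not close the estimate with the tools the paper makes available. On $\Omega_2$, Lemma~\ref{pwestsM63}\textup{(i)} gives $|M_6^2|\lesssim N_1|k_{12}|+N_3^2$, and the $\Omega_2$ lower bound together with the factorization of $\alpha_6$ gives $|\alpha_6|\gtrsim N_1|k_{12}|$. Dividing, one obtains
\[
|\sigma_6(\mathbf{k}^{(j)})|\lesssim 1+\frac{N_3^2}{N_1|k_{12}|}\lesssim 1+\Bigl(\frac{N_3}{N_1}\Bigr)^{\frac12}\lesssim 1,
\]
with no improvement over the crude bound --- note Lemma~\ref{pwestssigma6} records a gain only on $\Omega_1$, not on $\Omega_2$. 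Multiplying by $|k_{j+1}|\sim N_1$ then reproduces only the part~(i) bound $\lesssim N_1$, not the claimed $N_1^{\frac12}N_3^{\frac12}$. And this configuration is not vacuous: if the two large entries of the $8$-tuple sit at positions of the same parity (e.g.\ $k_1\sim N_1$ and $k_3\sim-N_1$ with $|k_1+k_3|\lesssim N_3$), then for $j=2$ the elongated $6$-tuple has its two large entries at opposite-parity positions and lies in $\Omega_2$ for a robust range of the small frequencies, with $\sigma_6(\mathbf{k}^{(2)})$ of order one and $k_{j+1}=k_3$ of size $N_1$. So the single term $\mathbb{X}_2^2(\sigma_6)(\mathbf{k})\,k_3$ is genuinely $\sim N_1$.

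The refinement in part~(ii) therefore cannot come from a term-by-term pointwise bound on $|\sigma_6|\cdot|k_{j+1}|$; it must come from a cancellation among the six summands of $M_8^3$ --- visible only after symmetrizing over permutations of the odd indices and of the even indices, which is harmless because only the symmetrized multiplier enters $\Lambda_8(M_8^3;v)$. (In the configuration above, the companion term produced by swapping the two same-parity large frequencies contributes $\approx\sigma_6\cdot(-k_3)$ and kills the leading $O(N_1)$ contribution, leaving $O(N_3)$.) Your argument treats the six $j$-terms independently and never invokes this, so it cannot produce the $N_1^{\frac12}N_3^{\frac12}$ bound. Incidentally, the $\Omega_3$ leg of your case analysis has the same defect (pairing $|\alpha_6|\sim N_1N_3$ with $|M_6^2|\lesssim N_1N_3$ again yields only $|\sigma_6|\lesssim1$) and is in any event redundant, since $\Omega_3\cap\{N_3\ll N\}\subset\Omega_1\cup\Omega_2$.
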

%\begin{proof}
%See \cite[Lemma~6.2]{CKSTTrefined}\footnote{In fact, (1) of Lemma~\ref{pwestsM8} follows trivially from the first estimate of $M_4$ given by Lemma~\ref{pwestsM4}.} 
%and \cite[Proposition~4.2]{MiaoWuXu}. 
%\end{proof}

Also, as direct  consequences of the above Lemma~\ref{pwestssigma6}, we have the 
same bounds for $K_8^3$ and $M_{10}^3$ (see \eqref{defnofK83} and  \eqref{defnofM103} ) as for $\sigma_6$. 
Finally, we provide the pointwise estimates corresponding to the second correction term in \eqref{defnofE3}. 

\begin{lemma}
\label{pwestssigma4tilde}
For $\widetilde{\sigma_4}$ defined by \eqref{defnoftildesigma4} 
and $\mathbf{k}\in\Gamma_4(\Tl)$, we have: 
\begin{enumerate}
\item[\textup{(i)}]  $|\widetilde{\sigma_4}(\mathbf{k})| \lesssim m(N_1)^2N_1$;%for all $\mathbf{k}\in\Gamma_4(\Tl)$;
\item[\textup{(ii)}]  if $N_3\ll N$, then $|\widetilde{\sigma_4}(\mathbf{k})| \lesssim m(N_1)^2$. 
\end{enumerate}
\end{lemma}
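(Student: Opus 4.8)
The plan is to make the cancellation against the denominator $\alpha_4=-2ik_{12}k_{14}$ visible already at the level of $K_4^1$ before dividing, since on $\Gamma_4(\Tl)$ the quantity $|k_{12}k_{14}|$ admits no lower bound beyond $\gtrsim\lambda^{-2}$. As in Lemma~\ref{pwestssigma4}, and following Remark~\ref{rmk:smallfrequencies} and Remark~\ref{rmk:evenoddinvariance}, I would reduce to $\mathbf{k}\in\Upsilon_4(\Tl)$ with $k_1^*=k_1$ (so $m_1=m(N_1)$) and to $\alpha_4\neq0$. The starting point is to write $m(k)^2k^2=k^2+h(k)$ with $h(k):=k^2(m(k)^2-1)$; then $h$ is even, smooth, vanishes on $\{|k|\le N\}$, and $|h''|\lesssim1$ throughout (for $|k|\ge2N$ one computes $h''(k)=2s(2s-1)m(k)^2-2\in[-2,2)$ since $2s(2s-1)\in[0,2)$ and $0<m(k)^2\le1$; on $N\le|k|\le2N$ this is the usual smooth-interpolant bound). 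Substituting this into $K_4^1$ and using $\tfrac12\sum_{j=1}^4(-1)^jk_j^2=-\tfrac i2\alpha_4$ produces the two identities $K_4^1=-\tfrac i2\alpha_4+\tfrac12\sum_{j=1}^4(-1)^jh(k_j)$ and $K_4^1=-\tfrac i2 m(N_1)^2\alpha_4+R'$, where $R':=\tfrac12\sum_{j=2}^4(-1)^j(m_j^2-m(N_1)^2)k_j^2$ (the $j=1$ term vanishes); dividing by $\alpha_4$, $\widetilde{\sigma_4}=-\tfrac i2+\tfrac1{2\alpha_4}\sum_j(-1)^jh(k_j)=-\tfrac i2 m(N_1)^2+\tfrac{R'}{\alpha_4}$.

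For part (i) I would work from the first identity. Writing $k_2,k_3,k_4$ in terms of $k_1,k_{12},k_{14}$ and using that $h$ is even, $\sum_j(-1)^jh(k_j)=h(k_1-k_{12})-h(k_1)-h(k_1-k_{12}-k_{14})+h(k_1-k_{14})$, which is a second difference: applying the mean value theorem twice — once in the $k_{14}$-increment of $G(x):=h(x-k_{12})-h(x)$, then in the $k_{12}$-increment in $G'(x)=h'(x-k_{12})-h'(x)$ — gives $\sum_j(-1)^jh(k_j)=-h''(\zeta)\,k_{12}k_{14}$ for some $\zeta\in\R$. Since $\alpha_4=-2ik_{12}k_{14}$, this collapses to $\widetilde{\sigma_4}=-\tfrac i2\bigl(1+\tfrac12 h''(\zeta)\bigr)$, hence $|\widetilde{\sigma_4}|\lesssim1$; and on $\Upsilon_4(\Tl)$ one has $m(N_1)^2N_1=N^{2-2s}N_1^{2s-1}\ge N\gtrsim1$ because $s\ge\tfrac12$ and $N_1\gtrsim N$, so $|\widetilde{\sigma_4}|\lesssim m(N_1)^2N_1$.

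For part (ii) the second identity is the right one: the term $-\tfrac i2 m(N_1)^2$ already has the desired size, so it remains to show $|R'|\lesssim m(N_1)^2|k_{12}k_{14}|$. When $N_3\ll N$, exactly one of $k_2,k_3,k_4$ — call it $k_p$ — has size $\sim N_1$, while the other two, $k_q,k_r$, carry $m_q=m_r=1$; thus $R'$ reduces to $\pm\tfrac12(m_p^2-m(N_1)^2)k_p^2$ plus a term of the form $\pm\tfrac12(1-m(N_1)^2)(k_q^2-k_r^2)=\mp\tfrac12(1-m(N_1)^2)(k_q-k_r)k_{q+r}$, and the constraints force one of $|k_{12}|,|k_{14}|$ to be $\sim N_1$, which after the permitted even-index swap $k_2\leftrightarrow k_4$ we may take to be $|k_{14}|$. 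Using the mean value theorem and $|(m^2)'(\theta)|\lesssim m(N_1)^2/N_1$ for $\theta\sim N_1$, together with $\bigl|\,|k_p|-|k_1|\,\bigr|\le\min(|k_p+k_1|,|k_p-k_1|)$ and $|k_{q+r}|\in\{|k_{12}|,|k_{14}|\}$ with $|k_q-k_r|\lesssim N_3$, a short split into the cases $|k_{12}|\sim N_1$ and $|k_{12}|\lesssim N_3$ (the two large frequencies "same-signed" versus "nearly opposite") yields $|R'|\lesssim m(N_1)^2 N_1\,|k_{12}|$ in the latter case and $|R'|\lesssim m(N_1)^2 N_1^2\sim m(N_1)^2|k_{12}k_{14}|$ in the former, in either case giving $|\widetilde{\sigma_4}|\le\tfrac12 m(N_1)^2+|R'/\alpha_4|\lesssim m(N_1)^2$.

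The main obstacle is precisely the lack of a lower bound on $|\alpha_4|$: both parts hinge on exhibiting the $k_{12}k_{14}$ cancellation inside $K_4^1$ — in (i) via the second-difference structure of $\sum(-1)^jh(k_j)$, and in (ii) via the additional step of peeling off the $m(N_1)^2\alpha_4$ piece and then tracking a $k_{12}$- (or $k_{14}$-) factor through the remainder when the two large frequencies nearly cancel. The remaining ingredients (the bound $|h''|\lesssim1$, the mean value theorem estimates, and the elementary inequalities for $\bigl|\,|a|-|b|\,\bigr|$) are routine.
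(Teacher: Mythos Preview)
Your argument is correct, and for part (i) it is genuinely different from the paper's. The paper bounds the numerator $K_4^1$ and the denominator $\alpha_4$ separately: it splits according to whether the two largest frequencies have the same or opposite parity, and in the opposite-parity case applies the mean value theorem directly to $\xi\mapsto m(\xi)^2\xi^2$ to extract a factor of $|k_{12}|=|k_{34}|$ from $K_4^1$ that then cancels against $|\alpha_4|\sim N_1|k_{34}|$. Your second-difference identity $\sum_j(-1)^jh(k_j)=-h''(\zeta)k_{12}k_{14}$ bypasses the case split entirely and yields the sharper uniform bound $|\widetilde{\sigma_4}|\lesssim 1$ in one stroke; in particular it does not require any lower bound on $|\alpha_4|$ (the paper's remark that $|\alpha_4|\ge 2N_1$ whenever $\alpha_4\neq0$ is in fact not obvious when three or four frequencies are comparable to $N_1$, whereas your route is insensitive to this). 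For part (ii) the two approaches are closer in spirit---both isolate an $m(N_1)^2$ piece and control the remainder by the mean value theorem---though your splitting $K_4^1=-\tfrac i2 m(N_1)^2\alpha_4+R'$ is organized slightly differently.

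One small slip: when $k_p=k_3$ (the same-parity case, i.e.\ your case $|k_{12}|\sim N_1$), the two small-frequency terms $k_q=k_2$ and $k_r=k_4$ enter $R'$ with the \emph{same} sign $(-1)^2=(-1)^4=+1$, so the remainder is $\tfrac12(1-m(N_1)^2)(k_q^2+k_r^2)$ rather than $k_q^2-k_r^2$, and the factorization $(k_q-k_r)k_{q+r}$ is unavailable. This does not affect your conclusion, since in that case $|\alpha_4|\sim N_1^2$ and the crude bound $(1-m(N_1)^2)(k_q^2+k_r^2)\lesssim N_3^2\ll m(N_1)^2N_1^2$ already suffices; only the $k_p=k_2$ case (your $|k_{12}|\lesssim N_3$) actually needs the cancellation, and there the signs do alternate as you wrote.
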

\begin{proof}
Let $\beta_4$ denote the numerator in \eqref{defnoftildesigma4}.  
We have the crude estimate \linebreak
$|\beta_4|\lesssim m(N_1)^2N_1^2$, and note that either $\alpha_4=0$ (in which case $\widetilde{\sigma_4}=0$) or $|\alpha_4|\geq 2N_1$. 
Depending on the parity of the largest two frequencies, we distinguish two cases.
%\begin{enumerate}
%\item[(i)] 

If $k_1^*=k_1$ and $k_2^*=k_3$, then $|\alpha_4|\sim |k_{12}| |k_{14}|\sim N_1^2$
and  $|\beta_4|\sim m(N_1)^2N_1^2$.
%\item[(ii)] 

If $k_1^*=k_1$ and $k_2^*=k_2$, then $|\alpha_4|\sim N_1|k_{34}|$, $k_1$ and $k_2$ have opposite signs and by the mean value theorem, we have 
\begin{align*}
|\beta_4| \leq& |m(k_1)^2k_1^2 - m(-k_2)^2(-k_2)^2| + |k_{34}| \cdot |k_3-k_4|\\
 \leq& |k_{12}|\cdot|(m(\xi)^2\xi^2)'| + |k_{34}| \cdot |k_3-k_4| ,
\end{align*}
where $|\xi|\sim N_1$ and thus 
$$\left|\frac{d}{d\xi}(m(\xi)^2\xi^2) \right| 
   \sim |m(\xi)^2\xi|\sim m(N_1)^2N_1.$$ 
Since 
$$|k_3-k_4|\lesssim N_3\ll N\lesssim m(N_1)^2N_1,$$ 
we get 
$$|\beta_4|\lesssim m(N_1)^2N_1|k_{34}|$$
and the conclusion follows. 
%\end{enumerate}
\end{proof}

Consequently, 
by simply referring to their definitions in \eqref{defnofK64tilde} and \eqref{defnofK83tilde}, 
we also have the same bounds 
for $\widetilde{K_6^4}$ and $\widetilde{K_8^3}$, respectively,  
as for $\widetilde{\sigma_4}$, 
In the same manner, we have the following lemma.

\begin{lemma}
For $\widetilde{K_6^3}$ defined by \eqref{defnofK63tilde} 
and $\mathbf{k}\in\Gamma_6(\Tl)$, we have: 
\begin{enumerate}
\item[\textup{(i)}] $|\widetilde{K_6^3}(\mathbf{k})| \lesssim m(N_1)^2N_1^2$;
\item[\textup{(ii)}] if $N_3\ll N$, then $|\widetilde{K_6^3}(\mathbf{k})| \lesssim m(N_1)^2N_1$.
%\item[\textup{(iii)}] $|\widetilde{K_6^4}(\mathbf{k})| \lesssim 1$.
\end{enumerate}
\end{lemma}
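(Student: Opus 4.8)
The plan is to estimate $\widetilde{K_6^3}$ by the triangle inequality over the four summands in \eqref{defnofK63tilde}. For each $j$ the factor $\mathbb{X}_j^2(\widetilde{\sigma_4})(\mathbf{k})$ is $\widetilde{\sigma_4}$ evaluated at the quadruple $\mathbf{k}^{(j)}\in\Gamma_4(\Tl)$ obtained by merging the three consecutive frequencies $k_j,k_{j+1},k_{j+2}$; since every entry of $\mathbf{k}^{(j)}$ is one of the $k_i$ or a sum of three of them, its largest frequency magnitude $\widetilde{N_1^{(j)}}$ satisfies $\widetilde{N_1^{(j)}}\lesssim N_1$. For part (i), Lemma~\ref{pwestssigma4tilde}(i) gives $|\mathbb{X}_j^2(\widetilde{\sigma_4})(\mathbf{k})|\lesssim m(\widetilde{N_1^{(j)}})^2\widetilde{N_1^{(j)}}$, which by the monotonicity of $\xi\mapsto m(\xi)^2\xi$ coming from \eqref{nondecrasingpropofmsgeq12} (together with the monotonicity of $m$) is $\lesssim m(N_1)^2 N_1$; multiplying by $|k_{j+1}|\le N_1$ and summing the four terms gives (i).

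For part (ii) one must account for the fact that $\widetilde{\sigma_4}$ degenerates on low-frequency quadruples. By Remark~\ref{rmk:smallfrequencies} we may work on $\Upsilon_6(\Tl)$, and by Remark~\ref{rmk:evenoddinvariance} we take $k^*_1=k_1$ and $k^*_2\in\{k_2,k_3\}$; since $N_3\ll N$, exactly two of the $k_i$ then have magnitude $\gtrsim N$ (hence $\sim N_1$) and the other four have magnitude $\lesssim N_3\ll N$. Fix $j$. If the merged triple $\{k_j,k_{j+1},k_{j+2}\}$ contains at most one of the two large frequencies, then $\mathbf{k}^{(j)}$ retains two entries of size $\sim N_1$, its third-largest magnitude is $\lesssim N_3\ll N$, so Lemma~\ref{pwestssigma4tilde}(ii) yields $|\mathbb{X}_j^2(\widetilde{\sigma_4})(\mathbf{k})|\lesssim m(\widetilde{N_1^{(j)}})^2\sim m(N_1)^2$, and multiplication by $|k_{j+1}|\le N_1$ gives $\lesssim m(N_1)^2 N_1$. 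If the merged triple contains both large frequencies and they have the same parity, they necessarily occupy the two outer slots of the triple, so the middle slot $k_{j+1}$ is a small frequency; the merge then cancels down to size $\lesssim N_3$, $\widetilde{\sigma_4}(\mathbf{k}^{(j)})$ equals the constant $-\frac{i}{2}$ (or $0$ when $\alpha_4(\mathbf{k}^{(j)})=0$), and the summand reduces to $\frac12 k_{j+1}$ with $|k_{j+1}|\lesssim N_3\ll N\lesssim m(N_1)^2 N_1$.

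The remaining, genuinely delicate configuration — and the main obstacle — is when the merged triple contains the two large frequencies with opposite parity: then one of them always sits in the middle slot $k_{j+1}$, the merge still cancels down to size $\lesssim N_3$ so that $\widetilde{\sigma_4}(\mathbf{k}^{(j)})=-\frac{i}{2}$, but now $|k_{j+1}|\sim N_1$, and this single summand is only $O(N_1)$, which is larger than $m(N_1)^2 N_1$. To deal with it I would use that in $\Lambda_6(\widetilde{K_6^3};v)$ the multiplier may be symmetrized over permutations of the odd indices and of the even indices, so that it suffices to bound the symmetrized multiplier; the resonant contributions $\frac12 k_{j+1}$ then pair up, across the sum over $j$ and the permutation average, into combinations of the form $\frac12(k'+k'')$ with $k',k''$ the two large frequencies, and on $\Gamma_6(\Tl)$ the sum of the two large frequencies equals minus the sum of the four small ones, hence is $O(N_3)\ll N\lesssim m(N_1)^2 N_1$. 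This cancellation is in the same spirit as the one exploited for $M^2_6$ in \cite{CKSTTrefined}; with it in hand, the remaining bookkeeping — identifying which permutations put a large frequency in a middle slot and checking that the coefficients cancel — is routine, and the lemma follows.
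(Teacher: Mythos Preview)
The paper does not actually give a proof for this lemma; it merely says ``In the same manner, we have the following lemma'', implying that the bound follows directly from Lemma~\ref{pwestssigma4tilde} together with the extra factor $|k_{j+1}|\le N_1$. Your argument is more careful, and you have correctly identified a genuine subtlety that this one-line remark glosses over: when $N_3\ll N$ and the two large frequencies of opposite parity occupy adjacent positions (one of them being the middle slot $j+1$), the elongated quadruple $\mathbf{k}^{(j)}$ has all four entries $\ll N$, so $\widetilde{\sigma_4}(\mathbf{k}^{(j)})=-\tfrac{i}{2}$ rather than $O(m(N_1)^2)$, and that single summand has size $\sim N_1$, which exceeds $m(N_1)^2N_1$ once $N_1\gg N$. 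Thus part~(ii) fails \emph{pointwise} for the unsymmetrized multiplier as written in \eqref{defnofK63tilde}.

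Your proposed fix via symmetrization is correct. A direct count over the five adjacent placements $(a,b)$ with $a$ odd and $b$ even shows that, after averaging over permutations of the odd and of the even indices, the total of these problematic contributions is a constant multiple of $K+K'$ (the sum of the two large frequency values); on $\Gamma_6(\Tl)$ this equals minus the sum of the four small frequencies, hence is $O(N_3)\ll N\lesssim m(N_1)^2N_1$ for $s\ge\frac12$. Since $\Lambda_6(\widetilde{K_6^3};v)$ sees only the symmetrized multiplier (consistent with Remark~\ref{rmk:evenoddinvariance}), this is exactly what is needed for the application in \eqref{MultilinearEst:K63tilde}. In short: the paper's statement should be read for the symmetrized multiplier, and your argument supplies the missing verification.
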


%\newpage
\section{Almost conservation estimates for  the third generation modified energy}
\label{Section:AlmostConservationEstimates}

The scope of  this section is to show that for a (local) $H^s$-solution $v$ of \eqref{g1DNLS}, 
with the life-span provided by Proposition~\ref{prop:LWPIsyst},  
the possible increase of $\mathcal{E}^3[v(\cdot)]$ is minuscule, i.e. that we have an estimate of the form 
\begin{equation}
\label{eqn:AlmostConservationE3}
\left| \mathcal{E}^3[v(\delta)] -\mathcal{E}^3[v(0)] \right| \lesssim 
 N^{-\gamma}\lambda^{-\kappa} %P(\|Iv^{\lambda}\|_{Z^1([0,\delta])}) , 
\end{equation}
for some $\gamma,\kappa>0$. %polynomial $P$ with non-negative coefficients. 
\footnote{The powers $\gamma, \kappa$ are  responsible for the level of regularity at which 
the global existence via the $I$-method is obtained. 
Hence the name of the game in subsequent iterations of the method is finding a functional 
that can provide good enough decay rates in order to reach $s=\frac12$. 
One cannot do better than the optimal local well-posedness result available, even though the powers in \eqref{eqn:AlmostConservationE3} may allow a better result. 
For an argument that shows that the local well-posedness theory via the contraction mapping argument 
for \eqref{g1DNLS} in the periodic setting is optimal, see Appendix~\ref{appdx:illposedness}.} 
On the right hand side we use (powers of) the $Z^1$-norm of  $Iv$ 
who, we recall, lives on the scaled spatial domain $\Tl$ and 
whose energy on frequencies $\gtrsim N$ is damped by the operator $I$. 
 
We decompose the solution $v$ using the Litllewood-Paley projectors in spatial frequencies:
$$v=\sum_{j=0}^{\infty} P_{2^j}v \quad,\quad \widehat{P_{2^j} v}(\tau,k)=\ind_{I_j}(n)\widehat{v}(\tau,k) , $$
where %$\widehat{P_j v}(\tau,k)=\ind_{I_j}(n)\widehat{v}(\tau,k)$ with 
$I_0:=\{k\in\Zl : |k|<1\}$ and 
$I_j:=\{k\in\Zl : 2^{j-1}\leq |k|<2^j\}$ for $j\geq 1$. 
By the fundamental theorem of calculus, the proof of \eqref{eqn:AlmostConservationE3} 
reduces to estimating expressions of the form  
$$\int_{0}^{\delta}\Lambda_n(M_n;v(t))\,dt$$
corresponding to the multipliers ${M_n}$ 
that appear in \eqref{parttE3}. It is enough\footnote{ 
Indeed, one can take the functions $v_j$ such that the time restrictions 
${v_j}_{|[0,\delta]}= P_jv$ and 
$ \|v_j\|_{Z^1(\R\times\Tl)} \leq \|P_jv\|_{Z^1([0,\delta]\times\Tl)}  +\varepsilon$ 
for odd $j$'s,  
and similarly with $P_j\overline{v}$ for even $j$'s. 
Eventually one takes $\varepsilon\to0$ to obtain the estimate. 
} to obtain estimates %(with the same decaying factors) 
for 
\begin{equation}
\label{genericmultilinexpression}
\int_{\R}\ind_{[0,\delta]}(t)\Lambda_n(M_n;v_1(t),\ldots,v_n(t))\,dt ,
\end{equation}
where each $v_j$ has Fourier support in the band $\{(\tau,k): |k|\sim N_j\}$, with $N_j\sim |I_j|$. 
If $N_j\ll N$ for all $j$, 
the multiplier $M_n$ vanishes, 
hence we assume $N_1\sim N_2\gtrsim N$ 
(see Remark~\ref{rmk:smallfrequencies}). 
Due to %the multi-linearity of \eqref{genericmultilinexpression} and 
the Symmetry Remark \ref{rmk:evenoddinvariance}, 
we can also assume %without loss of generality 
that $N_1\geq N_2\geq\ldots\geq N_n$. 
%and that $\widehat{v_j}$ are real-valued and non-negative.  

Regarding the sharp time-cutoff, we note that in each case, we are able to place 
at least a few factors in the $X^{1,\frac12-}$-norm (rather than in the $Z^1$-norm) 
and since we know that 
$$\|\ind_{[0,\delta]}(t)\|_{H_t^{\frac12-}} \lesssim \delta^{0+}, $$
by Lemma~\ref{sharpcutoffX112minus}, we have 
\begin{equation}
\|\ind_{[0,\delta]} Iv\|_{X^{1,\frac12-}} \lesssim \delta^{0+} \|Iv\|_{X^{1,\frac12}}.
\end{equation}
Therefore, in proving the results of this section, 
we are concerned with estimates of the form 
\begin{equation}
\label{genericmultilinearestimate}
\int_{\R}\Lambda_n(M_n;v_1(t),\ldots,v_n(t))\,dt \lesssim 
N^{-\gamma}\lambda^{-\kappa}\prod_{j=1}^n \|Iv_j\|_{Z^1(\R\times\Tl)} ,
\end{equation}
where $v_j=P_{N_j}v_j$ for all $j$. 

Before starting to prove them one by one for each term that appears in \eqref{parttE3}, we make some further reductions common to all of them.   

\begin{remark}
Since the norms on the right hand side of \eqref{genericmultilinearestimate} 
depend on $|\widehat{v_j}|$, 
for the sake of simplified writing, we assume that all $\widehat{v_j}$'s are real-valued and non-negative.  
\end{remark}

\begin{remark}
To ensure summability over all dyadics $N_1\geq N_2\geq\ldots \geq N_n$,  
we can most of the times obtain a factor of 
$1/N_1^{0+}$ 
on the right hand side above 
since then
$$\frac{1}{N_1^{0+}} \prod_{j=1}^n \|IP_{N_j}v_j\|_{Z^1(\R\times\Tl)}
  \lesssim 
   \left(\prod_{j=1}^n \frac{1}{N_j^{0+}} \right)\|Iv_j\|^n_{Z^1(\R\times\Tl)} \ ,$$
and the summation (first over $N_n$, lastly over $N_1$) is straightforward. 
However, having $L^2_{\tau,k}$-based norms on the largest two frequency factors $Iv_1$ and $Iv_2$ 
allows one to relax the summability factor to $1/N_3^{0+}$ in the region $N_1\sim N_2$.  
This essentially follows from an application of Cauchy-Schwarz inequality. 
Indeed, 
suppose that we have 
\begin{equation*}
%\label{eq:prereq1summationoverN3}
|\mathcal{L}(P_{N_1}v_1,P_{N_2}v_2)|\leq A 
 \|IP_{N_1}v_1\|_{X^{1,\frac12}} \|IP_{N_2}v_2\|_{X^{1,\frac12}} 
\end{equation*}
for the bilinear functional $\mathcal{L}$ defined by fixing $v_3,\ldots, v_n$ 
in the left hand side of \eqref{genericmultilinearestimate}. 
Let $N_1=2^{j_1}$ and $N_2=2^{j_2}$. 
Summing over the pair of dyadic numbers  $(N_1,N_2)$ 
in the region $N_1\sim N_2$ 
amounts to summing over the pair of integers $(j_1,j_2)$ with $|j_1-j_2|\leq 4$. \footnote{For $n\leq 10$, 
on $\Gamma_n(\Tl)$ we have  $\frac{1}{9}N_2\leq N_1\leq 9 N_2$ 
and thus  the universal constant bounding $|j_1-j_2|$.}
Therefore
 \begin{align*}
\sum_{N_1\sim N_2}|\mathcal{L}(P_{N_1}v_1,P_{N_2}v_2)| 
 &\leq A \left(\sum_{j_1\in\Z} \|P_{2^{j_1}}w_1\|_{L^2_{t,x}}^2 \right)^{\frac12}
  \left(\sum_{{j_2\in\Z, |j_2-j_1|\leq 4}} \|P_{2^{j_2}}w_2\|_{L^2_{t,x}}^2 \right)^{\frac12}\\
  &\lesssim A \|w_1\|_{L^2_{t,x}} \|w_2\|_{L^2_{t,x}} ,
\end{align*}
where we have taken $w_j$ to be defined by 
$\widehat{w_j}(\tau,k) =m(k)\langle k\rangle \langle \tau+k^2\rangle^{\frac12} \widehat{v_j}(\tau,k)$. 
\end{remark}

With these reduction remarks at hand, we can proceed to the proof of almost conservation estimates. 
We denote by $J_x$ the Bessel potential operator in the spatial variable, 
i.e. $\widehat{J_xf}(k)=\langle k\rangle \widehat{f}(k)$. 
For simplicity, $\int_*$ and $\int_{**}$ stand for integration with respect to the measures 
$\delta_0(\tau_1+\ldots +\tau_n)$ and $\delta_0(k_1+\ldots +k_n)$ 
on $\Gamma_n(\R)$ and on $\Gamma_n(\Tl)$, respectively. 

%\newpage
\begin{lemma}
\label{prop:estofM62}
Let $s\geq\frac12$ and $\delta>0$. For 
$M_6^2$ defined by \eqref{defnofM62}, 
and $\Omega^c$ as in Subsection \ref{Sect:TheNonresonantSet}, 
we have the estimate
\label{prop:EstimateSigma6}
\begin{equation}
\label{MultilinearEst:M62Omegac}
\left| \int_0^{\delta}\Lambda_6(M_6^2\ind_{\Omega^c};v(t))\,dt \right| \lesssim 
N^{-\frac32+}\lambda^{-1+} \delta^{0+} \|Iv\|^6_{Z^1([0,\delta]\times\Tl)} .
% \prod_{j=1}^4 \|Iv_j\|_{X^{1,\frac12}} \prod_{j=5,6} \| I v_j\|_{Z^1} .
\end{equation}
\end{lemma}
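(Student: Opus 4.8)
The plan is to reduce the estimate to the generic multilinear form \eqref{genericmultilinearestimate} with $n=6$, via the reductions already recorded in this section: we decompose $v=\sum P_{N_j}v$, assume $\widehat{v_j}\geq 0$, assume $N_1\geq N_2\geq\ldots\geq N_6$ with $N_1\sim N_2\gtrsim N$, and use Lemma~\ref{sharpcutoffX112minus} to absorb the sharp cutoff $\ind_{[0,\delta]}$ into a factor $\delta^{0+}$ whenever we can afford to place a factor in $X^{1,\frac12-}$ rather than $X^{1,\frac12}$. Thus it suffices to bound $\int_{**}\int_* \frac{|M_6^2|\ind_{\Omega^c}}{\prod_j m(k_j)\langle k_j\rangle}\prod_{j=1}^6 \widehat{J_xIv_j}$ by $N^{-\frac32+}\lambda^{-1+}\prod_j\|Iv_j\|_{Z^1}$, with an extra gain $1/N_1^{0+}$ (or $1/N_3^{0+}$ when we keep $L^2$-based norms on the top two frequencies) to ensure dyadic summability. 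Since $M_6^2\ind_{\Omega^c}$ is supported where $\mathbf{k}\in\Omega^c$, and $\Omega\supset\Omega_3=\{N_3\gg N_4\}$, on $\Omega^c$ we necessarily have $N_3\sim N_4$; moreover $\Omega^c$ forces either $N_3\gtrsim N$, or $N_3\ll N$ together with $\mathbf{k}\notin\Omega_1\cup\Omega_2$.

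The main case is $N_3\ll N$ and $\mathbf{k}\in\Omega^c$: here Lemma~\ref{pwestsM63}(ii) gives $|M_6^2(\mathbf{k})|\lesssim N_1^{\frac12}N_3^{\frac32}$, while $m(k_j)=\langle k_j\rangle\sim 1$ for $j\geq 3$ and $m(k_1)\langle k_1\rangle m(k_2)\langle k_2\rangle\sim m(N_1)^2N_1^2$. Since $s\geq\frac12$ we have $m(N_1)^2N_1=N^{2-2s}N_1^{2s-1}\gtrsim N$, so $|M_6^2|/\big(m(N_1)^2N_1^2\big)\lesssim N_1^{-\frac12}N_3^{\frac32}(m(N_1)^2N_1)^{-1}\lesssim N^{-1}N_1^{-\frac12}N_3^{\frac32}$. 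Since $N_3\sim N_4$ on $\Omega^c$, we bound the integral by putting $Iv_1,Iv_3$ and $Iv_2,Iv_4$ into bilinear $L^2_{t,x}$ pairs: in Case 2 of Proposition~\ref{prop:WinestsofLambda6} the relevant frequency-separation was verified (either $N_3\sim N_1$, forcing two of the three top frequencies to have opposite signs so alternative (ii) of Lemma~\ref{bilinearL2Strichartz} applies, or $N_3\ll N_1$ so alternative (i) applies), and \eqref{interpbilinearest} yields each pair $\lesssim C(\lambda,N_1)^{1-}\|\cdot\|_{X^{1,\frac12-}}\|\cdot\|_{X^{1,\frac12-}}$ with $C(\lambda,N_1)\sim\lambda^{-\frac12}$; the remaining $Iv_5,Iv_6$ go into $L^\infty_{t,x}$ via \eqref{embeddinginLinfty}. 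Collecting constants, $N_3^{\frac32}/N_3^{\frac12}=N_3$ is absorbed by the $N_3$-decay coming from placing $J_x^{0+}$ on only one of $Iv_3,Iv_4$ — more precisely, the extra factor $N_3$ is controlled because $\|J_xIv_4\|$ costs only $N_4\sim N_3$ and after that one still retains a genuine $N_3^{-0+}$: combining $N^{-1}\cdot N_1^{-\frac12}N_3\cdot\lambda^{-1+}$ against the norms, and noting $N^{-1}N_1^{-\frac12}\lesssim N^{-\frac32+}N_1^{-0+}$ since $N_1\gtrsim N$, gives the claimed $N^{-\frac32+}\lambda^{-1+}\delta^{0+}\prod_j\|Iv_j\|_{Z^1}$ with a spare $N_1^{-0+}$ for summation.

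For the remaining case $N_3\gtrsim N$ (hence $N_3\sim N_4\gtrsim N$), we instead use the crude bound $|M_6^2|\lesssim m(N_1)^2N_1^2$ from Lemma~\ref{pwestsM6}(i), but now we have \emph{four} large frequencies: $m(k_j)\langle k_j\rangle\gtrsim N^{1-}N_j^{0+}$ for $j=3,4$, which produces two extra factors of $N^{-1+}$, so $|M_6^2|/\prod_{j=1}^4 m(k_j)\langle k_j\rangle\lesssim N^{-2+}N_3^{-0+}$; pairing $(Iv_1,Iv_3)$ and $(Iv_2,Iv_4)$ bilinearly as before — the frequency separation being handled exactly as in Subcases 2.1--2.2 and Case 3 of Proposition~\ref{prop:WinestsofLambda6} — gives an additional $\lambda^{-1+}$ (or better), and $N^{-2+}\lesssim N^{-\frac32+}$ with room to spare. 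The main obstacle, and the place requiring care, is bookkeeping the interplay between the refined multiplier bound $N_1^{\frac12}N_3^{\frac32}$ on $\Omega^c$, the loss $N_3$ incurred when distributing the derivative weights $\langle k_j\rangle$, and the logarithmic losses in $\lambda$ hidden in \eqref{interpbilinearest}, so that the final powers land at exactly $N^{-\frac32+}\lambda^{-1+}$; once the decomposition into $\{N_3\ll N\}$ versus $\{N_3\gtrsim N\}$ (both inside $\{N_3\sim N_4\}$) is in place, each sub-estimate follows the template of Proposition~\ref{prop:WinestsofLambda6}.
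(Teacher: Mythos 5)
The skeleton is the same as the paper's (decompose into dyadic shells, reduce to pointwise multiplier bounds, run the bilinear $L^4$-Strichartz estimate on the two largest pairs, put the rest in $L^\infty$), but the bookkeeping contains genuine errors in the two places where the proof actually lives.

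First, in the main case $N_3\ll N$ you assert ``$m(k_j)=\langle k_j\rangle\sim 1$ for $j\ge 3$.'' Only $m(k_j)\sim 1$ is true there; $\langle k_3\rangle,\langle k_4\rangle$ are of size $\sim N_3$, which is not $\sim 1$ (it can be anywhere between $1$ and $N$). This is not a cosmetic slip: the factor $\langle k_3\rangle\langle k_4\rangle\gtrsim N_3^2$, which appears automatically once you write $\widehat{v_j}=\big(m(k_j)\langle k_j\rangle\big)^{-1}\widehat{J_xIv_j}$ for $j=3,4$, is precisely what kills the $N_3^{3/2}$ coming from $|M_6^2\ind_{\Omega^c}|\lesssim N_1^{1/2}N_3^{3/2}$ (using $N_3\sim N_4$ on $\Omega^c$), with a spare $N_3^{1/2}$ to throw away. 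By discarding those weights you are left with $N^{-1}N_1^{-1/2}N_3^{3/2}$ and then forced to invent an ``$N_3$-decay from placing $J_x^{0+}$ on one factor,'' which is not a real mechanism: taking the factors $Iv_5,Iv_6$ in $L^\infty_{t,x}\lesssim Y^{\frac12+,0}$ costs $\langle k_j\rangle^{\frac12+}$, and applying $J_x$ to $Iv_4$ costs $\langle k_4\rangle\sim N_3$ in the norm, it does not produce a decay of $N_3$.

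Second, in the case $N_3\gtrsim N$ (so $N_3\sim N_4\gtrsim N$ on $\Omega^c$), you claim that the bilinear pairing of $(Iv_1,Iv_3)$ and $(Iv_2,Iv_4)$ ``gives an additional $\lambda^{-1+}$ (or better).'' This is false in the subcase $N_3\sim N_1$: with $N_1\sim N_2\sim N_3\sim N_4$ and $N_5\ll N_4$ you can only guarantee that \emph{one} of the pairs among $k_1,\dots,k_4$ has opposite signs, so only one of the two $L^2_{t,x}$ products enjoys the refined constant $C(\lambda,N_1)\sim\lambda^{-1/2}$ via Lemma~\ref{bilinearL2Strichartz}(ii); the other must be handled by the plain $L^4$-Strichartz estimate, yielding only $\lambda^{-1/2+}$ in total. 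And when $N_5\gtrsim N$ (six large frequencies) the paper does not use the bilinear estimate at all and only extracts $\lambda^{0+}$. These cases still close, but only because one is allowed to trade $N^{-1/2}$ for $\lambda^{-1/2}$ thanks to the choice $1\le\lambda\le N$ from Section~\ref{sect:proofofGWPg1DNLS}; ``with room to spare'' hides exactly this trade, which is the crux and must be made explicit. As written, the powers you collect do not land on $N^{-\frac32+}\lambda^{-1+}$ without this additional observation, and the proposal would need to be corrected in both places before it constitutes a proof.
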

\begin{proof}
We distinguish several subregions of $\Omega^c$, but first note that 
for all $\mathbf{k}\in \Upsilon_6(\Tl)\setminus \Omega_3$ 
we have $N_3\sim N_4$. 

\textbf{Case 1:} $N_1\sim N_2\gtrsim N\gg N_3$. 
%since $\mathbf{k}\in \Gamma_6\setminus \Omega_1$, 
%the largest two frequencies have distinct parity, 
%hence $|k_1|\sim |k_2|\gtrsim N\gg N_3$ and 
Note that $m(N_j)=1$ for $j\geq 3$, and 
$m(N_1)^2N_1\gtrsim N.$ 
By Lemma~\ref{pwestsM63}, 
we have $|M^2_6\sharpind_{\Omega^c}|\lesssim N_1^{\frac12} N_3^{\frac32}$
and by using \eqref{nondecrasingpropofmsgeq12}, 
we get
\begin{gather*}
\begin{split}
%\textup{LHS}\eqref{MultilinearEst:M62Omegac}
\int_{\R}\Lambda_6 (M_6^2\ind_{\Omega^c};v_1,\ldots,v_6)\,dt
&\lesssim
 \int_*\int_{**} \frac{1}{m(N_1)^2N_1^{\frac32}N_3^{\frac12}
  \prod_{j=5,6}\langle k_j\rangle} \prod_{j=1}^6 \widehat{J_xIv_j} \\
&\lesssim \frac{N^{-\frac32+}}{N_1^{0+}} \int_* \int_{**}
\frac{1}{\langle k_5\rangle \langle k_6\rangle} 
\prod_{j=1}^6 \widehat{J_xIv_j}\\
&\lesssim \frac{N^{-\frac32+}}{N_1^{0+}} \int_{\R}\int_{\Tl}
   ({J_xIv_1}) ({J_xIv_3}) ({J_xIv_2}) ({J_xIv_4})(Iv_5)(Iv_6)dxdt\\
&\lesssim \frac{N^{-\frac32+}}{N_1^{0+}} 
 \|(J_xIv_1)(J_xIv_3)\|_{L^2_{t,x}} \|(J_xIv_2)(J_xIv_4)\|_{L^2_{t,x}} 
\prod_{j=5,6}\|Iv_j\|_{L^{\infty}_{t,x}} . %\|Iv_6\|_{L^{\infty}_{t,x}}
\end{split}
\end{gather*}
By \eqref{interpbilinearest} and \eqref{embeddinginLinfty}, we thus get
\begin{gather*}
\begin{split}
%\textup{LHS}\eqref{MultilinearEst:M62Omegac} 
\int_{\R}\Lambda_6 &(M_6^2\ind_{\Omega^c};v_1,\ldots,v_6)\,dt \lesssim 
 \frac{N^{-\frac32+}\lambda^{-1+}}{N_1^{0+}} 
 \prod_{j=1}^4 \|Iv_j\|_{X^{1,\frac12}} 
 \prod_{j=5,6}\|Iv_j\|_{Y^{\frac12+,0}}.
\end{split}
\end{gather*}

The case $N_3\gtrsim N\gg N_4$ is vacuous on $\Omega^c$ 
and thus the next case we have to consider is the one in which precisely 
four of the frequencies have sizes larger than the threshold $N$.  

\textbf{Case 2:} $N_4\gtrsim N\gg N_5$. 
We also have $N_1\sim N_2$, $N_3\sim N_4$ and for $j=3,4$,
\begin{equation}
\label{obtainingN30plus}
m(N_j)N_j=  N^{1-}\left(\frac{N_j}{N}\right)^{s-} N_j^{0+} 
\gtrsim N^{1-}N_j^{0+}.
\end{equation}
By using the crude estimate $|M^2_6|\lesssim m(N_1)^2N_1^2$ of Lemma~\ref{pwestsM6},  
we  estimate
\begin{gather}
\begin{split}
%\textup{LHS}\eqref{MultilinearEst:M62Omegac}
\int_{\R}\Lambda_6 (M_6^2\ind_{\Omega^c};v_1,\ldots,v_6)\,dt\  &\lesssim 
 \int_*\int_* \frac{1}{m(N_3)^2N_3^2 \langle k_5\rangle \langle k_6\rangle} \prod_{j=1}^6 \widehat{J_xIv_j}\\
&\lesssim \frac{N^{-2+}}{N_3^{0+}} \int_* 
 \frac{1}{\langle k_5\rangle \langle k_6\rangle}
 \prod_{j=1}^6  \widehat{J_xIv_j}.
\end{split}
\end{gather}
We now discuss two subcases. 

\textbf{Subcase 2.1.} If $N_3\sim N_1$,  since $N_5\ll N_4$, 
two out of the four frequencies $k_1,k_2,k_3,k_4$ must have opposite signs, say $k_1$ and $k_2$. 
Therefore $v_1$ and $v_2$ are separated in frequency and 
we use the bilinear estimate \eqref{interpbilinearest},  
and together with the $L^4$-Strichartz estimate \eqref{L4Strichartz}, we obtain 
\begin{gather*}
\begin{split}
%\textup{LHS}\eqref{MultilinearEst:M62Omegac}
\int_{\R}\Lambda_6 (M_6^2\ind_{\Omega^c};v_1,\ldots,v_6)\,dt
&\lesssim 
\frac{N^{-2+}}{N_1^{0+}} \|(J_xIv_1)(J_xIv_2)\|_{L^2_{t,x}} 
\prod_{j=3,4} \|J_xIv_j\|_{L^4_{t,x}} %\|J_xIv_4\|_{L^4_{t,x}} 
\prod_{j=5,6} \|Iv_j\|_{L^{\infty}_{t,x}}\\  %\|Iv_6\|_{L^{\infty}_{t,x}}\\
 &\lesssim \frac{N^{-2+}\lambda^{-\frac12+}}{N_1^{0+}} 
  \prod_{j=1}^4 \|J_xIv_j\|_{X^{0,\frac12}} 
  \prod_{j=5,6}\|Iv_j\|_{Y^{\frac12+,0}} .
\end{split}
\end{gather*}

\textbf{Subcase 2.2.} If $N_3\ll N_1$, 
then we apply the bilinear estimate \eqref{interpbilinearest} twice and get
\begin{gather*}
\begin{split}
%\textup{LHS}\eqref{MultilinearEst:M62Omegac}
\int_{\R}\Lambda_6 &(M_6^2\ind_{\Omega^c};v_1,v_2,\ldots,v_6)\,dt\\
&\lesssim 
\frac{N^{-2+}}{N_3^{0+}} \|(J_xIv_1)(J_xIv_3)\|_{L^2_{t,x}} 
\|(J_xIv_2)(J_xIv_4)\|_{L^2_{t,x}} 
\prod_{j=5,6} \|Iv_j\|_{L^{\infty}_{t,x}}\\%  \|Iv_6\|_{L^{\infty}_{t,x}}\\
 &\lesssim \frac{N^{-2+}\lambda^{-1+}}{N_3^{0+}} 
  \prod_{j=1}^4 \|J_xIv_j\|_{X^{0,\frac12}} 
  \prod_{j=5,6}\|Iv_j\|_{Y^{\frac12+,0}} .
\end{split}
\end{gather*}
%Notice that in the second subcase, as well as in the last case below, 
%the numerator cancels all the $N_1$ powers from the denominator, 
%hence we need to use the summation trick 
%described at the beginning of the section. 

\textbf{Case 3:} $N_5\gtrsim N$. 
We use \eqref{obtainingN30plus} for $j=3,4,5$,  
$m(k_6)\langle k_6\rangle^{\frac12} \gtrsim 1$, and $N_5\geq N_6$ 
to deduce
\begin{gather*}
\begin{split}
%\textup{LHS}\eqref{MultilinearEst:M62Omegac} &
\int_{\R}\Lambda_6 (M_6^2\ind_{\Omega^c};v_1,\ldots,v_6)\,dt
&\lesssim 
\int_*\int_{**} \frac{1}{\prod_{j=3}^6  m(k_j)\langle k_j\rangle} 
\prod_{j=1}^6 \widehat{J_xIv_j} \\
&\lesssim \frac{N^{-3+}}{N_3^{0+}} 
\int_*\int_{**}  \widehat{J_xIv_1} \widehat{J_xIv_2}
\prod_{j=3}^5\left(
\frac{\lambda^{0+}}{\langle k_j\rangle^{0+}}\widehat{J_xIv_j}\right)\,
 \left(\frac{1}{\langle k_6\rangle^{\frac12+}}\widehat{J_xIv_6}\right)\\ 
&\lesssim \frac{N^{-3+}\lambda^{0+}}{N_3^{0+}} 
 \prod_{j=1,2}\|J_xIv_j\|_{L^4_{t,x}} 
 \prod_{j=3}^5 \|J_x^{1-}Iv_j\|_{L^6_{t,x}} 
 \|J_x^{\frac12-}Iv_6\|_{L^{\infty}_{t,x}} .
\end{split}
\end{gather*}
The factors $\lambda^{0+}$ above appear 
due to the application of \eqref{uniformboundsofjapanesebracket}. 
By using the  Strichartz estimates \eqref{L4Strichartz} and \eqref{interpL6Strichartz}, 
as well as the embedding \eqref{embeddinginLinfty}, 
we have
\begin{gather*}
\begin{split}
%\textup{LHS}\eqref{MultilinearEst:M62Omegac}&
\int_{\R}\Lambda_6 (M_6^2\ind_{\Omega^c};v_1,\ldots,v_6)\,dt
&\lesssim 
\frac{N^{-3+}\lambda^{0+}}{N_3^{0+}} \prod_{j=1,2} \|J_xIv_j\|_{X^{0,\frac38}}
\prod_{j=3}^5 \|J_x^{1-}Iv_j\|_{X^{0+,\frac12}} 
\|J_x^{\frac12-}Iv_6\|_{Y^{\frac12+,0}}\\
&\lesssim \frac{N^{-3+}\lambda^{0+}}{N_3^{0+}} 
\prod_{j=1}^5 \|Iv_j\|_{X^{1,\frac12}} \|Iv_6\|_{Y^{1,0}} .
\end{split}
\end{gather*}

Since in Section~\ref{sect:proofofGWPg1DNLS} we choose $\lambda, N$ such that 
$1\leq \lambda\leq N$ (for $s\geq\frac12$), 
in the second and third cases we have faster decaying factors than in Case 1.  
\end{proof}

\begin{lemma}
Let $s\geq\frac12$ and $\delta>0$. For $M_8^3$ defined by \eqref{defnofM83}, 
we have the estimate
\begin{equation}
\label{MultilinearEst:M83}
\left| \int_0^{\delta}\Lambda_8(M_8^3;v(t))\,dt \right| \lesssim 
{N^{-\frac32+}\lambda^{-1+}} \delta^{0+}
\|Iv\|^8_{Z^1([0,\delta]\times\Tl)} .
%\prod_{j=1}^4 \|Iv_j\|_{X^{1,\frac12}} \prod_{j=5}^8 \| I v_j\|_{Z^1}
\end{equation}
The same estimate holds if $M_8^3$ is replaced by $M_8^2$. 
\end{lemma}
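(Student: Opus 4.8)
The plan is to run the same machine as in the proof of Lemma~\ref{prop:estofM62}. After a Littlewood--Paley decomposition $v=\sum_j P_{N_j}v$ and the standard reductions of Section~\ref{Section:AlmostConservationEstimates} (non-negative Fourier transforms; the Symmetry Remark~\ref{rmk:evenoddinvariance}; Remark~\ref{rmk:smallfrequencies}, which lets us assume $N_1\ge N_2\ge\cdots\ge N_8$ with $N_1\sim N_2\gtrsim N$; and absorbing the sharp cutoff $\ind_{[0,\delta]}$ into one of the factors carried in an $X^{1,\frac12-}$-norm, which costs only $\delta^{0+}$ by Lemma~\ref{sharpcutoffX112minus}), it suffices to prove
$$\int_\R\Lambda_8(M_8^3;v_1,\ldots,v_8)\,dt\lesssim N^{-\frac32+}\lambda^{-1+}\prod_{j=1}^8\|Iv_j\|_{Z^1(\R\times\Tl)},$$
where $v_j=P_{N_j}v_j$. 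First I would write $\widehat{v_j}=(m_j\langle k_j\rangle)^{-1}\widehat{J_xIv_j}$ on the four factors destined for the bilinear/Strichartz estimates and $\widehat{v_j}=m_j^{-1}\widehat{Iv_j}$ on the remaining four (moving a small spatial derivative onto the latter when some $N_j\gtrsim N$, which is harmless since it is still controlled by $\|Iv_j\|_{Z^1}$ through \eqref{embeddinginLinfty}), then bound the resulting multiplier and distribute the eight factors over two $L^2_{t,x}$-bilinear estimates — one for each of two frequency-separated pairs — times four $L^\infty_{t,x}$-factors, using $\int|w_1\cdots w_8|\le\|w_1w_2\|_{L^2}\|w_3w_4\|_{L^2}\prod_{j=5}^8\|w_j\|_{L^\infty}$.

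The main case is $N_3\ll N$, where I would invoke the refined bound $|M_8^3(\mathbf{k})|\lesssim N_1^{1/2}N_3^{1/2}$ of Lemma~\ref{pwestsM83}(ii). Here $m_j=1$ for $j\ge3$ and, since $m(N_1)^2N_1^{3/2}=(m(N_1)^2N_1)\,N_1^{1/2}\gtrsim N\cdot N^{1/2}$ for $s\ge\frac12$ by \eqref{symbol:lowbound}, the multiplier obeys
$$\frac{|M_8^3|}{m_1m_2\langle k_1\rangle\langle k_2\rangle\langle k_3\rangle\langle k_4\rangle}\lesssim\frac{N_3^{1/2}}{m(N_1)^2N_1^{3/2}\langle N_3\rangle\langle N_4\rangle}\lesssim\frac{N^{-\frac32}}{\langle N_4\rangle}$$
(absorbing $N_3^{1/2}\langle N_3\rangle^{-1}\lesssim 1$). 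Since on $\Gamma_8(\Tl)$ we have $k_1=-k_2+O(N_3)$, the conditions $N_1\gg N_3\ge N_4$ put the pairs $(v_1,v_3)$ and $(v_2,v_4)$ into alternative (i) of Lemma~\ref{bilinearL2Strichartz}; applying \eqref{interpbilinearest} to each pair gains a factor $C(\lambda,N_1)^{1-2\varepsilon}\sim\lambda^{-\frac12+}$ (recall we work with $1\le\lambda\le N\lesssim N_1$), so the two pairs together yield $\lambda^{-1+}$, while $Iv_5,\ldots,Iv_8$ go into $L^\infty_{t,x}$ via \eqref{embeddinginLinfty}. This gives $N^{-\frac32+}\lambda^{-1+}$ with an extra factor $1/N_1^{0+}$ (or $\langle N_4\rangle^{-1}$) to spare, and the dyadic summation is standard: the top two factors carry $L^2$-based norms (Cauchy--Schwarz handles the regime $N_1\sim N_2$) and the $L^\infty$-factors sum as in Section~\ref{Section:AlmostConservationEstimates}.

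In the remaining case $N_3\gtrsim N$ there are at least three frequencies above the threshold, and the crude bound $|M_8^3|\lesssim N_1$ of Lemma~\ref{pwestsM83}(i) already suffices: since $m_3\langle k_3\rangle\gtrsim N^{1-}$ and $m_1m_2\langle k_1\rangle\langle k_2\rangle\gtrsim m(N_1)^2N_1^2\gtrsim N N_1$, the multiplier is $\lesssim N^{-2+}$ (and smaller still if $N_4\gtrsim N$), and one still extracts at least a single $\lambda^{-\frac12+}$ from one bilinear estimate applied to a frequency-separated pair — either $(v_1,v_2)$ when $N_3\ll N_1$ (then $N_1\sim N_2$ and $k_1k_2<0$, alternative (ii) of Lemma~\ref{bilinearL2Strichartz}), or a suitable pair among the dominant frequencies when several are comparable, since they cannot all carry the same sign; any intermediate factors that cannot go into $L^\infty$ are placed in $L^4_{t,x}$ via \eqref{L4Strichartz}. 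As $1\le\lambda\le N$, this bound is $\le N^{-\frac32+}\lambda^{-1+}$, so this region is strictly better. Finally, the statement for $M_8^2$ follows verbatim: by Lemma~\ref{pwestsM82}, $|M_8^2|\lesssim m(N_1)^2N_1\le N_1$ in general and $|M_8^2|\lesssim N_3\le N_1^{1/2}N_3^{1/2}$ when $N_3\ll N$, i.e. $M_8^2$ satisfies exactly the pointwise bounds used above for $M_8^3$. The one point requiring care — and where I expect the bookkeeping to be most delicate — is guaranteeing two frequency-separated pairs, hence the full power $\lambda^{-1+}$, in the critical region $N_3\ll N$; once that structural observation is in place, the multiplier estimates and the summation over dyadic scales are routine.
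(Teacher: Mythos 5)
Your proposal is correct and follows essentially the same route as the paper's proof: split into the critical region $N_3\ll N$ (where the refined bound $|M_8^3|\lesssim N_1^{1/2}N_3^{1/2}$ of Lemma~\ref{pwestsM83}(ii) together with two applications of the bilinear estimate \eqref{interpbilinearest} to the high--low pairs $(v_1,v_3)$, $(v_2,v_4)$ produces $N^{-\frac32+}\lambda^{-1+}$) and the region $N_3\gtrsim N$ (where the crude bound $|M_8^3|\lesssim N_1$ plus the extra factor from $m(N_3)N_3\gtrsim N^{1-}$ already overshoots the target, and one bilinear gain of $\lambda^{-\frac12+}$ suffices since $\lambda\le N$). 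The only cosmetic difference is in the secondary case: you propose pairing $(v_1,v_2)$ via alternative (ii) of Lemma~\ref{bilinearL2Strichartz} when $N_3\ll N_1$, whereas the paper pairs $(v_1,v_4)$, which is always a high--low pair (alternative (i)) since $N_4\ll N\lesssim N_1$; the latter avoids the sub-casing on the sign pattern, but both work. The $M_8^2$ reduction to the $M_8^3$ bounds is exactly the paper's.
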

\begin{proof}
By Lemma~\ref{pwestsM83}, we have 
$|M_8^3(\mathbf{k})|\lesssim N_1$ for all $\mathbf{k}\in\Gamma_8(\Tl)$, 
and if $N_3\ll N$, then $|M_8^3(\mathbf{k})|\lesssim N_1^{\frac12}N_3^{\frac12}$. 

We distinguish three cases and in all of them we use that 
$m(N_1)^2N_1\gtrsim N$, and 
when $N_3\gtrsim N$,   
$m(N_3)N_3\gtrsim N^{1-}N_3^{0+}$ as in \eqref{obtainingN30plus}. 
 
\textbf{Case 1:} $N_1\sim N_2\gtrsim N\gg N_3$. We have
\begin{align*}
%\textup{LHS}\eqref{MultilinearEst:M83} 
\int_{\R}\Lambda_8 (M_8^3;v_1,\ldots,v_8)\,dt&\lesssim 
\int_*\int_{**} \frac{1}{m(N_1)^2N_1^{\frac32}  N_3^{\frac12} 
 \prod_{j=4}^8 \langle k_j\rangle} \prod_{j=1}^8 \widehat{J_xIv_j}\\
 &\lesssim \frac{N^{-\frac32+}}{N_1^{0+}} 
 \int_*\int_{**} (\widehat{J_xIv_1}\widehat{J_xIv_3}) 
 (\widehat{J_xIv_2}\widehat{J_xIv_4})  
 \prod_{j=5}^8 \widehat{Iv_j}\\
 &\lesssim \frac{N^{-\frac32+}}{N_1^{0+}} 
 \|(J_xIv_1)(J_xIv_3)\|_{L^2_{t,x}}\|(J_xIv_2)(J_xIv_4)\|_{L^2_{t,x}} 
 \prod_{j=5}^8 \|Iv_j\|_{L^{\infty}_{t,x}}\\
 &\lesssim \frac{N^{-\frac32+}\lambda^{-1+}}{N_1^{0+}} 
 \prod_{j=1}^4 \|Iv_j\|_{X^{1,\frac12}} 
 \prod_{j=5}^8 \|Iv_j\|_{Y^{\frac12+,0}} .
\end{align*}

\textbf{Case 2:} $N_3\gtrsim N\gg N_4$. 
Here, we get
\begin{align*}
%\textup{LHS}\eqref{MultilinearEst:M83} 
\int_{\R}\Lambda_8 (M_8^3;v_1,\ldots,v_8)\,dt &\lesssim 
\int_*\int_{**} \frac{1}{m(N_1)^2N_1 m(N_3)N_3 \prod_{j=4}^8 \langle k_j\rangle }
\prod_{j=1}^8 \widehat{J_xIv_j}\\
&\lesssim 
\frac{N^{-2+}}{N_3^{0+}} \int_*\int_{**} (\widehat{J_xIv_1} \widehat{J_xIv_4})  (\widehat{J_xIv_2})(\widehat{J_xIv_3}) 
 \prod_{j=5}^8 \widehat{Iv_j}\\
 &\lesssim \frac{N^{-2+}}{N_3^{0+}} 
 \|(J_xIv_1)(J_xIv_4)\|_{L^2_{t,x}}\|J_xIv_2\|_{L^4_{t,x}}\|J_xIv_3\|_{L^4_{t,x}}  
 \prod_{j=5}^8 \|Iv_j\|_{L^{\infty}_{t,x}}\\
 &\lesssim \frac{N^{-2+}\lambda^{-\frac12+}}{N_3^{0+}} \prod_{j=1}^4 \|Iv_j\|_{X^{1,\frac12}} 
  \prod_{j=5}^8 \|Iv_j\|_{Y^{\frac12+,0}} .
\end{align*}

\textbf{Case 3:} $N_4\gtrsim N$. 
In this case, we additionally have that $m(N_4)N_4\gtrsim N$. 
For $5\leq j\leq 8$, 
since $m(k_j)\langle k_j\rangle^{\frac12} \gtrsim 1$,   
by taking into account \eqref{uniformboundsofjapanesebracket}, we have
\begin{equation}
N_3^{0+}m(k_j)\langle k_j\rangle 
 \gtrsim \lambda^{0-} \langle k_j\rangle^{\frac12+} .
\end{equation}
Thus, we obtain
\begin{align*}
%\textup{LHS}\eqref{MultilinearEst:M83} 
\int_{\R}\Lambda_8 (M_8^3;v_1,\ldots,v_8)\,dt &\lesssim 
\frac{N^{-3+}}{N_3^{0+}} \int_*\int_{**} \prod_{j=1}^4 \widehat{J_xIv_j} 
\prod_{j=5}^8 \frac{\lambda^{0+}}{\langle k_j\rangle^{\frac12+}}\widehat{J_xIv_j}\\
 &\lesssim \frac{N^{-3+}\lambda^{0+}}{N_3^{0+}} 
 \prod_{j=1}^4 \|J_xIv_j\|_{L^4_{t,x}} 
 \prod_{j=5}^8 \|J_x^{\frac12-}Iv_j\|_{L^{\infty}_{t,x}}\\
 &\lesssim \frac{N^{-3+}\lambda^{0+}}{N_3^{0+}}  
 \prod_{j=1}^4 \|J_xIv_j\|_{X^{0,\frac38}} 
  \prod_{j=5}^8 \|J_x^{\frac12-}Iv_j\|_{Y^{\frac12+,0}} \\
  &\lesssim \frac{N^{-3+}\lambda^{0+}}{N_3^{0+}}  
 \prod_{j=1}^4 \|Iv_j\|_{X^{1,\frac12}} 
  \prod_{j=5}^8 \|Iv_j\|_{Y^{1,0}} .
\end{align*}
%In the second and third cases we have faster decaying factors than that in \eqref{MultilinearEst:M83}. 

We recall  that for the multiplier $M_8^2$ we have better bounds than for $M_8^3$ 
(see Lemma~\ref{pwestsM6} and Lemma~\ref{pwestsM63}),  
hence it is enough to consider only the latter.
\end{proof}

\begin{lemma}
\label{prop:estofM103}
Let $s\geq \frac12$ and $\delta>0$. For $M_{10}^3$ defined by \eqref{defnofM103}, 
we have the estimate
\begin{equation}
\label{MultilinearEst:M103}
\left|\int_0^{\delta}\Lambda_{10}(M^3_{10};v(t)) \,dt \right| \lesssim 
{N^{-2+}\lambda^{-1+}} \delta^{0+} \|Iv\|^{10}_{Z^1([0,\delta]\times\Tl)} .%\prod_{j=1}^{10}\| I v_j\|_{Z^1}
\end{equation}
\end{lemma}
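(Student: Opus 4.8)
The plan is to reprise the scheme used above for the eighth-order multipliers, taking full advantage of the very favourable pointwise bound $|M_{10}^3(\mathbf{k})|\lesssim 1$, which holds on all of $\Gamma_{10}(\Tl)$ because $M_{10}^3$ is (by \eqref{defnofM103}) a signed sum of four-position elongations of $\sigma_6$ and each such elongation inherits the bound of $\sigma_6$ from Lemma~\ref{pwestssigma6}(i). As in the reductions preceding Lemma~\ref{prop:estofM62}, it suffices to prove the model estimate \eqref{genericmultilinearestimate} for $n=10$ after a Littlewood--Paley decomposition $v_j=P_{N_j}v_j$, with all $\widehat{v_j}$ real and non-negative, in the regime $N_1\sim N_2\gtrsim N$ and $N_1\geq N_2\geq\dots\geq N_{10}$. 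Passing to the Fourier side and writing $\widehat{v_j}=(m(k_j)\langle k_j\rangle)^{-1}\widehat{J_xIv_j}$, the monotonicity property \eqref{nondecrasingpropofmsgeq12} together with $N_1,N_2\gtrsim N$ gives $m(k_1)\langle k_1\rangle\,m(k_2)\langle k_2\rangle\gtrsim N\,N_1\gtrsim N^{2-}N_1^{0+}$, so the two largest factors alone already supply the full spatial-frequency gain $N^{-2+}$ together with a summability factor $N_1^{-0+}$.

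It then remains to squeeze the decay $\lambda^{-1+}\delta^{0+}$ out of the remaining eight factors. The $\delta^{0+}$ is produced as usual by attaching the sharp cutoff $\ind_{[0,\delta]}$ to one factor carried in an $X^{1,\frac12-}$-norm and invoking Lemma~\ref{sharpcutoffX112minus}. For $\lambda^{-1+}$ I would localize, as in the proof of the $M_8^3$ estimate, according to how many of the $N_j$ exceed $N$. When $N_3\ll N$ (only two large frequencies) one has $N_1\sim N_2\gg N_3\geq N_4$, so the pairs $(v_1,v_3)$ and $(v_2,v_4)$ are disjointly supported through alternative (i) of Lemma~\ref{bilinearL2Strichartz}; applying \eqref{interpbilinearest} to both pairs yields $\lambda^{-1+}$, while the remaining six (small-frequency) factors go into $L^\infty_{t,x}$ via \eqref{embeddinginLinfty}, Hölder closing since $\tfrac12+\tfrac12=1$ and $\|Iv_j\|_{L^\infty_{t,x}}\lesssim\|Iv_j\|_{Y^{\frac12+,0}}\leq\|Iv_j\|_{Z^1}$. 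The sub-case $N_3\gtrsim N$ with $N_3\ll N_1$ is handled identically, the only change being that the available summability factor is now $N_3^{-0+}$, which is absorbed by the Cauchy--Schwarz argument over $N_1\sim N_2$ recorded in the summability remark (carrying $v_1,v_2$ in the $L^2$-based norm $X^{1,\frac12}$). In every sub-case in which at least three frequencies exceed $N$, the weights of the large factors contribute $\gtrsim N^{3-}$ on their own, so, since we work with $1\leq\lambda\leq N$, it suffices to extract a single factor $\lambda^{-\frac12+}$ (from one bilinear pair, e.g. $v_1$ paired with a small-frequency factor) and to place the surplus large factors in $L^4_{t,x}$, respectively $L^6_{t,x}$ via \eqref{interpL6Strichartz}, at the harmless cost of $\lambda^{0+}$; and when $N_1\sim N_2\sim N_3$ one still recovers two bilinear pairs by observing that $k_1+k_2+k_3=O(N_4)$ forces two of these frequencies to have opposite signs, so that after a dyadic-and-sign splitting alternative (ii) of Lemma~\ref{bilinearL2Strichartz} applies.

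Assembling these bounds and summing the geometric series over the dyadic blocks $N_1\geq\cdots\geq N_{10}$ — using $N_1^{-0+}$, respectively $N_3^{-0+}$ together with the Cauchy--Schwarz trick — yields \eqref{MultilinearEst:M103}. I expect the only genuinely delicate point to be, as always in these estimates, the bookkeeping required to exhibit two disjointly-supported bilinear pairs in the regime where several large frequencies are comparable; but since $|M_{10}^3|\lesssim 1$ leaves a full power of $N$ to spare there, a crude treatment (trading the second bilinear pair for the weight surplus and the inequality $\lambda\leq N$) suffices, so no new idea beyond those already deployed for $M_8^3$ is needed.
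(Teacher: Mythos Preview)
Your proposal is correct and follows essentially the same route as the paper: bound $|M_{10}^3|\lesssim 1$ via Lemma~\ref{pwestssigma6}, extract $N^{-2+}$ from the weights of the two largest frequencies, and in the regime $N_2\gtrsim N\gg N_3$ obtain $\lambda^{-1+}$ from two bilinear pairs $(v_1,v_3)$, $(v_2,v_4)$ with the remaining six factors in $L^\infty_{t,x}$.

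The only noteworthy difference is in the regime $N_3\gtrsim N$. The paper treats this case more economically than you do: it simply picks up the extra factor $m(N_3)N_3\gtrsim N$ to obtain $N^{-3+}$, places $J_xIv_1,\dots,J_xIv_4$ in $L^4_{t,x}$ and the rest in $L^\infty_{t,x}$, and then invokes $N^{-1+}\lesssim\lambda^{-1+}$ (valid since $1\leq\lambda\leq N$) to conclude. No bilinear estimate, no further frequency or sign splitting is needed there. Your more elaborate subcase analysis (extracting an additional $\lambda^{-\frac12+}$ from a bilinear pair, handling $N_1\sim N_2\sim N_3$ via the sign argument for alternative~(ii)) is correct and indeed yields a slightly better bound in that region, but it is unnecessary work; the paper even remarks explicitly that such an improvement is available but not pursued because Case~1 is already the bottleneck.
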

\begin{proof}
By \eqref{defnofM103} and Lemma~\ref{pwestssigma6}, we have 
$|M^3_{10}(\mathbf{k})|\lesssim 1$ and 
thus we  gain the factor $N^{-2+}$ from 
$m(N_j)N_j\gtrsim N^{1-}N_j^{0+}$, $j=1,2$. 
For additional decaying factors, 
it is enough to discuss two cases. 

\textbf{Case 1:} $N_2\gtrsim N\gg N_3$. We have
\begin{align*}
%\textup{LHS}\eqref{MultilinearEst:M103} 
\int_{\R}\Lambda_{10} (M^3_{10};v_1,\ldots,v_{10})\,dt &\lesssim 
 \frac{N^{-2+}}{N_1^{0+}} \int_*\int_{**} (\widehat{J_xIv_1}\widehat{J_xIv_3}) (\widehat{J_xIv_2}\widehat{J_xIv_4}) 
 \prod_{j=5}^{10} \widehat{Iv_j} \\
 &\lesssim \frac{N^{-2+}}{N_1^{0+}} 
 \|(J_xIv_1)(J_xIv_3)\|_{L^2_{t,x}} \|(J_xIv_2)(J_xIv_4)\|_{L^2_{t,x}} 
  \prod_{j=5}^{10} \|Iv_j\|_{L^{\infty}_{t,x}} \\
&\lesssim  \frac{N^{-2+}\lambda^{-1+}}{N_1^{0+}} 
 \prod_{j=1}^4 \|Iv_j\|_{X^{1,\frac12}} 
  \prod_{j=5}^{10} \|Iv_j\|_{Y^{\frac12+,0}}
\end{align*}

\textbf{Case 2:} $N_3\gtrsim N$. 
%Using $m(N_3)N_3^{\frac12-}\gtrsim N^{\frac12-}$, we have
%Using $m(N_j)\langle N_j\rangle^{\frac12} \gtrsim 1$ and $N_j^{\frac12}\gtrsim \frac{N_j^{\frac12+}}{N_1^{0+}}$ 
%for $5\leq j\leq 10$,  we get
In this case, 
we additionally have $m(N_3)N_3\gtrsim N$. 
Also, we use $m(k_4)\langle k_4\rangle \gtrsim 1$,  
and $m(k_j)\langle k_j\rangle^{\frac12}\gtrsim 1$ 
for $5\leq j\leq 10$. 
By using 
$1/N_1^{\varepsilon}\leq \prod_{j=5}^{10} 1/N_j^{\varepsilon/6}$, 
we get 
\begin{align*}
%\textup{LHS}\eqref{MultilinearEst:M103} 
\int_{\R}\Lambda_{10} (M^3_{10};v_1,\ldots,v_{10})\,dt&\lesssim  
\frac{N^{-3+}}{N_1^{0+}} 
\prod_{j=1}^4 \|J_xIv_j\|_{L^4_{t,x}} 
\prod_{j=5}^{10} \|J_x^{\frac12-}Iv_j\|_{L^{\infty}_{t,x}}\\
 &\lesssim  \frac{N^{-3+}}{N_1^{0+}} 
 \prod_{j=1}^4 \|Iv_j\|_{X^{1,\frac38}}
 \prod_{j=5}^{10}\|Iv_j\|_{Y^{1,0}} .
\end{align*}
Note that in Case 2 (by discussing various subregions), 
we could provide at least an additional $\lambda^{-\frac12+}$ factor, 
but since $N^{-1+}\lesssim \lambda^{-1+}$  
and the decaying factor in Case 1 is optimal, 
we limit ourselves to the above estimate.   
\end{proof}

For the remaining terms that appear in \eqref{parttE3} 
(i.e. the ones due to the gauge transformation in the periodic setting), 
we have a decaying factor $\lambda^{-1}$ 
thanks to the coupling coefficient $\mu[v]$.  
Indeed, by  \eqref{embeddinginLtinftyHxs} 
and by using $1\leq m(k)\langle k\rangle$, we have 
$$\mu[v]=\frac{1}{2\pi\lambda} \|v\|^2_{L_t^{\infty}L^2_x} 
\lesssim \lambda^{-1} \|J_xIv\|^2_{Y^{0,0}} 
\leq \lambda^{-1} \|Iv\|^2_{Z^1}.$$

\begin{lemma} 
Let $s>0$ and $\delta>0$. For $K_4^1$ as defined by \eqref{defnofK41}, 
we have the estimate 
\begin{equation}
\label{MultilinearEst:K41}
\left| \int_0^{\delta}\Lambda_4(K_4^1 ; v(t))\,dt  \right| 
\lesssim {N^{-1+}\lambda^{-1+}} \delta^{0+} \|Iv\|^4_{X^{1,\frac12}([0,\delta]\times\Tl)} .
 %\prod_{j=1}^4\|I v_j\|_{X^{1,\frac12}(\R\times\Tl)} .
\end{equation}
\end{lemma}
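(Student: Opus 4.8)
The plan is to run the standard almost-conservation machinery of this section. By the fundamental theorem of calculus it suffices to establish the generic multilinear bound \eqref{genericmultilinearestimate} with $n=4$, $M_4=K_4^1$ and decay $N^{-1+}\lambda^{-1+}$. As usual I would assume $\widehat{v_j}\geq0$, dispose of the sharp cutoff $\ind_{[0,\delta]}$ by placing at least two of the four factors in $X^{1,\frac12-}$ and using $\|\ind_{[0,\delta]}\|_{H_t^{\frac12-}}\lesssim\delta^{0+}$ together with Lemma~\ref{sharpcutoffX112minus}, and invoke Remark~\ref{rmk:smallfrequencies} and Remark~\ref{rmk:evenoddinvariance} to reduce to $\mathbf{k}\in\Upsilon_4(\Tl)$ with $N_1\sim N_2\gtrsim N$, the ordering $|k_1|\geq|k_3|$, $|k_2|\geq|k_4|$, $|k_1|\geq|k_2|$ in force, so that $k_1^\ast=k_1$ and $k_2^\ast\in\{k_2,k_3\}$. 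Substituting $\widehat{v_j}=(m_j\langle k_j\rangle)^{-1}\widehat{J_xIv_j}$ turns the task into: bound the transferred multiplier $Q:=K_4^1\big/\prod_{j=1}^4 m_j\langle k_j\rangle$ pointwise, and then estimate the four-fold space-time product of the $J_xIv_j$'s by H\"older's inequality, the $L^4$-Strichartz estimate \eqref{L4Strichartz}, the embedding \eqref{embeddinginLinfty}, and crucially the bilinear estimate \eqref{interpbilinearest}, exploiting that $1\leq\lambda\lesssim N$ so that $C(\lambda,\cdot)\sim\lambda^{-\frac12}$ on the relevant frequency supports.

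I would then split according to how many frequencies exceed $N$. If $N_3\gtrsim N$, the crude bound $|K_4^1|\lesssim m(N_1)^2N_1^2$ of Lemma~\ref{pwestsK41}(i), together with $m(N_1)^2N_1\gtrsim N$ and $m(N_j)\langle k_j\rangle\gtrsim N^{1-}N_j^{0+}$ for the remaining large indices, gives $|Q|\lesssim N^{-1+}N_1^{-0+}\langle N_4\rangle^{-1}$; moreover three or four frequencies of size $\gtrsim N$ summing to zero on $\Gamma_4(\Tl)$ force a pair of opposite signs and hence two frequency-separated pairs, to each of which \eqref{interpbilinearest} applies with gain $\lambda^{-\frac12}$, so the product is bounded by $\lambda^{-1+}\prod_j\|Iv_j\|_{X^{1,\frac12}}$, with the $N_1^{-0+}$ and $\langle N_4\rangle^{-1}$ ensuring summability over all dyadic scales. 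If $N_3\ll N$ and $k_2^\ast=k_2$, the refined bound $|K_4^1|\lesssim m(N_1)^2N_1N_3$ of Lemma~\ref{pwestsK41}(ii) is available, whence $|Q|\lesssim N_3\,(N_1^2\langle N_3\rangle\langle N_4\rangle)^{-1}\lesssim N^{-1+}N_1^{-0+}\langle N_4\rangle^{-1}$, and since $|k_1|\sim|k_2|\gg|k_3|,|k_4|$ the bilinear estimate (alternative (i)) applied to the pairs $(v_1,v_3)$ and $(v_2,v_4)$ again yields $\lambda^{-1+}$, with the same summation.

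The remaining case $N_3\ll N$ with $k_2^\ast=k_3$ is, I expect, where essentially all of the difficulty lies. Here the two largest frequencies $k_1,k_3$ are both odd-indexed, $k_1=-k_3+O(N_3)$, so $K_4^1=m_1m_3k_1k_3+O(m(N_1)^2N_3^2)$; the error term transfers to $|Q|\lesssim N^{-1+}N_1^{-0+}\langle N_4\rangle^{-1}$ and is handled exactly as above, but the main term $m_1m_3k_1k_3$ has modulus $\sim m(N_1)^2N_1^2$, which by Remark~\ref{rmk:necessityofcorrectingforK41} is genuinely optimal, and after the weight transfer it yields only $|Q|\lesssim\langle k_2\rangle^{-1}\langle k_4\rangle^{-1}$ with no pointwise gain in $N_1$. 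My plan is to rewrite $\int K_4^1\prod\widehat{v_j}$ in physical space as (up to constants) $\int_{\Tl}(\partial_xIv_1)(\partial_xIv_3)v_2\,v_4\,dx$, use that $v_1,v_3$ and $v_2,v_4$ are frequency-separated and apply \eqref{interpbilinearest} (alternative (ii) for the high pair, alternative (i) or the $L^4$-estimate for the low pair), with the $L^2$-based $X^{1,\frac12-}$-norms on the two high-frequency factors permitting a Cauchy--Schwarz summation over the comparable dyadic pair $(|k_1|,|k_3|)$ and $\langle k_2\rangle^{-1}\langle k_4\rangle^{-1}$ absorbing the low-frequency sum (up to a harmless $\log\lambda$). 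Extracting the full $N^{-1+}\lambda^{-1+}$ in precisely this resonant region — rather than merely $\lambda^{-\frac12+}$ from a single bilinear application — is the delicate point and the step I would spend the most effort on; it must come from a careful double use of the bilinear estimate on the two separated pairs together with the low-frequency summation gain, since no improvement of the pointwise bound on $K_4^1$ is available there. The $\delta^{0+}$ prefactor is retained throughout from the sharp-cutoff reduction.
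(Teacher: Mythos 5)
Your plan closely tracks the paper's own proof in the regions the paper actually treats. When $N_3\gtrsim N$ (the paper's Cases 2 and 3) your use of the crude bound of Lemma~\ref{pwestsK41}(i), the decomposition into the subcases $N_3\sim N_1$ versus $N_3\ll N_1$, the double application of \eqref{interpbilinearest} on frequency-separated pairs, and the Cauchy--Schwarz summation over $N_1\sim N_2$ all mirror the paper. Likewise, when $N_3\ll N$ with $k_2^\ast=k_2$, your use of the refined bound of Lemma~\ref{pwestsK41}(ii) and the pairing $(v_1,v_3)$, $(v_2,v_4)$ is precisely the paper's Case~1.

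The genuine gap is the subcase you correctly single out: $N_3\ll N$ with $k_1^\ast=k_1$, $k_2^\ast=k_3$. The plan you sketch there cannot work, and not because of a missing trick. As you observe, only the crude bound is available, so after the weight transfer $|Q|\lesssim\langle k_2\rangle^{-1}\langle k_4\rangle^{-1}$ with no residual power of $N_1$; two applications of \eqref{interpbilinearest} on the high-low pairs $(v_1,v_2)$, $(v_3,v_4)$ (together with the $L^2$-based norms on the high factors to handle the dyadic sum) then give at most $\lambda^{-1+}$, and no amount of re-pairing produces an extra $N^{-1+}$. This is exactly what Remark~\ref{rmk:necessityofcorrectingforK41} of the paper says: the pointwise bound $|K_4^1(\mathbf{k})|\lesssim m(N_1)^2N_1^2$ is optimal in this configuration and ``the available tools cannot yield a better estimate.'' The cure is not a sharper multilinear estimate but the second correction term $i\mu[v]\Lambda_4(\widetilde{\sigma_4};v)$ in $\mathcal{E}^3$: since $\widetilde{\sigma_4}\alpha_4 = K_4^1$ on $\Gamma_4(\Tl)$ (and $\alpha_4=0$ forces $K_4^1=0$), differentiating that term produces $+i\mu[v]\Lambda_4(K_4^1)$, which cancels the $-i\mu[v]\Lambda_4(K_4^1)$ coming from $\partial_t\mathcal{E}^2$. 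Nothing is left to estimate in the resonant configuration.

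It is fair to point out that the paper's own write-up of Case~1 is silent on precisely this same-parity subcase: the phrase ``due to the refined estimate'' presupposes $|k_1|\sim|k_2|$, whereas the case label $N_1\sim N_2\gtrsim N\gg N_3$ also contains $|k_1|\sim|k_3|$. You have been more careful than the source in flagging the difficulty; but the step you ``expect to spend the most effort on'' has no honest solution within the statement of the lemma, and the correction term $\widetilde{\sigma_4}$ is what actually disposes of it.
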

\begin{proof}
By Lemma~\ref{pwestsK41} we have 
$|K_4^1(\mathbf{k})|\lesssim m(N_1)^2N_1^2$ for all $\mathbf{k}\in\Gamma_4(\Tl)$, and if $N_3\ll N$ then $|K_4^1(\mathbf{k})|\lesssim m(N_1)^2N_1N_3$. 
We need to discuss three cases. 

\textbf{Case 1:} $N_1\sim N_2\gtrsim N\gg N_3$. 
Due to the refined estimate, we have 
\begin{align*}
%\textup{LHS}\eqref{MultilinearEst:K41} 
\int_{\R} \Lambda_4(K_4^1 ; v_1,\ldots,v_4)\,dt &\lesssim 
\int_*\int_{**} \frac{1}{N_1\langle k_4\rangle} \prod_{j=1}^4 \widehat{J_xIv_j}\\
&\lesssim \frac{N^{-1+}}{N_1^{0+}} \int_{\R}\int_{\Tl} 
(J_xIv_1)(J_xIv_3)(J_xIv_2)(J_xIv_4)dxdt\\
 &\lesssim \frac{N^{-1+}}{N_1^{0+}} \| (J_xIv_1) (J_xIv_3)\|_{L^2_{t,x}} 
  \| (J_xIv_2) (J_xIv_4)\|_{L^2_{t,x}}\\
&\lesssim \frac{N^{-1+}\lambda^{-1+}}{N_1^{0+}}
  \prod_{j=1}^4\|Iv_j\|_{X^{1,\frac12-}}.
\end{align*}

\textbf{Case 2:} $N_3\gtrsim N\gg N_4$. By using \eqref{obtainingN30plus}, 
we obtain  
\begin{align*}
%\textup{LHS}\eqref{MultilinearEst:K41} 
\int_{\R} \Lambda_4(K_4^1 ; v_1,\ldots,v_4)\,dt &\lesssim 
\int_*\int_{**} \frac{1}{m(N_3)N_3 m(k_4)\langle k_4\rangle} 
  \prod_{j=1}^4 \widehat{J_xIv_j}\\
&\lesssim \frac{N^{-1+}}{N_3^{0+}} \int_{\R}\int_{\Tl} 
(J_xIv_1)(J_xIv_2)(J_xIv_3)(v_4)dxdt.
\end{align*}

\textbf{Subcase 2.1.} If $N_3\sim N_1$, then since $N_3\gg N_4$, two out of the three frequencies $k_1,k_2,k_3$ must have opposite signs, say $k_1$ and $k_2$. Thus $J_xIv_1$ and $J_xIv_2$ are separated in frequency, 
and so are 
$J_xIv_3$ and $J_xI v_4$. 
By also using $m(k_4)\langle k_4\rangle \gtrsim 1$, we have 
\begin{align}
\nonumber
%\textup{LHS}\eqref{MultilinearEst:K41} 
\int_{\R} \Lambda_4(K_4^1 ; v_1,\ldots,v_4)\,dt &\lesssim 
\frac{N^{-1+}}{N_1^{0+}} \| (J_xIv_1) (J_xIv_2)\|_{L^2_{t,x}} \| (J_xIv_3) (J_xIv_4)\|_{L^2_{t,x}}\\
\label{eqn:estK41case21}
 &\lesssim \frac{N^{-1+}\lambda^{-1+}}{N_3^{0+}} 
  \prod_{j=1}^4 \|Iv_j\|_{X^{1,\frac12-}} .
\end{align}

\textbf{Subcase 2.2.}  If $N_3\ll N_1$, then as in Case 1, we can clearly apply the bilinear estimate \eqref{interpbilinearest} 
to the $L^2_{t,x}$-norms of both $(J_xIv_1)(J_xIv_3)$ and $(J_xIv_2)(J_xIv_4)$ and obtain the same 
bound as in \eqref{eqn:estK41case21}. 
%\begin{align*}
%\textup{LHS}\eqref{MultilinearEst:K41} &\lesssim  
%  \frac{N^{-1+}\lambda^{-1+}}{N_3^{0+}} \prod_{j=1}^4 \|Iv_j\|_{X^{1,\frac12-}} .
%\end{align*}

\textbf{Case 3:} $N_4\gtrsim N$. We have 
$m(N_j)N_j\gtrsim N^{-1+}N_j^{0+}$ for $j=3,4$ 
and  thus 
\begin{align*}
%\textup{LHS}\eqref{MultilinearEst:K41} 
\int_{\R} \Lambda_4(K_4^1 ; v_1,\ldots,v_4)\,dt &\lesssim 
\int_*\int_{**} \frac{1}{m(N_3)N_3 m(N_4)N_4} 
  \prod_{j=1}^4 \widehat{J_xIv_j}\\
&\lesssim \frac{N^{-2+}}{N_3^{0+}} \int_{\R}\int_{\Tl} 
(J_xIv_1)(J_xIv_2)(J_xIv_3)(J_xIv_4)dxdt\\
&\lesssim \frac{N^{-2+}}{N_3^{0+}} 
\prod_{j=1}^4 \|J_xIv_j\|_{L^4_{t,x}}\\
&\lesssim \frac{N^{-2+}}{N_3^{0+}} 
\prod_{j=1}^4 \|Iv_j\|_{X^{1,\frac38}} .
\end{align*}
\end{proof}

\begin{lemma}
\label{Prop:estGamma6K61} 
Let $s\geq \frac38$ and $\delta>0$. For $K_6^1$ defined by \eqref{defnofK61}, 
we have the estimate
\begin{equation}
\label{MultilinearEst:K61}
\left| \int_0^{\delta} \Lambda_{6}(K_6^1;v(t)) \,dt\right|  
\lesssim {N^{-2+}} \delta^{0+}\|Iv\|^6_{X^{1,\frac12}([0,\delta]\times\Tl)} .
%\prod_{j=1}^{6} \| I v_j\|_{X^{1,\frac12}} .
\end{equation}
\end{lemma}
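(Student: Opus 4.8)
The plan is to imitate the scheme of the almost-conservation estimates already carried out in this section; since $K_6^1$ carries no frequency weight, this will be one of the ``easy'' terms. As explained in the preamble of the section, it suffices to establish
$$\int_{\R}\Lambda_6(K_6^1;v_1,\ldots,v_6)\,dt\ \lesssim\ \frac{N^{-2+}}{N_1^{0+}}\prod_{j=1}^6\|Iv_j\|_{X^{1,\frac12}(\R\times\Tl)},$$
for $v_j=P_{N_j}v_j$ with $\widehat{v_j}\geq0$, where — because $K_6^1$ vanishes if all frequencies are $\ll N$ (Remark~\ref{rmk:smallfrequencies}) and on $\Gamma_6(\Tl)$ the two largest frequencies are comparable (Remark~\ref{rmk:evenoddinvariance}) — we may assume $N_1\sim N_2\gtrsim N$ and $N_1\geq N_2\geq\ldots\geq N_6$. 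The extra factor $\delta^{0+}$ and the passage from $\int_0^\delta$ to $\int_{\R}$ are obtained as usual from Lemma~\ref{sharpcutoffX112minus}, and the summation over dyadic blocks from the summability remark in the preamble.

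By Lemma~\ref{pwestsK6162}(i) we have the crude uniform bound $|K_6^1(\mathbf{k})|\lesssim1$, so all the gain must come from the two largest factors. Since $s\geq\frac38>0$, the bound \eqref{obtainingN30plus} gives $m(N_j)\langle k_j\rangle\gtrsim N^{1-}N_j^{0+}$ for every $j$ with $N_j\gtrsim N$; in particular
$$\bigl(m(k_1)\langle k_1\rangle\bigr)\bigl(m(k_2)\langle k_2\rangle\bigr)\ \gtrsim\ N^{2-}N_1^{0+}.$$
Writing $\widehat{v_j}=\bigl(m(k_j)\langle k_j\rangle\bigr)^{-1}\widehat{J_xIv_j}$ for $j=1,2$, leaving $\widehat{v_j}$ untouched for $j=3,\ldots,6$, and passing to physical space, one is reduced to bounding
$$\frac{N^{-2+}}{N_1^{0+}}\int_{\R}\!\!\int_{\Tl}\bigl|J_xIv_1\bigr|\,\bigl|J_xIv_2\bigr|\prod_{j=3}^6|v_j|\ dx\,dt.$$
I would then apply H\"older's inequality with exponents $(4,4,8,8,8,8)$, use the $L^4$-Strichartz estimate \eqref{L4Strichartz} on the first two factors (so that $\|J_xIv_j\|_{L^4_{t,x}}\lesssim\|J_xIv_j\|_{X^{0,\frac38}}=\|Iv_j\|_{X^{1,\frac38}}\leq\|Iv_j\|_{X^{1,\frac12}}$), and the $L^8$-Sobolev estimate \eqref{Sobolev2} with $p=q=8$ (so $b=s=\frac38$) on the remaining four (so that $\|v_j\|_{L^8_{t,x}}\lesssim\|v_j\|_{X^{\frac38,\frac38}}$).

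The final point is that $\|v_j\|_{X^{\frac38,\frac38}}\lesssim\|Iv_j\|_{X^{1,\frac12}}$, and this is precisely where the hypothesis $s\geq\frac38$ enters: $\langle\tau+k^2\rangle^{\frac38}\leq\langle\tau+k^2\rangle^{\frac12}$ trivially, while \eqref{symbol:lowbound} with $\theta=\frac38\leq s$ yields $\langle k\rangle^{\frac38}\lesssim m(k)\langle k\rangle$ for all $k\in\Zl$. This produces the displayed estimate. To close, one sums over the dyadic pieces exactly as in the preamble: the factor $N_1^{-0+}$ is split as $N_1^{-0+}\leq\prod_{j=3}^6N_j^{-0+}$ (using $N_1\geq N_j$), making the sums over $N_3,\ldots,N_6$ convergent, and the remaining sum over the pair $N_1\sim N_2$ is absorbed by Cauchy--Schwarz in the $L^2$-based $X^{1,\frac12}$-norms of the two largest factors. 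I do not anticipate a genuine obstacle: the term is far from critical, and the only thing that requires care is the bookkeeping at the endpoint $s=\frac38$ — where the inequality $\langle k\rangle^{\frac38}\lesssim m(k)\langle k\rangle$ is sharp — together with the now-routine dyadic summation.
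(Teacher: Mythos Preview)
Your argument is correct and essentially identical to the paper's: both use $|K_6^1|\lesssim1$, extract $N^{-2+}N_1^{-0+}$ from $m(N_1)N_1\cdot m(N_2)N_2$, apply H\"older with exponents $(4,4,8,8,8,8)$, the $L^4$-Strichartz estimate on the large factors, and the $L^8$-Sobolev embedding $\|\cdot\|_{L^8_{t,x}}\lesssim\|\cdot\|_{X^{\frac38,\frac38}}$ on the remaining four, with the hypothesis $s\geq\frac38$ entering via $\langle k\rangle^{\frac38}\lesssim m(k)\langle k\rangle$. The only cosmetic difference is that the paper inserts this last inequality on the Fourier side before passing to physical space (writing $\widehat{v_j}\lesssim\widehat{J_x^{5/8}Iv_j}$), whereas you apply it afterwards to pass from $\|v_j\|_{X^{\frac38,\frac38}}$ to $\|Iv_j\|_{X^{1,\frac12}}$.
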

\begin{proof}
By Lemma~\ref{pwestsK6162}, we have
$|K_6^1(\mathbf{k})|\lesssim 1$ for all $\mathbf{k}\in\Gamma_6(\Tl)$. 
%Similarly to the proof of Proposition~\ref{prop:estofM103} (Case 2), 
By using \eqref{symbol:lowbound}, \eqref{Sobolev2} and \eqref{L4Strichartz}, 
we estimate
\begin{align*}
%\textup{LHS}\eqref{MultilinearEst:K61}
 \int_{\R}\Lambda_{6}(K_6^1;v_1,\ldots,v_{6}) \,dt&\lesssim 
\int_*\int_{**} \frac{1}{(m(N_1)N_1)^2}  
  \prod_{j=1,2} \widehat{J_xIv_j} \prod_{j=3}^6 \widehat{v_j}\\
&\lesssim  \frac{N^{-2+}}{N_1^{0+}} \int_{\R}\int_{\Tl} \prod_{j=1,2} J_xIv_j
  \prod_{j=3}^6 J_x^{\frac58} Iv_j\,dxdt\\
&\lesssim  \frac{N^{-2+}}{N_1^{0+}} \prod_{j=1,2} \|J_xIv_j\|_{L^4_{t,x}} 
\prod_{j=3}^6 \|J_x^{\frac58}Iv_j\|_{L^8_{t,x}}\\
&\lesssim  \frac{N^{-2+}}{N_1^{0+}} \prod_{j=1}^6 \|Iv_j\|_{X^{1,\frac38}} .
\end{align*}
\end{proof}

\begin{lemma}
Let $s\geq\frac12$ and $\delta>0$. For $K_6^2$ defined by \eqref{defnofK62}, we have the estimate
\label{prop:gamma62}
\begin{equation}
\label{MultilinearEst:K62}
\left|\int_0^{\delta} \Lambda_{6}(K_6^2;v(t)) \,dt \right| \lesssim 
{N^{-1+}\lambda^{-1+}} \delta^{0+} \| I v\|^6_{Z^{1}([0,\delta]\times\Tl)}.
%\prod_{j=1}^{6} \| I v_j\|_{Z^{1}}
\end{equation}
\end{lemma}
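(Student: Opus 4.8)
The proof follows the same blueprint as the other almost-conservation estimates of this section, e.g. Lemma~\ref{prop:estofM62}, and is in fact one of the more routine ones, since the only pointwise input needed is the crude bound $|K_6^2(\mathbf k)|\lesssim m(N_1)^2N_1$ from Lemma~\ref{pwestsK6162}(ii) and the target decay $N^{-1+}\lambda^{-1+}$ is comparatively weak. First I would carry out the standard reductions of the section: decompose $v$ dyadically in the spatial frequency, reduce to an estimate of the form \eqref{genericmultilinearestimate} with $v_j=P_{N_j}v$, use Remark~\ref{rmk:smallfrequencies} and the Symmetry Remark~\ref{rmk:evenoddinvariance} to assume $N_1\ge N_2\ge\cdots\ge N_6$ with $N_1\sim N_2\gtrsim N$, take each $\widehat{v_j}$ real and nonnegative, and replace $\int_0^\delta$ by $\int_\R$ against the sharp cutoff $\ind_{[0,\delta]}$; the factor $\delta^{0+}$ is then produced by Lemma~\ref{sharpcutoffX112minus} applied to the (at least two) factors that will be placed in the $X^{1,\frac12-}$-norm. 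Writing $\widehat{v_j}=(m(k_j)\langle k_j\rangle)^{-1}\,\widehat{J_x I v_j}$, the task becomes to bound $\displaystyle\int_*\int_{**}\frac{|K_6^2|}{\prod_{j=1}^6 m(k_j)\langle k_j\rangle}\prod_{j=1}^6\widehat{J_x I v_j}$.

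The $N^{-1+}$ gain comes from the two largest frequencies: since $m(k_1)\langle k_1\rangle\,m(k_2)\langle k_2\rangle\sim(m(N_1)N_1)^2$ and, by the monotonicity \eqref{nondecrasingpropofmsgeq12} with $s\ge\frac12$ and $N_1\gtrsim N$, one has $m(N_1)^2N_1\gtrsim N$, it follows that $|K_6^2|/\big(m(k_1)\langle k_1\rangle\,m(k_2)\langle k_2\rangle\big)\lesssim N_1^{-1}\lesssim N^{-1+}/N_1^{0+}$. The $\lambda^{-1+}$ gain is obtained from two applications of the bilinear $L^4$-Strichartz estimate \eqref{interpbilinearest}: in the generic situation $N_1\gg N_3$ the pairs $(v_1,v_3)$ and $(v_2,v_4)$ are frequency-separated, so alternative (i) of Lemma~\ref{bilinearL2Strichartz} applies and each $L^2_{t,x}$-product costs only $C(\lambda,N_1)^{1-}\sim\lambda^{-\frac12+}$ (recall $1\le\lambda\lesssim N_1$), while the remaining two factors are placed in $L^\infty_{t,x}$ via \eqref{embeddinginLinfty} together with the inclusion $Z^1\subset Y^{\frac12+,0}$; combining the gains, after cancelling the surviving weights $\langle k_j\rangle^{-1}$ against the corresponding $\widehat{J_x I v_j}$, yields the claimed bound. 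When three or more of the large frequencies have comparable size one proceeds as in Subcase~2.1 of the proof of Proposition~\ref{prop:WinestsofLambda6}: because $k_1+\cdots+k_6=0$ and the remaining frequencies are strictly smaller, two of the large ones must have opposite signs, and alternative (ii) of Lemma~\ref{bilinearL2Strichartz} supplies the needed separation. Finally, in the cases where more than two frequencies exceed $N$ (i.e. $N_3\gtrsim N\gg N_4$, $N_4\gtrsim N\gg N_5$, or $N_5\gtrsim N$) each additional frequency $\gtrsim N$ contributes a further factor $\lesssim N^{-1+}/N_j^{0+}$ from the $I$-smoothing $m(k_j)\langle k_j\rangle\gtrsim N^{1-}N_j^{0+}$ (cf. \eqref{obtainingN30plus}); since we only ever use the scaling range $1\le\lambda\le N$, one such extra factor already absorbs $\lambda^{-1+}$, so in those cases a single bilinear estimate — or even just the plain $L^4$- and $L^6$-Strichartz estimates of Lemma~\ref{Lem:SobolevStrichartz} — suffices.

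Summability over the dyadic scales is then handled exactly as in the reduction remarks preceding this section: the factor $1/N_1^{0+}$ (or, in the region $N_1\sim N_2$, the $L^2$-based Cauchy--Schwarz argument that relaxes the requirement to $1/N_3^{0+}$) makes the geometric series in all of $N_1\ge\cdots\ge N_6$ convergent against the Littlewood--Paley square function of $\|Iv\|_{Z^1}$. The only points requiring genuine care are therefore the verification, in each sub-case, that two bilinear $L^4$-Strichartz estimates can be extracted — which reduces to the frequency-separation versus opposite-sign dichotomy above — and the routine bookkeeping across the sub-cases; I do not expect any substantive obstacle. I would also note that this term appears in \eqref{parttE3} coupled with $\mu[v]$, which separately contributes the additional $\lambda^{-1}$ decay recorded just before the statement, so that the present lemma is all that is needed from the multilinear side.
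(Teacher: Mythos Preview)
Your proposal is correct and follows essentially the same route as the paper: the crude bound $|K_6^2|\lesssim m(N_1)^2N_1$ is the only multiplier input, the case $N_2\gtrsim N\gg N_3$ is handled by two applications of the bilinear estimate \eqref{interpbilinearest} on the pairs $(v_1,v_3)$ and $(v_2,v_4)$, and the case $N_3\gtrsim N$ is dispatched by the extra factor $m(N_3)N_3\gtrsim N$ together with plain $L^4$/$L^{12}$-Strichartz, exactly as the paper does. Your discussion of the opposite-sign sub-case is slightly redundant here (once $N_3\gtrsim N$ the additional $N^{-1+}$ already absorbs $\lambda^{-1+}$, so no bilinear gain is needed), but it is not incorrect.
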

\begin{proof}
By Lemma~\ref{pwestsK6162}, 
we have $|K_6^2|\lesssim m(N_1)^2N_1$.  
%and  this bound is better than that of $\beta_4$, hence we have a better decay rate. 

\textbf{Case 1:} $N_2\gtrsim N\gg N_3$. By using 
$1\lesssim m(k_j)\langle k_j\rangle $ for $j=3,4$, and 
$\frac{1}{N_1^{0+}} \lesssim m(k_j) \langle k_j\rangle^{\frac12-}$ for $j=5,6$, we estimate
\begin{align*}
%\textup{LHS}\eqref{MultilinearEst:K62} 
\int_{\R}\Lambda_{6}(K_6^2;v_1,\ldots,v_{6})dt
&\lesssim 
\int_*\int_{**} \frac{1}{N_1} \prod_{j=1,2}\widehat{J_xIv_j}\prod_{j=3}^6 \widehat{v_j}\\
&\lesssim \frac{N^{-1+}}{N_1^{0+}} \int_{\R}\int_{\Tl} \prod_{j=1}^4 J_xIv_j \prod_{j=5,6} J_x^{\frac12-}Iv_j\,dxdt\\
&\lesssim \frac{N^{-1+}}{N_1^{0+}} \|(J_xIv_1)(J_xIv_3)\|_{L^2_{t,x}} \|(J_xIv_2)(J_xIv_4)\|_{L^2_{t,x}} 
  \prod_{j=5,6} \|J_x^{\frac12-}Iv_j\|_{L^{\infty}_{t,x}}\\
&\lesssim \frac{N^{-1+}\lambda^{-1+}}{N_1^{0+}} \prod_{j=1}^4 \|Iv_j\|_{X^{1,\frac38}}  
 \prod_{j=5,6} \|Iv_j\|_{Y^{1,0}} .
\end{align*}

\textbf{Case 2:} $N_3\gtrsim N$. We make use of $m(N_3)N_3\gtrsim N$ and %of \eqref{} to 
thus we get 
\begin{align*}
\int_{\R}\Lambda_{6}(K_6^2;v_1,\ldots,v_{6})dt
&\lesssim \frac{N^{-2+}}{N_1^{0+}} \int_{\R}\int_{\Tl} \prod_{j=1}^3 J_xIv_j \prod_{j=4}^6 v_j\,dxdt\\
&\lesssim \frac{N^{-2+}}{N_1^{0+}} \prod_{j=1}^3 \|J_xIv_j\|_{L^4_{t,x}} \prod_{j=4}^6 \|v_j\|_{L^{12}_{t,x}}\\
&\lesssim 
\frac{N^{-2+}}{N_1^{0+}} \prod_{j=1}^3 \|Iv_j\|_{X^{1,\frac38}} 
 \prod_{j=4}^6 \| v_j\|_{X^{\frac{5}{12},\frac{5}{12}}} .
\end{align*}
\end{proof}

For  the next lemma, we make the following remark. 
The proof follows identically in Case 1, 
but we only have $|\widetilde{K_6^3}|\lesssim m(N_1)^2N_1^2$ in Case 2. 
By splitting the discussion into the subcases $N_3\sim N_1$ and $N_3\ll N_1$ 
as in Case 2 of the proof of Lemma~\ref{prop:estofM62}, 
we can provide at least an additional $\lambda^{-\frac12+}$ factor. 
Hence, we have:

\begin{lemma}
Let $s\geq\frac12$ and $\delta>0$. For $\widetilde{K_6^3}$ defined by \eqref{defnofK62}, we have the estimate
\begin{equation}
\label{MultilinearEst:K63tilde}
\left|\int_0^{\delta} \Lambda_{6}(\widetilde{K_6^3};v(t)) \,dt \right| \lesssim 
{N^{-1+}\lambda^{-\frac12+}} \delta^{0+} \| I v\|^6_{Z^{1}([0,\delta]\times\Tl)}.
\end{equation}
\end{lemma}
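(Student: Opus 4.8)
The plan is to follow the proof of Lemma~\ref{prop:gamma62} for $\Lambda_6(K_6^2)$ almost verbatim, upgrading only the region in which $\widetilde{K_6^3}$ fails to enjoy the sharper pointwise bound. First I would carry out the standard reductions of Section~\ref{Section:AlmostConservationEstimates}: decompose $v=\sum_j P_{N_j}v$ into spatial Littlewood--Paley pieces, reduce \eqref{MultilinearEst:K63tilde} to an estimate of the type \eqref{genericmultilinearestimate} for $\int_{\R}\Lambda_6(\widetilde{K_6^3};v_1,\dots,v_6)\,dt$ with $\widehat{v_j}$ supported in $\{|k|\sim N_j\}$, order $N_1\geq N_2\geq\cdots\geq N_6$ and assume $N_1\sim N_2\gtrsim N$ (otherwise the multiplier vanishes, Remark~\ref{rmk:smallfrequencies}), use the Symmetry Remark~\ref{rmk:evenoddinvariance}, absorb $\ind_{[0,\delta]}$ via Lemma~\ref{sharpcutoffX112minus} together with $\|\ind_{[0,\delta]}\|_{H_t^{1/2-}}\lesssim\delta^{0+}$, and take each $\widehat{v_j}$ real and nonnegative. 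The pointwise input is the pair of bounds for $\widetilde{K_6^3}$ established above: $|\widetilde{K_6^3}(\mathbf{k})|\lesssim m(N_1)^2N_1^2$ in general, and $|\widetilde{K_6^3}(\mathbf{k})|\lesssim m(N_1)^2N_1$ when $N_3\ll N$.

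In Case~1, $N_1\sim N_2\gtrsim N\gg N_3$, the refined bound coincides with the one used for $K_6^2$, so the computation is identical to Case~1 of Lemma~\ref{prop:gamma62}: using $m(N_1)^2N_1\gtrsim N$ from \eqref{symbol:lowbound} I pass to physical space, keep $J_x$ on $v_1,\dots,v_4$ and $J_x^{1/2-}$ on $v_5,v_6$, observe that $N_1\gg N_3\geq N_4$ renders the pairs $(J_xIv_1,J_xIv_3)$ and $(J_xIv_2,J_xIv_4)$ frequency-separated in the sense of alternative (i) of Lemma~\ref{bilinearL2Strichartz}, apply \eqref{interpbilinearest} to each pair to gain $\lambda^{-1+}$, and dispatch the last two factors through \eqref{embeddinginLinfty}. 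This yields $N^{-1+}\lambda^{-1+}\delta^{0+}$ with a summable $N_1^{-0+}$, already stronger than claimed.

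In Case~2, $N_3\gtrsim N$, only the crude bound $|\widetilde{K_6^3}|\lesssim m(N_1)^2N_1^2=(m(N_1)N_1)^2\gtrsim N^{2-}N_1^{0+}$ is available, so the situation mirrors Case~2 of Lemma~\ref{prop:estofM62}: dividing by the weight gains $N^{-2+}$, and the residual factors $m(N_j)N_j\gtrsim N^{1-}N_j^{0+}$ ($j\geq3$) and $m(k_j)\langle k_j\rangle^{1/2}\gtrsim1$ move smoothing onto the smaller frequencies. To extract a power of $\lambda$ I split: if $N_3\ll N_1$ then $N_1\sim N_2\gg N_3\geq N_4$, so $(Iv_1,Iv_3)$ and $(Iv_2,Iv_4)$ are separated by alternative (i) and two applications of \eqref{interpbilinearest} give $\lambda^{-1+}$; if $N_3\sim N_1$ then $N_1\sim N_2\sim N_3\gtrsim N$, and since the signed sum of the frequencies lying below the first gap in $N_1\geq\cdots\geq N_6$ is $\ll N_1$ (or the whole signed sum vanishes when there is no gap), at least two of the large frequencies must carry opposite signs; separating that pair via alternative (ii) and using \eqref{interpbilinearest} once gains $\lambda^{-1/2+}$, the remaining factors being handled by \eqref{L4Strichartz} and \eqref{embeddinginLinfty}. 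In either subcase the total is $\gtrsim N^{-2+}\lambda^{-1/2+}$, which is $\lesssim N^{-1+}\lambda^{-1/2+}$ since $1\leq\lambda\leq N$ in the regime of Section~\ref{sect:proofofGWPg1DNLS}. Summing the dyadic pieces (directly via $N_1^{-0+}$, or via the Cauchy--Schwarz device of the reduction remarks when only $N_3^{-0+}$ is available but two $L^2$-based norms sit on the two largest factors) and combining the two cases gives \eqref{MultilinearEst:K63tilde}.

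The hard part is the subcase $N_3\sim N_1$ of Case~2: obtaining any $\lambda$-gain there requires producing a frequency-separated pair among three or more comparable high frequencies, and alternative (ii) of Lemma~\ref{bilinearL2Strichartz} demands genuinely opposite signs (not merely $N_i\sim N_j$), so this rests on the sign bookkeeping forced by $\sum_j k_j=0$. It is precisely because $\widetilde{K_6^3}$ admits no refined bound once $N_3\gtrsim N$ (in contrast to $K_6^2$) that only $\lambda^{-1/2+}$, rather than $\lambda^{-1+}$, survives — exactly the weaker conclusion recorded in the statement, and still ample once the coupling coefficient $\mu[v]$ supplies its own $\lambda^{-1}$ in \eqref{parttE3}.
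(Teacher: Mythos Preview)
Your approach is essentially the same as the paper's: Case~1 is copied from Lemma~\ref{prop:gamma62}, and in Case~2 you split into $N_3\ll N_1$ versus $N_3\sim N_1$ exactly as the paper indicates (referring to Case~2 of Lemma~\ref{prop:estofM62}), using the sign argument forced by $k_1+\cdots+k_6=0$ to locate an opposite-sign pair in the latter subcase. The overall structure and the final bound $N^{-1+}\lambda^{-1/2+}$ are correct.

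There is, however, a numerical slip in your Case~2 bookkeeping. You assert that ``dividing by the weight gains $N^{-2+}$'', but with $|\widetilde{K_6^3}|\lesssim m(N_1)^2N_1^2$ and only $N_3\gtrsim N$ guaranteed (your Case~2 does \emph{not} assume $N_4\gtrsim N$, unlike Case~2 of Lemma~\ref{prop:estofM62}), the arithmetic gives
\[
\frac{m(N_1)^2N_1^2}{m(k_1)\langle k_1\rangle\, m(k_2)\langle k_2\rangle\, m(k_3)\langle k_3\rangle}
\;\sim\; \frac{1}{m(N_3)N_3}
\;\lesssim\; \frac{N^{-1+}}{N_3^{0+}},
\]
i.e.\ only $N^{-1+}$, not $N^{-2+}$. (In Lemma~\ref{prop:estofM62}, the extra factor of $N^{-1+}$ came from $m(N_4)N_4\gtrsim N$, which you do not have here.) This does not damage the final conclusion: your bilinear analysis in both subcases still furnishes at least $\lambda^{-1/2+}$, so the product is $N^{-1+}\lambda^{-1/2+}$, exactly what the lemma claims. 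Just correct the intermediate claim from $N^{-2+}$ to $N^{-1+}$, and drop the sentence about $N^{-2+}\lambda^{-1/2+}\lesssim N^{-1+}\lambda^{-1/2+}$, which is then unnecessary.
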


The estimates for $\int_0^{\delta}\Lambda_6(\widetilde{K_6^4})dt$ 
and  $\int_0^{\delta}\Lambda_8(\widetilde{K_8^3})dt$ 
follow identically to that of Lemma~\ref{prop:gamma62} above, 
since we have the same upper bound (see Lemma~\ref{pwestssigma4tilde} and the subsequent comment). 

\begin{lemma}
Let $s\geq \frac{5}{12}$ and $\delta>0$. For $K_8^3$ defined by \eqref{defnofK83}, we have the estimate
\label{prop:K83}
\begin{equation}
\left| \int_0^{\delta} \Lambda_{8}(K^3_8;v(t)) \,dt\right| \lesssim 
   N^{-2+} \delta^{0+}\| I v\|^8_{X^{1,\frac12}([0,\delta]\times\Tl)} .
%\prod_{j=1}^{8} \| I v_j\|_{Z^1}
\end{equation}
\end{lemma}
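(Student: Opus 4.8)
The plan is to mirror the proof of Lemma~\ref{Prop:estGamma6K61} (the estimate for $K_6^1$), the only change being that there are now eight factors, which forces a different split of Lebesgue exponents. First I would record the pointwise bound on the multiplier. Since $K_8^3=\sum_{j=1}^6\mathbb{X}_j^2(\sigma_6)$ and, by Lemma~\ref{pwestssigma6}(i), $|\sigma_6(\mathbf{k})|\lesssim 1$ on all of $\Gamma_6(\Tl)$, one gets $|K_8^3(\mathbf{k})|\lesssim 1$ for every $\mathbf{k}\in\Gamma_8(\Tl)$ (this is the bound recorded right after Lemma~\ref{pwestsM83}). Moreover $\sigma_6$, hence each elongation $\mathbb{X}_j^2(\sigma_6)$, vanishes when all frequencies are $\ll N$ (Remark~\ref{rmk:smallfrequencies}), so by Remark~\ref{rmk:evenoddinvariance} it suffices to work on $\Upsilon_8(\Tl)$, where $N_1\sim N_2\gtrsim N$. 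Following the standard reductions of Section~\ref{Section:AlmostConservationEstimates}, I would reduce to bounding $\int_{\R}\Lambda_8(K_8^3;v_1,\ldots,v_8)\,dt$ with $v_j=P_{N_j}v_j$, $N_1\geq\ldots\geq N_8$, $N_1\sim N_2\gtrsim N$, each $\widehat{v_j}$ real and non-negative, deferring the sharp time cutoff (which yields the $\delta^{0+}$) to the end.

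Next, on the Fourier side I would write, for $j=1,2$, $\widehat{v_j}(k_j)=\big(m(k_j)\langle k_j\rangle\big)^{-1}\widehat{J_xIv_j}(k_j)$ and apply $\eqref{symbol:lowbound}$ with a small $\theta>0$ to get $\big(m(k_j)\langle k_j\rangle\big)^{-1}\lesssim N^{-1+\theta}N_j^{-\theta}$; together with $|K_8^3|\lesssim 1$ and $N_1\sim N_2$, the two top factors contribute $\frac{N^{-2+}}{N_1^{0+}}$. For $j=3,\ldots,8$ I would write $\widehat{v_j}(k_j)=\big(m(k_j)\langle k_j\rangle^{\frac{7}{12}}\big)^{-1}\widehat{J_x^{7/12}Iv_j}(k_j)$ and note that, by $\eqref{symbol:lowbound}$ with $\theta=\frac{5}{12}$ --- admissible precisely because $s\geq\frac{5}{12}$ --- one has $m(k_j)\langle k_j\rangle^{\frac{7}{12}}\gtrsim 1$. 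After these substitutions, H\"older in $(t,x)$ with exponents $\frac14+\frac14+6\cdot\frac1{12}=1$, the loss-free $L^4$-Strichartz estimate $\eqref{L4Strichartz}$ on $J_xIv_1,J_xIv_2$, and the Sobolev--Strichartz estimate $\eqref{Sobolev2}$ with $p=q=12$ (which needs spatial and temporal regularity $\frac12-\frac1{12}=\frac5{12}$) on $J_x^{7/12}Iv_3,\ldots,J_x^{7/12}Iv_8$ give
\begin{align*}
\left|\int_{\R}\Lambda_8(K_8^3;v_1,\ldots,v_8)\,dt\right|
&\lesssim \frac{N^{-2+}}{N_1^{0+}}\int_{\R}\int_{\Tl} (J_xIv_1)(J_xIv_2)\prod_{j=3}^8 (J_x^{7/12}Iv_j)\,dx\,dt\\
&\lesssim \frac{N^{-2+}}{N_1^{0+}}\,\|Iv_1\|_{X^{1,\frac38}}\|Iv_2\|_{X^{1,\frac38}}\prod_{j=3}^8\|Iv_j\|_{X^{1,\frac{5}{12}}}\\
&\lesssim \frac{N^{-2+}}{N_1^{0+}}\prod_{j=1}^8\|Iv_j\|_{X^{1,\frac12}},
\end{align*}
where in the second line one absorbs $J_x^{7/12}$ into the $X^{\frac{5}{12},\frac{5}{12}}$ weight, turning $\|J_x^{7/12}Iv_j\|_{X^{\frac{5}{12},\frac{5}{12}}}$ into $\|Iv_j\|_{X^{1,\frac{5}{12}}}$, and similarly $\|J_xIv_j\|_{X^{0,\frac38}}=\|Iv_j\|_{X^{1,\frac38}}$.

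Finally I would sum over the dyadic frequencies and reinstate the cutoff: distributing $N_1^{-0+}$ among the eight indices (legitimate since $N_1\geq N_j$) and combining Littlewood--Paley with Cauchy--Schwarz in each index, as in the summability remark of Section~\ref{Section:AlmostConservationEstimates}, the sum collapses to $\|Iv\|_{X^{1,\frac12}}^8$; inserting $\ind_{[0,\delta]}$ on two of the factors and using $\|\ind_{[0,\delta]}Iv\|_{X^{1,\frac12-}}\lesssim\delta^{0+}\|Iv\|_{X^{1,\frac12}}$ together with $X^{1,\frac12-}\subset X^{1,\frac38}$ and $X^{1,\frac12-}\subset X^{1,\frac{5}{12}}$ produces the factor $\delta^{0+}$ and the restriction norm $X^{1,\frac12}([0,\delta]\times\Tl)$. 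No delicate multiplier cancellation is needed here (in contrast to the $M_6^2$ term), so the only real point of care --- and the one place where the hypothesis $s\geq\frac{5}{12}$ is used --- is the choice of the H\"older exponents $(4,4,12,\ldots,12)$, which is exactly what is needed to afford each of the six $L^{12}$ factors the weight $J_x^{7/12}$ with $m(k)\langle k\rangle^{7/12}\gtrsim 1$.
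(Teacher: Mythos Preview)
Your proposal is correct and follows essentially the same approach as the paper: the paper also invokes $|K_8^3|\lesssim 1$ from Lemma~\ref{pwestssigma6}, mirrors the proof of Lemma~\ref{Prop:estGamma6K61}, and uses the identical H\"older split $(4,4,12,\ldots,12)$ with $J_x^{7/12}$ on the six small factors to conclude. Your write-up is in fact more explicit about where the hypothesis $s\geq\frac{5}{12}$ enters and about the summation/time-cutoff bookkeeping, which the paper leaves implicit.
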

\begin{proof}
By Lemma~\ref{pwestssigma6}, we have 
$|K_8^3(\mathbf{k})|\lesssim 1$ for all $\mathbf{k}\in\Gamma_8(\Tl)$. Hence, similarly to the proof of 
Lemma~\ref{Prop:estGamma6K61}, we get
\begin{align*}
\int_{\R}\Lambda_{8}(K^3_8;v_1,\ldots,v_{8}) \,dt &\lesssim 
   \frac{N^{-2+}}{N_1^{0+}} \int_{\R}\int_{\Tl} \prod_{j=1,2} J_xIv_j  \prod_{j=3}^8 J_x^{\frac{7}{12}} Iv_j\,dxdt\\
&\lesssim  \frac{N^{-2+}}{N_1^{0+}} \prod_{j=1,2} \|J_xIv_j\|_{L^4_{t,x}} 
\prod_{j=3}^8 \|J_x^{\frac{7}{12}}Iv_j\|_{L^{12}_{t,x}}\\
&\lesssim  \frac{N^{-2+}}{N_1^{0+}} \prod_{j=1}^8 \|Iv_j\|_{X^{1,\frac{1}{2}}} .
\end{align*}
\end{proof}

We put all the results of this section together in the following:

\begin{proposition}
\label{prop:slowlyvaryingincremsofE3}
Let $s\geq\frac12$ and $\delta>0$. Suppose $v$ is a solution to \eqref{g1DNLS} on $[0,\delta]$. 
For $\mathcal{E}^3$ defined by \eqref{defnofE3}, 
we have 
\begin{equation}
\left| \mathcal{E}^3[v(\delta)] - \mathcal{E}^3[v(0)] \right| \leq 
   N^{-\frac32+}\lambda^{-1+} \delta^{0+} P(\|Iv\|_{Z^1([0,\delta]\times\Tl)}), 
\end{equation}
for some polynomial $P$ with non-negative coefficients. 
\end{proposition}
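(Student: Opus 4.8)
The plan is to obtain the statement as a straightforward assembly of the multilinear estimates established in this section. By a standard density argument it suffices to prove the bound for $v$ sufficiently smooth, so that $t\mapsto\mathcal{E}^3[v(t)]$ is $C^1$ on $[0,\delta]$ and the fundamental theorem of calculus gives
\[
\mathcal{E}^3[v(\delta)]-\mathcal{E}^3[v(0)]=\int_0^{\delta}\frac{d}{dt}\mathcal{E}^3[v(t)]\,dt .
\]
Computing the integrand from \eqref{diffrule} and the definitions \eqref{defnofE3}, \eqref{defnofsigma6}, \eqref{defnoftildesigma4} — that is, via \eqref{parttE3}, and using that the contribution $-i\mu[v]\Lambda_4(K_4^1;v)$ of $\frac{d}{dt}\mathcal{E}^2$ is exactly cancelled by the leading part of $\frac{d}{dt}\bigl(i\mu[v]\Lambda_4(\widetilde{\sigma_4};v)\bigr)$, which is precisely what the choice $\widetilde{\sigma_4}=K_4^1/\alpha_4$ accomplishes (cf. Remark~\ref{rmk:necessityofcorrectingforK41}) — the increment is reduced to a finite sum of the quantities $\int_0^{\delta}\Lambda_n(M_n;v(t))\,dt$ with $M_n\in\{M_6^2\ind_{\Omega^c},\,M_8^2,\,M_8^3,\,M_{10}^3\}$, of $\mu[v]\int_0^{\delta}\Lambda_n(K_n;v(t))\,dt$ with $K_n\in\{K_6^1,\,K_6^2,\,\widetilde{K_6^3},\,K_8^3,\,\widetilde{K_8^3}\}$, and of the single term $\mu[v]^2\int_0^{\delta}\Lambda_6(\widetilde{K_6^4};v(t))\,dt$.

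Each of these is then bounded by the corresponding lemma proved in this section. For the first group I would quote Lemma~\ref{prop:estofM62}, the lemma yielding \eqref{MultilinearEst:M83} (which also covers $M_8^2$), and Lemma~\ref{prop:estofM103}; these already carry the gain $\delta^{0+}$ and provide the prefactors $N^{-\frac32+}\lambda^{-1+}$, $N^{-\frac32+}\lambda^{-1+}$, $N^{-\frac32+}\lambda^{-1+}$ and $N^{-2+}\lambda^{-1+}$, multiplying monomials in $\|Iv\|_{Z^1([0,\delta]\times\Tl)}$. For the $\mu[v]$-coupled terms I would first record that mass conservation makes $\mu[v]$ constant in $t$ and that, by \eqref{embeddinginLtinftyHxs} together with $1\le m(k)\langle k\rangle$, one has $\mu[v]\lesssim\lambda^{-1}\|Iv\|_{Z^1}^2$, hence $\mu[v]^2\lesssim\lambda^{-2}\|Iv\|_{Z^1}^4$; then I would apply Lemma~\ref{Prop:estGamma6K61}, Lemma~\ref{prop:gamma62}, the lemma yielding \eqref{MultilinearEst:K63tilde}, Lemma~\ref{prop:K83}, and the text estimates for $\widetilde{K_8^3}$ and $\widetilde{K_6^4}$, multiply by the extra $\lambda^{-1}$ (respectively $\lambda^{-2}$) supplied by $\mu[v]$, and use the relation $1\le\lambda\le N$ to convert any surplus $N^{-1}$-decay into $\lambda^{-1}$-decay, so that each of these contributions is $\lesssim N^{-\frac32+}\lambda^{-1+}\delta^{0+}$ times a monomial in $\|Iv\|_{Z^1([0,\delta]\times\Tl)}$. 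Summing the finitely many estimates and gathering the resulting monomials into a single polynomial $P$ with non-negative coefficients yields the asserted bound.

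The real content of this proposition is therefore already in place: all the analysis sits in the multilinear lemmas above, which rest on the non-resonant set $\Omega=\Omega_1\cup\Omega_2\cup\Omega_3$ of Subsection~\ref{Sect:TheNonresonantSet} and the correction multipliers $\sigma_6$ and $\widetilde{\sigma_4}$. The only delicate point in the assembly — the (mild) main obstacle — is the bookkeeping: verifying, case by case, that once the $\Lambda_4(K_4^1)$ cancellation is used and the $\mu[v]$-factors and the constraint $1\le\lambda\le N$ are taken into account, every surviving term decays at least as fast as the bottleneck rate $N^{-\frac32+}\lambda^{-1+}$, which is dictated by the $\Lambda_6(M_6^2\ind_{\Omega^c})$, $\Lambda_8(M_8^2)$ and $\Lambda_8(M_8^3)$ contributions, while all the remaining ones carry strictly more decay in $N$ or in $\lambda$ after these reductions.
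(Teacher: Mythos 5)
Your argument reproduces the paper's route: reduce the increment to the multilinear integrals via the fundamental theorem of calculus and the differentiation rule~\eqref{diffrule}, then assemble the estimates of Section~\ref{Section:AlmostConservationEstimates}, using $\mu[v]\lesssim\lambda^{-1}\|Iv\|_{Z^1}^2$ and $1\le\lambda\le N$ for the $\mu[v]$-coupled pieces. The one place where your write-up genuinely improves on the paper's bookkeeping is the explicit cancellation of $\Lambda_4(K_4^1)$. Since $\widetilde{\sigma_4}\,\alpha_4=K_4^1$ by construction (extended by zero where $\alpha_4=0$, which also forces $K_4^1=0$) and $\mu[v]$ is constant in time by mass conservation of \eqref{g1DNLS}, the first term of $\partial_t\bigl(i\mu[v]\Lambda_4(\widetilde{\sigma_4};v)\bigr)$ is $+i\mu[v]\Lambda_4(K_4^1;v)$, and it cancels the $-i\mu[v]\Lambda_4(K_4^1;v)$ coming from $\frac{d}{dt}\mathcal{E}^2$ in \eqref{eqn:parttcalE2}. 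This cancellation is exactly what the second correction term in \eqref{defnofE3} is designed to achieve — this is the content of Remark~\ref{rmk:necessityofcorrectingforK41} — but the displayed formula \eqref{parttE3} still lists $\Lambda_4(K_4^1)$ among the surviving terms, which is almost certainly a typographical slip. Tracking the cancellation as you do is not cosmetic: the paper's own remark shows that in the same-parity case $|k_1|\sim|k_3|\gtrsim N\gg N_3$ the $K_4^1$-multiplier admits only the crude bound $m(N_1)^2N_1^2$, so the best available decay for $\mu[v]\int_0^\delta\Lambda_4(K_4^1)\,dt$ is $\lambda^{-2+}$, which is not dominated by $N^{-\frac32+}\lambda^{-1+}$; hence without the cancellation the stated rate would not be obtainable from the lemmas at hand, and the lemma yielding \eqref{MultilinearEst:K41} (whose proof does not cover that sub-case) would be insufficient rather than a fallback. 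Your version makes the assembly both correct and self-consistent.
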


%\section{Estimates of the correction terms}% $\Lambda_n(\sigma_n)$, $n=4,6$}
\section{Control of the almost conserved energy and of the almost conserved momentum}
\label{Sect:AlmostconservedEandP}
In this section we show that 
$\mathcal{E}[Iv(t)]$ stays close to $\mathcal{E}^3[v(t)]$ (which is very slowly varying in time)   
and that $\mathcal{P}[Iv(t)]$ stays close to $\mathcal{P}[v(t)]=\mathcal{P}[v_0]$, at any time $t$.  
For the sake of efficiency, we adopt in the proofs below the reduction remarks  from the previous section.

\begin{lemma}
\label{lem:sigma4}
For $\sigma_4$ defined by \eqref{defnofsigma4}, we have 
\begin{equation}
%\left| \Lambda_4(\sigma_4;v_1,\overline{v_2},v_3,\overline{v_4})\right| \lesssim 
% \lambda^{-1+} \prod_{j=1}^4 \|Iv_j\|_{H^1(\Tl)}
\left| \Lambda_4(\sigma_4;f) \right| \lesssim N^{-1+}  \|If\|^4_{H^1(\Tl)} .
\end{equation}
\end{lemma}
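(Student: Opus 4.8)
The statement estimates the single quartic correction multiplier $\sigma_4$ evaluated on a fixed-time function $f$ (no time integral, no modulation weights). The plan is to reduce to a frequency-localized estimate and then apply the $L^4$-Strichartz estimate twice together with the pointwise bound on $\sigma_4$. First I would decompose $f = \sum_{N\geq 1} P_N f$ into dyadic pieces and bound $\Lambda_4(\sigma_4; P_{N_1}f, \overline{P_{N_2}f}, P_{N_3}f, \overline{P_{N_4}f})$, arguing by the Symmetry Remark that without loss of generality $N_1\geq N_2\geq N_3\geq N_4$. By the Small Frequencies Remark, $\sigma_4$ vanishes unless $N_1\sim N_2\gtrsim N$, so only that region contributes.

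\textbf{Key steps.} In the region $N_1\sim N_2\gtrsim N$, the pointwise bound $|\sigma_4(\mathbf{k})|\lesssim m(N_1)^2N_1$ from Lemma~\ref{pwestssigma4} is the main input. Since $f$ depends only on $x$, I would embed the fixed-time $L^4(\Tl)$ norm into the $L^4_{t,x}$-Strichartz framework in the standard way (e.g. by considering $\eta(t)U_\lambda(t)P_{N_j}f$ and using that the $L^4_{t,x}$ norm of this controls, up to constants, $\|P_{N_j}f\|_{L^2_x}$ via \eqref{L4Strichartz}), so that
\begin{align*}
|\Lambda_4(\sigma_4;P_{N_1}f,\ldots,P_{N_4}f)|
&\lesssim m(N_1)^2 N_1 \int_{\Tl} |P_{N_1}f|\,|P_{N_2}f|\,|P_{N_3}f|\,|P_{N_4}f|\,dx\\
&\lesssim m(N_1)^2 N_1 \|P_{N_1}f\|_{L^4}\|P_{N_2}f\|_{L^4}\|P_{N_3}f\|_{L^4}\|P_{N_4}f\|_{L^4},
\end{align*}
after Hölder. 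Using $m(N_1)^2N_1 = m(N_1)^2 N_1^2 / N_1$ and the fact that $m(N_j)\langle k_j\rangle \gtrsim m(N_1)N_1 / N_1^{0+}$ for the two large frequencies (via \eqref{symbol:lowbound} and the monotonicity \eqref{nondecrasingpropofmsgeq12}), one distributes the weight: write $m(N_1)^2N_1 \lesssim \frac{1}{N_1^{1-}} (m(N_1)\langle N_1\rangle)(m(N_2)\langle N_2\rangle)$, which converts $\|P_{N_j}f\|_{L^4}$ for $j=1,2$ into $\|J_x I P_{N_j}f\|_{L^4}$ and leaves a gain of $N_1^{-1+}$; for $j=3,4$ one has $\|P_{N_j}f\|_{L^4}\lesssim \|I P_{N_j}f\|_{H^1}$ trivially via Sobolev embedding $H^{1/4}\subset L^4$ on $\Tl$ combined with $\|P_{N_j}f\|_{H^{1/4}}\lesssim \|IP_{N_j}f\|_{H^1}$. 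This yields
\[
|\Lambda_4(\sigma_4;P_{N_1}f,\ldots)| \lesssim \frac{N^{-1+}}{N_1^{0+}} \prod_{j=1}^4 \|IP_{N_j}f\|_{H^1},
\]
and summing over the dyadic scales (first $N_4$, then $N_3$, then the comparable pair $N_1\sim N_2$ using the Cauchy–Schwarz trick from the reduction remarks to handle the two $L^2_k$-based factors) gives the claimed $N^{-1+}\|If\|_{H^1}^4$.

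\textbf{Main obstacle.} The genuine work is bookkeeping the weights: making sure that after extracting $m(N_1)^2 N_1$ one still has enough smoothing left over to convert all four factors into $I$-smoothed $H^1$ norms while retaining the $N^{-1+}$ and an $N_1^{-0+}$ for summability. The key algebraic inequality is $m(N_1)^2 N_1 \lesssim N^{-1+} N_1^{-0+} (m(N_1)N_1)^2$, equivalently $1 \lesssim N^{-1+} N_1^{-0+} N_1$, which holds precisely because $N_1\gtrsim N$ in this region. Since here we only integrate in $x$ (not $t$) and there are no modulation denominators to exploit, we cannot squeeze out an extra $\lambda^{-1}$ or go below the crude $\sigma_4$ bound, but the bound stated is exactly what the crude estimate delivers, so no further refinement is needed.
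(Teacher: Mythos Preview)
Your overall strategy (dyadic decomposition, Small Frequencies Remark, pointwise bound $|\sigma_4|\lesssim m(N_1)^2N_1$, then H\"older) is the same as the paper's, but the H\"older exponents you choose create a genuine gap. After you write
\[
m(N_1)^2 N_1\;\lesssim\; N_1^{-1+}\,(m(N_1)\langle N_1\rangle)(m(N_2)\langle N_2\rangle),
\]
you are left with $\|J_x I P_{N_j}f\|_{L^4_x}$ for $j=1,2$, and this is \emph{not} bounded by $\|IP_{N_j}f\|_{H^1}$: on $\Tl$ one has $\|g\|_{L^4}\lesssim \|g\|_{H^{1/4}}$, so passing from $L^4$ back to $L^2$ costs an extra $N_j^{1/4}$ each. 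The two high-frequency factors therefore lose $N_1^{1/2}$, and your net gain collapses from $N_1^{-1+}$ to $N_1^{-1/2+}$, i.e.\ only $N^{-1/2+}$ instead of the claimed $N^{-1+}$. The remark about ``embedding the fixed-time $L^4(\Tl)$ norm into the $L^4_{t,x}$-Strichartz framework'' is a red herring: $\Lambda_4(\sigma_4;f)$ is a purely spatial quantity with no time integral, so Strichartz estimates play no role here.

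The paper avoids this loss by using $L^2_x\times L^2_x\times L^\infty_x\times L^\infty_x$ instead of $L^4\times L^4\times L^4\times L^4$. For $j=1,2$ one gets $\|J_xIP_{N_j}f\|_{L^2_x}=\|IP_{N_j}f\|_{H^1}$ exactly, with no derivative loss; for $j=3,4$ one keeps only $J_x^{1/2-}$ on those factors (using $m(k_j)\langle k_j\rangle^{1/2-}\gtrsim N_1^{-0+}$) and then invokes $\|J_x^{1/2-}If_j\|_{L^\infty_x}\lesssim \|If_j\|_{H^1}$ via the Sobolev embedding $H^{1/2+}(\Tl)\hookrightarrow L^\infty(\Tl)$. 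This yields the full $N^{-1+}/N_1^{0+}$ factor. Your bookkeeping for $j=3,4$ was fine; the fix is solely in how you place the two high-frequency factors.
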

\begin{proof}
By Lemma~\ref{pwestssigma4}, 
we have $|\sigma_4(\mathbf{k})|\lesssim m(N_1)^2N_1$ for all $k\in\Gamma_4(\Tl)$. 
Then, by H\"{o}lder and Sobolev inequalities, 
and using $\frac{1}{N_1^{0+}}\lesssim  m(k_j)\langle k_j\rangle^{\frac12-}$ for $j=3,4$,  
we have
\begin{align*}
\Lambda_4(\sigma_4;f_1,\ldots, f_4) &\lesssim  
\int_{**} \frac{m(N_1)^2N_1}{\prod_{j=1}^4 m(k_j)\langle k_j\rangle } \prod_{j=1}^4 \widehat{J_xIf_j} \\
&\lesssim  \frac{1}{N_1} \int_{\Tl} (J_xIf_1) (J_xIf_2) (J_x^{\frac12-}If_3) (J_x^{\frac12-}If_4)dx\\
 &\lesssim  \frac{N^{-1+}}{N_1^{0+}} \|J_xIf_1\|_{L^2_x} \|J_xIf_2\|_{L^2_x} \|J_x^{\frac12-}If_3\|_{L_x^{\infty}} 
  \|J_x^{\frac12-}If_4\|_{L_x^{\infty}}\\
  &\lesssim \frac{ N^{-1+}}{N_1^{0+}} \prod_{j=1}^4 \|If_j\|_{H^1_x} .
\end{align*}
\end{proof}

The estimate for $\Lambda_4(\widetilde{\sigma_4}; f)$ follows similarly 
since, by Lemma~\ref{pwestssigma4tilde} (i),  
we have the same pointwise bound, that is  $|\widetilde{\sigma_4}(\mathbf{k})|\lesssim m(N_1)^2N_1$. 

\begin{lemma}
For $\sigma_6$ defined by \eqref{defnofsigma6}, we have 
\begin{equation}
%\left| \Lambda_6(\sigma_6;v_1,\overline{v_2},v_3,\overline{v_4},v_5,\overline{v_6})\right| \lesssim 
%\lambda^{-2+} \prod_{j=1}^6 \|Iv_j\|_{H^1(\Tl)}
\left| \Lambda_6(\sigma_6;f)\right| \lesssim N^{-2+}  \|If\|^6_{H^1(\Tl)} .
\end{equation}
\end{lemma}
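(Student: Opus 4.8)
The plan is to follow the template of Lemma~\ref{lem:sigma4} for $\Lambda_4(\sigma_4;f)$: this is again a purely spatial multilinear estimate, so no Strichartz estimates enter, only H\"older's inequality and Sobolev embeddings on $\Tl$. First I would decompose $f=\sum_j P_{N_j}f$ into Littlewood--Paley pieces and reduce, using the Symmetry Remark~\ref{rmk:evenoddinvariance} and the Small Frequencies Remark~\ref{rmk:smallfrequencies}, to bounding $\Lambda_6(\sigma_6;f_1,\ldots,f_6)$ where each $\widehat{f_j}$ is supported in $\{|k|\sim N_j\}$ with $N_1\geq N_2\geq\ldots\geq N_6$, all $\widehat{f_j}$ nonnegative, and $N_1\sim N_2\gtrsim N$. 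Summing over the dyadic bands at the end will require that the per-band estimate carries a gain of the form $N_1^{-0+}\prod_{j=3}^6 N_j^{-0+}$.

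Next I would invoke the crude pointwise bound $|\sigma_6(\mathbf{k})|\lesssim 1$ from Lemma~\ref{pwestssigma6}(i). Passing to the Fourier side and writing $|\widehat{f_j}(k_j)|=|\widehat{J_xIf_j}(k_j)|/\big(m(k_j)\langle k_j\rangle\big)$ one gets
\begin{equation*}
|\Lambda_6(\sigma_6;f_1,\ldots,f_6)|\lesssim \int_{**}\frac{1}{\prod_{j=1}^6 m(k_j)\langle k_j\rangle}\prod_{j=1}^6\widehat{J_xIf_j}.
\end{equation*}
For the two largest frequencies I would use $m(k_1)\langle k_1\rangle\, m(k_2)\langle k_2\rangle\sim\big(m(N_1)N_1\big)^2=N^{2-2s}N_1^{2s}$; since $s\geq\frac12$ we have $2s\geq 1$, so together with $N_1\geq N$ this produces both the full $N^{-2+}$ decay and summability weights for all six bands, with room left over. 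For the four remaining frequencies I would use the monotonicity property \eqref{nondecrasingpropofmsgeq12} (which is exactly why $s\geq\frac12$ is assumed), i.e. $m(k_j)\langle k_j\rangle^{1/2}\gtrsim 1$, to pass $\widehat{J_xIf_j}/\big(m(k_j)\langle k_j\rangle\big)\lesssim\langle k_j\rangle^{0+}\widehat{J_x^{1/2-}If_j}$, absorbing the $\langle k_j\rangle^{0+}$ loss into the surplus power of $N_1$ from the previous step. Then H\"older's inequality in $x$ with the split $L^2_x\cdot L^2_x\cdot L^\infty_x\cdot L^\infty_x\cdot L^\infty_x\cdot L^\infty_x$, the embedding $H^{1/2+}(\Tl)\hookrightarrow L^\infty(\Tl)$ on the four factors $J_x^{1/2-}If_j$, and $\|J_xIf_1\|_{L^2_x}\|J_xIf_2\|_{L^2_x}\leq\|If_1\|_{H^1}\|If_2\|_{H^1}$ give the per-band bound $N^{-2+}N_1^{-0+}\prod_{j=3}^6 N_j^{-0+}\prod_{j=1}^6\|If_j\|_{H^1(\Tl)}$, and summing over the dyadic bands concludes.

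I do not expect a genuine obstacle here; the only point needing a little care is the simultaneous extraction, from the single factor $\big(m(N_1)N_1\big)^{-2}=N^{2s-2}N_1^{-2s}$, of the decay $N^{-2+}$, the summability weights for all the frequency bands, and the small extra power $\langle k_j\rangle^{0+}$ needed to upgrade $H^{1/2}$ to $H^{1/2+}$ for the $L^\infty$ embedding -- all of which is available precisely because $s\geq\frac12$ forces $2s$ to exceed the sum of these arbitrarily small losses.
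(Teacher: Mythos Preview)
Your proposal is correct and follows essentially the same approach as the paper: invoke the bound $|\sigma_6|\lesssim 1$ from Lemma~\ref{pwestssigma6}(i), extract $N^{-2+}/N_1^{0+}$ from the two largest-frequency denominators $m(k_1)\langle k_1\rangle\, m(k_2)\langle k_2\rangle$, use $m(k_j)\langle k_j\rangle^{1/2-}\gtrsim N_1^{-0+}$ on the remaining four factors, and close with the H\"older split $L^2_x\cdot L^2_x\cdot(L^\infty_x)^4$ together with the embedding $H^{1/2+}(\Tl)\hookrightarrow L^\infty(\Tl)$. The paper's proof is a two-line reference back to the template of Lemma~\ref{lem:sigma4}, and your writeup unwinds precisely that template.
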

\begin{proof}
By Lemma~\ref{pwestssigma6}, 
we have $|\sigma_6(\mathbf{k})|\lesssim 1$ for all $k\in\Gamma_6(\Tl)$. 
%Then, by H\"{o}lder's inequality and Sobolev embedding, we have
Similarly to the proof of Lemma~\ref{lem:sigma4}, we have 
\begin{align*}
\Lambda_6(\sigma_6;f_1,\ldots, f_4) 
 &\lesssim  \frac{N^{-2+}}{N_1^{0+}} \int_{\Tl} (J_xIf_1) (J_xIf_2) \prod_{j=3}^6(J_x^{\frac12-}If_j) dx\\
 &\lesssim \frac{ N^{-2+}}{N_1^{0+}} \prod_{j=1}^6 \|If_j\|_{H^1_x} .
\end{align*}
\end{proof}

Hence, we proved that all the correction terms are small, and thus we obtain:

\begin{proposition}%[Almost conservation]% for solutions of \eqref{Isystem}]
\label{prop:mathcalE3isclosetomathcalEI}
%Suppose $v\in C([0,T];H^s(\Tl))$ is a solution of \eqref{Isystem}. Then 
For $\mathcal{E}$ and $\mathcal{E}^3$ defined by \eqref{defnofmathcalE} and \eqref{defnofE3}, 
we have
\begin{equation}
\label{eqn:AlmostConservationEstar}
\left| \mathcal{E}[If] -\mathcal{E}^3[f] \right| \lesssim 
   N^{-1+} \left(\|If\|^4_{H_x^1(\Tl)} + \|If\|^6_{H_x^1(\Tl)}\right) , 
\end{equation}
for all $f\in H_x^s(\Tl)$.  
\end{proposition}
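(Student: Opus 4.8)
The plan is to observe that, by the definition \eqref{defnofE1}, one has $\mathcal{E}[If]=\mathcal{E}^1[f]$ \emph{exactly}, so that the difference $\mathcal{E}[If]-\mathcal{E}^3[f]$ collapses, through the two stages of the construction \eqref{defnofE2} and \eqref{defnofE3}, into precisely the correction terms appended along the way. Chaining these three identities, I would first record
\begin{equation}
\label{eqn:E3minusEIdecomp}
\mathcal{E}[If]-\mathcal{E}^3[f] = -\Lambda_4(\sigma_4;f) - \Lambda_6(\sigma_6;f)
  - i\mu[f]\,\Lambda_4(\widetilde{\sigma_4};f),
\end{equation}
reducing the claim to bounding each of the three summands on the right by the right-hand side of \eqref{eqn:AlmostConservationEstar} (note that all three are well-defined, since $f\in H^s_x(\Tl)$ gives $If\in H^1_x(\Tl)$).

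The first two terms are already in hand: Lemma~\ref{lem:sigma4} gives $|\Lambda_4(\sigma_4;f)|\lesssim N^{-1+}\|If\|_{H^1(\Tl)}^4$, and the companion estimate for $\sigma_6$ gives $|\Lambda_6(\sigma_6;f)|\lesssim N^{-2+}\|If\|_{H^1(\Tl)}^6$. For the last term, Lemma~\ref{pwestssigma4tilde}~(i) provides the same pointwise bound on $\widetilde{\sigma_4}$ as Lemma~\ref{pwestssigma4} does on $\sigma_4$, so I would run the H\"older--Sobolev argument of Lemma~\ref{lem:sigma4} verbatim to obtain $|\Lambda_4(\widetilde{\sigma_4};f)|\lesssim N^{-1+}\|If\|_{H^1(\Tl)}^4$; it then remains only to absorb the coupling coefficient, which is immediate since $\lambda\geq 1$ forces $\mu[f]=\tfrac{1}{2\pi\lambda}\|f\|_{L^2(\Tl)}^2\lesssim\|f\|_{L^2(\Tl)}^2\lesssim\|If\|_{H^1(\Tl)}^2$ by \eqref{smoothingpropofI}, so that $\mu[f]\,|\Lambda_4(\widetilde{\sigma_4};f)|\lesssim N^{-1+}\|If\|_{H^1(\Tl)}^6$.

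Finally I would sum the three contributions in \eqref{eqn:E3minusEIdecomp} and use $N^{-2+}\lesssim N^{-1+}$ (recall $N\gg1$) to conclude \eqref{eqn:AlmostConservationEstar}. I do not anticipate any genuine obstacle here: all the analytic content has already been expended in the pointwise multiplier bounds of Lemmas~\ref{pwestssigma4}, \ref{pwestssigma6} and \ref{pwestssigma4tilde} and in the short estimates of Lemma~\ref{lem:sigma4}, and the present statement is merely the bookkeeping step certifying that $\mathcal{E}^3[\,\cdot\,]$ is a size-$N^{-1+}$ perturbation of the functional $f\mapsto\mathcal{E}[If]$, which is what is fed into the $I$-method iteration of Section~\ref{sect:proofofGWPg1DNLS}.
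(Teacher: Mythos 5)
Your proposal is correct and follows the paper's proof essentially verbatim: the paper also reduces the claim to the three correction terms via the chain of definitions $\mathcal{E}^1 \to \mathcal{E}^2 \to \mathcal{E}^3$ and invokes Lemma~\ref{lem:sigma4}, the $\sigma_6$ analogue, and the remark that $\widetilde{\sigma_4}$ obeys the same pointwise bound as $\sigma_4$. You merely make explicit (correctly, using $\lambda\geq1$ and $\|f\|_{L^2}\lesssim\|If\|_{H^1}$) the absorption of the coefficient $\mu[f]$, which the paper leaves as a one-line comment.
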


Next, we turn to the analysis of $\mathcal{P}[I(\cdot)]$ for which, as in \cite{GuoWu}, we prove:

\begin{proposition}
\label{prop:mathcalPIstaysclosetoP}
Let $s\geq \frac12$. For $\mathcal{P}$ defined by \eqref{defnofmathcalP}, we have 
\begin{equation}
\left|\mathcal{P}[If] - \mathcal{P}[f] \right| \lesssim N^{-1} \left(\|If\|_{H^1(\Tl)}^2 +\|If\|_{H^1(\Tl)}^4 \right), 
\end{equation}
for all $f\in H_x^s(\Tl)$. 
\end{proposition}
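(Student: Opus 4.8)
The plan is to compute $\mathcal{P}[If]-\mathcal{P}[f]$ directly on the Fourier side and to split it according to the two terms defining $\mathcal{P}$ in \eqref{defnofmathcalP}: a quadratic piece coming from $\int_{\Tl}\Im(f\partial_x\overline{f})\,dx$ and a quartic piece coming from $-\tfrac12\|f\|_{L^4(\Tl)}^4$. For the quadratic piece, a direct computation (as for the identities recorded right after Remark~\ref{rmk:SymmPropofLambdas}) gives $\int_{\Tl}\Im(f\partial_x\overline{f})\,dx=-\tfrac{1}{2\pi\lambda}\sum_{k\in\Zl}k\,|\widehat{f}(k)|^2$, and since $m$ is even we have $m_1m_2=m(k_1)^2$ on $\Gamma_2(\Tl)$, so
\[
\int_{\Tl}\Big(\Im(If\,\partial_x\overline{If})-\Im(f\,\partial_x\overline{f})\Big)\,dx=\frac{1}{2\pi\lambda}\sum_{k\in\Zl}k\,(1-m(k)^2)\,|\widehat{f}(k)|^2 .
\]
The summand vanishes for $|k|\le N$ since $m(k)=1$ there, while for $|k|\gtrsim N$ the hypothesis $s\ge\tfrac12$ enters: then $m(k)^2|k|=N^{2(1-s)}|k|^{2s-1}\gtrsim N$ (here $2s-1\ge0$ is used), whence $|k|\,(1-m(k)^2)\le|k|\lesssim N^{-1}m(k)^2\langle k\rangle^2$. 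Summing in $k$ bounds the quadratic difference by $N^{-1}\|If\|_{H^1(\Tl)}^2$, with no dyadic summation required.

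For the quartic piece we write $\|If\|_{L^4}^4-\|f\|_{L^4}^4=c\,\Lambda_4(m_1m_2m_3m_4-1;f)$ for a harmless constant $c$. By the Small Frequencies Remark~\ref{rmk:smallfrequencies}, the multiplier $1-m_1m_2m_3m_4$ vanishes unless some $|k_j|\gtrsim N$, and since $k_1+k_2+k_3+k_4=0$ this forces the two largest frequencies to satisfy $N_1\sim N_2\gtrsim N$. After a Littlewood--Paley decomposition of $f$ and the reductions of the Symmetry Remark~\ref{rmk:evenoddinvariance}, we may assume the two top-frequency factors occupy prescribed slots; on their supports $m(k)\langle k\rangle\gtrsim N$, so each of these two Fourier factors is dominated by $N^{-1}\widehat{J_xIf_j}$. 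Using $|1-m_1m_2m_3m_4|\le1$, H\"{o}lder's inequality in the form $L^2_x\cdot L^2_x\cdot L^\infty_x\cdot L^\infty_x$ with the $L^2$-norms placed on these two factors, the Sobolev embedding $\|g\|_{L^\infty(\Tl)}\lesssim\|g\|_{H^{1/2+}(\Tl)}$ (which via Bernstein gives a gain $N_3^{-1/2+}$, $N_4^{-1/2+}$ on the two lower-frequency factors, these coinciding with $If_j$ in the main case $N_3\ll N$), and $\|g\|_{H^\theta(\Tl)}\lesssim\|Ig\|_{H^1(\Tl)}$ for $0\le\theta\le s$ (by \eqref{smoothingpropofI}), we obtain for each localized piece a bound by $N^{-2}\|If_{(1)}\|_{H^1}\|If_{(2)}\|_{H^1}\,N_3^{-1/2+}\|If_{(3)}\|_{H^1}\,N_4^{-1/2+}\|If_{(4)}\|_{H^1}$. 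The sums over $N_3,N_4$ converge by Cauchy--Schwarz against the orthogonality $\sum_N\|If_N\|_{H^1}^2=\|If\|_{H^1}^2$, and the remaining sum over the comparable top pair is absorbed, exactly as in the summation remark of Section~\ref{Section:AlmostConservationEstimates}, via $\sum_{N_1\sim N_2}\|If_{N_1}\|_{H^1}\|If_{N_2}\|_{H^1}\lesssim\|If\|_{H^1}^2$ (possible since both are $L^2$-based). The cases with three or four frequencies above $N$ are easier, as further powers of $N^{-1}$ then become available. Hence the quartic difference is $\lesssim N^{-2}\|If\|_{H^1}^4\lesssim N^{-1}\|If\|_{H^1}^4$, and combining with the quadratic estimate completes the proof.

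There is no genuine obstacle here: the argument is purely spatial (no $X^{s,b}$ machinery is needed) and in fact over-delivers an $N^{-2}$ in the quartic term. The only slightly delicate points are bookkeeping ones --- ensuring that after the symmetry reductions the $N^{-1}$ factors are extracted precisely from the two slots carrying $L^2_x$-based norms, so that the $N_1\sim N_2$ summation closes --- together with the single essential use of $s\ge\tfrac12$, which occurs in the quadratic piece.
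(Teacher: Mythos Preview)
Your proof is correct and reaches the same conclusion, but the mechanics differ from the paper's argument in both pieces.

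For the quadratic term, the paper rewrites $\Im\int(If\,\partial_x\overline{If}-f\,\partial_x\overline{f})\,dx$ via integration by parts as $\Im\int(If+f)\,\partial_x(\overline{If-f})\,dx$, then uses that $I-\mathrm{Id}$ is supported on high frequencies together with the duality pairing $H^{1/2}\times H^{-1/2}$ and the bounds $\|P_{\mathrm{hi}}If\|_{H^{1/2}},\,\|P_{\mathrm{hi}}f\|_{H^{1/2}}\lesssim N^{-1/2}\|If\|_{H^1}$. Your direct Fourier computation $\sum_k k(1-m(k)^2)|\widehat{f}(k)|^2$ is cleaner and makes the role of $s\ge\tfrac12$ (via $m(k)^2|k|\gtrsim N$ on high frequencies) completely transparent.

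For the quartic term, the paper works in physical space: it factors $|If|^4-|f|^4$ telescopically into four terms, each containing a factor of $(I-\mathrm{Id})f$, and then applies H\"older, Sobolev, a Bernstein gain on $P_{\mathrm{hi}}$, and the fractional Leibniz rule to obtain an $N^{-3/2}$ bound. Your multilinear-form approach, writing the difference as $\Lambda_4(m_1m_2m_3m_4-1;f)$ and localizing dyadically, is closer in spirit to the machinery of Section~\ref{Section:AlmostConservationEstimates}; it avoids the Leibniz rule entirely and even yields the slightly better $N^{-2}$. Both approaches are legitimate; the paper's physical-space argument is perhaps more self-contained (no summation remark needed), while yours integrates more seamlessly with the rest of the $I$-method bookkeeping already set up.
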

\begin{proof}
We have 
\begin{equation}
\label{separatingintotwo}
\left| \mathcal{P}[If] - \mathcal{P}[f]  \right| \leq  
\left|\Im \int_{\Tl} \left(If\partial_x(\overline{If}) -f\partial_x\overline{f}\right)dx\right| 
   + \frac12\left| \int_{\Tl} \left( |If|^4 - |f|^4\right)dx\right|
\end{equation}
and we can estimate the two terms separately. 

First, using integration by parts, we write
\begin{align*}
\Im \int_{\Tl} \left(If\partial_x(\overline{If}) -f\partial_x\overline{f}\right)dx  &= 
 \Im \int_{\Tl} If \partial_x\left(\overline{If} -\overline{f}\right) dx + \Im\int_{\Tl} \partial_x \overline{f} \left( If-f \right)dx\\
 &=\Im \int_{\Tl} If \partial_x\left(\overline{If} -\overline{f}\right) dx 
    +\Im\int_{\Tl}  f \partial_x\left( \overline{If-f} \right)dx\\
 &= \Im\int_{\Tl}  (If+f) \partial_x\left( \overline{If-f} \right)dx .
\end{align*}
Notice that $I-\textup{Id}=P_{\textup{hi}}(I-\textup{Id})$, where $\textup{Id}$ 
is the identity operator and we take $P_{\textup{hi}}:=P_{\gtrsim N}$. Thus, 
by commuting Fourier multiplier operators, using the self-adjointness of Littlewood-Paley operators 
and duality properties of Sobolev norms,  
we have %can bound the first term on the right hand side of \eqref{separatingintotwo} by
\begin{align*}
\left| \Im \int_{\Tl} (If+f) \partial_x\left( \overline{If-f} \right)dx \right| &\leq  
  %\left| \int_{\Tl} P_{\textup{hi}}(If+f) (I-\textup{Id})\partial_x\overline{f}dx \right| 
  \left| \langle P_{\textup{hi}}(If+f), (I-\textup{Id})\partial_x\overline{f}\rangle_{L^2(\Tl)} \right|\\
 &\leq \|P_{\textup{hi}}(If+f) \|_{{H}^{\frac12}} \|P_{\textup{hi}}(I-\textup{Id})\partial_xf\|_{{H}^{-\frac12}} \\
 &\leq \left(\|P_{\textup{hi}}If \|_{{H}^{\frac12}}  + \|P_{\textup{hi}}f \|_{{H}^{\frac12}} \right)^2 .
\end{align*}
Since 
$1\lesssim N^{-\frac12}\langle k\rangle^{\frac12}\ ,\ 
1\lesssim N^{-\frac12}m(k)\langle k\rangle^{\frac12}$ 
for all $|k|\gtrsim N$, 
we have
\begin{align}
\label{bds:momentumIdiff}
\|P_{\textup{hi}}If \|_{{H}^{\frac12}} &\lesssim  N^{-\frac12} \|If \|_{H^{1}} ,\\
\label{bds:momentumIdiff2}
\|P_{\textup{hi}}f \|_{{H}^{\frac12}} &\lesssim  N^{-\frac12} \|If \|_{H^{1}} .
\end{align}
Thus the first term in the right hand side of \eqref{separatingintotwo} is bounded by 
$ N^{-1} \|If \|_{H^{1}}^2$.

%Second, using $|If|^4 -|f|^4 = (|If|^2+|f|^2)\Re\left( (\overline{If}+\overline{f})(I-\textup{Id})f\right)$, we have 
For the second term in the right hand side of \eqref{separatingintotwo}, we write
$$|If|^4 -|f|^4 = |If|^2If(\overline{If}-\overline{f}) + |If|^2(If-f)\overline{f} + If(\overline{If}-\overline{f})|f|^2 
 + (If-f)\overline{f}|f|^2$$
and we treat, for example, the second term (modulo complex conjugation, it has all three possible factors involved); 
the others can be argued for analogously. 
By H\"{o}lder's inequality, 
\begin{align*}
\left| \int_{\Tl} |If|^2\overline{f} P_{\textup{hi}}(I-\textup{Id})f \,dx\right| &\leq 
  \left| \langle (I-\textup{Id})f , P_{\textup{hi}}(|If|^2 f)\rangle_{L^2}\right|  \\
&\lesssim \|(I-\textup{Id})f\|_{L^6} \|P_{\textup{hi}}(|If|^2 f)\|_{L^{\frac65}}  .
\end{align*}
Then, by Sobolev embedding, 
\begin{equation}
\label{L6Sobolevembforthesecondterm}
\|(I-\textup{Id})f\|_{L^6} \lesssim \|P_{\textup{hi}}(I-\textup{Id})f\|_{H^{\frac13}} \leq 
 \|P_{\textup{hi}}If \|_{H^{\frac12}} + \|P_{\textup{hi}}f \|_{H^{\frac12}}
\end{equation}
and we can use the estimates \eqref{bds:momentumIdiff}-\eqref{bds:momentumIdiff2}
to gain a factor of $N^{-\frac12}$. 
Another decaying factor is obtained via a Bernstein estimate, 
and then by Leibniz and H\"{o}lder inequalities, we get
\begin{align}
\notag
\|P_{\textup{hi}}(|If|^2 f)\|_{L^{\frac65}} &\lesssim N^{-\frac12} \| J_x^{\frac12}P_{\textup{hi}} (|If|^2 {f})\|_{L^{\frac65}}\\ 
\label{jkgfsflkjsdfkl}
& \lesssim N^{-\frac12} \left( \|J_x^{\frac12}If\|_{L^2}\|If\|_{L^6} \|f\|_{L^6} +  \|J_x^{\frac12}f\|_{L^2}\|If\|_{L^6}^2\right)\\
\notag
& \lesssim  N^{-\frac12}  \|If\|^3_{H^1},
\end{align}
where in the last step we used the Sobolev embedding as in \eqref{L6Sobolevembforthesecondterm} and 
$\| If\|_{H^{\frac12}} \lesssim \|f\|_{H^{\frac12}} \lesssim \|If\|_{H^1}$. 
Notice that if we do not drop the frequency restriction when passing to \eqref{jkgfsflkjsdfkl}, 
at least one factor (in both terms) has to be supported on frequencies $\gtrsim N$, 
hence by arguing as for \eqref{bds:momentumIdiff}, we could get another factor of $N^{-\frac12}$. 
Therefore, we obtain that the second term of  \eqref{separatingintotwo} is bounded by $N^{-\frac32}\|If\|^4_{H^1}$.
\end{proof}

\section{Proof of Proposition~\ref{prop:GWPofg1DNLS} via the $I$-method}%$ $
\label{sect:proofofGWPg1DNLS}

In order to prove that blowup of the $H^{1/2}$-norm of a solution $v$  
of \eqref{g1DNLS} does not occur in finite time,  
we adapt the $I$-method of \cite{CKSTT,CKSTTrefined} 
(therein also referred to as  ``the almost conserved energy method'') 
to also incorporate the almost conservation of $\mathcal{P}[Iv]$.

For initial data $v_0\in \mathcal{H}^s(\T):=\{ f\in H^s(\T) : M[f] <4\pi \}$,  $s<1$, 
its energy $E[v_0]$ might not even be defined. 
However, 
the functionals $\mathcal{E}[Iv(t)]$ and $\mathcal{P}[Iv(t)]$ 
are well-defined and  via Lemma~\ref{lem:controlofdotH1wEandP} provide
\begin{equation}
\label{controlofIv}
\|Iv(t)\|^2_{H^1} \lesssim |\mathcal{E}[Iv(t)]|+ \mathcal{P}[Iv(t)]^2 + 1,
\end{equation} 
where the smoothing operator $I$ is defined by \eqref{defnofIoperator} in Section~\ref{subsect:theIoperator} 
and $v$ is a (local) solution of \eqref{g1DNLS} with $v(0)=v_0$. 
%for some universal implicit constant $c_1>0$. 
This control allows us to iterate the local well-posedness theory for any initial data in $\mathcal{H}^s(\T)$ 
and prove that the corresponding solution exists for arbitrarily large times. 

% The frequency threshold $N\gg 1$ of the $I$-operator will be chosen such that $T$ can be taken arbitrarily large. 
%Since we need a local time of existence independent of $T$
%=N(T)\uparrow\infty$ as $T\uparrow\infty$. 
Since \eqref{smoothingpropofIhomog} allows for $\|Iv_0\|_{\dot{H}^1}\sim N^{1-s}$,   
which would give 
a time of existence $\delta\downarrow 0$  as $N\uparrow\infty$, 
we use the  scaling transformation  \eqref{naturalscaling} and notice that 
\begin{equation}
\|Iv_0^{\lambda}\|_{\dot{H}^1(\Tl)} \lesssim N^{1-s}\lambda^{-s} \|v_0\|_{\dot{H}^s(\T)} .
\end{equation}
Therefore, we choose the scaling parameter 
\begin{equation}
\label{choiceoflambda}
\lambda = N^{\frac{1-s}{s}}
\end{equation} 
which ensures that $\delta\gtrsim 1$, uniformly in $N$ and $\lambda$. 
We then have $1\ll \lambda \leq N$ in the regularity range $\frac12\leq s<1$, 
(in particular, $\lambda=N$ for $s=\frac12$). 
We also record that 
%$E[f^{\lambda}]=\frac{1}{\lambda^2}E[f]$, 
$\|v_0^{\lambda}\|_{H^s(\Tl)}$, $P[Iv_0^{\lambda}]$, $E[Iv_0^{\lambda}]$ are bounded 
by constants depending only on $\|v_0\|_{H^s(\T)}$.

A slightly modified iteration argument concludes the proof of 
Proposition~\ref{prop:GWPofg1DNLS}. Indeed, consider $B>0$ such that
$$B^2\sim \|v_0\|^2_{H^s(\T)} + |\mathcal{E}[Iv_0]|+\mathcal{P}[Iv_0]^2+1 $$ 
and suppose that at step $j$, we have 
$$\|Iv^{\lambda}(j\delta)\|_{H^1(\Tl)}\leq B.$$ 
Then, by Proposition~\ref{prop:LWPIsyst}, 
$$\|Iv^{\lambda}\|_{Z^1([j\delta,j\delta+\delta]\times\Tl)}\leq D$$
and according to Proposition~\ref{prop:slowlyvaryingincremsofE3}, 
$$|\mathcal{E}^3[v^{\lambda}(j\delta+\delta)]| \leq 
  |\mathcal{E}^3[v^{\lambda}(j\delta)]| + \delta^{0+} N^{-\gamma}\lambda^{-\kappa} C_1(D)$$
with $\gamma=\frac32-$, $\kappa=1-$. 
Assuming that we run this iteration $J$ times so that we cover the scaled time interval $[0,\lambda^2T]$, 
i.e.  we choose 
\begin{equation}
\label{bdonJ:frombelow}
J \gtrsim \lambda^2 T , 
\end{equation}
we have 
$$|\mathcal{E}^3[v^{\lambda}(J\delta)]| \leq 
 |\mathcal{E}^3[v^{\lambda}(0)] | + J \delta^{0+} N^{-\gamma}\lambda^{-\kappa} C_1(D). $$
Notice that $|\mathcal{E}^3[v(t)]|$ stays bounded 
(e.g. $|\mathcal{E}^3[v(t)]|\leq 2|\mathcal{E}^3[v^{\lambda}(0)] |$) over the entire $[0,\lambda^2T]$ 
if we further impose that $N$ is chosen such that 
\begin{equation}
\label{bdonJ:fromabove}
J \lesssim N^{\gamma}\lambda^{\kappa} .
\end{equation}
At each iteration step, due to Proposition~\ref{prop:mathcalE3isclosetomathcalEI} 
and Proposition~\ref{prop:mathcalPIstaysclosetoP}, 
we have in particular that
\begin{align}
\label{EIjplus1deltaisbounded}
|\mathcal{E}[Iv^{\lambda}((j+1)\delta)]| &\leq  2|\mathcal{E}^3[v^{\lambda}(0)]| + N^{-1+} C_2(D), \\
\label{PIjplus1deltaisbounded}
|\mathcal{P}[Iv^{\lambda}((j+1)\delta)]| &\leq  |\mathcal{P}[v^{\lambda}(0)]| + N^{-1} C_3(D), 
\end{align}
where we used the time interval restricted to $[j\delta, (j+1)\delta]$ version of \eqref{embeddinginLtinftyHxs} to get
$$\|Iv^{\lambda}((j+1)\delta)\|_{H^1(\Tl)}\lesssim D .$$ %\|Iv\|_{Z^1([j\delta,(j+1)\delta])}$. 
We choose $N$ large enough so that the second terms are dominated by the corresponding first terms in 
\eqref{EIjplus1deltaisbounded}, \eqref{PIjplus1deltaisbounded}. 
By Lemma~\ref{lem:controlofdotH1wEandP}, 
 we then deduce 
 $$\|Iv^{\lambda}((j+1)\delta)\|_{H^1(\Tl)}\leq B$$
and thus we get to perform the iteration again. 

Note that \eqref{bdonJ:frombelow}, \eqref{bdonJ:fromabove} and $s\geq\frac12$ yield
$$T\lesssim N^{\gamma-(\kappa-2)+\frac{1}{s}(\kappa-2)}\lesssim N^{\gamma+\kappa-2}.$$
In our case, $\gamma+\kappa-2=\frac12-$; hence, given any large $T$, 
we can choose a frequency threshold $N=N(T)\gg1$ for the $I$-operator. 

Notice that for all $t\in[0,\lambda^2T]\subset [0,J\delta]$, 
we have
$\mathcal{E}[Iv^{\lambda}(t)]\lesssim \mathcal{E}[Iv^{\lambda}_0]\lesssim 1$ and  
$\mathcal{P}[Iv^{\lambda}(t)]\lesssim \mathcal{P}[Iv^{\lambda}_0]\lesssim 1$, thus 
$\|Iv^{\lambda}(t)\|_{H^1(\Tl)}\lesssim 1$. 
Also, 
we recall that we owe to undo the scaling:
\begin{equation*}
\|v(t)\|_{{H}^s(\T)} \lesssim \lambda^s \|v^{\lambda}(\lambda^2 t)\|_{H^s(\Tl)} \lesssim 
\lambda^s \|Iv^{\lambda}(t)\|_{H^1(\Tl)} \lesssim N^{1-s}, 
\end{equation*}
for all $t\in[0,T]$, 
where we used \eqref{smoothingpropofI} and \eqref{choiceoflambda}. 
The above numerology allows us to take $N\sim T^{2+}$ and thus
$$\sup_{t\in[0,T]} \|v(t)\|_{{H}^s(\T)} \lesssim T^{2-2s+}$$
for any $\frac12\leq s<1$.

%\section{Failure of uniform continuity of the flow map below $H^{\frac12}(\T)$}
\appendix
\section{Mild ill-posedness below $H^{1/2}(\T)$}
\label{appdx:illposedness}

The scope of this section is to provide the analogue to the periodic setting 
of the ill-posedness result of Biagioni and Linares \cite{BiagioniLinares} 
where it was shown that 
the flow map $u_0\mapsto u$ of \eqref{DNLS} is 
not uniformly continuous from bounded subsets of $H^s(\R)$ into $C_tH_x^s([-T,T]\times\R)$, 
for any $T>0$ and $0\leq s<\frac12$. 
However, 
the method (which was introduced in \cite{KPVduke01}) 
uses the family of soliton solutions of \eqref{DNLS} on $\R$ 
(see \cite{KaupNewell, vanSaarloosHohenberg}), 
for which the corresponding initial data are not compactly supported, 
hence this strategy cannot be adapted to the periodic setting.

We also recall an observation of Gr\"{u}nrock and Herr \cite[Remark~2]{GrunrockHerr}, 
that in the periodic setting, 
due to the presence of a translation in the gauge transformation 
$\mathcal{G}^{\beta}$ (see \eqref{gaugetransfwithtranslation}), 
at any regularity level, 
the uniform continuity of the solution map of \eqref{DNLS} fails 
without fixing the mass on bounded subsets of $H^s(\T)$ 
(see also \cite[Theorem~3.1.1.(ii)]{HerrThesis}). 
Nevertheless, for the gauge equivalent equation \eqref{g1DNLS} 
one does not face the uniform continuity bottleneck due to the translation operator 
and it was for this equation that the contraction mapping argument was applied in \cite{HerrIMRN06}.

Using ideas similar to those in 
\cite{BurqGerardTzvetkovmrl02,CCTajm03},  
we construct smooth solutions 
that prove the failure of uniform continuity of the solution map of  \eqref{g1DNLS}
on bounded subsets of $H^s(\T)$, for $0\leq s<\frac12$. 
Since the solutions we construct are supported on single frequencies (monochromatic waves), 
the same result holds true for the Fourier-Lebesgue spaces $\mathcal{F}L^{s,r}(\T)$ for $s<\frac12$, $b\in\R$. 

\begin{lemma}
\label{prop:notuc}
Let $0\leq s<\frac12$ and $T>0$. 
For any $0<\delta\ll \varepsilon < 1$, there exist 
smooth initial data $v_0, \widetilde{v}_0$ such that 
\begin{align}
\|v_0\|_{H^s(\T)},\, \|\widetilde{v}_0\|_{H^s(\T)} \lesssim \varepsilon ,&\\
\|v_0-\widetilde{v}_0\|_{H^s(\T)} \lesssim \delta, &
\end{align}
and for which the corresponding solutions $v,\widetilde{v}$ to \eqref{g1DNLS} have the property
\begin{align}
\|v-\widetilde{v}\|_{L_t^{\infty}([-T,T];H_x^s(\T))}\gtrsim \varepsilon . 
\end{align}
%Let $s\in (0,\frac12)$ and $B$ a bounded subset of $H_x^s(\T)$.  
%Then the solution map $u_0\mapsto u$ is not uniformly from $B$ to $C([-T,T];H_x^s(\T))$ for any $T>0$. 
 \end{lemma}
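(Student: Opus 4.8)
The plan is to exploit the fact that \eqref{g1DNLS}, when restricted to a single Fourier mode, admits explicit monochromatic-wave solutions whose frequency depends nonlinearly on the amplitude; two nearby amplitudes then produce solutions whose phases drift apart in finite time. First I would look for solutions of the form $v(t,x)=a\,e^{i(nx+\omega t)}$ with $n\in\Z$ and $a\in\C$. Plugging this ansatz into \eqref{g1DNLS}, and using that for such a pure mode $\mu[v]=|a|^2$, $\partial_x\overline v=-in\overline v$, $\psi[v]$ is an explicit real constant depending only on $|a|$ and $n$, one finds that $v^2\partial_x\overline v=-in|a|^2v$, $|v|^4v=|a|^4v$, $|v|^2v=|a|^2v$, so the nonlinearity reduces to a multiple of $v$. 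Hence the ansatz solves \eqref{g1DNLS} provided $\omega$ satisfies a dispersion-type relation of the shape $\omega=-n^2+c_1n|a|^2+c_2|a|^4+c_3|a|^2\cdot(\text{const})+\dots$, i.e. $\omega=\omega(n,|a|)$ with $\partial_{|a|^2}\omega\neq0$. This last property — that the temporal frequency genuinely depends on the amplitude — is the engine of the construction.

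Next I would fix a large integer $n$ (to be chosen depending on $\varepsilon,\delta,s$) and set $a=\varepsilon n^{-s}$, $\widetilde a=(\varepsilon+\delta)n^{-s}$ (or $\widetilde a = a e^{i\theta}$ for a suitable small $\theta$; either modulating amplitude or phase works, but modulating the real amplitude is cleanest since it changes $|a|$ hence $\omega$). Then $v_0=a e^{inx}$, $\widetilde v_0=\widetilde a e^{inx}$ satisfy $\|v_0\|_{H^s(\T)}\sim a\langle n\rangle^s\sim\varepsilon$, $\|\widetilde v_0\|_{H^s}\sim\varepsilon$, and $\|v_0-\widetilde v_0\|_{H^s}\sim|a-\widetilde a|\langle n\rangle^s\sim\delta$, as required. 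The corresponding solutions are $v(t,x)=a e^{i(nx+\omega t)}$ and $\widetilde v(t,x)=\widetilde a e^{i(nx+\widetilde\omega t)}$ with $\omega=\omega(n,|a|^2)$, $\widetilde\omega=\omega(n,|\widetilde a|^2)$. Then
\begin{align*}
\|v(t)-\widetilde v(t)\|_{H^s(\T)}\sim \langle n\rangle^s\,\big\| a e^{i\omega t}-\widetilde a e^{i\widetilde\omega t}\big\|
\geq \langle n\rangle^s\Big(|a|\,\big|e^{i\omega t}-e^{i\widetilde\omega t}\big| - |a-\widetilde a|\Big),
\end{align*}
and $\langle n\rangle^s|a|\sim\varepsilon$ while $\langle n\rangle^s|a-\widetilde a|\sim\delta\ll\varepsilon$. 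It remains to pick a time $t\in[-T,T]$ at which $|e^{i\omega t}-e^{i\widetilde\omega t}|\gtrsim1$, i.e. at which the phase difference $(\omega-\widetilde\omega)t$ is of order $1$. Since $\omega-\widetilde\omega=\partial_{|a|^2}\omega\cdot(|a|^2-|\widetilde a|^2)+O(\dots)\sim n\cdot n^{-2s}(\varepsilon\delta)=\varepsilon\delta\, n^{1-2s}$ (the dominant contribution being the $c_1 n|a|^2$ term), and $1-2s>0$, by taking $n$ large enough (depending on $\varepsilon,\delta,T$) we can make $|\omega-\widetilde\omega|\gtrsim 1/T$, so there exists $t_*\in[-T,T]$ with $(\omega-\widetilde\omega)t_*\sim1$; at that time $\|v(t_*)-\widetilde v(t_*)\|_{H^s(\T)}\gtrsim\varepsilon$.

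The main obstacle, and the point requiring care, is the bookkeeping of the coupling coefficients $\mu[v]$ and $\psi[v]$ on a single mode: one must verify that the ansatz is genuinely an exact solution of the full equation \eqref{g1DNLS} (including the $\mu[v]|v|^2v$ and $-\psi[v]v$ terms and the correct form of $\psi[v]$ from \eqref{defnofpsinu1}), that $\psi[v]$ is real so that the modulus $|a|$ is preserved along the flow, and — crucially — that the amplitude-dependence of $\omega$ does not cancel. The term $c_1 n|a|^2$ coming from $-i v^2\partial_x\overline v$ provides a contribution to $\omega$ of size $n|a|^2$ that cannot be cancelled by the purely $|a|^2$- or $|a|^4$-dependent lower-order terms (which carry no factor of $n$), so for $n$ large this term dominates and $\partial_{|a|^2}\omega\sim n$. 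I would also note that since $v,\widetilde v$ are supported on the single frequency $n$, the same initial data and solutions witness the failure of uniform continuity in every $\mathcal{F}L^{s,r}(\T)$ with $s<\frac12$, because the $\mathcal{F}L^{s,r}$-norm of a monomial $c\,e^{inx}$ is comparable to $|c|\langle n\rangle^s$, exactly as for $H^s$. Finally, one reconciles this with Proposition~\ref{prop1p2}: the pair of orbits stays on a bounded subset of $H^s(\T)$ (indeed of radius $O(\varepsilon)$), so uniform continuity of $S_T$ on bounded sets fails, which is precisely the assertion, and the mass is not fixed along the two orbits but that is irrelevant here since we work directly with \eqref{g1DNLS} rather than \eqref{DNLS}.
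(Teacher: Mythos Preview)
Your approach is essentially the same as the paper's: both construct monochromatic exact solutions $v=a\,e^{i(nx+\omega t)}$ of \eqref{g1DNLS}, scale the amplitude as $|a|\sim \varepsilon n^{-s}$, and exploit the amplitude-dependent phase $\omega=-n^2-n|a|^2$ together with $1-2s>0$ to make the phase difference of order one within $[-T,T]$.

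One point in your write-up needs correcting. You argue that the $n|a|^2$ contribution to $\omega$ from $-iv^2\partial_x\overline v$ ``cannot be cancelled by the purely $|a|^2$- or $|a|^4$-dependent lower-order terms (which carry no factor of $n$)''. This is false: the coupling coefficient $\psi[v]$ in \eqref{defnofpsinu1} \emph{does} carry a term linear in $n$, since for $v=a\,e^{inx}$ one has $\Im(v\overline{v}_x)=-n|a|^2$ and hence $\psi[v]=-2n|a|^2+\tfrac12|a|^4$. So both $-iv^2\partial_x\overline v$ and $-\psi[v]v$ contribute at order $n|a|^2$, and a priori they could cancel. The paper simply computes everything explicitly: the cubic-derivative term gives $-n|a|^2\,v$, the $-\psi[v]v$ term gives $+2n|a|^2\,v$ (plus $|a|^4$-pieces), and together with $\mu[v]|v|^2v-\tfrac12|v|^4v$ the net nonlinearity is $\mathcal N(v)=n|a|^2\,v$, whence $\omega=-n^2-n|a|^2$. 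Thus the non-cancellation you rely on is true, but it must be checked by the explicit bookkeeping you flagged as ``the main obstacle'' rather than by the heuristic you gave. With this correction your proof goes through exactly as in the paper (and, as you note, $n$ in fact depends on $\varepsilon,\delta,T$).
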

 \begin{proof}
 Let $a\in\C$ and $N\in \Z$, $N\gg 1$ (to be chosen later) and consider 
 functions supported on a single frequency of the form 
 $$v_{N_,a}(t,x) = ae^{i(Nx+\theta(N)t)},$$
 for some $\R$-valued $\theta(\cdot)$. We have 
 $$\mu[v_{N,a}] = |a|^2\ ,\ \psi[v_{N,a}]=-2|a|^2N+\frac12|a|^4 $$
 and thus we compute the corresponding nonlinearity of \eqref{g1DNLS}:
 $$\mathcal{N}(v_{N,a})= %\left(-|a|^2aN - \frac12|a|^4a+ |a|^4a +2|a|^2aN-\frac12|a|^4a\right) e^{i()}$$
  |a|^2aNe^{i(Nx+\theta(N)t)}.$$
Then,  by taking $\theta(N)=-N^2-|a|^2N$, the function 
 \begin{equation}
 v_{N,a}(t,x) = ae^{i(Nx -N^2t - |a|^2Nt)}
 \end{equation}
 is a solution of \eqref{g1DNLS} with 
 $$\|v_{N,a}(t,x)\|_{L_x^2(\T)}\sim |a|\ ,\ \|v_{N,a}(t,x)\|_{\dot{H}_x^s(\T)}\sim |a| N^s$$
 and since $s\geq0$, we also have 
 $$\|v_{N,a}(t,x)\|_{H_x^s(\T)}\sim |a| N^s.$$
 Now let $a= b N^{-s}$ and $\widetilde{a} =\widetilde{b} N^{-s}$ with $b,\widetilde{b}\in\C$ 
 such that $|b|\sim |\widetilde{b}|\sim \varepsilon$ and $|b-\widetilde{b}|\lesssim \delta$. 
 We find
 $$\|v_{N,a}(0,x) - v_{N,\tilde{a}}(0,x)\|_{H_x^s(\T)} 
 =  |b-\tilde{b}|N^{-s} \|e^{iNx}\|_{H_x^s(\T)} \lesssim \delta .$$
On the other hand, by setting $\varphi(N,b):=|bN^{-s}|^2N$ to simplify the writing, we obtain
 \begin{align*}
 \|v_{N,a}(t,x) - v_{N,\tilde{a}}(t,x)\|_{H_x^s(\T)} &=
  \left| b e^{-i \varphi(N,b)t} - \tilde{b} e^{-i\varphi(N,\tilde{b})t}\right| N^{-s}  \|e^{iNx}\|_{H_x^s(\T)}\\
  &\gtrsim |b| \left|e^{-i\varphi(N,b)t}-e^{-i\varphi(N,\tilde{b})t}\right| - |b-\tilde{b}|\\
  &\gtrsim \varepsilon \left|e^{i(\varphi(N,b)-\varphi(N,\tilde{b}))t}-1\right| -\delta .
\end{align*}
Note that
 \begin{equation}
 \label{illposed:phasediff}
 \varphi(N,b)-\varphi(N,\tilde{b}) = N^{1-2s}(|b|^2-|\tilde{b}|^2)
 \end{equation}
and that at $t=t_N$, where 
\begin{equation}
t_N:=\frac{\pi}{\varphi(N,b)-\varphi(N,\tilde{b})},
\end{equation}
the two solutions have opposite phases, and thus 
$$ \|v_{N,a}(t_N,x) - v_{N,\tilde{a}}(t_N,x)\|_{H_x^s(\T)}  \gtrsim\varepsilon-\delta \sim \varepsilon.$$
Indeed, since the power of $N$ in \eqref{illposed:phasediff} is positive, 
we can 
choose an integer $N=N(\varepsilon,T)$ (independent of $\delta$) such that 
$|t_N|\leq T/2$, or equivalently 
$$| \varphi(N,b)-\varphi(N,\tilde{b})  | \gtrsim T^{-1} .$$
 \end{proof}

\begin{remark}
One can easily adapt the above argument to any other gauge equivalent equation, including \eqref{DNLS} itself. 
Indeed, it is enough to take 
$$\theta_{\beta}(N)=\theta(N) -(\beta^2-\frac32\beta+\frac12)|a|^4.$$
Correspondingly, we take
$$ \varphi_{\beta}(N,b)= \varphi(N,b)-(\beta^2-\frac32\beta+\frac12)|b|^4N^{-4s}$$
and note that for $N\gg 1$, the difference in phase is essentially as above, i.e.
$$\varphi_{\beta}(N,b)-\varphi_{\beta}(N,\tilde{b}) \sim \varphi(N,b)-\varphi(N,\tilde{b}).$$
%Notice that the sequence of initial data $v_{N,a}(0)= bN^{-s}e^{iNx}$ does not have fixed mass.
\end{remark}

\bibliographystyle{siam}
\bibliography{/Users/razvanmosincat/Dropbox/Razvan_bibfile}

\end{document}